\documentclass[11pt]{article}

\usepackage[T1]{fontenc}
\usepackage{lmodern}
\usepackage[a4paper,margin=1.05in]{geometry}
\usepackage{amsmath,amssymb,amsthm,mathtools}
\usepackage{bm}
\usepackage{enumitem}
\usepackage{microtype}
\usepackage[colorlinks=true,linkcolor=blue,citecolor=blue,urlcolor=blue]{hyperref}
\hypersetup{
  pdftitle={A Uniform Pole-Subtracted Limiting Absorption Principle for High-Contrast Elastic Resonator Clusters},
  pdfauthor={Yixian Gao},
  pdfsubject={High-contrast elastic resonators and limiting absorption},
  pdfkeywords={limiting absorption principle, high-contrast elasticity, subwavelength resonances, elastic scattering}
}

\numberwithin{equation}{section}

\theoremstyle{plain}
\newtheorem{theorem}{Theorem}[section]
\newtheorem{lemma}[theorem]{Lemma}
\newtheorem{proposition}[theorem]{Proposition}
\newtheorem{corollary}[theorem]{Corollary}
\theoremstyle{definition}
\newtheorem{assumption}[theorem]{Assumption}

\theoremstyle{remark}
\newtheorem{remark}[theorem]{Remark}

\newcommand{\R}{\mathbb R}
\newcommand{\C}{\mathbb C}

\newcommand{\dd}{\,\mathrm d}
\newcommand{\ii}{\mathrm i}
\newcommand{\eps}{\varepsilon}
\newcommand{\tr}{\operatorname{tr}}
\newcommand{\supp}{\operatorname{supp}}
\newcommand{\dist}{\operatorname{dist}}
\newcommand{\rank}{\operatorname{rank}}
\newcommand{\diag}{\operatorname{diag}}
\newcommand{\Span}{\operatorname{span}}
\newcommand{\Ker}{\operatorname{ker}}

\newcommand{\cL}{\mathcal L}
\newcommand{\cH}{\mathcal H}
\newcommand{\cR}{\mathcal R}
\newcommand{\cN}{\mathcal N}
\newcommand{\cM}{\mathcal M}
\newcommand{\cE}{\mathcal E}

\newcommand{\cD}{\mathcal D}
\newcommand{\cB}{\mathcal B}
\newcommand{\cA}{\mathcal A}
\newcommand{\cF}{\mathcal F}
\newcommand{\cV}{\mathcal V}
\newcommand{\cX}{\mathcal X}

\newcommand{\cZ}{\mathcal Z}
\newcommand{\cJ}{\mathcal J}
\newcommand{\cT}{\mathcal T}
\newcommand{\cW}{\mathcal W}
\newcommand{\bfnu}{\bm\nu}

\newcommand{\bfu}{\bm u}
\newcommand{\bfv}{\bm v}
\newcommand{\bfw}{\bm w}
\newcommand{\bff}{\bm f}

\newcommand{\bfphi}{\bm \phi}

\newcommand{\inner}[2]{\left\langle #1,#2\right\rangle}
\newcommand{\norm}[1]{\left\|#1\right\|}

\title{\Large\bfseries A Uniform Pole-Subtracted Limiting Absorption Principle\\
for High-Contrast Elastic Resonator Clusters}

\author{Yixian Gao
\thanks{School of Mathematics and Statistics, Center for Mathematics and
Interdisciplinary Sciences, Northeast Normal University, Changchun,
Jilin 130024, China. 
    \texttt{gaoyx643@nenu.edu.cn}.}}
\date{}

\begin{document}

\maketitle

\begin{abstract}
We establish a uniform pole-subtracted limiting absorption principle for a fixed cluster of
\(N\) disjoint three-dimensional high-contrast elastic resonators when the
contrast tends to infinity and \(\omega=\delta^{1/2}\tau\) approaches the zero
threshold.  The exterior Dirichlet-to-Neumann map and a variational
Grushin--Feshbach reduction give an exact decomposition of the cutoff resolvent
into a uniformly bounded regular part and a finite-rank term governed by
\[
  \cM_\delta^\pm(\omega)
  =
  \delta K-\omega^2I_m
  \mp\ii\delta\omega\Gamma_0
  +\mathcal O(\delta^2+\delta\omega^2).
\]
The cutoff-resolvent norm is uniformly equivalent to
\(1+\|(\cM_\delta^\pm(\omega))^{-1}\|\); hence the finite-dimensional channel
carries every loss of uniformity.  An elastic optical identity factors the
leading radiation matrix through one total-force map into \(\C^3\).  Thus
\(\rank\Gamma_0=3\) and \(\dim\Ker\Gamma_0=6N-3\).  Compression to a static
eigenspace of dimension \(r\) leaves at most three leading radiative channels.
Simple bright poles have width \(O(\delta)\), whereas force-dark poles with a
positive second radiation form have width \(O(\delta^2)\); the corresponding
real-axis peaks have orders \(\delta^{-3/2}\) and \(\delta^{-5/2}\).  We also
treat multiple static eigenvalues, compute the spherical coefficients, and
derive a conditional two-parameter crossover for a symmetry-broken dimer.
\end{abstract}

\medskip
\noindent\textbf{Keywords.}
pole-subtracted limiting absorption principle; high-contrast elastic resonators;
subwavelength resonances; elastic wave scattering;
Grushin--Feshbach reduction; Fermi golden rule.

\smallskip
\noindent\textbf{2020 Mathematics Subject Classification.}
Primary 35P25; Secondary 35B34, 74J20, 47A40.

\section{Introduction}

\subsection{The joint threshold--contrast problem}

We consider a fixed cluster of finitely many disjoint elastic inclusions in
three dimensions.  Both the stiffness and density contrasts are
\(\delta^{-1}\), where \(0<\delta\ll1\).  For a fixed uniformly elliptic
medium, the limiting absorption principle selects the outgoing or incoming
solution by taking real-frequency boundary values of the resolvent.  Here this
absorption limit interacts with the singular contrast limit.  The multi-body
rigid space has dimension \(6N\), while the associated subwavelength resonances
occur at frequencies
\[
  \omega\asymp\delta^{1/2},
\]
and therefore approach the zero threshold at the same time as the
material contrast becomes singular.  Fixed-contrast resolvent
estimates do not control this regime.

The central object is
\[
  R_\delta(z)=(\cL_\delta-z)^{-1},
  \qquad
  z=(\omega\pm\ii\eps)^2,
\]
where \(\cL_\delta\) is the self-adjoint transmission operator defined
in Section~\ref{sec:model}.  We study
\[
  \eps\downarrow0,\qquad
  \delta\downarrow0,\qquad
  \omega=\delta^{1/2}\tau,
\]
with \(\tau\) in a fixed compact subset of \((0,\infty)\).  The
absorption limit cannot be interchanged uniformly with the
high-contrast limit along a subwavelength resonant path.

\subsection{Standing assumptions and main conclusions}

The uniform statements in this paper are made for a fixed geometric and material
configuration.  We record the assumptions here so that the dependence of the
constants is unambiguous.

\begin{assumption}
\label{ass:standing}
Let \(N\ge1\).  The inclusions \(D_1,\ldots,D_N\Subset\R^3\) are pairwise disjoint connected
\(C^{2,\alpha}\) domains, \(0<\alpha<1\).  Set
\[
  D:=\bigcup_{j=1}^N D_j,
  \qquad
  \Omega:=\R^3\setminus\overline D.
\]
Assume that \(\Omega\) is connected and that the cluster is fixed as
\(\delta\downarrow0\).  In particular,
\[
  d_*:=\min_{i\ne j}\dist(\overline D_i,\overline D_j)>0
  \quad (N\ge2),
\]
with the convention \(d_*=+\infty\) when \(N=1\).
The exterior tensor is homogeneous isotropic and strongly elliptic, with density
\(\rho_0>0\).  On each component \(D_j\), the reference tensor \(C_D\) is
homogeneous isotropic and strongly elliptic, and the reference density is a
positive constant \(\rho_{D,j}\).  We write \(\rho_D=\rho_{D,j}\) on \(D_j\).
This concrete assumption guarantees the interior Cauchy unique-continuation
property used in the fixed-contrast limiting absorption argument.  The
high-contrast coefficients
are
\[
  C_\delta=\delta^{-1}C_D\mathbf1_D+C_0\mathbf1_\Omega,\qquad
  \rho_\delta=\delta^{-1}\rho_D\mathbf1_D+\rho_0\mathbf1_\Omega.
\]
The constants below may depend on this fixed cluster, on the material parameters,
on the compact rescaled-frequency set, and on the cutoff, but not on
\(\delta\), the absorption parameter, or the real frequency on the scale
\(\omega=\delta^{1/2}\tau\).
\end{assumption}

The \(C^{2,\alpha}\) regularity in Assumption~\ref{ass:standing} is used only for
the operator-norm low-frequency expansion of the elastic boundary operators and
for the explicit pointwise calculations in Section~\ref{sec:examples}.  The variational
Grushin--Feshbach reduction uses only the Korn decomposition, trace theory, and
the exterior Dirichlet-to-Neumann map; see Remark~\ref{rem:boundary-regularity}.
A simultaneous closing-gap limit is not included in the uniformity class.

Let \(\cN\) be the \(6N\)-dimensional space of fields that are rigid on each
component.  After choosing a mass-orthonormal basis of \(\cN\), put \(m=6N\),
let \(K>0\) be the static elastic capacitance matrix, and let
\(\Gamma_0\ge0\) be the leading radiation matrix.

The theorem below collects the main conclusions.  The maps and matrices used
in parts~\textup{(iii)}--\textup{(iv)} are defined explicitly in
Sections~\ref{sec:reduction} and~\ref{sec:radiation}; the full statements cited after the
theorem specify all operator topologies and nondegeneracy conditions.

\begin{theorem}
\label{thm:intro-main}
Under Assumption~\ref{ass:standing}, fix a compact interval
\(I=[\tau_-,\tau_+]\Subset(0,\infty)\), \(s>1/2\),
and \(\chi\in C_c^\infty(\R^3)\) equal to one on a neighborhood of
\(\overline D\).  There are \(\delta_0,C,c>0\) such that, for
\[
  0<\delta<\delta_0,\qquad
  \omega=\delta^{1/2}\tau,\qquad \tau\in I,
\]
the following assertions hold.

\begin{enumerate}[label=\textup{(\roman*)},leftmargin=2.2em]
\item The outgoing and incoming effective matrices satisfy
\[
  \cM_\delta^\pm(\omega)
  =
  \delta K-\omega^2I_m
  \mp\ii\delta\omega\Gamma_0
  +E_\delta^\pm(\omega),
  \qquad
  \norm{E_\delta^\pm(\omega)}
  \le C(\delta^2+\delta\omega^2),
\]
with
\[
  \norm{\partial_\omega E_\delta^\pm(\omega)}
  \le C(\delta^2+\delta\omega),
  \qquad
  \norm{\partial_\omega^2 E_\delta^\pm(\omega)}
  \le C\delta.
\]

\item The cutoff boundary values admit the exact decomposition
\[
  \chi R_\delta^\pm(\omega)\chi
  =
  \cR_{\delta,\mathrm{reg}}^\pm(\omega)
  +
  \chi\Phi_\delta^\pm(\omega)
  \bigl(\cM_\delta^\pm(\omega)\bigr)^{-1}
  \Psi_\delta^\pm(\omega)\chi,
\]
where
\[
  \cR_{\delta,\mathrm{reg}}^\pm(\omega)
  \in\mathcal B(L_s^2,H_{-s}^1),\qquad
  \chi\Phi_\delta^\pm(\omega)
  \in\mathcal B(\C^m,H_{-s}^1),\qquad
  \Psi_\delta^\pm(\omega)\chi
  \in\mathcal B(L_s^2,\C^m)
\]
are uniformly bounded.  Moreover,
\[
  c\bigl(1+\norm{(\cM_\delta^\pm(\omega))^{-1}}\bigr)
  \le
  1+\norm{\chi R_\delta^\pm(\omega)\chi}
  \le
  C\bigl(1+\norm{(\cM_\delta^\pm(\omega))^{-1}}\bigr).
\]
Thus every loss of uniformity is carried by the finite-dimensional effective
matrix.

\item The leading radiation matrix satisfies
\[
  \rank\Gamma_0=3,\qquad
  \Ker\Gamma_0=\Ker(\mathfrak qB),\qquad
  \dim\Ker\Gamma_0=6N-3.
\]
The kernel consists of leading force-dark rigid directions.  Such a direction
produces a dark resonant branch only when it is compatible with the corresponding
eigenspace of \(K\).  If
\(\cX_\lambda=\Ker(K-\lambda I)\) has dimension \(r_\lambda\) and
\(P_\lambda\) is its orthogonal projection, then
\[
  \rank(P_\lambda\Gamma_0P_\lambda)\le3,
  \qquad
  \dim(\cX_\lambda\cap\Ker\Gamma_0)\ge r_\lambda-3.
\]
Hence, counting algebraic multiplicity, at least
\(\max\{r_\lambda-3,0\}\) poles in that cluster have vanishing
order-\(\delta\) imaginary coefficient.

\item Let \(Ka_j=\lambda_ja_j\) be simple with \(|a_j|=1\) and
\(\lambda_j^{1/2}\in\operatorname{int}I\).  If
\(\inner{\Gamma_0a_j}{a_j}>0\), then the corresponding pole is bright and
\[
  \zeta_j(\delta)
  =
  \delta^{1/2}\lambda_j^{1/2}
  -
  \frac{\ii\delta}{2}
  \inner{\Gamma_0a_j}{a_j}
  +\mathcal O(\delta^{3/2}).
\]
If instead \(a_j\in\Ker\Gamma_0\) and
\(\inner{\Gamma_1a_j}{a_j}>0\), then after a real shift of order
\(\delta^{3/2}\),
\[
  \operatorname{Im}\zeta_j(\delta)
  =
  -\frac{\delta^2\lambda_j}{2}
  \inner{\Gamma_1a_j}{a_j}
  +o(\delta^2).
\]
The corresponding real-axis peaks are of orders \(\delta^{-3/2}\) and
\(\delta^{-5/2}\), respectively.
\end{enumerate}
\end{theorem}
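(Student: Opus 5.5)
The proof goes through a reduction to the boundary and then a Grushin--Feshbach reduction. We first rewrite the transmission problem for \(\cL_\delta-(\omega\pm\ii\eps)^2\) as an interior problem on \(D\) coupled to the exterior through the outgoing/incoming Dirichlet-to-Neumann map \(\Lambda^\pm(\omega)\) on \(\partial D\). For \(\omega\) of order \(\delta^{1/2}\) we use its low-frequency expansion \(\Lambda^\pm(\omega)=\Lambda_0+\omega^2\Lambda_2\mp\ii\omega^3\Lambda_3+\dots\) in operator norm on \(H^{1/2}(\partial D)\to H^{-1/2}(\partial D)\). The \(C^{2,\alpha}\) regularity enters only here. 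The interior sesquilinear form, multiplied by \(\delta\), is \(a_D(u,v)-\omega^2\langle\rho_D u,v\rangle+\delta\langle\Lambda^\pm u,v\rangle\).

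By Korn's inequality this form is coercive on the mass-orthogonal complement \(\cN^\perp\), uniformly in \(\delta\) and \(\tau\in I\). We then set up a Grushin problem with \(R_+:u\mapsto\) the \(\cN\)-coordinates in the mass-orthonormal basis and \(R_-=R_+^*\). The Schur complement is the effective matrix \(\cM_\delta^\pm(\omega)\), and the Grushin inverse gives the exact decomposition in (ii). The regular part and the maps \(\Phi,\Psi\) are uniformly bounded because the inverse on \(\cN^\perp\) is uniformly bounded. For the weighted spaces \(L^2_s\to H^1_{-s}\) we combine this with fixed-contrast limiting absorption in the exterior, using the unique-continuation property from Assumption~\ref{ass:standing}. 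The two-sided norm equivalence follows from the Grushin identity \(\cM^{-1}=-\Psi\,(\text{resolvent})\,\Phi\) up to bounded terms; for the lower bound we test on \(\chi\) times rigid fields.

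For (i) we expand the Schur complement: \(\cM=\delta\langle\Lambda^\pm r_i,r_j\rangle-\omega^2 I-\delta^2(\ldots)\). The static term \(\langle\Lambda_0 r_i,r_j\rangle\) gives \(\delta K\). The first imaginary term \(\omega^3\) would naively produce \(\delta\omega^3\). We must instead show that the order-\(\delta\omega\) imaginary part arises from the correct scaling, namely \(\delta\,\mathrm{Im}\Lambda^\pm(\omega)\) restricted to rigid traces; this is exactly where the low-frequency elastic far field of rigid motions is computed. Derivative bounds follow by differentiating the expansion term by term.

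For (iii) we prove an elastic optical identity. \(\mathrm{Im}\langle\Lambda^\pm(\omega) r,r\rangle\) equals the radiated energy flux, and at leading order this depends only on the net force \(\mathfrak q B r\in\C^3\), which is the monopole of the exterior static traction. Hence \(\Gamma_0=(\mathfrak qB)^*Q(\mathfrak qB)\) with \(Q>0\), \(\mathfrak qB\) surjective onto \(\C^3\) (translations produce all forces), and the rank and kernel statements follow. Compression to \(\cX_\lambda\) gives rank at most \(3\), and the dimension count gives \(\dim(\cX_\lambda\cap\Ker\Gamma_0)\ge r_\lambda-3\). Perturbation theory for the \(r_\lambda\times r_\lambda\) block yields the pole count.

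For (iv) we rescale \(\cM/\delta\) with \(\omega=\delta^{1/2}\tau\): \(K-\tau^2\mp\ii\delta^{1/2}\tau\Gamma_0+O(\delta)\). Simple-eigenvalue perturbation (Rouché or the implicit function theorem, using the derivative bounds) gives \(\zeta_j\). In the dark case the first-order imaginary term vanishes. We need the next term \(\Gamma_1\), the quadrupole/dipole flux at order \(\delta\omega^3\), together with a second-order Feshbach correction, which is real to leading order and produces the \(\delta^{3/2}\) shift. The peaks follow from \(\|\cM^{-1}\|\sim(\delta\cdot\mathrm{width})^{-1}\): \(\delta^{-1}\cdot\delta^{-1/2}\) in the bright case and \(\delta^{-1}\cdot\delta^{-3/2}\) in the dark case, inserted into (ii).

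The main obstacle is the uniform low-frequency expansion of \(\Lambda^\pm\) with the correct imaginary orders. This includes separating the real second-order corrections from the \(\Gamma_1\) flux in the dark case, so that the sign of \(\operatorname{Im}\zeta_j\) is genuinely \(O(\delta^2)\) with the stated coefficient.
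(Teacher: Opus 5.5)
\textbf{Gap 1: the DtN expansion, and hence part (i).} The expansion you start from, $\Lambda^\pm(\omega)=\Lambda_0+\omega^2\Lambda_2\mp\ii\omega^3\Lambda_3+\cdots$, is wrong in three dimensions, and no rescaling can recover (i) from it.

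To see why, note that $\cM_\delta=J^*\cA_\delta S_\delta^r$ and $Q^*\cA_\delta S_\delta^r=0$. Hence $\inner{\cM_\delta^+a}{a}=\inner{\cA_\delta^+u_a}{u_a}$ with $u_a=S_\delta^ra$. The strain and mass terms are real on the diagonal, so the anti-Hermitian part of $\cM_\delta^+$ is exactly $\delta B_\delta^*(\operatorname{Im}_{\mathrm{op}}\Lambda^+)B_\delta$, up to a convention-dependent sign. With your expansion this is $O(\delta\omega^3)$, which forces $\Gamma_0=0$ and contradicts your own (iii).

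The missing fact is that the outgoing Kupradze tensor has an $x$-independent term linear in $\zeta$. Inserting $\Phi_k(x)=\frac{1}{4\pi|x|}+\frac{\ii k}{4\pi}-\frac{k^2|x|}{8\pi}-\frac{\ii k^3|x|^2}{24\pi}+\cdots$ into the Kupradze formula gives $\mathbf G^\zeta=\mathbf G^0+\ii\zeta\gamma_{\mathrm{el}}I+O(\zeta^2)$. Consequently:
\begin{itemize}
\item $\mathsf S(\zeta)=\mathsf S_0+\ii\zeta\mathsf S_1+O(\zeta^2)$ with $\mathsf S_1\mu=\gamma_{\mathrm{el}}\int_\Sigma\mu$;
\item $\mathsf K^*(\zeta)$ has no linear term;
\item $\Lambda(\zeta)=(-\tfrac12I+\mathsf K^*(\zeta))\mathsf S(\zeta)^{-1}=\Lambda_0+\ii\zeta\Lambda_1+\zeta^2\Lambda_2(\zeta)$ with $\Lambda_1=\gamma_{\mathrm{el}}\mathfrak q^*\mathfrak q\ne0$.
\end{itemize}
Equivalently, divide $\operatorname{Im}_{\mathrm{op}}\Lambda^+(\omega)=\omega\cF(\omega)^*\cF(\omega)$ by $\omega$ and let $\omega\to0$ to get $\Lambda_1=\cF_0^*\cF_0$.

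With the linear term in place, (i) is a short computation needing no far-field analysis. In the paper's sign convention, $A_{00}=-\zeta^2I_m-\delta B^*\Lambda(\zeta)B=\delta K-\zeta^2I_m-\ii\delta\zeta\Gamma_0-\delta\zeta^2B^*\Lambda_2B$ with $\Gamma_0=B^*\Lambda_1B$. The Feshbach correction $A_{01}G_\delta^{-1}A_{10}$ is $O(\delta^2)$, because both off-diagonal blocks are $\delta$ times boundary pairings. Your force-monopole picture in (iii) is exactly this linear term, and your $\omega^3$ term is what later produces $\Gamma_1$.

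The same kernel expansion is also what makes the cutoff exterior Poisson and zero-Dirichlet maps, and hence $\Phi_\delta$, $\Psi_\delta$ and the regular block, uniformly bounded as $\omega=\delta^{1/2}\tau\to0$. A fixed-frequency exterior LAP does not supply that uniformity.

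\textbf{Gap 2: the dark width in (iv).} You correctly call this the main obstacle but do not resolve it. The bound $\norm{E_\delta}\le C\delta^2$ is the size of the real shift and larger than the imaginary part $\delta\omega^3\inner{\Gamma_1a_j}{a_j}\asymp\delta^{5/2}$ you must extract. So your claim that the second-order Feshbach correction is ``real to leading order'' is precisely what needs proof.

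The paper closes this with the exact effective optical identity $-\operatorname{Im}_{\mathrm{op}}\cM_\delta^+(\omega)=\delta\omega B_\delta(\omega)^*\cF(\omega)^*\cF(\omega)B_\delta(\omega)$, where $B_\delta=\gamma S_\delta^r=B+O(\delta)$. It follows from the identity $\inner{\cM_\delta^+a}{a}=\inner{\cA_\delta^+u_a}{u_a}$ above together with the optical identity for $\Lambda^+$. The argument then runs:
\begin{itemize}
\item Define the real center by $\operatorname{Re}m_{j,\delta}(\widehat\omega_j)=0$ for the Lyapunov--Schmidt scalar. This absorbs all $O(\delta^2)$ Hermitian corrections into an $O(\delta^{3/2})$ shift.
\item Use $\Gamma_0a_j=0$ to get a Lyapunov--Schmidt vector $\widehat a_j=a_j+O(\delta)$.
\item Expand $\cF(\widehat\omega_j)B_\delta\widehat a_j=\widehat\omega_j\cF_1Ba_j+O(\delta)$ on $\Ker\cF_0B$, giving $-\operatorname{Im}m_{j,\delta}(\widehat\omega_j)=\delta\widehat\omega_j^3\inner{\Gamma_1a_j}{a_j}+O(\delta^3)$.
\item Apply Rouch\'e on a $C\delta^2$ disk, using $\partial_\zeta m_{j,\delta}=-2\widehat\omega_j+O(\delta)$.
\end{itemize}
A direct bookkeeping alternative is possible: using $\Lambda_1Ba_j=0$, show the Feshbach correction contributes only $O(\delta^{7/2})$ to the imaginary part. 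But some such argument must be supplied.

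\textbf{What is sound.} The rest of your plan matches the paper's structure: Korn coercivity on the mass-orthogonal complement, the Grushin--Feshbach reduction, force factorization, and Lyapunov--Schmidt for bright poles. Surjectivity of $\mathfrak qB$ still needs the one-line argument $\inner{\mathfrak qBa(t)}{t}=\inner{Ka(t)}{a(t)}>0$ for global translations $Ja(t)=t$.

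Your lower bound is in fact exact rather than ``up to bounded terms''. With $f_a=Ja$ on $D$ and zero outside, $\cM_\delta^{-1}a=J^\sharp\bigl((\chi R_\delta\chi f_a)|_D\bigr)$, which gives $\norm{\cM_\delta^{-1}}\le C\norm{\chi R_\delta\chi}$ directly. This is slightly more economical than the paper's finite-rank transfer lemma, which uses the same test sources.
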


The results cited in the theorem give the precise operator topologies,
non-cancellation estimates, multiple-eigenvalue analysis, and Lorentzian
expansions.  The rank-three law is global rather than componentwise: spectral
conclusions follow only after the radiation form is compressed to a static
eigenspace; see Corollary~\ref{cor:cluster-dark-count}.

\subsection{Contributions}

The fixed-\(\delta\) elastic limiting absorption principle is classical; the
problem here is uniformity in the joint threshold--contrast limit.  After the
homogeneous exterior is eliminated, the rigid channel is separated from its
coercive complement by an exact Grushin--Feshbach identity.  The projection
acts only on \(L^2(D)^3\), avoiding non-square-integrable static exterior
fields.  Quantitative nondegeneracy of the source and reconstruction maps then
prevents cancellation of the finite-rank singularity.

The reduction also preserves radiative flux.  For the corrected trace
\(B_\delta(\omega)=\gamma S_\delta^r(\omega)\),
\[
  -\operatorname{Im}_{\mathrm{op}}\cM_\delta^+(\omega)
  =
  \delta\omega
  B_\delta(\omega)^*\cF(\omega)^*\cF(\omega)B_\delta(\omega).
\]
The two elastic polarizations share the same leading total-force vector, giving
\[
  \Gamma_0
  =
  \gamma_{\mathrm{el}}B^*\mathfrak q^*\mathfrak qB
\]
and hence the rank-three law.  Only its compression to an eigenspace of \(K\)
determines bright and force-dark poles.  On compatible dark eigendirections,
the next far-field coefficient yields a nonnegative form \(\Gamma_1\) and the
second radiative scale.

The spherical calculation identifies both scales explicitly.  In the dimer,
exchange-odd modes are force-dark; under an explicit transversality condition,
a symmetry-breaking perturbation produces the crossover
\(O(\delta^2)+O(\delta\eta^2)\).  The pole-subtraction argument applies more
generally to analytic Fredholm families with an exact finite-dimensional
reduction and a uniformly invertible complement; the optical factorization
and radiation hierarchy are specific to elastic scattering.

\subsection{Relation to earlier work}

Foundational weighted resolvent frameworks include
\cite{Agmon1975,AgmonHormander1976}; threshold resolvent expansions are
developed in \cite{JensenNenciu2001}, with the correction
\cite{JensenNenciu2004}, and threshold forms of the Fermi golden rule in
\cite{JensenNenciu2006}.  Weighted limiting absorption
estimates for the constant-coefficient Navier operator were proved in
\cite{Barcelo2012}, while two-dimensional inhomogeneous anisotropic elastic systems were
treated, under different hypotheses, in \cite{NakamuraWang2006}.  These
works keep the operator fixed.  Here the rigid eigenvalues approach the
continuous-spectrum threshold at the same rate at which the material
contrast becomes singular.
Classical resonance results for exterior elastic obstacles include
\cite{StefanovVodev1996}; that setting is an exterior Neumann problem rather
than the high-contrast transmission problem considered here.

The mathematical theory of high-contrast phononic media includes the
spectral and layer-potential analysis of \cite{AmmariKangLee2009} and,
more recently, the hard-inclusion resonance theory of \cite{LiZou2025}.
Quasi-Minnaert elastic resonances and their relation to stress concentration
were studied in \cite{DiaoTangLiu2026}; their incident-field mechanism differs
from the resolvent poles considered below.
The three-dimensional variational formulation in \cite{ChenGaoLiRen2025}
locates subwavelength resonances and derives frequency and field asymptotics.
For multiple elastic scatterers, capacitance-type finite-dimensional systems
and Foldy--Lax interactions were developed in \cite{ChallaSini2015}; resonant
multiple inclusions with large mass density were subsequently analyzed in
\cite{ChallaGangadaraiahSini2024}.  These small-body and density-contrast
regimes provide important precedents for finite-dimensional elastic
interaction laws, but they do not address the joint real-axis
absorption--contrast limit studied here.

Uniform contrast-dependent resolvent estimates were obtained for a single
acoustic Minnaert resonator in \cite{LiSiniUniform2026}.  The present problem
instead has an elastic \(6N\)-dimensional rigid channel and two coupled
polarizations; the cluster radiation law is controlled by one global force
map into \(\C^3\).

A particularly close recent work is Li and Sini \cite{LiSini2026}, who study a
single three-dimensional Lam\'e transmission inclusion whose Lam\'e moduli and
density have the same high-contrast scaling.  Near the zero threshold they
obtain subwavelength resonances with real part of order the square root of the
inverse-contrast parameter, identify generic and exceptional radiative widths
of first and second order, derive finite-rank resolvent asymptotics, and handle
eigenvalue multiplicities.  Thus the present paper does not claim novelty for
those pole scales, finite-rank asymptotics, or multiplicity analysis by
themselves.  Its distinct objective is the uniform
pole-subtracted limiting absorption principle for a fixed cluster of
\(N\) inclusions, including a two-sided cutoff-resolvent norm law and
non-cancellation of the finite-rank channel.  The additional cluster feature is
the global total-force factorization of the leading radiation form.  Its
spectral consequences, together with the two-sided norm law, distinguish the
present result from a juxtaposition of one-body expansions; the novelty claim
is not the finite-dimensional perturbation procedure itself.

Complementary work treats time-domain elastic modal
approximations \cite{ChenGaoLiu2022,ChenGaoLiLiu2024}, periodic phononic band
gaps \cite{RenChenGaoLi2026}, two-dimensional high-contrast resonators
\cite{RenGao2025}, and the multilayer two-dimensional problem
\cite{JiangLiuSunWang2026}.  Other recent developments include resonant modes and
stress concentration for elastic dimers \cite{LiXu2026} and close-to-touching
two-dimensional configurations \cite{LiXuYang2026}.  The
Grushin--Feshbach argument follows the effective-Hamiltonian framework of
\cite{SjostrandZworski2007,DyatlovZworski2019}; the low-frequency boundary
analysis uses elastic layer potentials and DtN maps as in
\cite{GachterGrote2003,McLean2000,AmmariElasticity2015,VodickaMantic2004}.

\subsection{Organization and notation}

Section~\ref{sec:model} defines the transmission problem and eliminates the
exterior.  Section~\ref{sec:reduction} constructs the rigid channel and carries
out the uniform Grushin--Feshbach reduction.  Section~\ref{sec:lap} proves the
pole-subtracted LAP and the sharp norm equivalence.
Section~\ref{sec:radiation} derives the optical identity, radiation hierarchy,
pole asymptotics, and Lorentzian laws.  Section~\ref{sec:examples} treats the
sphere and the conditional dimer crossover.  Section~\ref{sec:conclusion}
records the scope of the result.

All Hilbert-space inner products are linear in the first argument and
anti-linear in the second.  Duality on a boundary \(\Sigma\) is denoted
by \(\inner{\cdot}{\cdot}_{\Sigma}\), extending the \(L^2(\Sigma)\)
pairing.  Dual spaces such as \(\cV^*\) and \(H^{-1/2}(\Sigma)^3\) are
understood as anti-duals, so the represented operators are linear in their
input.  Operator adjoints are taken with these conventions.  Constants
may change from line to line and are independent of \(\delta\),
\(\eps\), and \(\omega=\delta^{1/2}\tau\), unless stated otherwise.
We use
\[
  \langle x\rangle=(1+|x|^2)^{1/2},\qquad
  \operatorname{Re}_{\mathrm{op}}A=\frac{A+A^*}{2},\qquad
  \operatorname{Im}_{\mathrm{op}}A=\frac{A-A^*}{2\ii}.
\]
The symbol \(\zeta\) always denotes a complex frequency, \(\omega>0\) a
real frequency, and \(z=\zeta^2\) the spectral-energy variable.  Complex
resonance poles are therefore denoted by \(\zeta_j\), whereas a real
resonant center is denoted by \(\widehat\omega_j\).
All norms on \(\C^m\) are Euclidean.  Unless a smaller parameter set is
displayed, \(\mathcal O(\cdot)\) bounds are uniform for \(\tau\) in a fixed
compact subset of \((0,\infty)\), and \(o(\cdot)\) is understood in the
joint limit specified in the corresponding statement.  The central operator
types are
\[
\begin{array}{ccl}
\cA_\delta(\zeta)&:&\cV=H^1(D)^3\longrightarrow\cV^*=(H^1(D)^3)^*,\\
G_\delta(\zeta)&:&\cV_\perp\longrightarrow\cV_\perp^*,\\
\cM_\delta(\zeta)&:&\C^m\longrightarrow\C^m,\\
B=\gamma_\Sigma J&:&\C^m\longrightarrow H^{1/2}(\Sigma)^3.
\end{array}
\]

\section{The Concrete Transmission Model and Exterior Reduction}\label{sec:model}

\subsection{Geometry, coefficients, and the self-adjoint operator}

Let \(N\ge1\), and let \(D_1,\ldots,D_N\Subset\R^3\) be pairwise disjoint, bounded,
connected domains with \(C^{2,\alpha}\) boundaries, and assume that
\[
  \Omega:=\R^3\setminus\overline D,\qquad
  D:=\bigcup_{j=1}^ND_j,
\]
is connected.  Put \(\Sigma=\partial D\), and let \(\bfnu\) denote the
unit normal pointing from \(D\) into \(\Omega\).

The exterior tensor is homogeneous and isotropic:
\[
  C_0A=2\mu_0A+\lambda_0\tr(A)I
  \quad\text{for symmetric }A,
\]
where
\[
  \mu_0>0,\qquad 3\lambda_0+2\mu_0>0.
\]
The exterior density is the positive constant \(\rho_0\).
On each \(D_j\), the interior tensor is a fixed homogeneous isotropic
tensor
\[
  C_DA=2\mu_{D,j}A+\lambda_{D,j}\tr(A)I,
  \qquad
  \mu_{D,j}>0,\quad
  3\lambda_{D,j}+2\mu_{D,j}>0,
  \qquad x\in D_j.
\]
This concrete choice guarantees the interior unique-continuation
property used below.  The arguments extend to sufficiently regular
anisotropic tensors once that property is imposed explicitly.  The density is
constant and positive on each component:
\(\rho_D|_{D_j}=\rho_{D,j}>0\).  More general densities may be admitted if the
corresponding interior Navier system has the Cauchy unique-continuation property;
we do not use that extension below.

The high-contrast tensor and density are
\[
  C_\delta
  =
  \delta^{-1}C_D\mathbf1_D+C_0\mathbf1_\Omega,
  \qquad
  \rho_\delta
  =
  \delta^{-1}\rho_D\mathbf1_D+\rho_0\mathbf1_\Omega.
\]
Thus the stiffness and density contrasts have the same order; a fixed
order-one ratio between the interior and exterior wave speeds can be
absorbed into \(C_D\) and \(\rho_D\).

For
\[
  e(\bfu)=\frac12(\nabla\bfu+\nabla\bfu^{\mathsf T}),
\]
write
\[
  T_0^\nu\bfu=(C_0e(\bfu))\bfnu,
  \qquad
  T_D^\nu\bfu=(C_De(\bfu))\bfnu,
\]
whenever the corresponding traction trace is defined, and denote the
interior displacement trace on \(\Sigma\) by
\(\gamma=\gamma_\Sigma:H^1(D)^3\to H^{1/2}(\Sigma)^3\).  Its transpose
\(\gamma^*:H^{-1/2}(\Sigma)^3\to\cV^*=(H^1(D)^3)^*\) is defined by
\[
  \inner{\gamma^*h}{v}=\inner{h}{\gamma v}_{\Sigma}.
\]
We also use
both tractions in their standard weak
\(H^{-1/2}(\Sigma)^3\) sense.  Consider the closed nonnegative form
\[
  q_\delta[\bfu,\bfv]
  =
  \delta^{-1}
  \int_D C_De(\bfu):\overline{e(\bfv)}\,\dd x
  +
  \int_\Omega C_0e(\bfu):\overline{e(\bfv)}\,\dd x
\]
on \(H^1(\R^3)^3\), in the Hilbert space
\[
  \cH_\delta=L^2(\R^3,\rho_\delta\dd x)^3.
\]
The representation theorem defines a nonnegative self-adjoint operator
\(\cL_\delta\).  Away from \(\Sigma\),
\[
  \cL_\delta\bfu
  =
  -\rho_\delta^{-1}\nabla\cdot(C_\delta e(\bfu)),
\]
and its domain imposes continuity of displacement and physical
traction across \(\Sigma\).

\begin{remark}
\label{rem:one-model}
In this transmission model the restoring matrix is the exterior
elastic capacitance of \(\Sigma\), multiplied by the traction factor
\(\delta\).  No separate connector stiffness is present.  Coated or
spring-connected resonators lead to different static Schur complements
and require their contrast scalings to be derived separately.
\end{remark}

\begin{remark}
\label{rem:boundary-regularity}
The \(C^{2,\alpha}\) assumption is used to invoke the classical
operator-norm Taylor expansions of the elastic layer potentials and to
write the traction calculations in Section~\ref{sec:examples} pointwise.  The
variational reduction itself requires only Lipschitz boundaries, and
the static single-layer isomorphism is available under weaker
hypotheses.  We retain \(C^{2,\alpha}\) so that the low-frequency
boundary expansion and its differentiated remainder hold in the stated
Sobolev topologies without introducing a separate nonsmooth calculus.
\end{remark}

\subsection{Weighted spaces and fixed-contrast boundary values}

For \(s\in\R\), set
\[
  L_s^2(\R^3)^3
  =
  \{\bff:\langle x\rangle^s\bff\in L^2(\R^3)^3\},
  \qquad
  H_s^1(\R^3)^3
  =
  \{\bfu:\langle x\rangle^s\bfu\in H^1(\R^3)^3\}.
\]
These are Lebesgue-based weighted spaces, not spaces weighted by
\(\rho_\delta\).  For each fixed \(\delta\) they embed naturally in the
self-adjoint Hilbert space \(\cH_\delta\); the uniform estimates below
are stated after the interior equation has been normalized by the
fixed density \(\rho_D\).
For \(\omega>0\) and \(\eps>0\), write
\[
  \zeta_{\omega,\eps}^{\pm}=\omega\pm\ii\eps,
  \qquad
  R_\delta(\zeta^2)=(\cL_\delta-\zeta^2)^{-1}.
\]

\begin{proposition}
\label{prop:fixed-lap}
Fix \(\delta>0\), \(s>1/2\), and \(\omega>0\).  Then
\[
  R_\delta^\pm(\omega)
  =
  \lim_{\eps\downarrow0}
  R_\delta\bigl((\zeta_{\omega,\eps}^{\pm})^2\bigr)
\]
exists in
\[
  \mathcal B\bigl(L_s^2(\R^3)^3,H_{-s}^1(\R^3)^3\bigr).
\]
The \(+\) boundary value is outgoing and the \(-\) boundary value is
incoming.  No assertion of uniformity as \(\delta\downarrow0\) is made
in this proposition.
\end{proposition}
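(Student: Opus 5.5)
This is the classical fixed-coefficient limiting absorption principle for a compactly supported perturbation of the homogeneous Lamé operator. We would prove it by the standard compactness/Fredholm argument, with \(\delta>0\) fixed throughout.

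\textbf{Step 1: the free problem.} For the homogeneous isotropic exterior medium extended to all of \(\R^3\), the operator \(L_0=-\rho_0^{-1}\nabla\cdot(C_0e(\cdot))\) decomposes via Helmholtz projections into two scalar Helmholtz operators with wave speeds \(c_p,c_s\). Its resolvent kernel is the explicit Kupradze matrix. The weighted LAP of Agmon type \cite{Agmon1975,Barcelo2012} then gives the limits
\[
  R_0^\pm(\omega)=\lim_{\eps\downarrow0}(L_0-(\omega\pm\ii\eps)^2)^{-1}
  \quad\text{in }\mathcal B(L_s^2,H_{-s}^1),\qquad s>1/2 .
\]
These limits are locally uniform in \(\omega>0\). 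The \(+\) limit is characterized by the Kupradze radiation conditions on the \(p\)- and \(s\)-parts, and the \(-\) limit by the incoming conditions.

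\textbf{Step 2: resolvent identity.} Choose \(\chi_1\in C_c^\infty\) with \(\chi_1=1\) near \(\overline D\). For \(\zeta=\omega\pm\ii\eps\), construct a parametrix
\[
  Q(\zeta)=(1-\chi_2)R_0(\zeta^2)(1-\chi_1)+\chi_3 R_{\mathrm{int}}(\zeta^2)\chi_1,
\]
where \(R_{\mathrm{int}}\) is the resolvent of \(\cL_\delta\) restricted to a large ball with a Dirichlet condition, taken at a nonreal reference point or analytically continued. The cutoffs are nested so that
\[
  (\cL_\delta-\zeta^2)Q(\zeta)=I+\mathcal K(\zeta).
\]
Here \(\mathcal K(\zeta)\) consists of commutator terms \([\cL_\delta,\chi]\) that are compactly supported and of first order. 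Hence \(\mathcal K(\zeta)\) is compact on \(L_s^2\) by Rellich, and it is analytic in \(\zeta\). It extends continuously to \(\eps=0\) using Step 1, provided the interior resolvent is chosen away from the Dirichlet spectrum; if needed, we use a black-box gluing and adjust the reference point. Analytic Fredholm theory then yields
\[
  R_\delta(\zeta^2)=Q(\zeta)(I+\mathcal K(\zeta))^{-1},
\]
and the boundary limit exists wherever \(I+\mathcal K(\omega\pm\ii0)\) is injective.

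\textbf{Step 3: injectivity at real \(\omega>0\). This is the main obstacle.} Suppose \(I+\mathcal K\) has a kernel element. Then there is \(\bfu\in H^1_{-s}\), \(\bfu\neq0\), solving \((\cL_\delta-\omega^2)\bfu=0\) with the transmission conditions and the outgoing radiation condition outside a ball. We then argue as follows.

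\begin{enumerate}[label=(\alph*)]
\item Green's identity on \(B_R\), using the weak form \(q_\delta\), shows that the imaginary part of the boundary flux on \(\partial B_R\) vanishes.
\item Vanishing flux forces the far-field patterns of both the \(p\)- and \(s\)-components to vanish.
\item Rellich's lemma, applied to each Helmholtz potential, then gives \(\bfu=0\) in \(\Omega\) outside a large ball. Unique continuation for the constant-coefficient Navier system in the connected set \(\Omega\) extends this to \(\bfu=0\) on all of \(\Omega\).
\item The transmission conditions give zero displacement and zero traction Cauchy data on \(\Sigma\) from inside each \(D_j\). The interior Cauchy unique-continuation property for the homogeneous isotropic Navier system in the connected \(D_j\) then gives \(\bfu=0\) in \(D\).
\end{enumerate}

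This is exactly where Assumption~\ref{ass:standing} enters. Embedded eigenvalues are excluded because any \(L^2\) eigenfunction is in particular outgoing.

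\textbf{Step 4: conclusion.} Continuity of \(\eps\mapsto\mathcal K\) together with injectivity gives uniform invertibility near \(\eps=0\), so the limit exists in \(\mathcal B(L_s^2,H_{-s}^1)\). The limit inherits the outgoing or incoming property from \(R_0^\pm\) through the parametrix. The \(-\) case follows by complex conjugation, since \(R_\delta(\overline{z})=R_\delta(z)^*\). No uniformity in \(\delta\) is claimed: the constants depend on \(\delta\) through the Fredholm inverse.
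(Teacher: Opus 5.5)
Your proposal is correct and follows essentially the same route as the paper. Both arguments glue the free Navier weighted LAP, obtained from the pressure/shear Helmholtz decomposition, to a local interior resolvent or parametrix using nested cutoffs, and then reduce by analytic Fredholm theory to uniqueness of outgoing solutions. That uniqueness is proved by the flux identity, vanishing of both far fields, the elastic Rellich lemma, and Cauchy unique continuation in each \(D_j\). The differences are cosmetic: you use a Dirichlet ball resolvent at a nonreal reference point instead of the paper's local transmission parametrix, and a Green identity on the full ball \(B_R\) instead of Betti's formula on \(B_R\setminus\overline D\) combined with the interior identity.
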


\begin{proof}
Let \(\chi_0\in C_c^\infty(\R^3)\) be one on a ball containing
\(\overline D\).  Outside this ball the operator coincides with the homogeneous
Navier operator
\[
  \cL_0\bfu
  =
  -\rho_0^{-1}\nabla\cdot(C_0e(\bfu)).
\]
The longitudinal and transverse Fourier projectors diagonalize its resolvent into
scalar Helmholtz resolvents with wave numbers
\[
  k_p(\zeta)=\zeta\sqrt{\frac{\rho_0}{\lambda_0+2\mu_0}},
  \qquad
  k_s(\zeta)=\zeta\sqrt{\frac{\rho_0}{\mu_0}}.
\]
For \(s>1/2\), the scalar weighted limiting absorption principle therefore gives
boundary values of \((\cL_0-(\omega\pm\ii0)^2)^{-1}\) from \(L_s^2\) to
\(H_{-s}^1\).  To make the gluing step explicit, choose nested cutoffs
\(\chi_0\prec\chi_1\prec\chi_2\), meaning that
\(\chi_{j+1}=1\) on a neighborhood of \(\supp\chi_j\), with
\(\chi_0=1\) near \(\overline D\).  Let
\(P_{\delta,\mathrm{loc}}(\zeta)\) be a standard elliptic parametrix for the
transmission operator in a ball containing \(\supp\chi_2\).  Then
\[
  \mathcal P_\delta^\pm(\omega)
  =
  (1-\chi_1)R_0^\pm(\omega)(1-\chi_0)
  +
  \chi_2P_{\delta,\mathrm{loc}}(\omega)\chi_1
\]
satisfies
\[
  (\cL_\delta-\omega^2)\mathcal P_\delta^\pm(\omega)
  =I+\mathcal K_\delta^\pm(\omega),
\]
where \(\mathcal K_\delta^\pm(\omega)\) is supported in a fixed bounded set and
factors as
\(L_s^2\to H^1(B)^3\hookrightarrow L^2(B)^3\to L_s^2\) for a fixed ball
\(B\).  Rellich compactness therefore gives
\(\mathcal K_\delta^\pm(\omega)\in\mathcal K(L_s^2,L_s^2)\).
The corresponding family for \(\operatorname{Im}\zeta\ne0\) is analytic and
has Fredholm index zero, since it is connected to the physical resolvent away
from the real axis.  Its compact error is norm-continuous down to the real
axis.  Analytic Fredholm theory therefore reduces existence and operator-norm
continuity of the boundary inverse to the absence of a nontrivial outgoing,
respectively incoming, homogeneous transmission field.
This is the Fredholm gluing construction for a compactly supported
transmission perturbation of the constant-coefficient Navier operator; the
local block is the strongly elliptic transmission parametrix and the exterior
block is the free weighted resolvent.

We verify this uniqueness statement.  Let \(\bfu\) be an outgoing solution at a
real frequency \(\omega>0\).  Applying Betti's formula to
\(B_R\setminus\overline D\), taking the imaginary part, and using continuity of
the displacement and physical traction across \(\Sigma\), the interface terms
cancel with the corresponding interior identity.  Passing to \(R\to\infty\)
and using the Kupradze radiation condition gives
\[
  \omega c_p\norm{F_p}_{L^2(\mathbb S^2)}^2
  +
  \omega c_s\norm{F_s}_{L^2(\mathbb S^2)}^2
  =0,
\]
where \(F_p\) and \(F_s\) are the pressure and shear far fields and
\(c_p,c_s>0\) are the elastic impedances introduced later in
\eqref{eq:weighted-far-field-map}.  Thus both far fields vanish.  The elastic
Rellich lemma yields \(\bfu=0\) in the connected exterior \(\Omega\).

The exterior vanishing gives both zero displacement trace and zero exterior
traction on \(\Sigma\).  The transmission conditions therefore give zero
Cauchy data for the interior Navier equation on each component.  For the
piecewise homogeneous isotropic tensors in Assumption~\ref{ass:standing},
unique continuation, equivalently continuation from vanishing Cauchy data,
implies \(\bfu=0\) in every \(D_j\).  Hence the outgoing homogeneous kernel is
trivial.  Complex conjugation gives the same conclusion for the incoming
problem.

The Fredholm inverse consequently has no singularity at the prescribed positive
real frequency.  Combining the localized parametrix with the free weighted
boundary values gives
\[
  R_\delta^\pm(\omega)
  \in\mathcal B(L_s^2,H_{-s}^1),
\]
with convergence in this operator norm as the absorption parameter tends to
zero.  This is a fixed-\(\delta\) statement; none of the constants in this
argument are asserted to remain bounded as \(\delta\downarrow0\).  The weighted
Navier estimates and the elastic Rellich uniqueness used here are standard; see
\cite{Barcelo2012,Kupradze1979,Leis1986}.
\end{proof}

\subsection{The exterior Dirichlet-to-Neumann map}

For \(\operatorname{Im}\zeta>0\) and
\(g\in H^{1/2}(\Sigma)^3\), let \(\cE(\zeta)g\) be the unique solution
of
\[
  -\nabla\cdot(C_0e(\bfu))-\rho_0\zeta^2\bfu=0
  \quad\text{in }\Omega,\qquad
  \gamma_\Sigma\bfu=g,
\]
which decays at infinity.  Its real-frequency boundary value is
outgoing.  We define
\[
  \Lambda(\zeta)g
  =
  (C_0e(\cE(\zeta)g))\bfnu\big|_\Sigma
  \in H^{-1/2}(\Sigma)^3.
\]
Notice that \(\bfnu\) points into the exterior and hence is opposite
to the outward normal of the exterior domain.  This convention gives
\begin{equation}
\label{eq:static-dtn-sign}
  -\inner{\Lambda(0)g}{g}_{\Sigma}
  =
  \int_\Omega
  C_0e(\cE(0)g):\overline{e(\cE(0)g)}\,\dd x
  \ge0.
\end{equation}

Let \(\mathbf G^\zeta\) be the outgoing Kupradze tensor.  We distinguish
the volume single-layer potential from its boundary trace:
\[
  \mathbf{SL}(\zeta)\mu(x)
  :=\int_\Sigma \mathbf G^\zeta(x-y)\mu(y)\,\dd S_y,
  \qquad
  \mathsf S(\zeta):=\gamma_\Sigma\mathbf{SL}(\zeta).
\]
Thus \(\mathbf{SL}(\zeta)\mu\) is a field on
\(\R^3\setminus\Sigma\), whereas
\(\mathsf S(\zeta):H^{-1/2}(\Sigma)^3\to H^{1/2}(\Sigma)^3\) is the
boundary single-layer operator.

\begin{proposition}
\label{prop:dtn-analytic}
There is \(r_0>0\) such that the outgoing Dirichlet-to-Neumann family
\[
  \Lambda(\zeta):
  H^{1/2}(\Sigma)^3\longrightarrow H^{-1/2}(\Sigma)^3
\]
extends holomorphically from \(\operatorname{Im}\zeta>0\) to
\(|\zeta|<r_0\).  The following expansion holds in the operator norm of
\[
  \mathcal B\bigl(H^{1/2}(\Sigma)^3,H^{-1/2}(\Sigma)^3\bigr):
\]
\begin{equation}
\label{eq:dtn-basic-expansion}
  \Lambda(\zeta)
  =
  \Lambda_0+\ii\zeta\Lambda_1+\zeta^2\Lambda_2(\zeta),
\end{equation}
where \(\Lambda_0=\Lambda(0)\), \(\Lambda_1=\Lambda_1^*\ge0\), and
\(\Lambda_2\) is holomorphic.  After decreasing \(r_0\) if necessary,
\[
  \sup_{|\zeta|<r_0}
  \left(
    \norm{\Lambda_2(\zeta)}
    +
    \norm{\partial_\zeta\Lambda_2(\zeta)}
    +
    \norm{\partial_\zeta^2\Lambda_2(\zeta)}
  \right)
  <\infty.
\]
For real \(\omega\) with \(|\omega|<r_0\),
\[
  \Lambda^-(\omega)=\Lambda^+(\omega)^*,
  \qquad
  \Lambda^+(-\omega)=\Lambda^-(\omega).
\]
Here and below \(\Lambda^+_{\mathrm{an}}(\zeta):=\Lambda(\zeta)\) denotes
the outgoing holomorphic germ, while
\(\Lambda^-_{\mathrm{an}}(\zeta):=\Lambda(-\zeta)\) denotes the incoming
germ.  Their restrictions to \(\omega>0\) are the outgoing and incoming
boundary maps \(\Lambda^\pm(\omega)\).  Values of the outgoing germ in the
lower half-plane are analytic continuations, not decaying exterior solutions.
\end{proposition}

\begin{proof}
Write the outgoing Kupradze tensor as
\[
  \mathbf G^\zeta(x)
  =
  \frac1{\mu_0}\Phi_{k_s(\zeta)}(x)I
  +
  \frac1{\rho_0\zeta^2}
  \nabla\nabla^{\mathsf T}
  \bigl(\Phi_{k_s(\zeta)}-\Phi_{k_p(\zeta)}\bigr)(x),
  \qquad
  \Phi_k(x)=\frac{e^{\ii k|x|}}{4\pi|x|}.
\]
The apparent singularity at \(\zeta=0\) is removable.  The scalar Taylor
expansion of \(\Phi_k\) shows that the terms which could produce negative
powers of \(\zeta\) disappear after the two spatial derivatives.  More
precisely, Appendix~\ref{app:dtn} proves the kernel expansion, uniformly with the
spatial derivatives required for the single-layer and traction traces.  It
follows that
\[
  \mathsf S(\zeta):
  H^{-1/2}(\Sigma)^3\to H^{1/2}(\Sigma)^3,
  \qquad
  \mathsf K^*(\zeta):
  H^{-1/2}(\Sigma)^3\to H^{-1/2}(\Sigma)^3
\]
are holomorphic operator families and have Taylor remainders which may be
differentiated twice in \(\zeta\) without loss of uniformity; see
Lemma~\ref{lem:boundary-operator-expansion}.

The static single-layer operator \(\mathsf S_0=\mathsf S(0)\) is Fredholm of
index zero in three dimensions.  If \(\mathsf S_0\mu=0\), the corresponding
single-layer potential has zero trace; the interior and decaying exterior
energy identities force the potential to vanish, and the traction jump then
gives \(\mu=0\).  Thus \(\mathsf S_0\) is invertible.  This also identifies
precisely the static decay condition which removes the rigid-motion ambiguity
of the full-space elasticity equation.  Hence \(\mathsf S(\zeta)^{-1}\) is
holomorphic for \(|\zeta|<r_0\), and the jump relation yields
\[
  T_0^\nu\mathbf{SL}(\zeta)\mu\big|_{\Omega}
  =\left(-\frac12I+\mathsf K^*(\zeta)\right)\mu,
  \qquad
  T_0^\nu\mathbf{SL}(\zeta)\mu\big|_{D}
  =\left(\frac12I+\mathsf K^*(\zeta)\right)\mu.
\]
Here the same normal \(\bfnu\), pointing from \(D\) into \(\Omega\), is used
on both sides.  Consequently,
\[
  \Lambda(\zeta)
  =
  \left(-\frac12I+\mathsf K^*(\zeta)\right)
  \mathsf S(\zeta)^{-1},
\]
and the exterior-minus-interior traction jump equals \(-\mu\).  The
operator-norm expansion and the differentiated remainder
bound follow from Proposition~\ref{prop:dtn-detailed}.

For real frequency, complex conjugation interchanges outgoing and incoming
solutions, giving the two symmetry relations.  Finally, the large-sphere Betti
identity gives, for real \(\omega>0\),
\[
  \operatorname{Im}
  \inner{\Lambda^+(\omega)g}{g}_{\Sigma}\ge0.
\]
Dividing by \(\omega\) and using \eqref{eq:dtn-basic-expansion} gives
\(\inner{\Lambda_1g}{g}_{\Sigma}\ge0\).  The same symmetry relation shows that
\(\Lambda_1\) is self-adjoint.  Section~\ref{sec:radiation} identifies this coefficient explicitly
as a far-field Gram operator.
\end{proof}

\subsection{Exact reduction to the bounded inclusion domain}

Set
\[
  \cV=H^1(D)^3,\qquad
  \mathfrak m(\bfu,\bfv)
  =
  \int_D\rho_D\bfu\cdot\overline{\bfv}\,\dd x.
\]
For \(\operatorname{Im}\zeta>0\), define
\(\cA_\delta(\zeta):\cV\to\cV^*\) by
\begin{equation}
\label{eq:interior-form}
  \inner{\cA_\delta(\zeta)\bfu}{\bfv}
  =
  \int_D C_De(\bfu):\overline{e(\bfv)}\,\dd x
  -
  \zeta^2\mathfrak m(\bfu,\bfv)
  -
  \delta
  \inner{\Lambda(\zeta)\gamma\bfu}{\gamma\bfv}_{\Sigma}.
\end{equation}
The outgoing and incoming real-frequency forms are denoted
\(\cA_\delta^\pm(\omega)\).

Let \(R_{\Omega,D}(\zeta)\) denote the solution operator for
\[
  -\nabla\cdot(C_0e(\bfw))-\rho_0\zeta^2\bfw=F
  \quad\text{in }\Omega,
  \qquad \gamma_\Sigma\bfw=0,
\]
with the decaying or outgoing condition according to \(\zeta\).  If
\(f\) is compactly supported, put
\[
  \bfw_f(\zeta)
  =
  R_{\Omega,D}(\zeta)(\rho_0f|_\Omega)
\]
and define the source functional
\begin{equation}
\label{eq:source-functional}
  \inner{\cJ_\delta(\zeta)f}{\bfv}
  =
  \int_D\rho_D f\cdot\overline{\bfv}\,\dd x
  +
  \delta
  \inner{T_0^\nu\bfw_f(\zeta)}{\gamma\bfv}_{\Sigma}.
\end{equation}

\begin{proposition}
\label{prop:bounded-reduction}
Let \(\operatorname{Im}\zeta>0\), and let \(f\in L_s^2(\R^3)^3\) be compactly
supported.  The field
\(\bfu=R_\delta(\zeta^2)f\) is determined by the solution
\(u=\bfu|_D\in\cV\) of
\begin{equation}
\label{eq:bounded-reduced-equation}
  \cA_\delta(\zeta)u=\cJ_\delta(\zeta)f.
\end{equation}
The exterior part is reconstructed by
\begin{equation}
\label{eq:reconstruction}
  \bfu|_\Omega
  =
  \cE(\zeta)\gamma u+\bfw_f(\zeta).
\end{equation}
For every cutoff \(\chi\) supported in a fixed ball and every
\(s>1/2\), the maps
\[
  \cJ_\delta(\zeta)\chi:L_s^2(\R^3)^3\to\cV^*,
  \qquad
  \chi\cE(\zeta):H^{1/2}(\Sigma)^3\to H^1_{-s}(\Omega)^3
\]
are uniformly bounded for
\[
  0<\delta<\delta_0,\qquad
  |\zeta|\le C\delta^{1/2},\qquad
  \operatorname{Im}\zeta\ge0,
\]
where the boundary values are used when \(\operatorname{Im}\zeta=0\).
\end{proposition}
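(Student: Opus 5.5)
The proof has two parts: the exact reduction (equivalence of the transmission problem with \eqref{eq:bounded-reduced-equation}--\eqref{eq:reconstruction}), and the uniform bounds for \(\cJ_\delta(\zeta)\chi\) and \(\chi\cE(\zeta)\).

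\emph{Exact reduction.} Fix \(\operatorname{Im}\zeta>0\) and let \(\bfu=R_\delta(\zeta^2)f\in H^1(\R^3)^3\). By the representation theorem, for every test field \(\bm\varphi\in H^1(\R^3)^3\),
\[
  q_\delta[\bfu,\bm\varphi]-\zeta^2\int\rho_\delta\bfu\cdot\overline{\bm\varphi}
  =\int\rho_\delta f\cdot\overline{\bm\varphi}.
\]
In \(\Omega\) the field \(\bfu\) is a decaying solution of the exterior Navier equation with source \(\rho_0 f\). Subtract \(\bfw_f(\zeta)\); the remainder solves the homogeneous problem with trace \(\gamma u\). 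Uniqueness of decaying solutions for \(\operatorname{Im}\zeta>0\) (coercivity of the exterior form) then gives \eqref{eq:reconstruction}.

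Next take test fields \(\bm\varphi\) whose exterior part is the \(\zeta\)-harmonic extension of \(\gamma v\), for \(v\in\cV\). Integrating by parts in \(\Omega\) expresses the exterior bilinear contribution through the exterior traction:
\[
  -\inner{\Lambda(\zeta)\gamma u}{\gamma v}_\Sigma-\inner{T_0^\nu\bfw_f}{\gamma v}_\Sigma .
\]
The sign follows from \(\bfnu\) pointing into \(\Omega\), consistent with \eqref{eq:static-dtn-sign}. The source term \(\rho_0 f\) paired against the extension is absorbed in the same way. Multiplying the interior part by \(\delta\) then yields exactly \(\cA_\delta(\zeta)u=\cJ_\delta(\zeta)f\).

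For uniqueness, note that \(\operatorname{Im}\) of the form has a definite sign for \(\operatorname{Im}\zeta^2\ne0\), so \(\cA_\delta(\zeta)\) is injective. Being a compact perturbation of a coercive operator, it is invertible, and so \(u\) determines \(\bfu\).

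\emph{Uniform bounds.} Both bounds reduce to low-frequency continuity of exterior solution operators.

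For \(\chi\cE(\zeta)\), write
\[
  \cE(\zeta)g=\mathbf{SL}(\zeta)\mathsf S(\zeta)^{-1}g .
\]
Proposition~\ref{prop:dtn-analytic} gives \(\mathsf S(\zeta)^{-1}\) holomorphic and bounded for \(|\zeta|<r_0\). It therefore suffices to bound \(\chi\mathbf{SL}(\zeta)\) from \(H^{-1/2}(\Sigma)^3\) to \(H^1_{-s}\) uniformly in \(|\zeta|\le C\delta^{1/2}\), \(\operatorname{Im}\zeta\ge0\). This follows from the Kupradze kernel expansion and its derivatives: on the compact set \(\supp\chi\) the kernel is uniformly bounded with its gradient away from \(\Sigma\), and near \(\Sigma\) it is the static single layer plus a smooth correction. (The weight \(\langle x\rangle^{-s}\) is harmless because \(\chi\) is compactly supported.) Boundary values at \(\operatorname{Im}\zeta=0\) are obtained by continuity of the holomorphic germ.

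For \(\cJ_\delta(\zeta)\chi\), the interior term \(\int_D\rho_D f\cdot\overline v\) is trivially bounded. For the boundary term, write
\[
  R_{\Omega,D}(\zeta)=R_0(\zeta)\big|_\Omega-\cE(\zeta)\gamma R_0(\zeta),
\]
where \(R_0\) is the free Navier resolvent. For \(\chi f\) compactly supported, \(\chi' R_0(\zeta)\chi\) is bounded \(L^2\to H^2_{\mathrm{loc}}\) uniformly as \(\zeta\to0\) in dimension three, since the kernel \(\mathbf G^\zeta\) is locally integrable uniformly in \(\zeta\) with no threshold singularity. The traction \(T_0^\nu R_0(\zeta)\chi f\) is therefore uniformly bounded in \(H^{-1/2}(\Sigma)^3\). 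The correction term is controlled by \(\Lambda(\zeta)\gamma R_0(\zeta)\chi f\), again uniformly bounded. Multiplying by \(\delta\le\delta_0\) gives the required uniform bound in \(\cV^*\).

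\emph{Main obstacle.} The delicate step is uniformity of \(\chi R_0(\zeta)\chi\), and of its exterior Dirichlet modification, down to \(\zeta=0\), including \(\operatorname{Im}\zeta=0\). I would handle it through the removable-singularity expansion of \(\mathbf G^\zeta\) (Appendix~\ref{app:dtn}). In particular, the \(\zeta^{-2}\nabla\nabla^{\mathsf T}\) term has a removable singularity with locally uniform kernel bounds, which gives local \(H^1\) (indeed \(H^2\)) mapping estimates independent of \(\zeta\) in the disc.
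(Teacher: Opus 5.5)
Your proposal is correct and follows essentially the paper's route. The reduced equation comes from the weak transmission law: your exterior integration by parts on \(\bfu\) is equivalent to the paper's Green formula on \(D\) with \(T_D^\nu u=\delta(\Lambda(\zeta)\gamma u+T_0^\nu\bfw_f)\). The uniform bounds come from the representation \(\cE=\mathbf{SL}\,\mathsf S^{-1}\), the splitting \(R_{\Omega,D}=R_0|_\Omega-\cE\gamma R_0\) with local elliptic regularity, and trace duality, as the paper does (with details in Lemmas~\ref{lem:localized-free-resolvent}--\ref{lem:cutoff-exterior-holomorphy}).

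One small slip: the sign of \(\operatorname{Im}\inner{\cA_\delta(\zeta)u}{u}\) gives nothing when \(\operatorname{Re}\zeta=0\), since then \(\zeta^2<0\). In that case the real part of the form is coercive, so injectivity still holds (or, as in the paper, it follows from uniqueness for the full self-adjoint problem).
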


\begin{proof}
In \(D\), the equation
\((\cL_\delta-\zeta^2)\bfu=f\), multiplied by \(\rho_D\), is
\[
  -\nabla\cdot(C_De(u))-\zeta^2\rho_Du=\rho_Df.
\]
The physical transmission law is
\[
  \delta^{-1}T_D^\nu u=T_0^\nu\bfu|_\Omega,
  \qquad\text{or equivalently}\qquad
  T_D^\nu u
  =
  \delta\bigl(\Lambda(\zeta)\gamma u+
  T_0^\nu\bfw_f(\zeta)\bigr).
\]
Green's formula on \(D\) therefore gives exactly
\eqref{eq:bounded-reduced-equation}.  Conversely, a solution of that
equation, reconstructed by \eqref{eq:reconstruction}, satisfies both
PDEs and both transmission conditions, and uniqueness gives the full
resolvent solution.

The zero-Dirichlet exterior problem has no static kernel.  Its
low-frequency layer-potential representation and local elliptic
regularity give uniform bounds for \(\bfw_f(\zeta)\) and its traction
when \(f\) is cut off in a fixed ball.  The same representation gives
\[
  \norm{\cE(\zeta)g}_{H^1_{-s}(\Omega)}
  \le C_s\norm{g}_{H^{1/2}(\Sigma)}
\]
for \(s>1/2\); the \(r^{-1}\) static tail is in \(L^2_{-s}\) precisely
in this range.  Trace duality then proves the bounds for
\(\cJ_\delta(\zeta)\chi\).
\end{proof}

The following local estimate will be used for the exterior Dirichlet problem.
\begin{lemma}
\label{lem:localized-free-resolvent}
Let \(\chi_0,\chi_1\in C_c^\infty(\R^3)\).  The outgoing free Navier
resolvent, initially defined for \(\operatorname{Im}\zeta>0\), has a germ at
\(\zeta=0\) such that
\begin{equation}
\label{eq:localized-free-germ}
  \chi_0R_0^{\mathrm{out}}(\zeta)\chi_1
  \in\mathcal B\bigl(L^2(\R^3)^3,H^2(\R^3)^3\bigr)
\end{equation}
holomorphically.  The family and its first \(\zeta\)-derivative are uniformly
bounded on every smaller closed disk.  If \(\chi_0=1\) near \(\Sigma\), the
corresponding displacement and conormal traces are holomorphic with values in
\(H^{3/2}(\Sigma)^3\) and \(H^{1/2}(\Sigma)^3\), respectively.
\end{lemma}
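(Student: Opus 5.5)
The plan is to write the outgoing free Navier resolvent as convolution with the outgoing Kupradze tensor \(\mathbf G^\zeta\) and to exploit the explicit kernel. From the representation in the proof of Proposition~\ref{prop:dtn-analytic},
\[
  \mathbf G^\zeta(x)
  =
  \frac1{\mu_0}\Phi_{k_s(\zeta)}(x)I
  +
  \frac1{\rho_0\zeta^2}
  \nabla\nabla^{\mathsf T}
  \bigl(\Phi_{k_s(\zeta)}-\Phi_{k_p(\zeta)}\bigr)(x),
\]
the kernel is an entire function of \(\zeta\) for each fixed \(x\neq0\). Expanding \(e^{\ii k|x|}\) gives
\[
  \Phi_{k_s}-\Phi_{k_p}
  =
  \frac{1}{4\pi}\sum_{n\ge1}\frac{\ii^n(k_s^n-k_p^n)}{n!}|x|^{n-1}.
\]
The \(n=1\) term is constant and is annihilated by \(\nabla\nabla^{\mathsf T}\). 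Each remaining term carries a factor \(\zeta^n\) with \(n\ge2\), so the \(\zeta^{-2}\) prefactor is removable, as stated in the proof of Proposition~\ref{prop:dtn-analytic} (the kernel expansion is Appendix~\ref{app:dtn}). We therefore write
\[
  \mathbf G^\zeta=\mathbf G^0+\mathbf G_{\mathrm{rem}}^\zeta,
\]
where \(\mathbf G^0\) is the static Kelvin tensor, homogeneous of degree \(-1\), and the remainder \(\mathbf G_{\mathrm{rem}}^\zeta(x)\) is a convergent power series in \(\zeta\). On a bounded set \(|x|\le R\), its coefficients are smooth except for terms like \(|x|^{n-1}\) (\(n\ge1\)) and their second derivatives, which are bounded by \(C^n R^{n}\,|x|^{-1}\).

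Fix balls containing \(\supp\chi_0\cup\supp\chi_1\). The localized operator \(\chi_0R_0^{\mathrm{out}}(\zeta)\chi_1\) has kernel \(\chi_0(x)\mathbf G^\zeta(x-y)\chi_1(y)\), and only \(|x-y|\le R\) is ever seen. We treat the two pieces separately.

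\textbf{Static part.} \(\chi_0(\mathbf G^0*)\chi_1\) maps \(L^2\) to \(H^2\) by the Calder\'on--Zygmund estimate for the Lam\'e system. Concretely, \(\nabla^2\mathbf G^0\) is a homogeneous Calder\'on--Zygmund kernel, and the lower-order derivatives are locally integrable, so the cutoffs only produce bounded commutator terms.

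\textbf{Remainder.} The \(n\)-th Taylor coefficient of \(\mathbf G_{\mathrm{rem}}^\zeta\), together with its first and second spatial derivatives, is bounded by \(C^n R^{n}|x|^{-1}/n!\) times a polynomial factor in \(n\), on \(|x|\le 2R\). Schur's test then gives the operator bound \(\|\cdot\|_{L^2\to H^2}\le C^nR^{n+2}/(n-2)!\) for the localized term. The series in \(\zeta\) therefore converges in \(\mathcal B(L^2,H^2)\) for all \(\zeta\), and in particular on any disk.

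Holomorphy in \(\mathcal B(L^2,H^2)\) follows from this norm-convergent power series. It agrees with the physical resolvent for \(\operatorname{Im}\zeta>0\), because there the Kupradze kernel is exactly the decaying resolvent kernel and the localization makes the series and the exponential identical. Uniform bounds for the family and its \(\zeta\)-derivative on smaller closed disks follow by termwise differentiation, or by Cauchy estimates.

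\textbf{Traces.} When \(\chi_0=1\) near \(\Sigma\), compose with the trace maps \(H^2(\R^3)^3\to H^{3/2}(\Sigma)^3\) for displacement and \(\bfu\mapsto (C_0e(\bfu))\bfnu\in H^{1/2}(\Sigma)^3\) for the conormal traction. The \(C^{2,\alpha}\) boundary suffices for these trace theorems, and composing a holomorphic family with fixed bounded operators preserves holomorphy.

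The main obstacle is controlling the second derivatives of the remainder terms uniformly in \(n\). The issue is that \(\nabla^2|x|^{n-1}\) has \(n\)-dependent constants of size \(O(n^2)|x|^{n-3}\). After dividing by \(\zeta^2\), an index shift \(n\mapsto n-2\) appears, so one must check that the factorial still dominates. I expect it does, since we lose only polynomial factors in \(n\) against \((n-2)!\). Alternatively, one can avoid the series altogether and use the elementary representation \(\Phi_{k_s}-\Phi_{k_p}=\frac{1}{4\pi|x|}(e^{\ii k_s|x|}-e^{\ii k_p|x|})\) with an integral form of the Taylor remainder.
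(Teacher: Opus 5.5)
Your proof is correct, but it obtains the \(H^2\) bound by a different mechanism from the paper.

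\textbf{The paper's route.} The paper uses the kernel only at the \(L^2\) level. On compact sets, \(\mathbf G^\zeta\) and \(\partial_\zeta\mathbf G^\zeta\) are bounded by \(C|x-y|^{-1}\), so Schur's test gives a holomorphic family in \(\mathcal B(L^2,L^2)\). The upgrade to \(H^2\) then comes from the interior elliptic estimate \(\|\chi_0u\|_{H^2}\le C(\|\chi_2(\cL_0-\zeta^2)u\|_{L^2}+\|\chi_2u\|_{L^2})\). This is applied to \(u=R_0^{\mathrm{out}}(\zeta)\chi_1f\), where \((\cL_0-\zeta^2)u=\chi_1f\), and to \(\partial_\zeta u\), where the right side is \(2\zeta u\).

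\textbf{Your route.} You put the two spatial derivatives on the kernel instead. The Kelvin part is handled by Calder\'on--Zygmund, or equivalently by the bounded multiplier \(\xi_i\xi_j\widehat{\mathbf G^0}(\xi)\). The remainder is handled by Schur's test on the twice-differentiated Taylor coefficients, summed as a norm-convergent series in \(\mathcal B(L^2,H^2)\).

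\textbf{What each buys.} The paper's bootstrap needs no singular-integral theory and no bookkeeping of \(n\)-dependent constants, because regularity is supplied by the equation. It only uses that the continued kernel is still a fundamental solution of \(\cL_0-\zeta^2\). Its cost is a small extra step (for instance, difference quotients in the interior estimate) to get holomorphy, not just bounds, in \(H^2\). Your route gives holomorphy in \(\mathcal B(L^2,H^2)\) directly from the power series. It also proves more: the cutoff free resolvent extends to an entire function of \(\zeta\), not merely a germ at \(0\).

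\textbf{One point to make explicit.} The obstacle you flag is harmless: the losses are polynomial in \(n\) and are absorbed by the factorials. You should, however, state that for every remainder coefficient the distributional second derivatives coincide with the pointwise ones. These coefficients are constant or Lipschitz, with second derivatives of size \(O(|x|^{-1})\); the boundary terms on small spheres vanish, so no delta contributions arise. The only delta term sits in the Kelvin part, and your Calder\'on--Zygmund step already accounts for it.
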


\begin{proof}
The Kupradze tensor in Lemma~\ref{lem:kernel-expansion} is the kernel of
\(R_0^{\mathrm{out}}(\zeta)\).  On compact sets, the kernel and its first
\(\zeta\)-derivative are bounded by \(C|x-y|^{-1}\).  Schur's test therefore
gives a holomorphic localized family in \(\mathcal B(L^2,L^2)\).  Choose
\(\chi_2\in C_c^\infty\) equal to one near \(\operatorname{supp}\chi_0\).
The interior estimate for the strongly elliptic Navier operator gives
\[
  \norm{\chi_0u}_{H^2}
  \le C\bigl(
    \norm{\chi_2(\cL_0-\zeta^2)u}_{L^2}
    +\norm{\chi_2u}_{L^2}
  \bigr).
\]
Apply this estimate to
\(u=R_0^{\mathrm{out}}(\zeta)\chi_1f\) and to its
\(\zeta\)-derivative.  This proves the uniform \(L^2\to H^2\) bounds;
the kernel Taylor expansion gives holomorphy in that norm.  The displacement
and conormal statements follow from the \(H^2\) trace theorem.
\end{proof}

The next lemma records the local continuation used later for cutoff resolvent
identities.
\begin{lemma}
\label{lem:cutoff-exterior-holomorphy}
Let \(\chi_0,\chi_1\in C_c^\infty(\R^3)\) and \(s>1/2\).  There is
\(r_1>0\) such that the outgoing cutoff families
\begin{align*}
  \chi_0\cE_{\mathrm{out}}(\zeta)
  &:
  H^{1/2}(\Sigma)^3\to H^1_{-s}(\Omega)^3,\\
  \chi_0R_{\Omega,D}^{\mathrm{out}}(\zeta)\chi_1
  &:
  L_s^2(\R^3)^3\to H^1_{-s}(\Omega)^3,\\
  T_0^\nu R_{\Omega,D}^{\mathrm{out}}(\zeta)\chi_1
  &:
  L_s^2(\R^3)^3\to H^{-1/2}(\Sigma)^3
\end{align*}
extend holomorphically to \(|\zeta|<r_1\).  On each smaller closed disk,
these families and their first \(\zeta\)-derivatives are uniformly bounded in
the displayed operator norms.  Define \(\cT_{\mathrm{out}}(\zeta)u\) to equal
\(u\) in \(D\) and \(\cE_{\mathrm{out}}(\zeta)\gamma u\) in \(\Omega\), and
define \(\cW_{\mathrm{out}}(\zeta)f\) to equal zero in \(D\) and
\(R_{\Omega,D}^{\mathrm{out}}(\zeta)(\rho_0f|_\Omega)\) in \(\Omega\).
Then the cutoff families
\[
  \chi_0\cT_{\mathrm{out}}(\zeta):\cV\to H^1_{-s}(\R^3)^3,
  \qquad
  \chi_0\cW_{\mathrm{out}}(\zeta)\chi_1:
  L_s^2(\R^3)^3\to H^1_{-s}(\R^3)^3
\]
have the same property.  The incoming germs are obtained by
\(\zeta\mapsto-\zeta\).  Only the cutoff families are asserted to take values
in polynomially weighted spaces after continuation into the lower half-plane.
\end{lemma}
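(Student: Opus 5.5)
We will treat the three families by layer-potential representations whose kernels are built from the outgoing Kupradze tensor, so that holomorphy near \(\zeta=0\) follows from the kernel expansion of Lemma~\ref{lem:kernel-expansion} and the localized free-resolvent germ of Lemma~\ref{lem:localized-free-resolvent}.

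\emph{The extension operator.} For \(\operatorname{Im}\zeta>0\) we write
\[
  \cE(\zeta)g=\mathbf{SL}(\zeta)\mathsf S(\zeta)^{-1}g\big|_\Omega .
\]
This representation is valid because \(\mathbf{SL}(\zeta)\mu\) solves the exterior Navier equation, decays, and has trace \(\mathsf S(\zeta)\mu\). By the proof of Proposition~\ref{prop:dtn-analytic}, \(\mathsf S(\zeta)^{-1}\) is holomorphic on \(|\zeta|<r_0\), so it remains to treat \(\chi_0\mathbf{SL}(\zeta):H^{-1/2}(\Sigma)^3\to H^1(\Omega\cap\supp\chi_0)^3\).

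Near \(\Sigma\) we use the standard mapping property of the single layer potential, \(H^{-1/2}\to H^1_{\mathrm{loc}}\). This property holds uniformly in \(\zeta\) because the kernel differs from the static kernel by a term that is holomorphic in \(\zeta\) and smoother by one order; this is exactly the content of the kernel expansion.

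Away from \(\Sigma\), on the compact support of \(\chi_0\), the kernel \(\mathbf G^\zeta(x-y)\) is smooth and holomorphic in \(\zeta\). Since the cutoff is compactly supported, every weight \(\langle x\rangle^{-s}\) is harmless on it. Cauchy estimates on a smaller disk then give uniform bounds for the family and for its first \(\zeta\)-derivative.

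\emph{The Dirichlet resolvent.} We construct
\[
  R_{\Omega,D}(\zeta)\chi_1F
  =
  \bigl(R_0(\zeta)\chi_1\tilde F-\cE(\zeta)\gamma R_0(\zeta)\chi_1\tilde F\bigr)\big|_\Omega ,
\]
where \(\tilde F\) is the extension of \(F\) by zero. For \(\operatorname{Im}\zeta>0\), uniqueness of the decaying Dirichlet problem identifies this with the true resolvent. The identity then defines the continuation.

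To control the first term, we take \(\chi_2\) equal to one near \(\Sigma\cup\supp\chi_0\cup\supp\chi_1\). Lemma~\ref{lem:localized-free-resolvent} gives holomorphy of \(\chi_0R_0\chi_1\) into \(H^2\), and of the trace \(\gamma R_0\chi_1\) into \(H^{3/2}(\Sigma)^3\subset H^{1/2}(\Sigma)^3\). Combining this with the previous step yields the cutoff bound for the second family.

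For the traction family, we use the jump relation \(T_0^\nu\mathbf{SL}(\zeta)\mu|_\Omega=(-\tfrac12I+\mathsf K^*(\zeta))\mu\). This reduces \(T_0^\nu\cE(\zeta)=\Lambda(\zeta)\), which is holomorphic by Proposition~\ref{prop:dtn-analytic}. The free part of the traction, \(T_0^\nu R_0\chi_1\), lies in \(H^{1/2}(\Sigma)^3\subset H^{-1/2}(\Sigma)^3\) by Lemma~\ref{lem:localized-free-resolvent}. Since \(\chi_1\) has compact support, the weight in \(L^2_s\) is irrelevant.

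\emph{The glued maps.} The statements for \(\chi_0\cT_{\mathrm{out}}\) and \(\chi_0\cW_{\mathrm{out}}\chi_1\) follow by gluing. On \(D\), the map \(u\mapsto\chi_0u\) is trivially bounded \(\cV\to H^1(D)\). The two pieces of \(\cT_{\mathrm{out}}\) share the trace \(\gamma u\) on \(\Sigma\), and the two pieces of \(\cW_{\mathrm{out}}\) both have zero trace there. Hence the glued functions lie in \(H^1\) across \(\Sigma\), with norm controlled by the sum of the norms of the pieces.

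\emph{Incoming germs.} The incoming case follows from the symmetry \(\zeta\mapsto-\zeta\) of the kernel, by the same reasoning as the conjugation symmetry of Proposition~\ref{prop:dtn-analytic}.

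The main obstacle is the uniformity of \(\chi_0\mathbf{SL}(\zeta)\) in \(H^1\) up to \(\Sigma\) together with its \(\zeta\)-derivative. There we must check that the \(\zeta^{-2}\nabla\nabla^{\mathsf T}\) term of the Kupradze tensor produces no singular contribution. We would handle this by splitting the kernel into the static Kelvin kernel plus a remainder. By Lemma~\ref{lem:kernel-expansion}, the remainder and its derivative have only a bounded, \(|x-y|^{0}\)-type singularity after the removable cancellation, so the remainder maps \(H^{-1/2}(\Sigma)\to H^1_{\mathrm{loc}}\) compactly and holomorphically.
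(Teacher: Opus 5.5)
Your proposal is correct and follows the paper's proof essentially step for step. It uses the same ingredients: the Poisson germ $\mathbf{SL}(\zeta)\mathsf S(\zeta)^{-1}$, the representation $R_{\Omega,D}^{\mathrm{out}}=[R_0^{\mathrm{out}}-\cE_{\mathrm{out}}\gamma_\Sigma R_0^{\mathrm{out}}]|_\Omega$ controlled by Lemma~\ref{lem:localized-free-resolvent}, gluing through matching traces, Cauchy estimates, and $\zeta\mapsto-\zeta$.

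The only real deviation is the traction family. You write it explicitly as $T_0^\nu R_0^{\mathrm{out}}\chi_1-\Lambda(\zeta)\gamma_\Sigma R_0^{\mathrm{out}}\chi_1$ and control it through Proposition~\ref{prop:dtn-analytic}, whereas the paper uses a collar conormal estimate together with the exterior equation; both work. One small correction: the ``bounded singularity'' of the kernel remainder that you attribute to Lemma~\ref{lem:kernel-expansion} does not follow from its stated $|x|^{-1-|\beta|}$ bound but from the Taylor series in its proof. The pseudohomogeneous mapping theorem suffices either way.
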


\begin{proof}
By Proposition~\ref{prop:dtn-analytic},
\(\mathsf S(\zeta)^{-1}\) is holomorphic near zero, and the outgoing Poisson
germ has the layer-potential representation
\[
  \cE_{\mathrm{out}}(\zeta)
  =\mathbf{SL}(\zeta)\mathsf S(\zeta)^{-1}
  \quad\text{in }\Omega.
\]
Fix a ball \(B\) containing the supports of \(\chi_0,\chi_1\) and
\(\overline D\).  Lemma~\ref{lem:kernel-expansion} and the boundary-kernel
mapping theorem give, in operator norm,
\[
  \chi_0\mathbf{SL}(\zeta)
  \in\mathcal B\bigl(H^{-1/2}(\Sigma)^3,H^1(B\cap\Omega)^3\bigr),
\]
holomorphically for \(|\zeta|<r_1\).  Composition with
\(\mathsf S(\zeta)^{-1}\) proves the first assertion.  Since the output is
supported in \(B\), its local \(H^1\) norm and its \(H^1_{-s}\) norm are
uniformly equivalent.

  Let \(R_0^{\mathrm{out}}(\zeta)\) be the free outgoing Navier resolvent germ
  and choose \(\widetilde\chi_0\) equal to one near
  \(\overline D\cup\supp\chi_0\).  Lemma~\ref{lem:localized-free-resolvent}
  applies to
  \(\widetilde\chi_0R_0^{\mathrm{out}}(\zeta)\chi_1\).  In particular, its
  displacement and weak traction traces on \(\Sigma\) are holomorphic in the
  Sobolev spaces required below.

For \(f\in L_s^2(\R^3)^3\), put
\(F=(\chi_1f)|_\Omega\) and extend \(F\) by zero to \(\R^3\).  The
zero-Dirichlet exterior solution has the exact representation
\[
  R_{\Omega,D}^{\mathrm{out}}(\zeta)F
  =
  \left.
  \left[
    R_0^{\mathrm{out}}(\zeta)F
    -\cE_{\mathrm{out}}(\zeta)
     \gamma_\Sigma R_0^{\mathrm{out}}(\zeta)F
  \right]\right|_\Omega.
\]
The first term is controlled by \eqref{eq:localized-free-germ}; the second is
the composition of its trace with the Poisson family already treated.  This
proves the second displayed family in the lemma.  If \(u\) denotes the right
side, the local conormal estimate
\[
  \norm{T_0^\nu u}_{H^{-1/2}(\Sigma)}
  \le C\left(
    \norm{u}_{H^1(U\cap\Omega)}
    +\norm{\nabla\!\cdot(C_0e(u))}_{L^2(U\cap\Omega)}
  \right)
\]
on a fixed collar \(U\) of \(\Sigma\), together with the exterior equation,
gives the third family and its holomorphic dependence.

The homogeneous reconstruction has the same trace on the two sides of
\(\Sigma\), while the zero-Dirichlet field has zero trace.  Their piecewise
definitions therefore belong to global \(H^1\) after multiplication by
\(\chi_0\).  The asserted properties of \(\cT_{\mathrm{out}}\) and
\(\cW_{\mathrm{out}}\) follow by composition with the bounded trace map and
the restriction \(f\mapsto(\chi_1f)|_\Omega\).  Holomorphy gives uniform
first-derivative bounds on every smaller disk by Cauchy's estimate.  Finally,
the incoming Kupradze germ is obtained by replacing \(\zeta\) with
\(-\zeta\), which proves the last assertion.
\end{proof}

\section{Rigid Modes and the Uniform Grushin--Feshbach Reduction}
\label{sec:reduction}

\subsection{The rigid space and the uniform Korn decomposition}

For a fixed point \(x_j\in D_j\), let
\[
  \mathrm{RM}(D_j)
  =
  \{a_j+b_j\times(x-x_j):a_j,b_j\in\C^3\}.
\]
The nullspace of the interior strain energy is
\[
  \cN
  =
  \bigoplus_{j=1}^N\mathrm{RM}(D_j),
  \qquad
  m=\dim\cN=6N.
\]
Choose a basis \(\{\bfphi_1,\ldots,\bfphi_m\}\) which is orthonormal
for the mass form \(\mathfrak m\), and define
\[
  J:\C^m\longrightarrow\cN,\qquad
  Ja=\sum_{q=1}^ma_q\bfphi_q.
\]
Then
\[
  \mathfrak m(Ja,Jb)=\inner{a}{b}_{\C^m}.
\]
Let \(P=JJ^\sharp\) be the \(\mathfrak m\)-orthogonal projection onto
\(\cN\), where
\[
  (J^\sharp u)_q=\mathfrak m(u,\bfphi_q),
\]
and put
\[
  Q=I-P,\qquad
  \cV_\perp=Q\cV
  =
  \{u\in\cV:\mathfrak m(u,\bfphi_q)=0,\ 1\le q\le m\}.
\]
Because \(\cN\) is finite dimensional and consists of smooth rigid fields,
\(P,Q\in\mathcal B(\cV)\) and
\(\cV=\cN\oplus_{\mathfrak m}\cV_\perp\).

\begin{lemma}
\label{lem:korn-complement}
There is \(c_K>0\), depending only on \(D,C_D,\rho_D\), such that
\[
  \int_D C_De(w):\overline{e(w)}\,\dd x
  \ge
  c_K\norm{w}_{H^1(D)}^2,
  \qquad w\in\cV_\perp.
\]
\end{lemma}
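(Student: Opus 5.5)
The plan is the standard compactness (contradiction) argument built on Korn's second inequality. Since the domains \(D_j\) are finitely many, disjoint, connected, bounded and Lipschitz (indeed \(C^{2,\alpha}\)), I can argue either componentwise or globally. I will argue globally on \(D\), with \(\cN\) the direct sum of the componentwise rigid spaces.

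First, pointwise strong ellipticity of the isotropic tensor on symmetric matrices gives a lower bound on the energy. Writing a symmetric \(A\) as its trace part plus its deviatoric part,
\[
  C_DA:\overline A
  =2\mu_{D,j}|A|^2+\lambda_{D,j}|\tr A|^2
  \ge \min\{2\mu_{D,j},\,3\lambda_{D,j}+2\mu_{D,j}\}\,|A|^2 .
\]
This holds because \(2\mu|A_{\mathrm{dev}}|^2+\tfrac{3\lambda+2\mu}{3}|\tr A|^2\) is exactly the form above. Hence
\[
  \int_D C_De(w):\overline{e(w)}\,\dd x\ge c_0\norm{e(w)}_{L^2(D)}^2 .
\]
It therefore suffices to show
\[
  \norm{e(w)}_{L^2(D)}\ge c\norm{w}_{H^1(D)}\qquad\text{for } w\in\cV_\perp .
\]

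Second, I invoke Korn's second inequality on each Lipschitz component \(D_j\):
\[
  \norm{w}_{H^1(D_j)}\le C\bigl(\norm{e(w)}_{L^2(D_j)}+\norm{w}_{L^2(D_j)}\bigr).
\]
I also use that \(\{e(w)=0\}\) on a connected \(D_j\) is exactly \(\mathrm{RM}(D_j)\); this is the standard fact that a vector field with vanishing symmetric gradient is affine with a skew matrix, hence rigid. Summing over \(j\) gives the corresponding inequality on \(D\), and shows that the kernel of \(e\) on \(\cV\) is \(\cN\).

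Third comes the contradiction step. Suppose there are \(w_n\in\cV_\perp\) with \(\norm{w_n}_{H^1(D)}=1\) and \(\norm{e(w_n)}_{L^2}\to0\). By Rellich compactness of \(H^1(D)\hookrightarrow L^2(D)\), which holds since \(D\) is bounded Lipschitz, a subsequence converges in \(L^2\). Korn's second inequality then makes it Cauchy in \(H^1\), so \(w_n\to w\) in \(H^1\) with \(e(w)=0\) and \(\norm{w}_{H^1}=1\). Hence \(w\in\cN\). The constraints \(\mathfrak m(w_n,\bfphi_q)=0\) pass to the limit, because \(\mathfrak m\) is \(L^2\)-continuous and \(\rho_D\) is bounded. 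So \(w\in\cN\cap\cV_\perp=\{0\}\), using that \(\mathfrak m\) is positive definite on \(\cN\) because \(\rho_D>0\). This contradicts \(\norm{w}_{H^1}=1\).

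The constant \(c_K\) is then \(c_0c^2\), and it depends only on \(D\), \(C_D\) and \(\rho_D\): the constraint space, and hence \(\cV_\perp\), depends on \(\rho_D\) only through \(\mathfrak m\). The only substantive input is Korn's second inequality on Lipschitz domains. I take it as classical rather than reprove it; that step is the main obstacle only if one insists on a self-contained proof.
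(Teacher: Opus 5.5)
Your proof is correct and follows essentially the same compactness-by-contradiction argument as the paper. The only difference is minor: the paper applies Korn's inequality modulo rigid motions to obtain \(r_n\in\cN\) with \(\|w_n-r_n\|_{H^1(D)}\to0\) and then uses that \(\cN\) is finite-dimensional, whereas you use Korn's second inequality with Rellich compactness, and you state explicitly the pointwise coercivity of \(C_D\) on symmetric matrices that the paper uses without comment.
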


\begin{proof}
If the assertion failed, there would be \(w_n\in\cV_\perp\) with
\(\norm{w_n}_{H^1(D)}=1\) and
\(\norm{e(w_n)}_{L^2(D)}\to0\).  Korn's inequality on each connected
component gives rigid fields \(r_{n,j}\in\mathrm{RM}(D_j)\) such that
\[
  \norm{w_n-r_n}_{H^1(D)}\to0,
  \qquad r_n=(r_{n,1},\ldots,r_{n,N})\in\cN.
\]
Passing to a subsequence, \(r_n\to r\in\cN\) in \(H^1(D)^3\).
For every \(q\), the convergence and the defining moment conditions give
\[
  \mathfrak m(r,\bfphi_q)
  =\lim_{n\to\infty}\mathfrak m(w_n,\bfphi_q)=0.
\]
Thus \(r\in\cN\cap\cV_\perp=\{0\}\), so \(w_n\to0\) in \(H^1(D)^3\).
This contradicts
\(\norm{w_n}_{H^1}=1\).
\end{proof}

\begin{remark}
\label{rem:mass-matrix}
For an arbitrary rigid basis, the mass matrix is
\[
  M=(M_{\ell q})_{\ell,q=1}^m,\qquad
  M_{\ell q}
  =
  \int_D\rho_D\bfphi_q\cdot\overline{\bfphi_\ell}\,\dd x,
  \qquad M=M^*>0.
\]
All formulas below are transformed by the congruence
\(a\mapsto M^{1/2}a\); in particular, \(I_m\) is replaced by \(M\).
We use a mass-orthonormal basis to keep the Grushin formulas readable.
\end{remark}

\subsection{Static capacitance and the resonant scale}

Let
\[
  B=\gamma_\Sigma J:
  \C^m\longrightarrow H^{1/2}(\Sigma)^3.
\]
Here and below \(B^*:H^{-1/2}(\Sigma)^3\to\C^m\) is the adjoint with
respect to boundary duality and the Euclidean inner product.
For \(a\in\C^m\), let
\[
  U_a^0=\cE(0)Ba
\]
be the decaying static exterior field with rigid boundary value \(Ba\).  It is
understood in the finite-energy class
\[
  \mathcal D^{1,2}(\Omega)^3
  :=\{u\in L^6(\Omega)^3:\nabla u\in L^2(\Omega)^{3\times3}\}.
\]
Define the elastic capacitance matrix
\begin{equation}
\label{eq:K-definition}
  \inner{Ka}{b}_{\C^m}
  =
  -\inner{\Lambda_0Ba}{Bb}_{\Sigma}
  =
  \int_\Omega
  C_0e(U_a^0):\overline{e(U_b^0)}\,\dd x.
\end{equation}

\begin{proposition}
\label{prop:K-positive}
The matrix \(K\) is Hermitian positive definite.
\end{proposition}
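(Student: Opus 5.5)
Hermitian symmetry and nonnegativity come directly from the energy representation in \eqref{eq:K-definition}; the real content is definiteness, which I will obtain from a rigidity argument for the static exterior field.

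\emph{Hermitian symmetry.} The map \(a\mapsto U_a^0=\cE(0)Ba\) is linear, so the right-hand side of \eqref{eq:K-definition} is a sesquilinear form. Because \(C_0\) has the major symmetry \(C_0A:\overline{B}=A:\overline{C_0B}\) with real coefficients, the form is Hermitian; hence \(K=K^*\).

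\emph{Nonnegativity.} First I justify the equality of the two expressions in \eqref{eq:K-definition}. This is \eqref{eq:static-dtn-sign} polarized, i.e. Green's formula on \(\Omega\cap B_R\) for the decaying static solution. The boundary term on \(\partial B_R\) vanishes as \(R\to\infty\), since \(U_a^0=O(r^{-1})\) and \(\nabla U_a^0=O(r^{-2})\) by the single-layer representation \(\cE(0)=\mathbf{SL}(0)\mathsf S_0^{-1}\) from Proposition~\ref{prop:dtn-analytic}. Strong ellipticity of \(C_0\), meaning \(\mu_0>0\) and \(3\lambda_0+2\mu_0>0\), gives \(C_0A:\overline A\ge c|A|^2\) for symmetric \(A\). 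Hence
\[
\inner{Ka}{a}\ge c\norm{e(U_a^0)}_{L^2(\Omega)}^2\ge0 .
\]

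\emph{Definiteness.} Suppose \(\inner{Ka}{a}=0\). Then \(e(U_a^0)=0\) in the connected domain \(\Omega\), so \(U_a^0\) is a single rigid motion \(c+d\times x\) on all of \(\Omega\). Since \(U_a^0\in\mathcal D^{1,2}(\Omega)^3\subset L^6\), the rotational part must vanish (it grows linearly), and the constant must vanish (constants are not in \(L^6\) of an unbounded exterior domain). So \(U_a^0\equiv0\) and its trace \(Ba=\gamma_\Sigma Ja=0\). Now \(Ja\) restricted to each \(D_j\) is a rigid motion vanishing on \(\partial D_j\). A rigid motion vanishing on a two-dimensional surface, which contains three non-collinear points, is identically zero. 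Hence \(Ja=0\), and because \(J\) is injective (it sends an \(\mathfrak m\)-orthonormal basis to itself), \(a=0\). Thus \(K>0\).

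\emph{Main obstacle.} The delicate step is the decay and function-class bookkeeping. It is needed to justify the vanishing of the boundary term at infinity, which gives the energy identity, and to exclude constant rigid motions in the exterior. Both follow from \(U_a^0\) being the decaying solution represented by \(\mathbf{SL}(0)\mathsf S_0^{-1}\), whose kernel decays like \(|x|^{-1}\), together with the invertibility of \(\mathsf S_0\) established in the proof of Proposition~\ref{prop:dtn-analytic}.
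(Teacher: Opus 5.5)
Your proof is correct and takes essentially the same route as the paper. You use the static Green formula for the energy representation, symmetry of $C_0$ (the paper's ``Betti reciprocity'') for Hermiticity, and for definiteness you show $e(U_a^0)=0$, exclude nonzero rigid motions from $L^6(\Omega)^3$, and use that a rigid motion vanishing on each $\partial D_j$ is zero. Your decay bookkeeping ($U_a^0=O(r^{-1})$ and $\nabla U_a^0=O(r^{-2})$ via $\mathbf{SL}(0)\mathsf S_0^{-1}$) only spells out what the paper takes for granted in citing the static Green formula.
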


\begin{proof}
The second identity in \eqref{eq:K-definition} follows from the static
Green formula and the normal convention in
\eqref{eq:static-dtn-sign}.  Betti reciprocity makes \(K\) Hermitian.
If \(\inner{Ka}{a}=0\), strong ellipticity gives
\[
  e(U_a^0)=0\quad\text{in }\Omega.
\]
Thus \(U_a^0\) is a global rigid motion in the connected exterior.
A nonzero affine rigid motion does not belong to \(L^6(\Omega)^3\), so
\(U_a^0=0\).  Hence \(Ba=0\) on every
\(\partial D_j\).  A rigid motion that vanishes on the boundary of a
three-dimensional bounded domain is zero, so \(Ja=0\) in \(D\) and
therefore \(a=0\).
\end{proof}

Let
\[
  0<\lambda_1\le\cdots\le\lambda_m
\]
be the eigenvalues of \(K\), repeated with multiplicity.  The
unperturbed finite-dimensional pencil is
\[
  \cM_{\delta,0}(\omega)=\delta K-\omega^2I_m.
\]
Its positive roots are
\[
  \omega_{j,0}(\delta)=\delta^{1/2}\lambda_j^{1/2}.
\]
For a non-orthonormal basis these are the generalized eigenvalues of
\[
  Ka=\lambda Ma.
\]

\begin{proposition}
\label{prop:static-clusters}
Let \(\lambda\) be an eigenvalue of \(K\) of multiplicity \(r\), and
let \(\mathcal C\) be a positively oriented circle such that
\[
  \operatorname{int}\mathcal C\cap
  \{\pm\sqrt{\lambda_1},\ldots,\pm\sqrt{\lambda_m}\}
  =\{\sqrt\lambda\},
  \qquad
  \mathcal C\cap
  \{\pm\sqrt{\lambda_1},\ldots,\pm\sqrt{\lambda_m}\}
  =\varnothing.
\]
For all sufficiently
small \(\delta\), the full outgoing effective determinant constructed
in Section~\ref{sec:reduction} has exactly \(r\) zeros, counted algebraically, in
\[
  \{\zeta:\delta^{-1/2}\zeta\in\operatorname{int}\mathcal C\}.
\]
\end{proposition}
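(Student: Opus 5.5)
The plan is to rescale the frequency and compare the full effective determinant with that of the unperturbed pencil by a Rouché-type (argument principle) argument. Write \(\zeta=\delta^{1/2}\sigma\) with \(\sigma\) on or inside \(\mathcal C\), and consider the rescaled matrix
\[
  \widetilde\cM_\delta(\sigma):=\delta^{-1}\cM_\delta^+(\delta^{1/2}\sigma)
  =K-\sigma^2I_m-\ii\delta^{1/2}\sigma\Gamma_0+\delta^{-1}E_\delta^+(\delta^{1/2}\sigma).
\]
I will use the outgoing holomorphic germ from Section~\ref{sec:reduction}, built from \(\Lambda(\zeta)\) in Proposition~\ref{prop:dtn-analytic}. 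This germ is holomorphic for \(|\zeta|<r_0\), and on the disk \(|\zeta|\le C\delta^{1/2}\) it obeys the expansion of Theorem~\ref{thm:intro-main}(i), extended to complex \(\zeta\).

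The first step is to check that this expansion holds uniformly for complex \(\zeta\) in the rescaled disk containing \(\mathcal C\). The reduction constructs \(\cM_\delta\) as a Schur complement. Its complement block \(G_\delta(\zeta)\) is uniformly invertible on \(\cV_\perp\) for \(|\zeta|\le C\delta^{1/2}\). This follows from Lemma~\ref{lem:korn-complement}, because the perturbations \(\zeta^2\mathfrak m\) and \(\delta\Lambda(\zeta)\) are \(O(\delta)\) in norm. Hence the remainder estimate \(\norm{E_\delta(\zeta)}\le C(\delta^2+\delta|\zeta|^2)\) persists off the real axis, and it is in any case a consequence of holomorphy and Cauchy estimates. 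After rescaling,
\[
  \widetilde\cM_\delta(\sigma)=K-\sigma^2I_m+O(\delta^{1/2})
\]
uniformly on a neighborhood of \(\overline{\operatorname{int}\mathcal C}\), and \(\widetilde\cM_\delta\) is holomorphic in \(\sigma\).

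Set \(f_0(\sigma)=\det(K-\sigma^2I_m)=\prod_j(\lambda_j-\sigma^2)\) and \(f_\delta(\sigma)=\det\widetilde\cM_\delta(\sigma)\). Multilinearity of the determinant gives \(|f_\delta-f_0|\le C\delta^{1/2}\) uniformly on \(\mathcal C\). Since \(\mathcal C\) avoids every \(\pm\sqrt{\lambda_j}\), we have \(\min_{\mathcal C}|f_0|>0\). So for small \(\delta\), Rouché's theorem shows that \(f_\delta\) and \(f_0\) have the same number of zeros inside \(\mathcal C\). The zeros of \(f_0\) inside \(\mathcal C\) are exactly the zeros of \(\lambda-\sigma^2\), repeated \(r\) times. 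Each \(\sqrt\lambda\ne0\) is a simple root of \(\lambda-\sigma^2\), so the count is \(r\). Zeros of \(f_\delta\) in \(\sigma\) correspond bijectively, with multiplicities, to zeros of \(\det\cM_\delta^+(\zeta)\) in the rescaled region, because \(\delta^{-m}\) is a nonzero constant factor. Algebraic multiplicity of a zero of the matrix function agrees with the order of vanishing of its determinant (Gohberg--Sigal), which is the sense of \emph{counted algebraically}.

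The main obstacle is the first step. The remainder bound is stated in Theorem~\ref{thm:intro-main}(i) only on the real axis, and we need it uniformly on a complex neighborhood of scale \(\delta^{1/2}\), including into the lower half-plane, where the exterior fields are analytic continuations. I would derive it directly from the Schur complement formula
\[
  \cM_\delta=J^*\cA_\delta J-J^*\cA_\delta Q\,G_\delta^{-1}Q\cA_\delta J.
\]
The first term gives
\[
  \delta K-\zeta^2I-\ii\delta\zeta B^*\Lambda_1B+\delta\zeta^2B^*\Lambda_2B.
\]
The second term is \(O(\delta^2)\), since \(Q\cA_\delta J=-\delta Q\gamma^*\Lambda(\zeta)B\) (rigid fields have no strain, and \(\mathfrak m(QJ\cdot,\cdot)=0\)). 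Holomorphy of \(\Lambda\) and \(\Lambda_2\) on \(|\zeta|<r_0\) then makes these bounds uniform for complex \(\zeta\).
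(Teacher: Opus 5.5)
Your proposal is correct and follows essentially the paper's argument: rescale by $\delta^{1/2}$, use the expansion of Theorem~\ref{thm:effective-expansion} to get $\delta^{-1}\cM_\delta(\delta^{1/2}\xi)=K-\xi^2I_m+\mathcal O(\delta^{1/2})$ uniformly on $\mathcal C$, and apply Rouch\'e. Your ``main obstacle'' is not actually an obstacle, because that theorem is already stated uniformly for complex $|\zeta|\le C_{\mathrm{fr}}\delta^{1/2}$. The remaining differences are cosmetic: the paper uses the matrix-valued Rouch\'e/Gohberg--Sigal criterion $\sup_{\mathcal C}\|F_0^{-1}(F_\delta-F_0)\|<1$ where you apply scalar Rouch\'e to the determinants, which is equivalent in finite dimensions, and you have a harmless sign slip (the $\Lambda_2$ term in $A_{00}$ is $-\delta\zeta^2B^*\Lambda_2(\zeta)B$).
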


\begin{proof}
By Theorem~\ref{thm:effective-expansion} below,
\[
  F_\delta(\xi):=\delta^{-1}\cM_\delta(\delta^{1/2}\xi)
  =
  F_0(\xi)+\mathcal O(\delta^{1/2}),
  \qquad F_0(\xi):=K-\xi^2I_m,
\]
uniformly on \(\mathcal C\).  The matrix \(F_0(\xi)\) is invertible there,
and compactness of the circle gives
\[
  \sup_{\xi\in\mathcal C}
  \norm{F_0(\xi)^{-1}(F_\delta(\xi)-F_0(\xi))}<1
\]
for small \(\delta\).  Matrix-valued
Rouch\'e's theorem, equivalently the finite-dimensional Gohberg--Sigal
argument \cite{GohbergSigal1971} applied to \(F_\delta\), preserves the total
characteristic multiplicity inside \(\mathcal C\).  This is the number of
zeros of \(\det F_\delta\), counted with multiplicity.
\end{proof}

\subsection{The uniform complement block}

For \(|\zeta|\le C_{\mathrm{fr}}\delta^{1/2}\), define
\[
  G_\delta(\zeta)
  =
  Q^*\cA_\delta(\zeta)Q:
  \cV_\perp\longrightarrow\cV_\perp^*.
\]
Here \(Q^*:\cV^*\to\cV_\perp^*\) is restriction to
\(\cV_\perp\).

\begin{lemma}
\label{lem:complement-inverse}
There are \(C,\delta_0>0\) such that
\[
  G_\delta(\zeta)^{-1}:
  \cV_\perp^*\longrightarrow\cV_\perp
\]
exists and
\[
  \norm{G_\delta(\zeta)^{-1}}_{\cV_\perp^*\to\cV_\perp}
  \le C
\]
whenever
\[
  0<\delta<\delta_0,\qquad
  |\zeta|\le C_{\mathrm{fr}}\delta^{1/2}.
\]
The same statement holds for the incoming analytic continuation.  In
particular, both complement inverses are holomorphic in their respective full
complex disks, including the lower-half-plane portion used to locate poles.
\end{lemma}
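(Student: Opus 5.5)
The plan is a perturbation argument: a coercive principal part, plus a small perturbation of size \(O(\delta)\) in operator norm, followed by a Neumann series. For \(w,v\in\cV_\perp\), write
\[
  \inner{G_\delta(\zeta)w}{v}
  =
  a_D(w,v)-\zeta^2\mathfrak m(w,v)-\delta\inner{\Lambda(\zeta)\gamma w}{\gamma v}_{\Sigma},
  \qquad
  a_D(w,v)=\int_D C_De(w):\overline{e(v)}\,\dd x .
\]
Call the map induced by \(a_D\) on \(\cV_\perp\) \(G^{(0)}:\cV_\perp\to\cV_\perp^*\). By Lemma~\ref{lem:korn-complement}, \(a_D\) is bounded and coercive on \(\cV_\perp\), with \(\operatorname{Re}a_D(w,w)\ge c_K\norm{w}_{H^1(D)}^2\). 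Lax--Milgram then makes \(G^{(0)}\) an isomorphism with \(\norm{(G^{(0)})^{-1}}\le c_K^{-1}\), and this bound does not depend on \(\delta\) or \(\zeta\).

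Next I bound the two perturbation terms uniformly on \(|\zeta|\le C_{\mathrm{fr}}\delta^{1/2}\). For the mass term, \(|\zeta^2\mathfrak m(w,v)|\le C_{\mathrm{fr}}^2\delta\,\max_j\rho_{D,j}\,\norm{w}_{L^2}\norm{v}_{L^2}\), so it has norm \(O(\delta)\) as a map \(\cV_\perp\to\cV_\perp^*\). For the boundary term, I take \(\delta_0\) small enough that \(C_{\mathrm{fr}}\delta_0^{1/2}<r_0\). Proposition~\ref{prop:dtn-analytic} then shows that \(\Lambda(\zeta)\) is holomorphic on the disk \(|\zeta|<r_0\), which contains our range, and bounded there in \(\mathcal B(H^{1/2}(\Sigma)^3,H^{-1/2}(\Sigma)^3)\). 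Combined with the trace bound \(\norm{\gamma w}_{H^{1/2}(\Sigma)}\le C\norm{w}_{H^1(D)}\), the boundary term has norm at most \(C\delta\). So \(G_\delta(\zeta)=G^{(0)}+E_\delta(\zeta)\) with \(\norm{E_\delta(\zeta)}\le C_1\delta\). After shrinking \(\delta_0\) so that \(C_1\delta_0<c_K/2\), the Neumann series gives \(G_\delta(\zeta)^{-1}=(I+(G^{(0)})^{-1}E_\delta)^{-1}(G^{(0)})^{-1}\) with norm at most \(2/c_K\).

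Holomorphy of \(\zeta\mapsto G_\delta(\zeta)^{-1}\) on the whole disk, including the part below the real axis, follows because \(G_\delta(\zeta)\) is holomorphic there (it is built from the holomorphic germ \(\Lambda\)) and inversion is analytic on the open set of invertible operators. For the incoming family I replace \(\Lambda(\zeta)\) by \(\Lambda^-_{\mathrm{an}}(\zeta)=\Lambda(-\zeta)\). This germ is holomorphic on the same disk with the same bound, so the argument is identical.

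The only delicate point is checking that the disk \(|\zeta|\le C_{\mathrm{fr}}\delta^{1/2}\) lies inside the holomorphy disk of the DtN germ, including its lower-half-plane part, where \(\Lambda\) is an analytic continuation rather than an actual decaying exterior solution. Proposition~\ref{prop:dtn-analytic} already provides this, so choosing \(\delta_0\) small enough settles it. No sign information on \(\Lambda\) (such as \(\Lambda_1\ge0\)) is needed, since the perturbation is simply \(O(\delta)\) in norm.
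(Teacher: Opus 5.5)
Your proof is correct and is essentially the paper's argument: Korn coercivity on $\cV_\perp$ (Lemma~\ref{lem:korn-complement}) dominates perturbations of size $O(\delta)$ on the disk $|\zeta|\le C_{\mathrm{fr}}\delta^{1/2}$, and holomorphy in the full disk comes from the DtN germ. The paper instead applies Lax--Milgram directly to the real part of the full form, using the sign \eqref{eq:static-dtn-sign} to discard the static DtN term. You absorb all of $\delta\Lambda(\zeta)$ into a Neumann-series perturbation; since $\delta$ is small this works equally well, and you are right that it needs no sign information.
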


\begin{proof}
For \(w\in\cV_\perp\), Proposition~\ref{prop:dtn-analytic} gives
\[
\begin{split}
  \operatorname{Re}\inner{G_\delta(\zeta)w}{w}
  &=
  \int_D C_De(w):\overline{e(w)}\,\dd x
  -
  \operatorname{Re}(\zeta^2)\mathfrak m(w,w)\\
  &\quad
  -
  \delta\operatorname{Re}
  \inner{\Lambda(\zeta)\gamma w}{\gamma w}_{\Sigma}.
\end{split}
\]
The static boundary term is nonnegative after the minus sign by
\eqref{eq:static-dtn-sign}.  The trace theorem and
\(\Lambda(\zeta)-\Lambda_0=\mathcal O(|\zeta|)\) give
\[
  -\delta\operatorname{Re}
  \inner{(\Lambda(\zeta)-\Lambda_0)\gamma w}{\gamma w}_{\Sigma}
  \ge
  -C\delta|\zeta|\norm{w}_{H^1(D)}^2.
\]
The mass term is bounded below by
\(-C|\zeta|^2\norm{w}_{H^1(D)}^2\).  Lemma~\ref{lem:korn-complement}
therefore yields
\[
  \operatorname{Re}\inner{G_\delta(\zeta)w}{w}
  \ge
  \bigl(c_K-C\delta-C\delta^{3/2}\bigr)
  \norm{w}_{H^1(D)}^2.
\]
For small \(\delta\), the right side is at least
\(\frac12c_K\norm{w}_{H^1}^2\).  Lax--Milgram gives the inverse and
the uniform estimate.  The proof for the incoming branch is identical.
\end{proof}

\subsection{Typed Grushin blocks and corrected resonant states}

For \(F\in\cV^*\), define
\[
  (J^*F)_q=\inner{F}{\bfphi_q},
\]
so \(J^*:\cV^*\to\C^m\).  Relative to
\(\cV=J\C^m\oplus\cV_\perp\), introduce
\[
  A_{00}=J^*\cA_\delta J,\quad
  A_{01}=J^*\cA_\delta Q,\quad
  A_{10}=Q^*\cA_\delta J,\quad
  A_{11}=G_\delta.
\]
The types are
\[
  A_{00}:\C^m\to\C^m,\quad
  A_{10}:\C^m\to\cV_\perp^*,\quad
  A_{01}:\cV_\perp\to\C^m.
\]
This eliminates the domain/codomain ambiguity of a formal full-space
block matrix.

Define the right corrected lifting and the left coordinate map by
\begin{equation}
\label{eq:right-left-lifts}
  S_\delta^r(\zeta)
  =
  J-QG_\delta(\zeta)^{-1}A_{10}(\zeta),
  \qquad
  L_\delta^\ell(\zeta)
  =
  J^*-A_{01}(\zeta)G_\delta(\zeta)^{-1}Q^*.
\end{equation}
Thus
\[
  Q^*\cA_\delta(\zeta)S_\delta^r(\zeta)=0.
\]
The effective matrix is the Feshbach map
\begin{equation}
\label{eq:feshbach-matrix}
  \cM_\delta(\zeta)
  =
  A_{00}(\zeta)
  -
  A_{01}(\zeta)G_\delta(\zeta)^{-1}A_{10}(\zeta).
\end{equation}

\begin{proposition}
\label{prop:exact-variational-inverse}
The operator
\(\cA_\delta(\zeta):\cV\to\cV^*\) is invertible if and only if
\(\cM_\delta(\zeta):\C^m\to\C^m\) is invertible.  In that case,
as an identity in \(\mathcal B(\cV^*,\cV)\),
\begin{equation}
\label{eq:variational-inverse}
  \cA_\delta(\zeta)^{-1}
  =
  QG_\delta(\zeta)^{-1}Q^*
  +
  S_\delta^r(\zeta)
  \cM_\delta(\zeta)^{-1}
  L_\delta^\ell(\zeta).
\end{equation}
Moreover,
\begin{equation}
\label{eq:block-uniform}
  \norm{S_\delta^r(\zeta)-J}_{\C^m\to\cV}
  +
  \norm{L_\delta^\ell(\zeta)-J^*}_{\cV^*\to\C^m}
  \le C\delta
\end{equation}
uniformly for \(|\zeta|\le C_{\mathrm{fr}}\delta^{1/2}\).
\end{proposition}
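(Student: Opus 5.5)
The plan is to use the standard Schur complement (Grushin) argument for the block decomposition \(\cV=J\C^m\oplus\cV_\perp\), relying on Lemma~\ref{lem:complement-inverse}, and then to estimate the off-diagonal blocks directly from the form \eqref{eq:interior-form}. First I would record the block factorization. Writing \(u=Ja+Qw\) and testing \(\cA_\delta u=F\) against \(J\) and \(\cV_\perp\) gives the system
\[
  A_{00}a+A_{01}w=J^*F,\qquad A_{10}a+G_\delta w=Q^*F .
\]
Since \(G_\delta\) is invertible (Lemma~\ref{lem:complement-inverse}), the second equation gives
\[
  w=G_\delta^{-1}(Q^*F-A_{10}a).
\]
Substituting into the first yields \(\cM_\delta a=L_\delta^\ell F\) with \(\cM_\delta\) as in \eqref{eq:feshbach-matrix}. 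Then
\[
  u=S_\delta^r a+QG_\delta^{-1}Q^*F .
\]
Hence, if \(\cM_\delta\) is invertible, \(u\) is uniquely determined and \eqref{eq:variational-inverse} holds. Conversely, if \(\cM_\delta b=0\) with \(b\ne0\), then \(S_\delta^r b\ne0\), because its \(J\)-component is \(b\). Moreover \(\cA_\delta S_\delta^r b=0\): its \(\cV_\perp^*\)-component vanishes by construction, and its \(J^*\)-component equals \(\cM_\delta b\). So \(\cA_\delta\) is not injective. I would also keep track of the identification of \(\cV^*\) with \(\C^m\oplus\cV_\perp^*\) via \((J^*,Q^*)\). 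This is legitimate because \(P\) and \(Q\) are bounded and complementary, so \(F\mapsto(J^*F,Q^*F)\) is an isomorphism.

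Second, I would prove the \(O(\delta)\) bounds on the correction terms. The key observation is that for rigid \(Ja\) the strain vanishes, \(e(Ja)=0\). Also \(\mathfrak m(Ja,Qw)=0\) by \(\mathfrak m\)-orthogonality. Since \(Q\) is the \(\mathfrak m\)-orthogonal projection, both the strain term and the mass term drop out of the mixed block, leaving
\[
  \inner{A_{10}a}{w}=-\delta\inner{\Lambda(\zeta)Ba}{\gamma w}_\Sigma .
\]
By Proposition~\ref{prop:dtn-analytic}, \(\Lambda(\zeta)\) is uniformly bounded on \(|\zeta|\le C_{\mathrm{fr}}\delta^{1/2}\). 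Together with the trace theorem this gives
\[
  \norm{A_{10}}_{\C^m\to\cV_\perp^*}\le C\delta,
\]
and the same estimate holds for \(A_{01}\) by the symmetric computation. Combining with \(\norm{G_\delta^{-1}}\le C\) and \(\norm{Q}\le C\), I obtain \(\norm{S_\delta^r-J}\le C\delta\) and \(\norm{L_\delta^\ell-J^*}\le C\delta\).

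I expect no serious obstacle. The only delicate point is the bookkeeping of anti-duals and types, that is, confirming that \(Q^*F\) restricted to \(\cV_\perp\) and \(J^*F\) together reconstruct \(F\). It is also worth checking that the mass cross term vanishes exactly rather than merely being small; this exact vanishing is what makes the bound \(O(\delta)\) instead of \(O(|\zeta|^2)=O(\delta)\) plus strain errors. The incoming case is handled identically, using Lemma~\ref{lem:complement-inverse} for the incoming continuation.
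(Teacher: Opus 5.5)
Your proposal is correct and follows the paper's proof essentially step for step. Both arguments use the same block elimination based on Lemma~\ref{lem:complement-inverse}, the same kernel construction via \(S_\delta^r b\) together with \(PS_\delta^r=J\), and the same \(O(\delta)\) bound on \(A_{10},A_{01}\) from \(e(Ja)=0\) and mass orthogonality. Your closing remark misplaces the key point, though this does not affect the argument. On the disk \(|\zeta|\le C_{\mathrm{fr}}\delta^{1/2}\), a nonzero \(\zeta^2\mathfrak m\) cross term would itself be \(O(\delta)\). It is \(e(Ja)=0\) that removes the only \(O(1)\) contribution. Mass orthogonality matters later, for the derivative bounds in Lemma~\ref{lem:typed-block-estimates}.
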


\begin{proof}
Let \(F\in\cV^*\), and write \(u=Ja+w\) with
\(w\in\cV_\perp\).  Relative to the decomposition
\(\cV=J\C^m\oplus\cV_\perp\), the equation
\(\cA_\delta(\zeta)u=F\) is
\[
  A_{00}a+A_{01}w=J^*F,\qquad
  A_{10}a+G_\delta w=Q^*F.
\]
Since \(G_\delta\) is invertible by Lemma~\ref{lem:complement-inverse},
\[
  w
  =
  G_\delta^{-1}Q^*F-G_\delta^{-1}A_{10}a.
\]
Substitution into the first equation gives
\[
  \cM_\delta a
  =
  \bigl(J^*-A_{01}G_\delta^{-1}Q^*\bigr)F
  =
  L_\delta^\ell F.
\]
If \(\cM_\delta\) is invertible, this determines \(a\) and \(w\) and gives
\eqref{eq:variational-inverse}.

Conversely, if \(\cM_\delta a=0\), then
\(u=S_\delta^r a\) satisfies \(Q^*\cA_\delta u=0\) by construction and
\(J^*\cA_\delta u=\cM_\delta a=0\).  Hence
\(\cA_\delta u=0\).  The identity \(PS_\delta^r=J\) shows that the map
\(S_\delta^r:\C^m\to\cV\) is injective, so a nonzero \(a\) gives a
nonzero kernel vector.  Thus invertibility of \(\cA_\delta\) implies invertibility of
\(\cM_\delta\).

For \(w\in\cV_\perp\), the identities \(e(Ja)=0\) and
\(\mathfrak m(Ja,w)=0\) give
\[
  \inner{A_{10}(\zeta)a}{w}
  =
  -\delta
  \inner{\Lambda(\zeta)Ba}{\gamma w}_{\Sigma}.
\]
The trace theorem and Proposition~\ref{prop:dtn-analytic} yield
\(\|A_{10}\|\le C\delta\).  The same argument, with the duality order
reversed, gives \(\|A_{01}\|\le C\delta\); no adjoint identification for
complex \(\zeta\) is used here.  Combining these bounds with the uniform
estimate for \(G_\delta^{-1}\) proves \eqref{eq:block-uniform}.
\end{proof}

\begin{lemma}
\label{lem:typed-block-estimates}
Uniformly for \(|\zeta|\le C_{\mathrm{fr}}\delta^{1/2}\),
\begin{align}
  &\norm{A_{01}(\zeta)}
   +\norm{A_{10}(\zeta)}
   \le C\delta,
  \label{eq:offdiag-order}\\
  &\norm{\partial_\zeta A_{01}(\zeta)}
   +\norm{\partial_\zeta A_{10}(\zeta)}
   \le C\delta,
  \label{eq:offdiag-derivative-order}\\
  &\norm{\partial_\zeta^2 A_{01}(\zeta)}
   +\norm{\partial_\zeta^2 A_{10}(\zeta)}
   \le C\delta,
  \label{eq:offdiag-second-derivative-order}\\
  &\norm{\partial_\zeta G_\delta(\zeta)}
   \le C(\delta^{1/2}+\delta),
   \qquad
   \norm{\partial_\zeta G_\delta(\zeta)^{-1}}
   \le C(\delta^{1/2}+\delta),
  \label{eq:G-derivative-order}
  \\
  &\norm{\partial_\zeta^2G_\delta(\zeta)}
   +\norm{\partial_\zeta^2G_\delta(\zeta)^{-1}}
   \le C.
  \label{eq:G-second-derivative-order}
\end{align}
All norms are taken in the typed domain--codomain spaces specified above.
\end{lemma}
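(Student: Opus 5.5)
The plan is to differentiate the explicit formulas for the blocks, using the $\zeta$-dependence of $\cA_\delta(\zeta)$ given by \eqref{eq:interior-form}. For $w\in\cV_\perp$ and $a\in\C^m$ the rigid field satisfies $e(Ja)=0$ and $\mathfrak m(Ja,w)=0$. Hence, as already observed in the proof of Proposition~\ref{prop:exact-variational-inverse},
\[
  \inner{A_{10}(\zeta)a}{w}=-\delta\inner{\Lambda(\zeta)Ba}{\gamma w}_{\Sigma},
  \qquad
  \inner{A_{01}(\zeta)w}{e_q}=-\delta\inner{\Lambda(\zeta)\gamma w}{B e_q}_{\Sigma}.
\]
Neither off-diagonal block contains the mass or strain term; each is $\delta$ times a fixed bounded trace pairing applied to $\Lambda(\zeta)$. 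Consequently $\partial_\zeta^kA_{10}$ and $\partial_\zeta^kA_{01}$ are $\delta$ times the same pairings with $\partial_\zeta^k\Lambda(\zeta)$. Proposition~\ref{prop:dtn-analytic} gives $\partial_\zeta\Lambda=\ii\Lambda_1+2\zeta\Lambda_2+\zeta^2\partial_\zeta\Lambda_2$ and $\partial_\zeta^2\Lambda=2\Lambda_2+4\zeta\partial_\zeta\Lambda_2+\zeta^2\partial_\zeta^2\Lambda_2$. These are uniformly bounded in $\mathcal B(H^{1/2},H^{-1/2})$ on $|\zeta|\le C_{\mathrm{fr}}\delta^{1/2}<r_0$, by the supremum bound stated there. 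Together with the trace theorem and $\|B\|\le C$, this proves \eqref{eq:offdiag-order}--\eqref{eq:offdiag-second-derivative-order}. The incoming germ is handled by replacing $\zeta$ with $-\zeta$.

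For the complement block, I would write
\[
  \inner{G_\delta(\zeta)w}{v}=\int_D C_De(w):\overline{e(v)}\dd x-\zeta^2\mathfrak m(w,v)-\delta\inner{\Lambda(\zeta)\gamma w}{\gamma v}_\Sigma ,
\]
so that
\[
  \partial_\zeta G_\delta=-2\zeta\,\mathfrak m|_{\cV_\perp}-\delta\,\gamma^*\partial_\zeta\Lambda\,\gamma ,
  \qquad
  \partial_\zeta^2G_\delta=-2\,\mathfrak m|_{\cV_\perp}-\delta\,\gamma^*\partial_\zeta^2\Lambda\,\gamma .
\]
The mass form is bounded $\cV_\perp\to\cV_\perp^*$, so $\|\partial_\zeta G_\delta\|\le C(|\zeta|+\delta)\le C(\delta^{1/2}+\delta)$ and $\|\partial_\zeta^2G_\delta\|\le C$. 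For the inverse, I would differentiate the identity $G_\delta G_\delta^{-1}=I$. This gives $\partial_\zeta G_\delta^{-1}=-G_\delta^{-1}(\partial_\zeta G_\delta)G_\delta^{-1}$, and
\[
  \partial_\zeta^2G_\delta^{-1}=2G_\delta^{-1}(\partial_\zeta G_\delta)G_\delta^{-1}(\partial_\zeta G_\delta)G_\delta^{-1}-G_\delta^{-1}(\partial_\zeta^2G_\delta)G_\delta^{-1}.
\]
Lemma~\ref{lem:complement-inverse} supplies the uniform bound $\|G_\delta^{-1}\|\le C$ on the whole disk, in both half-planes. The inverse bounds in \eqref{eq:G-derivative-order} and \eqref{eq:G-second-derivative-order} then follow, with the first-derivative bound inheriting the factor $\delta^{1/2}+\delta$.

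I expect the main obstacle to be justifying differentiation in $\zeta$ in operator norm. This requires knowing that $G_\delta(\zeta)$ is holomorphic as a map into $\mathcal B(\cV_\perp,\cV_\perp^*)$ on the full disk, including its lower-half-plane part, where $\Lambda$ is only an analytic continuation. Proposition~\ref{prop:dtn-analytic} asserts holomorphy of $\Lambda$ on $|\zeta|<r_0$ in exactly this norm. The polynomial dependence on $\zeta$ of the remaining terms is trivial. Holomorphy of $G_\delta^{-1}$ then follows from the uniform inverse bound, so the chain-rule formulas are legitimate. One point needs checking: the second-derivative bounds must rest on the uniform bound for $\partial_\zeta^2\Lambda_2$. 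Cauchy estimates alone are not enough here, because shrinking disks of radius $\sim\delta^{1/2}$ would lose powers of $\delta$.
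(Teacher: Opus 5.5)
Your proposal is correct and follows the paper's proof essentially step for step. The off-diagonal blocks reduce to $\delta$ times a trace pairing with $\Lambda(\zeta)$ and its $\zeta$-derivatives. The complement bounds come from the explicit formulas for $\partial_\zeta G_\delta$ and $\partial_\zeta^2 G_\delta$, combined with the differentiated inverse identity and the uniform bound of Lemma~\ref{lem:complement-inverse}.

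Your remarks on holomorphy only make explicit what the paper leaves implicit. One caveat on your Cauchy-estimate remark: it is only the bounds for $G_\delta$ and $G_\delta^{-1}$, known just on the $\delta^{1/2}$-disk, that cannot come from Cauchy estimates. Since $\Lambda$ is holomorphic on the fixed, $\delta$-independent disk $|\zeta|<r_0$, Cauchy estimates there would bound its derivatives just as well.
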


\begin{proof}
The formula
\[
  \inner{A_{10}(\zeta)a}{w}
  =
  -\delta
  \inner{\Lambda(\zeta)Ba}{\gamma w}_{\Sigma}
\]
and its left-block analogue give \eqref{eq:offdiag-order}.  Differentiating
once and twice, and using Proposition~\ref{prop:dtn-analytic}, gives
\eqref{eq:offdiag-derivative-order} and
\eqref{eq:offdiag-second-derivative-order}.  On the complement,
\[
  \partial_\zeta G_\delta(\zeta)
  =
  Q^*\bigl(-2\zeta\,\mathfrak m
  -\delta\,\gamma^*\partial_\zeta\Lambda(\zeta)\gamma\bigr)Q,
\]
which proves the first estimate in \eqref{eq:G-derivative-order}.
The second follows from
\[
  \partial_\zeta G_\delta^{-1}
  =
  -G_\delta^{-1}(\partial_\zeta G_\delta)G_\delta^{-1}
\]
and Lemma~\ref{lem:complement-inverse}.
Moreover,
\[
  \partial_\zeta^2G_\delta
  =Q^*\bigl(-2\mathfrak m
    -\delta\gamma^*\partial_\zeta^2\Lambda(\zeta)\gamma\bigr)Q,
\]
so \(\partial_\zeta^2G_\delta=O(1)\).  A second differentiation of the
inverse identity gives
\[
  \partial_\zeta^2G_\delta^{-1}
  =2G_\delta^{-1}(\partial_\zeta G_\delta)G_\delta^{-1}
    (\partial_\zeta G_\delta)G_\delta^{-1}
   -G_\delta^{-1}(\partial_\zeta^2G_\delta)G_\delta^{-1},
\]
which proves \eqref{eq:G-second-derivative-order}.
\end{proof}

\subsection{Expansion of the effective matrix}

Set
\begin{equation}
\label{eq:Gamma0-definition}
  \Gamma_0=B^*\Lambda_1B.
\end{equation}
Its explicit pressure/shear Gram representation and its rank are proved
in Section~\ref{sec:radiation}.

\begin{theorem}
\label{thm:effective-expansion}
Uniformly for
\[
  0<\delta<\delta_0,\qquad
  |\zeta|\le C_{\mathrm{fr}}\delta^{1/2},
\]
one has
\begin{equation}
\label{eq:effective-expansion}
  \cM_\delta(\zeta)
  =
  \delta K-\zeta^2I_m-\ii\delta\zeta\Gamma_0
  +E_\delta(\zeta),
\end{equation}
where \(E_\delta\) is holomorphic and
\begin{equation}
\label{eq:effective-error}
  \norm{E_\delta(\zeta)}
  \le
  C(\delta^2+\delta|\zeta|^2),
  \qquad
  \norm{\partial_\zeta E_\delta(\zeta)}
  \le
  C(\delta^2+\delta|\zeta|),
  \qquad
  \norm{\partial_\zeta^2E_\delta(\zeta)}
  \le C\delta.
\end{equation}
For real \(\omega\),
\[
  \cM_\delta^-(\omega)=\cM_\delta^+(\omega)^*.
\]
\end{theorem}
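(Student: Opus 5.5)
We expand the Feshbach formula \eqref{eq:feshbach-matrix} term by term. First we compute the diagonal block \(A_{00}(\zeta)=J^*\cA_\delta(\zeta)J\) from \eqref{eq:interior-form}. Since \(e(Ja)=0\) and the basis is mass-orthonormal,
\[
  A_{00}(\zeta)=-\zeta^2I_m-\delta B^*\Lambda(\zeta)B .
\]
Substituting the expansion \eqref{eq:dtn-basic-expansion} from Proposition~\ref{prop:dtn-analytic} gives
\[
  A_{00}(\zeta)
  =
  -\zeta^2I_m-\delta B^*\Lambda_0B-\ii\delta\zeta B^*\Lambda_1B
  -\delta\zeta^2B^*\Lambda_2(\zeta)B .
\]
By \eqref{eq:K-definition} we have \(-B^*\Lambda_0B=K\), and by \eqref{eq:Gamma0-definition} we have \(B^*\Lambda_1B=\Gamma_0\). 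So the first three terms are exactly \(\delta K-\zeta^2I_m-\ii\delta\zeta\Gamma_0\).

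The remainder is
\[
  E_\delta(\zeta)
  =
  -\delta\zeta^2B^*\Lambda_2(\zeta)B
  -A_{01}(\zeta)G_\delta(\zeta)^{-1}A_{10}(\zeta).
\]
It is holomorphic because \(\Lambda_2\), \(A_{01}\), \(A_{10}\) and \(G_\delta^{-1}\) are holomorphic on the disk. This holds by Proposition~\ref{prop:dtn-analytic} and Lemma~\ref{lem:complement-inverse}, and it also covers the lower-half-plane continuation.

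We estimate the two pieces of \(E_\delta\) separately.
\begin{itemize}
\item \emph{First piece.} Since \(B\) is bounded and \(\Lambda_2,\partial_\zeta\Lambda_2,\partial_\zeta^2\Lambda_2\) are uniformly bounded, this piece is \(O(\delta|\zeta|^2)\). Its first derivative is \(O(\delta|\zeta|)\) and its second derivative is \(O(\delta)\).
\item \emph{Second piece.} By Lemma~\ref{lem:typed-block-estimates} and Lemma~\ref{lem:complement-inverse}, the product \(A_{01}G_\delta^{-1}A_{10}\) has norm \(\le C\delta^2\). Differentiating with the Leibniz rule puts one derivative on one of the three factors. A derivative on an off-diagonal block keeps it \(O(\delta)\) by \eqref{eq:offdiag-derivative-order}, and a derivative on \(G_\delta^{-1}\) keeps it \(O(1)\) by \eqref{eq:G-derivative-order}, since \(\delta^{1/2}+\delta\le C\). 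So the first derivative is \(O(\delta^2)\). In the same way, using \eqref{eq:offdiag-second-derivative-order} and \eqref{eq:G-second-derivative-order}, the second derivative is \(O(\delta^2)\le C\delta\).
\end{itemize}
Summing the two pieces gives \eqref{eq:effective-error}. On \(|\zeta|\le C_{\mathrm{fr}}\delta^{1/2}\) all quantities lie in the range where these lemmas apply, provided \(\delta_0\) is small enough that \(C_{\mathrm{fr}}\delta_0^{1/2}<r_0\).

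For the symmetry \(\cM_\delta^-(\omega)=\cM_\delta^+(\omega)^*\) at real \(\omega\), we show that each block of the incoming family is the adjoint of the corresponding outgoing block. By Proposition~\ref{prop:dtn-analytic}, \(\Lambda^-(\omega)=\Lambda^+(\omega)^*\). At real frequency the elastic and mass terms in \eqref{eq:interior-form} are Hermitian forms, so \(\cA_\delta^-(\omega)=\cA_\delta^+(\omega)^*\) as maps \(\cV\to\cV^*\) under the anti-duality conventions. Taking adjoints in the block decomposition gives
\[
  (A_{00}^+)^*=A_{00}^-,\qquad
  (A_{01}^+)^*=A_{10}^-,\qquad
  (A_{10}^+)^*=A_{01}^-,\qquad
  (G_\delta^+)^*=G_\delta^- .
\]
The last identity also gives \(((G_\delta^+)^{-1})^*=(G_\delta^-)^{-1}\). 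Applying the adjoint to \eqref{eq:feshbach-matrix} therefore yields \(\cM_\delta^-\).

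The step that needs the most care is this typed adjoint bookkeeping. One has to check that \(J^*\) and \(Q^*\) are genuinely the Banach adjoints of \(J\) and \(Q\) for the anti-dual pairing. One also has to use the mass-orthonormality so that \(J^*\gamma^*=B^*\) and the \(\C^m\) inner product matches. The derivative estimates themselves are routine once Lemma~\ref{lem:typed-block-estimates} is available.
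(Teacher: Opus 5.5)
Your proposal is correct and follows the paper's proof essentially step by step. It makes the same computation of $A_{00}=-\zeta^2I_m-\delta B^*\Lambda(\zeta)B$ via the DtN expansion, uses the same split of $E_\delta$ into the explicit $-\delta\zeta^2B^*\Lambda_2(\zeta)B$ term and the Schur correction bounded by the Leibniz rule and Lemma~\ref{lem:typed-block-estimates}, and obtains the real-axis symmetry by the same adjoint argument, which you spell out block by block in more detail than the paper does.
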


\begin{proof}
Because \(e(Ja)=0\) and the rigid basis is mass orthonormal,
\[
  A_{00}(\zeta)
  =
  -\zeta^2I_m-\delta B^*\Lambda(\zeta)B.
\]
Using \eqref{eq:dtn-basic-expansion} and
\(K=-B^*\Lambda_0B\) gives
\[
  A_{00}(\zeta)
  =
  \delta K-\zeta^2I_m-\ii\delta\zeta\Gamma_0
  -
  \delta\zeta^2B^*\Lambda_2(\zeta)B.
\]
The last term is \(O(\delta|\zeta|^2)\) in matrix norm, uniformly in the
stated disk; its first and second derivatives are
\(O(\delta|\zeta|)\) and \(O(\delta)\), respectively.

For the Schur correction, Lemma~\ref{lem:typed-block-estimates} and
Lemma~\ref{lem:complement-inverse} give
\[
  \norm{A_{01}G_\delta^{-1}A_{10}}\le C\delta^2.
\]
Differentiating this product produces three terms.  The two differentiated
off-diagonal factors are \(O(\delta)\), while
\(\partial_\zeta G_\delta^{-1}=O(\delta^{1/2}+\delta)\).  Consequently
\[
  \norm{\partial_\zeta
  (A_{01}G_\delta^{-1}A_{10})}
  \le C\delta^2.
\]
The second-derivative bounds in
Lemma~\ref{lem:typed-block-estimates} give, by the same product rule,
\[
  \norm{\partial_\zeta^2
  (A_{01}G_\delta^{-1}A_{10})}
  \le C\delta^2.
\]
Combining the two contributions gives \eqref{eq:effective-error}.  For real
\(\omega\), the incoming and outgoing DtN maps are adjoints.  Since the
interior strain and mass forms are Hermitian, the typed Schur complements
satisfy
\[
  \cM_\delta^-(\omega)=\cM_\delta^+(\omega)^*.
\]
\end{proof}

\begin{proposition}
\label{prop:second-real-coefficient}
Let \(\lambda\) be an eigenvalue of \(K\), let \(P_\lambda\) be the
orthogonal projection onto \(\cX_\lambda=\Ker(K-\lambda I)\), and set
\[
  \omega_{\lambda,0}(\delta)=\delta^{1/2}\lambda^{1/2}.
\]
Then the matrix limit
\begin{equation}
\label{eq:Hlambda-definition}
  \mathsf H_\lambda
  :=
  \lim_{\delta\downarrow0}
  \delta^{-2}
  P_\lambda
  E_\delta^+(\omega_{\lambda,0}(\delta))
  P_\lambda
\end{equation}
exists in matrix norm and is Hermitian.  Moreover, uniformly for bounded
\(\sigma\in\R\),
\begin{equation}
\label{eq:second-real-cluster-expansion}
\begin{split}
  P_\lambda\operatorname{Re}_{\mathrm{op}}
  \cM_\delta^+\!
  \left(
    \delta^{1/2}\lambda^{1/2}
    +\delta^{3/2}\sigma
  \right)P_\lambda
  &=
  \delta^2
  P_\lambda
  \left(
    \mathsf H_\lambda
    -2\lambda^{1/2}\sigma I
  \right)
  P_\lambda\\
  &\quad+o(\delta^2),
\end{split}
\end{equation}
where
\(\operatorname{Re}_{\mathrm{op}}A=(A+A^*)/2\).
\end{proposition}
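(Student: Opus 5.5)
The plan is to start from the exact representation of the error term obtained in the proof of Theorem~\ref{thm:effective-expansion}:
\[
  E_\delta(\zeta)
  =
  -\delta\zeta^2B^*\Lambda_2(\zeta)B
  -A_{01}(\zeta)G_\delta(\zeta)^{-1}A_{10}(\zeta).
\]
I would then evaluate each of the two pieces at \(\zeta=\omega_{\lambda,0}(\delta)=\delta^{1/2}\lambda^{1/2}\) and extract the factor \(\delta^2\).

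For the first piece, \(\delta\zeta^2=\delta^2\lambda\). By Proposition~\ref{prop:dtn-analytic}, \(\Lambda_2(\zeta)-\Lambda_2(0)=O(|\zeta|)=O(\delta^{1/2})\) in operator norm. Hence \(\delta^{-2}\) times this piece converges to \(-\lambda B^*\Lambda_2(0)B\).

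For the Schur piece, I would use the identities \(\langle A_{10}(\zeta)a,w\rangle=-\delta\langle\Lambda(\zeta)Ba,\gamma w\rangle_\Sigma\) and its left analogue. These give \(\delta^{-1}A_{10}(\zeta)\to -Q^*\gamma^*\Lambda_0B\), and similarly for \(\delta^{-1}A_{01}\), with errors \(O(\delta^{1/2})\). Next, let \(G_*\) be the static strain form \(Q^*(\int_D C_De(\cdot):\overline{e(\cdot)})Q\), which is coercive by Lemma~\ref{lem:korn-complement}. Then \(G_\delta(\zeta)-G_*=O(|\zeta|^2+\delta)=O(\delta)\), and by the second resolvent identity together with Lemma~\ref{lem:complement-inverse}, \(G_\delta(\zeta)^{-1}\to G_*^{-1}\) in norm.

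Consequently \(\delta^{-2}E^+_\delta(\omega_{\lambda,0}(\delta))\) converges in matrix norm to
\[
  \mathsf E_*:=-\lambda B^*\Lambda_2(0)B-(\gamma^*\Lambda_0B)^*QG_*^{-1}Q^*(\gamma^*\Lambda_0B),
\]
and compressing by the fixed projection \(P_\lambda\) gives existence of \(\mathsf H_\lambda=P_\lambda\mathsf E_*P_\lambda\).

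The step I expect to require the most care is Hermiticity, since \(\Lambda(\zeta)\) is not self-adjoint off the real axis. I would derive it from the two real-frequency symmetries in Proposition~\ref{prop:dtn-analytic}, which combine to give \(\Lambda^+(-\omega)=\Lambda^+(\omega)^*\). Inserting the expansion \eqref{eq:dtn-basic-expansion} on both sides and comparing coefficients of \(1,\omega,\omega^2\) as \(\omega\to0\) shows three things: \(\Lambda_0=\Lambda_0^*\) (consistent with \eqref{eq:static-dtn-sign}), \(\Lambda_1=\Lambda_1^*\), and, after dividing by \(\omega^2\) and letting \(\omega\to0\), \(\Lambda_2(0)=\Lambda_2(0)^*\). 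Since \(G_*\) is the Hermitian strain form, \(G_*^{-1}\) is self-adjoint. The Schur term therefore has the form \(X^*G_*^{-1}X\) and is Hermitian, so \(\mathsf E_*\) and \(\mathsf H_\lambda\) are Hermitian. I would need to check carefully that the anti-duality conventions make the left block limit exactly the adjoint of the right one.

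For \eqref{eq:second-real-cluster-expansion}, put \(\omega=\delta^{1/2}\lambda^{1/2}+\delta^{3/2}\sigma\). The three terms of \eqref{eq:effective-expansion} are handled as follows.
\begin{itemize}[leftmargin=2em]
\item Since \(P_\lambda KP_\lambda=\lambda P_\lambda\), one has \(P_\lambda(\delta K-\omega^2)P_\lambda=(\delta\lambda-\omega^2)P_\lambda=(-2\delta^2\lambda^{1/2}\sigma-\delta^3\sigma^2)P_\lambda\).
\item The term \(-\ii\delta\omega\Gamma_0\) is anti-Hermitian for real \(\omega\), because \(\Gamma_0=B^*\Lambda_1B\) is Hermitian, so its \(\operatorname{Re}_{\mathrm{op}}\) vanishes.
\item For the error, the mean value theorem and \eqref{eq:effective-error} give \(\|E^+_\delta(\omega)-E^+_\delta(\omega_{\lambda,0})\|\le C\delta^{3/2}|\sigma|(\delta^2+\delta\cdot\delta^{1/2})=o(\delta^2)\), uniformly for bounded \(\sigma\).
\end{itemize}
Finally, \(P_\lambda E^+_\delta(\omega_{\lambda,0})P_\lambda=\delta^2\mathsf H_\lambda+o(\delta^2)\). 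Taking \(\operatorname{Re}_{\mathrm{op}}\) and using \(\mathsf H_\lambda=\mathsf H_\lambda^*\) yields \eqref{eq:second-real-cluster-expansion}.
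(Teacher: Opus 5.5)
Your proposal is correct and follows essentially the same route as the paper. The paper also splits $E_\delta$ into the DtN remainder $-\delta\zeta^2B^*\Lambda_2(\zeta)B$ and the Schur correction, and passes to the $\delta^{-2}$ limit via $\delta^{-1}A_{10},\delta^{-1}A_{01}$ and $G_\delta^{-1}\to G_0^{-1}$. It then gets Hermiticity from $\Lambda^+(-\omega)=\Lambda^+(\omega)^*$ together with $\widetilde A_{01}(0)=\widetilde A_{10}(0)^*$, and shifts the center using the first-derivative bound on $E_\delta$.
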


\begin{proof}
The remainder in Theorem~\ref{thm:effective-expansion} is the sum of the
explicit DtN contribution
\[
  -\delta\zeta^2B^*\Lambda_2(\zeta)B
\]
and the Schur correction
\[
  -A_{01}(\zeta)G_\delta(\zeta)^{-1}A_{10}(\zeta).
\]
At \(\zeta=\delta^{1/2}\lambda^{1/2}\), the first term divided by
\(\delta^2\) converges in matrix norm to
\[
  -\lambda B^*\Lambda_2(0)B.
\]
For the second term, write
\(A_{10}(\zeta)=\delta\,\widetilde A_{10}(\zeta)\) and
\(A_{01}(\zeta)=\delta\,\widetilde A_{01}(\zeta)\).
Proposition~\ref{prop:dtn-analytic} implies
\(\widetilde A_{10}(\zeta)\to\widetilde A_{10}(0)\) and
\(\widetilde A_{01}(\zeta)\to\widetilde A_{01}(0)\).
On \(\cV_\perp\),
\[
  G_\delta(\delta^{1/2}\lambda^{1/2})
  \longrightarrow
  G_0,
  \qquad
  \inner{G_0w}{v}
  =
  \int_D C_De(w):\overline{e(v)}\,\dd x,
\]
and Lemma~\ref{lem:korn-complement} gives
\(G_\delta^{-1}\to G_0^{-1}\) in operator norm.  Hence the Schur
correction divided by \(\delta^2\) also has a matrix-norm limit.
At zero frequency the symmetry
\(\Lambda^+(-\omega)=\Lambda^+(\omega)^*\) implies that
\(\Lambda_2(0)\) is self-adjoint.  Moreover,
\(\widetilde A_{01}(0)=\widetilde A_{10}(0)^*\) and
\(G_0=G_0^*\).  Hence the limiting Schur correction is Hermitian as well.
The full matrix limit therefore exists and is Hermitian, proving
\eqref{eq:Hlambda-definition}.

Finally,
\[
  \delta\lambda
  -
  \left(
    \delta^{1/2}\lambda^{1/2}
    +\delta^{3/2}\sigma
  \right)^2
  =
  -2\delta^2\lambda^{1/2}\sigma+O(\delta^3),
\]
while the Hermitian part of
\(-\ii\delta\omega\Gamma_0\) vanishes.  The derivative estimate in
\eqref{eq:effective-error} shows that replacing the center by the
\(O(\delta^{3/2})\) shifted point changes the Hermitian remainder by
\(o(\delta^2)\).  This proves \eqref{eq:second-real-cluster-expansion}.
\end{proof}

\begin{corollary}
\label{cor:rescaled-effective}
For \(\tau\) in a fixed compact subset of \((0,\infty)\),
\[
  \delta^{-1}
  \cM_\delta^\pm(\delta^{1/2}\tau)
  =
  K-\tau^2I_m
  \mp\ii\delta^{1/2}\tau\Gamma_0
  +
  \mathcal O(\delta)
\]
uniformly in \(\tau\).
\end{corollary}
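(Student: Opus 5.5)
The corollary follows by substituting $\zeta=\delta^{1/2}\tau$ into Theorem~\ref{thm:effective-expansion} and dividing by $\delta$. No new analysis is needed; we only check that the hypotheses of the theorem hold along this path and that the error rescales correctly.

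\emph{Admissibility.} For $\tau$ in a compact set $I\Subset(0,\infty)$ we have $|\zeta|=\delta^{1/2}\tau\le\tau_+\delta^{1/2}$. We choose the frequency constant $C_{\mathrm{fr}}\ge\tau_+$ when invoking Lemma~\ref{lem:complement-inverse} and Theorem~\ref{thm:effective-expansion}. This is legitimate because those results hold for any fixed $C_{\mathrm{fr}}$, with $\delta_0$ and $C$ depending on it. The real point $\omega=\delta^{1/2}\tau$ then lies in the closed disk where the holomorphic outgoing germ $\cM_\delta(\zeta)$ is defined, and $\cM_\delta^+(\omega)$ is its boundary value.

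\emph{Outgoing sign.} For the $+$ sign, Theorem~\ref{thm:effective-expansion} gives
\[
  \cM_\delta^+(\omega)=\delta K-\omega^2I_m-\ii\delta\omega\Gamma_0+E_\delta(\omega),
  \qquad
  \norm{E_\delta(\omega)}\le C(\delta^2+\delta\omega^2)=C(\delta^2+\delta^2\tau^2).
\]
Dividing by $\delta$ and using $\omega^2/\delta=\tau^2$ and $\delta\omega/\delta=\delta^{1/2}\tau$, we obtain
\[
  \delta^{-1}\cM_\delta^+(\delta^{1/2}\tau)=K-\tau^2I_m-\ii\delta^{1/2}\tau\Gamma_0+\mathcal O(\delta),
\]
with $\mathcal O(\delta)$ bounded by $C(1+\tau_+^2)\delta$ uniformly in $\tau\in I$.

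\emph{Incoming sign.} The $-$ sign follows from the real-frequency identity $\cM_\delta^-(\omega)=\cM_\delta^+(\omega)^*$ in Theorem~\ref{thm:effective-expansion}. Taking adjoints uses that $K$ and $\Gamma_0$ are Hermitian: $K$ by Proposition~\ref{prop:K-positive}, and $\Gamma_0=B^*\Lambda_1B$ with $\Lambda_1$ self-adjoint. The adjoint flips $-\ii$ to $+\ii$ and preserves the norm of the remainder. Equivalently, we could use the incoming germ $\zeta\mapsto-\zeta$ of Proposition~\ref{prop:dtn-analytic}, which changes the sign of the linear $\zeta$ term.

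\emph{Main subtlety.} The only point needing care is the uniformity of the constant in the choice of $C_{\mathrm{fr}}$ relative to $I$. We make it explicit by fixing $C_{\mathrm{fr}}=\tau_+$ at the outset.
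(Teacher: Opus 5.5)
Your proposal is correct and is the paper's argument: substitute $\zeta=\delta^{1/2}\tau$ into Theorem~\ref{thm:effective-expansion}, note that $\norm{E_\delta}\le C(\delta^2+\delta\omega^2)=C(1+\tau^2)\delta^2$, and divide by $\delta$. The details you add are correct and left implicit in the paper: the choice $C_{\mathrm{fr}}\ge\tau_+$, and the incoming sign via $\cM_\delta^-(\omega)=\cM_\delta^+(\omega)^*$ with $K$ and $\Gamma_0$ Hermitian.
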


\begin{proof}
Insert \(\zeta=\delta^{1/2}\tau\) into
\eqref{eq:effective-expansion}--\eqref{eq:effective-error} and divide
by \(\delta\).
\end{proof}

\section{The Pole-Subtracted Resonant Limiting Absorption Principle}
\label{sec:lap}

\subsection{Global source and reconstruction maps}

For \(\operatorname{Im}\zeta\ge0\), let
\[
  \cT(\zeta):\cV\longrightarrow H^1_{\mathrm{loc}}(\R^3)^3
\]
be the homogeneous reconstruction
\[
  (\cT(\zeta)u)|_D=u,\qquad
  (\cT(\zeta)u)|_\Omega=\cE(\zeta)\gamma u.
\]
Let \(\cW(\zeta)f\) be zero in \(D\) and equal to
\(\bfw_f(\zeta)\) in \(\Omega\).  Proposition~\ref{prop:bounded-reduction}
can then be written
\begin{equation}
\label{eq:global-from-interior}
  R_\delta(\zeta^2)
  =
  \cW(\zeta)
  +
  \cT(\zeta)\cA_\delta(\zeta)^{-1}\cJ_\delta(\zeta)
\end{equation}
for \(\operatorname{Im}\zeta>0\), with outgoing boundary values on the
positive real axis.  The incoming formula is its adjoint counterpart.

Fix \(s>1/2\) and a cutoff
\(\chi\in C_c^\infty(\R^3)\) which equals one on a neighborhood of
\(\overline D\).  Define
\begin{align}
  \cR_{\delta,\mathrm{reg}}(\zeta)
  &=
  \chi\cW(\zeta)\chi
  +
  \chi\cT(\zeta)
  QG_\delta(\zeta)^{-1}Q^*
  \cJ_\delta(\zeta)\chi,
  \label{eq:regular-block}\\
  \Phi_\delta(\zeta)
  &=
  \cT(\zeta)S_\delta^r(\zeta):
  \C^m\to H^1_{\mathrm{loc}}(\R^3)^3,
  \notag\\
  \Psi_\delta(\zeta)f
  &=
  L_\delta^\ell(\zeta)\cJ_\delta(\zeta)f,
  \qquad f\in L^2_{\mathrm{comp}}(\R^3)^3.
  \notag
\end{align}
Only \(\chi\Phi_\delta(\zeta)\) and \(\Psi_\delta(\zeta)\chi\) are assigned
the weighted operator topologies used below.  In particular, no uncut outgoing
continuation in the lower half-plane is claimed to lie in a polynomially
weighted space.  The cutoff is suppressed from the notation because it is
inserted in every resolvent identity.
The unadorned maps in this subsection use the outgoing germ and are also
written with a superscript \(+\).  Repeating the same construction with
\(\Lambda^-_{\mathrm{an}}\), the incoming exterior Poisson operator, and the
incoming zero-Dirichlet solution defines
\(\cR_{\delta,\mathrm{reg}}^-(\zeta)\),
\(\Phi_\delta^-(\zeta)\), \(\Psi_\delta^-(\zeta)\), and
\(\cM_\delta^-(\zeta)\).  Every local bound below holds for both germs.

\begin{proposition}
\label{prop:global-block-bounds}
For every fixed \(C_{\mathrm{fr}}>0\) there is \(C>0\) such that
\begin{align}
  &\norm{\cR_{\delta,\mathrm{reg}}(\zeta)}
  _{L_s^2\to H_{-s}^1}
  \le C,\label{eq:regular-bound}\\
  &\norm{\chi\Phi_\delta(\zeta)}
  _{\C^m\to H_{-s}^1}
  +
  \norm{\Psi_\delta(\zeta)\chi}
  _{L_s^2\to\C^m}
  \le C
  \label{eq:off-diagonal-bound}
\end{align}
whenever
\[
  0<\delta<\delta_0,\qquad
  |\zeta|\le C_{\mathrm{fr}}\delta^{1/2}.
\]
Here the maps in the lower half-plane are understood through the local
outgoing continuation of the layer potentials.  All three maps are
holomorphic in the open disk and continuous up to its real diameter in the
displayed operator topologies.  In addition,
\begin{equation}
\label{eq:global-block-derivative-bound}
  \norm{\partial_\zeta\cR_{\delta,\mathrm{reg}}(\zeta)}
  +\norm{\partial_\zeta(\chi\Phi_\delta(\zeta))}
  +\norm{\partial_\zeta(\Psi_\delta(\zeta)\chi)}
  \le C
\end{equation}
in the same operator topologies and parameter range.
\end{proposition}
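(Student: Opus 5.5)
The plan is to treat the three maps as compositions of factors that are already uniformly bounded and holomorphic. The bounds then follow from the operator-norm product rule, and the derivative bounds from the product rule or from Cauchy estimates on a slightly larger disk. Throughout, write \(\chi=\chi\widetilde\chi\), where \(\widetilde\chi\in C_c^\infty\) equals one near \(\supp\chi\cup\overline D\). Then the outer cutoff on \(\cT(\zeta)\) and \(\cW(\zeta)\) is exactly the cutoff family of Lemma~\ref{lem:cutoff-exterior-holomorphy}, and the inner cutoff on \(\cJ_\delta(\zeta)\) is the one handled in Proposition~\ref{prop:bounded-reduction}.

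\textbf{Factor bounds.} First I will record the bounds on each factor, uniformly for \(|\zeta|\le C_{\mathrm{fr}}\delta^{1/2}\) and \(\delta<\delta_0\), taking \(\delta_0\) so small that \(C_{\mathrm{fr}}\delta_0^{1/2}<r_1/2\).
\begin{enumerate}[label=(\alph*)]
\item By Lemma~\ref{lem:cutoff-exterior-holomorphy}, \(\chi\cT(\zeta):\cV\to H^1_{-s}\) and \(\chi\cW(\zeta)\chi:L^2_s\to H^1_{-s}\) are holomorphic on \(|\zeta|<r_1\), with bounded first derivatives on \(|\zeta|\le r_1/2\). These bounds are independent of \(\delta\), since no \(\delta\) enters these maps.
\item From \eqref{eq:source-functional}, \(\cJ_\delta(\zeta)\chi=\rho_D(\cdot)|_D+\delta\,\gamma^*T_0^\nu R^{\mathrm{out}}_{\Omega,D}(\zeta)(\rho_0\chi\,\cdot)\). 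The first term is \(\zeta\)-independent. The second is \(\delta\) times the third family of Lemma~\ref{lem:cutoff-exterior-holomorphy}. Hence \(\cJ_\delta\chi\) and \(\partial_\zeta(\cJ_\delta\chi)\) are bounded \(L^2_s\to\cV^*\).
\item By Lemma~\ref{lem:complement-inverse}, \(\|G_\delta^{-1}\|\le C\). By \eqref{eq:G-derivative-order}, \(\|\partial_\zeta G_\delta^{-1}\|\le C\). Also \(Q\) and \(Q^*\) are fixed bounded maps.
\item By \eqref{eq:block-uniform} and Lemma~\ref{lem:typed-block-estimates}, \(S^r_\delta=J+O(\delta)\) and \(L^\ell_\delta=J^*+O(\delta)\). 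Their \(\zeta\)-derivatives are \(O(\delta)\), because \(\partial_\zeta(QG_\delta^{-1}A_{10})\) is controlled by \eqref{eq:offdiag-order}--\eqref{eq:G-derivative-order}.
\end{enumerate}

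\textbf{Products and derivatives.} Next, \eqref{eq:regular-bound} follows from (a)--(c) applied to \eqref{eq:regular-block}. The bound \eqref{eq:off-diagonal-bound} follows from \(\chi\Phi_\delta=(\chi\cT)S^r_\delta\) and \(\Psi_\delta\chi=L^\ell_\delta(\cJ_\delta\chi)\), together with (a), (b) and (d). For \eqref{eq:global-block-derivative-bound} I will apply the Leibniz rule to each product. Every factor and every factor derivative is bounded, so each of the finitely many terms is \(O(1)\).

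\textbf{Holomorphy and boundary values.} All factors are holomorphic on the open disk. This holds for the cutoff families by Lemma~\ref{lem:cutoff-exterior-holomorphy}, for \(G_\delta^{-1}\) by Lemma~\ref{lem:complement-inverse}, and for \(A_{01}\), \(A_{10}\) through the holomorphic DtN germ of Proposition~\ref{prop:dtn-analytic}. Holomorphy in the full disk then gives continuity up to the real diameter. It also shows that, for real \(\zeta=\omega>0\), the values coincide with the boundary values from \(\operatorname{Im}\zeta>0\), because the germs restrict to the outgoing maps there. The incoming case is identical, using \(\zeta\mapsto-\zeta\).

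\textbf{Main obstacle.} I expect the delicate point to be the uniformity of the \(\cW\) and \(\cJ_\delta\) factors in weighted norms. Specifically, I need the uncut source \(f\in L^2_s\) to enter only through \(\chi f\), so that the compactly supported Lemma~\ref{lem:cutoff-exterior-holomorphy} applies. This requires the cutoff bookkeeping \(\chi=\chi\widetilde\chi\) above. It also requires noting that the regular block is defined with \(\chi\) on both sides, so no uncut continuation into the lower half-plane is ever needed.
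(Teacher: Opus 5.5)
Your proposal is correct and follows essentially the same route as the paper. You factor each block into the $\delta$-independent cutoff exterior families of Lemma~\ref{lem:cutoff-exterior-holomorphy}, the source map $\cJ_\delta\chi$, the uniformly coercive complement inverse, and $S_\delta^r=J+O(\delta)$, $L_\delta^\ell=J^*+O(\delta)$, and you then differentiate the products using the estimates of Lemma~\ref{lem:typed-block-estimates}. The extra bookkeeping $\chi=\chi\widetilde\chi$ is harmless but unnecessary, since that lemma already allows arbitrary cutoffs $\chi_0,\chi_1\in C_c^\infty$.
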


\begin{proof}
The two terms in \eqref{eq:regular-block} contain only the exterior
zero-Dirichlet solution, the homogeneous reconstruction, the source
map, and the complement inverse.  The first three are uniformly
bounded after cutoff by Proposition~\ref{prop:bounded-reduction} and
Lemma~\ref{lem:cutoff-exterior-holomorphy}; the last is uniformly bounded by
Lemma~\ref{lem:complement-inverse}.
This proves \eqref{eq:regular-bound}.

For the right map, Proposition~\ref{prop:exact-variational-inverse}
gives
\[
  S_\delta^r(\zeta)=J+\mathcal O_{\C^m\to\cV}(\delta),
\]
and the reconstruction map is uniformly bounded.  For the left map,
\[
  L_\delta^\ell(\zeta)
  =
  J^*+\mathcal O_{\cV^*\to\C^m}(\delta),
\]
and \(\cJ_\delta(\zeta)\chi\) is uniformly bounded.  This proves
\eqref{eq:off-diagonal-bound}.  Differentiating the explicit block formulas,
using \(\partial_\zeta\cA_\delta=O(\delta^{1/2})\) on the displayed disk and
\(\partial_\zeta G_\delta^{-1}
=-G_\delta^{-1}(\partial_\zeta G_\delta)G_\delta^{-1}\), proves
\eqref{eq:global-block-derivative-bound}; the differentiated exterior factors
are controlled by Lemma~\ref{lem:cutoff-exterior-holomorphy}.  Continuity at
the real axis follows from the corresponding boundary values of the exterior
layer potentials; the complement inverse stays uniformly coercive there.
\end{proof}

\subsection{Exact resolvent identity and the uniform LAP}

\begin{proposition}
\label{prop:exact-resolvent-identity}
For \(\operatorname{Im}\zeta>0\),
\begin{equation}
\label{eq:exact-resolvent-identity}
  \chi R_\delta(\zeta^2)\chi
  =
  \cR_{\delta,\mathrm{reg}}(\zeta)
  +
  \chi\Phi_\delta(\zeta)
  \cM_\delta(\zeta)^{-1}
  \Psi_\delta(\zeta)\chi.
\end{equation}
The identity is algebraic and contains no asymptotic remainder.
\end{proposition}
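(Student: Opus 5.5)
The plan is to combine the exterior reduction \eqref{eq:global-from-interior} with the exact Grushin--Feshbach inverse \eqref{eq:variational-inverse}, and then insert the cutoff \(\chi\) on both sides.

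Fix \(\operatorname{Im}\zeta>0\). Then \(\zeta^2\notin[0,\infty)\), so \(R_\delta(\zeta^2)\) exists. By Proposition~\ref{prop:bounded-reduction}, for compactly supported \(f\) the interior equation \(\cA_\delta(\zeta)u=\cJ_\delta(\zeta)f\) has the unique solution \(u=(R_\delta(\zeta^2)f)|_D\). Uniqueness holds because any kernel element of \(\cA_\delta(\zeta)\), reconstructed via \eqref{eq:reconstruction}, is an \(L^2\) eigenfunction of \(\cL_\delta\) with non-real eigenvalue \(\zeta^2\), which is impossible. Hence \(\cA_\delta(\zeta)\) is injective. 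It is also Fredholm of index zero, since it is a compact perturbation of a coercive form: the \(\zeta^2\mathfrak m\) term and the boundary term both factor through compact embeddings or traces. Therefore \(\cA_\delta(\zeta)\) is invertible.

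Proposition~\ref{prop:exact-variational-inverse} then shows that \(\cM_\delta(\zeta)\) is invertible and that \eqref{eq:variational-inverse} holds. One caveat: that proposition needs \(G_\delta(\zeta)\) invertible, which Lemma~\ref{lem:complement-inverse} supplies only for \(|\zeta|\le C_{\mathrm{fr}}\delta^{1/2}\). So I will state the identity in that disk, which is the regime of interest. Outside it, the same Schur-complement algebra works wherever \(G_\delta\) is invertible, but I will not claim more than that.

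Next I substitute \eqref{eq:variational-inverse} into \eqref{eq:global-from-interior}:
\[
  R_\delta(\zeta^2)f
  =\cW(\zeta)f
  +\cT(\zeta)QG_\delta(\zeta)^{-1}Q^*\cJ_\delta(\zeta)f
  +\cT(\zeta)S_\delta^r(\zeta)\cM_\delta(\zeta)^{-1}L_\delta^\ell(\zeta)\cJ_\delta(\zeta)f .
\]
I first take \(f=\chi g\), which is compactly supported, and then multiply on the left by \(\chi\). The first two terms are exactly \(\cR_{\delta,\mathrm{reg}}(\zeta)g\) by \eqref{eq:regular-block}. The last term is \(\chi\Phi_\delta(\zeta)\cM_\delta(\zeta)^{-1}\Psi_\delta(\zeta)\chi g\), by the definitions of \(\Phi_\delta\) and \(\Psi_\delta\). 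This proves \eqref{eq:exact-resolvent-identity} on the compactly supported sources \(\chi g\), \(g\in L^2_s\).

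Both sides are bounded operators \(L^2_s\to H^1_{-s}\). The left side is so because \(\operatorname{Im}\zeta>0\) and the cutoffs localize. The right side is so by Proposition~\ref{prop:global-block-bounds} together with the invertibility of the matrix \(\cM_\delta(\zeta)\). The identity therefore holds as an operator identity, and every step is an algebraic substitution, so it carries no asymptotic remainder.

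The main obstacle is justifying \eqref{eq:global-from-interior} itself, that is, that the reconstructed field is exactly the resolvent output. I expect to argue this as follows. The glued field \(\cT u+\cW f\) lies in \(H^1(\R^3)\), because the traces match and \(\bfw_f\) has zero trace. It satisfies the PDE on both sides of \(\Sigma\), and the traction transmission condition \(\delta^{-1}T_D^\nu u=T_0^\nu\bfu|_\Omega\), which is precisely the weak form \eqref{eq:interior-form}--\eqref{eq:source-functional}. It decays for \(\operatorname{Im}\zeta>0\). Hence it lies in the form domain and solves \((\cL_\delta-\zeta^2)\bfu=f\) weakly, and the resolvent's uniqueness gives equality.
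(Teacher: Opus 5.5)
Your argument is essentially the paper's proof: substitute the Grushin--Feshbach inverse \eqref{eq:variational-inverse} into \eqref{eq:global-from-interior}, identify the $QG_\delta^{-1}Q^*$ term with $\cR_{\delta,\mathrm{reg}}$ and the remaining term with $\Phi_\delta\cM_\delta^{-1}\Psi_\delta$, and insert the cutoffs. Your restriction to the disk $|\zeta|\le C_{\mathrm{fr}}\delta^{1/2}$ is also implicit in the paper, since that is where $\cM_\delta$ is defined.

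Your extra invertibility check is useful, but the Fredholm remark in it is inaccurate and in any case unnecessary. The operator $\gamma^*\Lambda(\zeta)\gamma$ is not compact: the trace onto $H^{1/2}(\Sigma)^3$ is not compact, and $\Lambda(\zeta)$ has order one. On that disk you do not need it, because the Grushin equivalence with the finite matrix $\cM_\delta(\zeta)$ already turns injectivity of $\cA_\delta(\zeta)$ into invertibility. Injectivity holds because $\zeta^2\notin[0,\infty)$ and $\cL_\delta\ge0$; note that $\zeta^2$ is real and negative when $\operatorname{Re}\zeta=0$, so "non-real" is not quite the right word.
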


\begin{proof}
Insert the exact variational inverse
\eqref{eq:variational-inverse} into
\eqref{eq:global-from-interior}.  The term containing
\(QG_\delta^{-1}Q^*\) is exactly
\(\cR_{\delta,\mathrm{reg}}\), and the remaining term factors as
\[
  \cT S_\delta^r
  \cM_\delta^{-1}
  L_\delta^\ell\cJ_\delta
  =
  \Phi_\delta\cM_\delta^{-1}\Psi_\delta.
\]
Multiplication by the two cutoffs gives
\eqref{eq:exact-resolvent-identity}.
\end{proof}

\begin{theorem}
\label{thm:resonant-lap}
Let \(I=[\tau_-,\tau_+]\Subset(0,\infty)\), \(s>1/2\), and
\(\omega=\delta^{1/2}\tau\) with \(\tau\in I\).  For the outgoing and
incoming analytic families, set
\[
  \bigl(\Phi_{\delta,\eps}^\pm(\omega),
  \cM_{\delta,\eps}^\pm(\omega),
  \Psi_{\delta,\eps}^\pm(\omega)\bigr)
  :=
  \bigl(\Phi_\delta^\pm(\zeta_{\omega,\eps}^\pm),
  \cM_\delta^\pm(\zeta_{\omega,\eps}^\pm),
  \Psi_\delta^\pm(\zeta_{\omega,\eps}^\pm)\bigr).
\]
There is \(c_I>0\) such that, uniformly for
\(0<\eps\le c_I\delta^{1/2}\),
\begin{align*}
  &\chi R_\delta\bigl((\omega\pm\ii\eps)^2\bigr)\chi
  -
  \chi\Phi_{\delta,\eps}^\pm(\omega)
  \bigl(\cM_{\delta,\eps}^\pm(\omega)\bigr)^{-1}
  \Psi_{\delta,\eps}^\pm(\omega)\chi\\
  &\hspace{5em}
  =\cR_{\delta,\mathrm{reg}}^\pm
  (\zeta_{\omega,\eps}^\pm),
\end{align*}
and the norm of this expression in
\(\mathcal B(L_s^2,H_{-s}^1)\) is bounded independently of
\((\delta,\tau,\eps)\).  Consequently, the limits
\[
  \cR_{\delta,\mathrm{reg}}^\pm(\omega)
  =
  \lim_{\eps\downarrow0}
  \left[
  \chi R_\delta\bigl((\omega\pm\ii\eps)^2\bigr)\chi
  -
  \chi\Phi_{\delta,\eps}^\pm(\omega)
  \bigl(\cM_{\delta,\eps}^\pm(\omega)\bigr)^{-1}
  \Psi_{\delta,\eps}^\pm(\omega)\chi
  \right]
\]
exist in
\(\mathcal B(L_s^2,H_{-s}^1)\) and satisfy
\[
  \sup_{\substack{0<\delta<\delta_0\\\tau\in I}}
  \norm{\cR_{\delta,\mathrm{reg}}^\pm
  (\delta^{1/2}\tau)}_{L_s^2\to H_{-s}^1}
  <\infty.
\]
For every \(0<\delta<\delta_0\) and \(\tau\in I\),
\(\cM_\delta^\pm(\delta^{1/2}\tau)\) is invertible and
\begin{equation}
\label{eq:boundary-resolvent-decomposition}
  \chi R_\delta^\pm(\omega)\chi
  =
  \cR_{\delta,\mathrm{reg}}^\pm(\omega)
  +
  \chi\Phi_\delta^\pm(\omega)
  \bigl(\cM_\delta^\pm(\omega)\bigr)^{-1}
  \Psi_\delta^\pm(\omega)\chi.
\end{equation}
\end{theorem}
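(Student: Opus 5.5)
The approach is to combine the exact algebraic identity of Proposition~\ref{prop:exact-resolvent-identity} with the uniform block bounds of Proposition~\ref{prop:global-block-bounds}, and then pass to the limit \(\eps\downarrow0\). First I will check that the complex frequency stays in the disk where the reduction is valid. For \(\tau\in I\) and \(0<\eps\le c_I\delta^{1/2}\),
\[
  |\zeta_{\omega,\eps}^\pm|\le(\tau_++c_I)\delta^{1/2},
\]
so we may take \(C_{\mathrm{fr}}=\tau_++c_I\), with \(c_I\) fixed once and for all (say \(c_I=1\)). Lemma~\ref{lem:complement-inverse} and Proposition~\ref{prop:global-block-bounds} then apply with constants independent of \((\delta,\tau,\eps)\). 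In the outgoing case \(\operatorname{Im}\zeta^+_{\omega,\eps}=\eps>0\), and \(\chi R_\delta\chi\) exists for this nonreal \(z\). Hence \(\cA_\delta\) is invertible through the bounded reduction of Proposition~\ref{prop:bounded-reduction}, and so \(\cM_\delta\) is invertible by Proposition~\ref{prop:exact-variational-inverse}. The identity \eqref{eq:exact-resolvent-identity} therefore holds exactly. Rearranged, it says that the difference displayed in the theorem equals \(\cR^+_{\delta,\mathrm{reg}}(\zeta^+_{\omega,\eps})\), whose norm is bounded by \eqref{eq:regular-bound}. The incoming case is the same construction with the incoming germs \(\Lambda^-_{\mathrm{an}}\), evaluated at \(\zeta^-_{\omega,\eps}\); equivalently, one takes adjoints of the outgoing case at the conjugate point.

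Next I will pass to the limit. By Proposition~\ref{prop:global-block-bounds}, \(\cR_{\delta,\mathrm{reg}}^\pm\) is continuous up to the real diameter in \(\mathcal B(L_s^2,H_{-s}^1)\), and the derivative bound \eqref{eq:global-block-derivative-bound} gives \(\norm{\cR(\zeta_{\omega,\eps})-\cR(\omega)}\le C\eps\). So the limit exists and inherits the uniform bound. This part is soft.

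The substantive step is invertibility of \(\cM_\delta^\pm(\delta^{1/2}\tau)\) at real frequency, for every small \(\delta\). I will not attempt this perturbatively from Corollary~\ref{cor:rescaled-effective}, since at \(\tau=\lambda_j^{1/2}\) the leading matrix \(K-\tau^2\) is singular and \(\Gamma_0\) has a kernel (force-dark directions). Instead I will use the exact equivalence in Proposition~\ref{prop:exact-variational-inverse}: \(\cM^\pm_\delta(\omega)\) is invertible if and only if \(\cA^\pm_\delta(\omega)\) is. Suppose \(\cA^+_\delta(\omega)u=0\). Then \(\cT^+(\omega)u\) is an outgoing homogeneous transmission solution at real \(\omega>0\): the reduction of Proposition~\ref{prop:bounded-reduction} with \(f=0\), read in reverse, supplies the interior equation and the traction condition \(T_D^\nu u=\delta\Lambda^+\gamma u\). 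The uniqueness argument in the proof of Proposition~\ref{prop:fixed-lap} (Betti identity, vanishing far fields, Rellich lemma, unique continuation) then gives \(u=0\). Since \(\cA^+_\delta(\omega)\) is a compact perturbation of a coercive operator, it is Fredholm of index zero, so injectivity yields invertibility. The incoming case follows from \(\cM^-=(\cM^+)^*\).

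Finally, I will prove \eqref{eq:boundary-resolvent-decomposition}. Continuity of \(\cM^\pm_\delta\) in \(\zeta\), which follows from Theorem~\ref{thm:effective-expansion}, together with invertibility at \(\omega\), gives \(\cM_{\delta,\eps}^{-1}\to\cM_\delta(\omega)^{-1}\) as \(\eps\downarrow0\) for fixed \(\delta\). Continuity of \(\chi\Phi\) and \(\Psi\chi\) then makes the pole term converge. Adding the regular limit and comparing with the fixed-\(\delta\) boundary value of Proposition~\ref{prop:fixed-lap}, which is the limit of the left-hand side, yields the decomposition. The main obstacle is verifying that \(\cT^+(\omega)u\) genuinely satisfies the outgoing radiation condition, so that the uniqueness argument of Proposition~\ref{prop:fixed-lap} applies. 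This requires identifying the real-axis boundary value of the outgoing germ \(\cE\) with the Kupradze-outgoing Poisson operator, which I will take from the layer-potential representation in Lemma~\ref{lem:cutoff-exterior-holomorphy}.
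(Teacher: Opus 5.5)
Your proposal is correct and follows essentially the same route as the paper. For \(\eps>0\) you use the exact identity of Proposition~\ref{prop:exact-resolvent-identity} together with the uniform bounds and boundary values of Proposition~\ref{prop:global-block-bounds}. You then exclude a real kernel by the Rellich and unique-continuation argument of Proposition~\ref{prop:fixed-lap}, and obtain the incoming case by adjunction, exactly as the paper does.

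The only difference is how you pass from injectivity to invertibility at real frequency. The paper works directly with \(\cM_\delta^+(\omega)a=0\), which yields the nonzero kernel vector \(S_\delta^r(\omega)a\) of \(\cA_\delta^+(\omega)\) as in the proof of Proposition~\ref{prop:exact-variational-inverse}. Since \(\cM_\delta^+(\omega)\) is a square matrix, injectivity already gives invertibility, so your Fredholm-index step for \(\cA_\delta^+(\omega)\) is correct but unnecessary.
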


\begin{proof}
For positive absorption, the first displayed identity is exactly
Proposition~\ref{prop:exact-resolvent-identity}; its uniform bound is
Proposition~\ref{prop:global-block-bounds}.  That proposition also gives
boundary values of the regular, right, and left blocks.  The finite-dimensional matrix has
a boundary value by Theorem~\ref{thm:effective-expansion}.  Hence
\eqref{eq:exact-resolvent-identity} passes to the real axis whenever
\(\cM_\delta^\pm(\omega)\) is invertible, and subtracting the
finite-rank term leaves the uniformly bounded regular block.

It remains to exclude a real kernel.  If
\(\cM_\delta^+(\omega)a=0\), Proposition~\ref{prop:exact-variational-inverse}
gives a nonzero interior solution
\(S_\delta^r(\omega)a\) of the homogeneous reduced equation.
Reconstruction produces a nonzero outgoing solution of the homogeneous
full transmission problem.  The Rellich and unique-continuation
argument in Proposition~\ref{prop:fixed-lap} forces this solution to
vanish, a contradiction.  The incoming case follows by adjunction.
\end{proof}

\begin{remark}
The full resolvent boundary value in Theorem~\ref{thm:resonant-lap}
is the usual fixed-\(\delta\) LAP.  The uniform statement concerns the
pole-subtracted quantity.  This distinction is essential: the next
subsections prove that the unsubtracted norm diverges at an unperturbed
subwavelength resonant center.
\end{remark}

\subsection{Uniform nondegeneracy of the resonant maps}

\begin{lemma}
\label{lem:resonant-map-nondegeneracy}
Fix \(C_{\mathrm{fr}}>0\).  For the holomorphic outgoing continuation, there are
\(c,C>0\) such that, uniformly for small \(\delta\) and
\[
  |\zeta|\le C_{\mathrm{fr}}\delta^{1/2},
\]
one has
\begin{equation}
\label{eq:right-lower-singular}
  \norm{\chi\Phi_\delta(\zeta)a}_{H_{-s}^1}
  \ge c|a|,
  \qquad a\in\C^m,
\end{equation}
and \(\Psi_\delta(\zeta)\chi\) has a right inverse
\(\mathcal Q_\delta(\zeta):\C^m\to L_s^2(\R^3)^3\) with
\[
  \Psi_\delta(\zeta)\chi\mathcal Q_\delta(\zeta)=I_m,
  \qquad
  \norm{\mathcal Q_\delta(\zeta)}\le C.
\]
The same assertions hold for the incoming continuation.  In particular they
hold for the boundary values \(\zeta=\omega\pm\ii0\),
\(\omega=\delta^{1/2}\tau\), \(\tau\in I\).
\end{lemma}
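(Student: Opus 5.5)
The plan is to reduce both assertions to the leading-order structure $S_\delta^r=J+\mathcal O(\delta)$, $L_\delta^\ell=J^*+\mathcal O(\delta)$ from Proposition~\ref{prop:exact-variational-inverse}. The map $\Phi_\delta(\zeta)=\cT(\zeta)S_\delta^r(\zeta)$ is then a small perturbation of $\cT(\zeta)J$. On $D$ we have $(\cT(\zeta)u)|_D=u$, and $\chi=1$ near $\overline D$. Hence
\[
  \norm{\chi\Phi_\delta(\zeta)a}_{H^1_{-s}(\R^3)}
  \ge c_\chi\norm{S_\delta^r(\zeta)a}_{H^1(D)}
  \ge c_\chi\bigl(\norm{Ja}_{H^1(D)}-C\delta|a|\bigr),
\]
where $c_\chi>0$ comes from the fact that $\langle x\rangle^{-s}$ is bounded below on the bounded set $D$. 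Since $J$ is injective on the finite-dimensional space $\C^m$, we have $\norm{Ja}_{H^1(D)}\ge\norm{Ja}_{L^2(D,\rho_D)}=|a|$ by mass-orthonormality, up to a constant depending on $\min\rho_{D,j}$. Taking $\delta$ small gives \eqref{eq:right-lower-singular}. This argument only uses the interior restriction, so it applies verbatim to the incoming germ, to the lower half-plane continuation, and to the boundary values. The uniformity comes from \eqref{eq:block-uniform}.

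For the left map, I will build the right inverse explicitly from interior sources. Define $\mathcal Q_0:\C^m\to L_s^2$ by $\mathcal Q_0b=\sum_q b_q\bfphi_q\mathbf 1_D$, the rigid field extended by zero outside $D$; it is bounded with $\chi\mathcal Q_0=\mathcal Q_0$. For $f$ supported in $D$, the exterior Dirichlet source vanishes, so $\bfw_f=0$. By \eqref{eq:source-functional}, $\cJ_\delta(\zeta)f=\mathfrak m(f,\cdot)$ exactly, with no $\delta$-correction. Then $J^*\cJ_\delta(\zeta)\mathcal Q_0b=(\mathfrak m(Jb,\bfphi_q))_q=b$. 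It follows that
\[
  \Psi_\delta(\zeta)\chi\mathcal Q_0
  =L_\delta^\ell(\zeta)\cJ_\delta(\zeta)\mathcal Q_0
  =I_m+\bigl(L_\delta^\ell(\zeta)-J^*\bigr)\mathfrak m(J\cdot,\cdot)
  =I_m+\mathcal O(\delta).
\]
The error is controlled by \eqref{eq:block-uniform} together with the trivial bound $\norm{\mathfrak m(Jb,\cdot)}_{\cV^*}\le C|b|$. For small $\delta$, the matrix $N_\delta(\zeta):=\Psi_\delta(\zeta)\chi\mathcal Q_0$ is invertible by a Neumann series with $\norm{N_\delta^{-1}}\le2$. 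I will set $\mathcal Q_\delta(\zeta)=\mathcal Q_0N_\delta(\zeta)^{-1}$, which is uniformly bounded and depends holomorphically or continuously on $\zeta$ exactly as $\Psi_\delta$ does.

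The incoming case is identical, using $\Lambda^-_{\mathrm{an}}$; only the bounds \eqref{eq:block-uniform} are needed, and these hold for both germs by Lemma~\ref{lem:complement-inverse}. The real-axis case follows by continuity of the blocks up to the real diameter (Proposition~\ref{prop:global-block-bounds}), since the bounds are uniform.

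The main obstacle is making sure the $\mathcal O(\delta)$ estimates for $S^r_\delta-J$ and $L^\ell_\delta-J^*$ hold uniformly on the whole disk $|\zeta|\le C_{\mathrm{fr}}\delta^{1/2}$, including the lower half-plane continuation, where the exterior maps are defined only after cutoff. The interior-source choice for $\mathcal Q_0$ is what sidesteps this issue on the left, because it makes the exterior Dirichlet contribution to $\cJ_\delta$ vanish identically. On the right, the cutoff $\chi=1$ near $\overline D$ lets us ignore the exterior reconstruction entirely when proving the lower bound.
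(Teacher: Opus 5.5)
Your proof is correct and follows the paper's argument. The right map is handled by restriction to \(D\), where \(\chi=1\) and the reconstruction is the identity, and the left map by the interior rigid sources \(f_a=Ja\mathbf 1_D\), for which \(\bfw_{f_a}=0\) for every \(\zeta\).

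The only difference is that the paper uses exact identities where you invoke the \(\mathcal O(\delta)\) bounds \eqref{eq:block-uniform}:
\begin{itemize}
\item Since \(PS_\delta^r(\zeta)=J\), one has \(\norm{S_\delta^r(\zeta)a}_{H^1(D)}\ge\norm{P}^{-1}\norm{Ja}_{H^1(D)}\) with no smallness of \(\delta\) needed.
\item Since \(Q^*\cJ_\delta(\zeta)f_a=\mathfrak m(Ja,\cdot)|_{\cV_\perp}=0\), the correction \(A_{01}G_\delta^{-1}Q^*\) annihilates it. Your \(N_\delta(\zeta)\) is therefore exactly \(I_m\), the Neumann series is superfluous, and \(f_a\) is itself a \(\zeta\)-independent right inverse.
\end{itemize}
One small slip: mass normalization gives \(\norm{Ja}_{L^2(D)}\ge(\max_j\rho_{D,j})^{-1/2}|a|\), so the constant depends on the maximum of the densities, not the minimum.
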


\begin{proof}
The proof is uniform in the whole complex disk.  Inside \(D\), reconstruction
is the identity.  Since \(PS_\delta^r=J\), the boundedness of
\(P:H^1(D)^3\to\cN\) gives the exact estimate
\[
  \norm{(\chi\Phi_\delta(\zeta)a)|_D}_{H^1(D)}
  =\norm{S_\delta^r(\zeta)a}_{H^1(D)}
  \ge \norm{P}_{H^1\to H^1}^{-1}\norm{Ja}_{H^1(D)}.
\]
The finite-dimensional map \(J:\C^m\to H^1(D)^3\) has a positive smallest
singular value, so this proves \eqref{eq:right-lower-singular} uniformly in the
whole disk.

For \(a\in\C^m\), take \(f_a=Ja\) in \(D\) and extend it by zero to
\(\R^3\).  Since \(\chi=1\) near \(D\), one has \(\chi f_a=f_a\).  The
exterior source term in \eqref{eq:source-functional} vanishes for every
\(\zeta\).  Mass orthogonality also gives
\(Q^*\cJ_\delta(\zeta)f_a=0\), while
\[
  J^*\cJ_\delta(\zeta)f_a=a.
\]
The definition of \(L_\delta^\ell\) therefore yields the exact identity
\[
  \Psi_\delta(\zeta)\chi f_a=a.
\]
Thus \(\mathcal Q_\delta a=f_a\) is a right inverse, independent of
\(\zeta\) and uniformly bounded.  The incoming statement follows from the
corresponding incoming analytic family, and the real-axis assertions follow by
boundary values.
\end{proof}

\begin{lemma}
\label{lem:finite-rank-transfer}
Let \(X,Y\) be Hilbert spaces,
\(A:\C^m\to X\), and \(B:Y\to\C^m\).  Suppose
\[
  \norm{Aa}_X\ge\alpha|a|,
\]
and suppose \(B\) has a right inverse of norm at most \(\beta^{-1}\).
Then, for every matrix \(S\),
\[
  \norm{ASB}_{Y\to X}
  \ge
  \alpha\beta\norm{S}_{\C^m\to\C^m}.
\]
\end{lemma}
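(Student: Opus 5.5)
The plan is to test \(ASB\) against vectors built from the right inverse of \(B\), so that \(B\) and its right inverse cancel and the lower bound on \(A\) can be applied directly. Fix a right inverse \(\mathcal Q:\C^m\to Y\) of \(B\), so that \(B\mathcal Q=I_m\) and \(\norm{\mathcal Q}\le\beta^{-1}\). If \(S=0\) there is nothing to prove, so assume \(S\ne0\).

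Choose a unit vector \(b\in\C^m\) with \(|Sb|=\norm{S}\); this exists because the unit sphere of \(\C^m\) is compact and the operator norm is attained in finite dimensions. Set \(y=\mathcal Q b\in Y\). Then \(\norm{y}_Y\le\beta^{-1}\), and \(y\neq0\), since otherwise \(b=B\mathcal Q b=0\).

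Using \(B\mathcal Q=I_m\) together with the lower bound on \(A\),
\[
  \norm{ASBy}_X=\norm{ASb}_X\ge\alpha|Sb|=\alpha\norm{S}.
\]
Dividing by \(\norm{y}_Y\le\beta^{-1}\) gives
\[
  \norm{ASB}_{Y\to X}\ge\frac{\norm{ASBy}_X}{\norm{y}_Y}\ge\alpha\beta\norm{S}.
\]

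No step here is a genuine obstacle. The only point needing care is that the norm-attaining vector exists, which holds because \(m\) is finite; an approximating sequence of unit vectors would work equally well. The argument uses neither the Hilbert structure of \(X,Y\) nor any adjoints. It applies directly with \(A=\chi\Phi_\delta^\pm(\omega)\), \(B=\Psi_\delta^\pm(\omega)\chi\), and \(S=(\cM_\delta^\pm(\omega))^{-1}\), with \(\alpha=c\) and \(\beta=C^{-1}\) taken from Lemma~\ref{lem:resonant-map-nondegeneracy}.
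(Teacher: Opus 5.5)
Your argument is correct and is essentially the paper's proof. Both evaluate \(ASB\) on the image, under the right inverse of \(B\), of a unit vector that maximizes \(|Sx|\). The only difference is that you use an attained maximizer (available since \(m<\infty\)), whereas the paper takes unit vectors \(x_n\) with \(|Sx_n|\to\norm{S}\) and passes to the limit.
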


\begin{proof}
Choose unit vectors \(x_n\) with
\(|Sx_n|\to\norm{S}\), and let \(y_n\in Y\) satisfy
\(By_n=x_n\) and \(\norm{y_n}\le\beta^{-1}\).  Then
\[
  \norm{ASB}
  \ge
  \frac{\norm{ASx_n}}{\norm{y_n}}
  \ge
  \alpha\beta|Sx_n|.
\]
Letting \(n\to\infty\) proves the result.
\end{proof}

\begin{theorem}
\label{thm:sharp-resolvent-law}
Under the hypotheses of Theorem~\ref{thm:resonant-lap},
\begin{equation}
\label{eq:sharp-two-sided}
  c\norm{\bigl(\cM_\delta^\pm(\omega)\bigr)^{-1}}-C
  \le
  \norm{\chi R_\delta^\pm(\omega)\chi}_{L_s^2\to H_{-s}^1}
  \le
  C\left(
  1+\norm{\bigl(\cM_\delta^\pm(\omega)\bigr)^{-1}}
  \right).
\end{equation}
Thus all loss of uniformity in the cutoff resolvent is finite
dimensional.
\end{theorem}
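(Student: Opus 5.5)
The plan is to read both inequalities in \eqref{eq:sharp-two-sided} off the exact real-axis decomposition \eqref{eq:boundary-resolvent-decomposition} of Theorem~\ref{thm:resonant-lap}. That theorem gives, for every \(0<\delta<\delta_0\) and \(\tau\in I\), invertibility of \(\cM_\delta^\pm(\omega)\) and
\[
  \chi R_\delta^\pm(\omega)\chi
  =
  \cR_{\delta,\mathrm{reg}}^\pm(\omega)
  +
  \chi\Phi_\delta^\pm(\omega)
  \bigl(\cM_\delta^\pm(\omega)\bigr)^{-1}
  \Psi_\delta^\pm(\omega)\chi ,
\]
with the regular block uniformly bounded in \(\mathcal B(L_s^2,H_{-s}^1)\).

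For the upper bound, apply the triangle inequality to this decomposition. Use the uniform bound on \(\cR_{\delta,\mathrm{reg}}^\pm\) from Theorem~\ref{thm:resonant-lap}, and the uniform bounds \eqref{eq:off-diagonal-bound} on \(\chi\Phi_\delta^\pm\) and \(\Psi_\delta^\pm\chi\) from Proposition~\ref{prop:global-block-bounds}. Both are taken at the real boundary values, which the proposition covers through continuity up to the real diameter, since \(\omega=\delta^{1/2}\tau\) lies in a disk \(|\zeta|\le C_{\mathrm{fr}}\delta^{1/2}\) with \(C_{\mathrm{fr}}\ge\tau_+\). This gives \(\norm{\chi R_\delta^\pm\chi}\le C+C^2\norm{(\cM_\delta^\pm)^{-1}}\).

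For the lower bound, apply Lemma~\ref{lem:finite-rank-transfer} with
\[
  X=H^1_{-s},\qquad Y=L^2_s,\qquad A=\chi\Phi_\delta^\pm(\omega),\qquad B=\Psi_\delta^\pm(\omega)\chi,\qquad S=(\cM_\delta^\pm(\omega))^{-1}.
\]
Lemma~\ref{lem:resonant-map-nondegeneracy} supplies the two hypotheses at the real boundary values. First, \(\norm{Aa}\ge c|a|\) holds uniformly. Second, the explicit right inverse \(\mathcal Q_\delta a=f_a\) (the rigid field \(Ja\) in \(D\), extended by zero) satisfies \(\norm{\mathcal Q_\delta}\le C\). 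Here \(\chi f_a=f_a\) because \(\chi=1\) near \(\overline D\), and \(f_a\) is compactly supported, so it lies in \(L_s^2\) with norm comparable to \(|a|\). The lemma then gives \(\norm{ASB}\ge (c/C)\norm{(\cM_\delta^\pm)^{-1}}\). Combining this with the reverse triangle inequality,
\[
  \norm{\chi R_\delta^\pm\chi}\ge \norm{ASB}-\norm{\cR_{\delta,\mathrm{reg}}^\pm}\ge c'\norm{(\cM_\delta^\pm)^{-1}}-C ,
\]
which is the left inequality of \eqref{eq:sharp-two-sided}.

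The only delicate point is making sure the nondegeneracy statements hold at the real boundary values with the same weighted topologies, not just in the open upper half-disk. I will handle this by noting that both estimates are proved uniformly on the whole complex disk. The right-map estimate only uses the interior restriction and \(PS_\delta^r=J\). The left identity \(\Psi_\delta f_a=a\) is exact for every \(\zeta\), since the exterior source term vanishes and \(Q^*\cJ_\delta f_a=0\). Both therefore pass directly to the boundary values \(\zeta=\omega\pm\ii0\). The incoming case is identical, using the incoming germs. Finally, I will rearrange into the form \(c(1+\norm{\cM^{-1}})\le1+\norm{\chi R\chi}\) stated in Theorem~\ref{thm:intro-main}; this is immediate, since when \(\norm{\cM^{-1}}\) is small the left side is bounded by a constant.
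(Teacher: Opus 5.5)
Your proposal is correct and follows the paper's own proof. The upper bound comes from the exact decomposition \eqref{eq:boundary-resolvent-decomposition} together with the uniform block bounds of Proposition~\ref{prop:global-block-bounds}. The lower bound comes from Lemma~\ref{lem:finite-rank-transfer}, with constants supplied by Lemma~\ref{lem:resonant-map-nondegeneracy}, followed by the reverse triangle inequality.
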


\begin{proof}
The upper estimate follows from
\eqref{eq:boundary-resolvent-decomposition} and
Proposition~\ref{prop:global-block-bounds}.  For the lower estimate,
apply Lemma~\ref{lem:finite-rank-transfer} with
\[
  A=\chi\Phi_\delta^\pm(\omega),\quad
  B=\Psi_\delta^\pm(\omega)\chi,\quad
  S=\bigl(\cM_\delta^\pm(\omega)\bigr)^{-1}.
\]
Lemma~\ref{lem:resonant-map-nondegeneracy} gives uniform constants
\(\alpha,\beta>0\).  Hence the norm of the finite-rank term is bounded
below by \(c\norm{\cM_\delta^\pm(\omega)^{-1}}\).  The regular term has
norm at most \(C\), and the reverse triangle inequality gives the
lower bound.
\end{proof}

\subsection{Stability away from, and blow-up at, resonance}

Set
\[
  \cM_{\delta,\mathrm{lead}}^\pm(\omega)
  =
  \delta K-\omega^2I_m
  \mp\ii\delta\omega\Gamma_0.
\]

\begin{proposition}
\label{prop:schur-stability}
If
\[
  C(\delta^2+\delta\omega^2)
  \norm{\bigl(\cM_{\delta,\mathrm{lead}}^\pm(\omega)\bigr)^{-1}}
  \le\frac12,
\]
then
\[
  \frac12\norm{\bigl(\cM_{\delta,\mathrm{lead}}^\pm(\omega)\bigr)^{-1}}
  \le
  \norm{\bigl(\cM_\delta^\pm(\omega)\bigr)^{-1}}
  \le
  2\norm{\bigl(\cM_{\delta,\mathrm{lead}}^\pm(\omega)\bigr)^{-1}},
\]
and
\[
  \norm{
  \bigl(\cM_\delta^\pm\bigr)^{-1}
  -
  \bigl(\cM_{\delta,\mathrm{lead}}^\pm\bigr)^{-1}}
  \le
  C(\delta^2+\delta\omega^2)
  \norm{\bigl(\cM_{\delta,\mathrm{lead}}^\pm\bigr)^{-1}}^2.
\]
\end{proposition}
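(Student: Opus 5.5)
This is a standard Neumann-series perturbation argument for matrices. By Theorem~\ref{thm:effective-expansion}, restricted to real \(\omega\) with the outgoing or incoming branch,
\[
  \cM_\delta^\pm(\omega)=\cM_{\delta,\mathrm{lead}}^\pm(\omega)+E_\delta^\pm(\omega),
  \qquad
  \norm{E_\delta^\pm(\omega)}\le C(\delta^2+\delta\omega^2).
\]
Take the constant \(C\) in the hypothesis to be this constant. We may assume \(\cM_{\delta,\mathrm{lead}}^\pm(\omega)\) is invertible, since the hypothesis involves the norm of its inverse. Write \(L=\cM_{\delta,\mathrm{lead}}^\pm(\omega)\) and \(E=E_\delta^\pm(\omega)\), and factor
\[
  \cM_\delta^\pm=L\bigl(I_m+L^{-1}E\bigr).
\]
The hypothesis gives \(\norm{L^{-1}E}\le\norm{L^{-1}}\,C(\delta^2+\delta\omega^2)\le\tfrac12\). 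Hence \(I_m+L^{-1}E\) is invertible by the Neumann series, with \(\norm{(I_m+L^{-1}E)^{-1}}\le 2\). This yields invertibility of \(\cM_\delta^\pm\) and the formula
\[
  (\cM_\delta^\pm)^{-1}=(I_m+L^{-1}E)^{-1}L^{-1}.
\]
The upper bound \(\norm{(\cM_\delta^\pm)^{-1}}\le 2\norm{L^{-1}}\) follows at once.

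For the lower bound, I would run the same argument in the other direction, \(L=\cM_\delta^\pm(I_m-(\cM_\delta^\pm)^{-1}E)\), but that requires smallness of \(\norm{(\cM_\delta^\pm)^{-1}E}\), which is not given directly. Instead I use the resolvent identity
\[
  (\cM_\delta^\pm)^{-1}-L^{-1}=-(\cM_\delta^\pm)^{-1}E\,L^{-1}.
\]
Combined with the upper bound, it gives
\[
  \norm{(\cM_\delta^\pm)^{-1}-L^{-1}}\le 2\norm{L^{-1}}^2\,C(\delta^2+\delta\omega^2),
\]
which is the stated difference estimate after enlarging \(C\). Also,
\[
  \norm{(\cM_\delta^\pm)^{-1}-L^{-1}}\le \norm{L^{-1}E}\,\norm{(\cM_\delta^\pm)^{-1}}\le\tfrac12\norm{(\cM_\delta^\pm)^{-1}}.
\]
Therefore \(\norm{L^{-1}}\le \tfrac32\norm{(\cM_\delta^\pm)^{-1}}\), i.e. \(\norm{(\cM_\delta^\pm)^{-1}}\ge\tfrac23\norm{L^{-1}}\ge\tfrac12\norm{L^{-1}}\).

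There is no real obstacle. The only care needed is bookkeeping of constants: the constant in the hypothesis and the one in the difference estimate should be compatible with the remainder bound of Theorem~\ref{thm:effective-expansion}, and the difference estimate may carry a factor \(2C\), which is absorbed by the paper's convention that \(C\) may change from line to line. The argument is identical for both signs, since \(E_\delta^-(\omega)=E_\delta^+(\omega)^*\) satisfies the same bound.
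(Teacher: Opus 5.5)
Your proposal is correct and is essentially the paper's proof: the factorization \(\cM_\delta^\pm=L(I_m+L^{-1}E)\) with the Neumann series gives invertibility and the upper bound, and the resolvent identity gives the difference estimate. Your lower bound \(\norm{L^{-1}}\le\frac32\norm{(\cM_\delta^\pm)^{-1}}\) carries the same content as writing \(L^{-1}=(I_m+L^{-1}E)(\cM_\delta^\pm)^{-1}\).
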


\begin{proof}
Factor
\[
  \cM_\delta^\pm
  =
  \cM_{\delta,\mathrm{lead}}^\pm
  \left[
  I+
  \bigl(\cM_{\delta,\mathrm{lead}}^\pm\bigr)^{-1}
  E_\delta^\pm
  \right]
\]
and use the Neumann series.  The difference estimate follows from the
resolvent identity.
\end{proof}

\begin{corollary}
\label{cor:away-resonance}
Let \(I=[\tau_-,\tau_+]\Subset(0,\infty)\).  If \(\tau\in I\) and
\[
  \dist(\tau^2,\sigma(K))\ge c_0>0,
  \qquad \omega=\delta^{1/2}\tau,
\]
then
\[
  c\delta^{-1}
  \le
  \norm{\chi R_\delta^\pm(\omega)\chi}
  \le C\delta^{-1}.
\]
\end{corollary}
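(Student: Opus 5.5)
The argument runs entirely through the finite-dimensional matrix, combining Corollary~\ref{cor:rescaled-effective} with the two-sided law of Theorem~\ref{thm:sharp-resolvent-law}. Put \(\omega=\delta^{1/2}\tau\) and factor
\[
  \cM_\delta^\pm(\omega)=\delta\bigl(K-\tau^2I_m+Z_\delta^\pm(\tau)\bigr),
  \qquad
  Z_\delta^\pm(\tau)=\mp\ii\delta^{1/2}\tau\Gamma_0+\mathcal O(\delta).
\]
The remainder \(Z_\delta^\pm\) satisfies \(\norm{Z_\delta^\pm(\tau)}\le C\delta^{1/2}\), uniformly for \(\tau\in I\).

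Since \(K\) is Hermitian, its resolvent obeys
\[
  \norm{(K-\tau^2I_m)^{-1}}=\dist(\tau^2,\sigma(K))^{-1}\le c_0^{-1}.
\]
We take \(\delta_0\) small enough that \(C\delta^{1/2}c_0^{-1}\le\tfrac12\). A Neumann series, as in the proof of Proposition~\ref{prop:schur-stability} but applied to the rescaled matrix, then shows that \(K-\tau^2I_m+Z_\delta^\pm\) is invertible, with inverse norm between \(\tfrac12\dist(\tau^2,\sigma(K))^{-1}\) and \(2c_0^{-1}\). It follows that
\[
  \tfrac12\delta^{-1}\dist(\tau^2,\sigma(K))^{-1}
  \le\norm{(\cM_\delta^\pm(\omega))^{-1}}
  \le 2c_0^{-1}\delta^{-1}.
\]
For the lower bound we also need \(\dist(\tau^2,\sigma(K))\) bounded above independently of \(\delta\). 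This holds because \(\tau\in I\) is compact and \(\sigma(K)\) is a fixed finite set, so \(\dist(\tau^2,\sigma(K))\le\tau_+^2+\lambda_m\). Hence \(\norm{(\cM_\delta^\pm)^{-1}}\ge c'\delta^{-1}\).

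Inserting these bounds into \eqref{eq:sharp-two-sided} gives the upper estimate
\[
  \norm{\chi R_\delta^\pm\chi}\le C(1+2c_0^{-1}\delta^{-1})\le C'\delta^{-1}.
\]
It gives the lower estimate
\[
  \norm{\chi R_\delta^\pm\chi}\ge cc'\delta^{-1}-C\ge\tfrac12cc'\delta^{-1},
\]
after shrinking \(\delta_0\) so that the additive constant is absorbed. Invertibility of \(\cM_\delta^\pm\) on the real axis, and hence the decomposition \eqref{eq:boundary-resolvent-decomposition}, is already guaranteed by Theorem~\ref{thm:resonant-lap}.

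There is no serious obstacle. The only point needing care is uniformity: the constants in Corollary~\ref{cor:rescaled-effective} are uniform over the compact set \(I\), and the lower bound for \(\norm{(\cM_\delta^\pm)^{-1}}\) must use a \(\delta\)-independent upper bound on the distance to the spectrum, which compactness of \(I\) supplies.
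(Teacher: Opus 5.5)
Your proposal is correct and follows the paper's proof. Corollary~\ref{cor:rescaled-effective} makes the rescaled matrix \(K-\tau^2I_m\mp\ii\delta^{1/2}\tau\Gamma_0+\mathcal O(\delta)\) uniformly invertible, so \(\norm{(\cM_\delta^\pm(\omega))^{-1}}\asymp\delta^{-1}\), and the two-sided law of Theorem~\ref{thm:sharp-resolvent-law}, with the additive constant absorbed, transfers this to the cutoff resolvent. Your explicit Neumann-series bounds just fill in the step the paper states in one line; the lower bound on the inverse also follows directly from \(\norm{\cM_\delta^\pm(\omega)}\le C\delta\), without your upper bound on the spectral distance.
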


\begin{proof}
The rescaled matrix in Corollary~\ref{cor:rescaled-effective} is uniformly
invertible, so
\[
  c\delta^{-1}
  \le\norm{\cM_\delta^\pm(\omega)^{-1}}
  \le C\delta^{-1}.
\]
Apply both sides of Theorem~\ref{thm:sharp-resolvent-law} and absorb
the additive constant for small \(\delta\).
\end{proof}

\begin{remark}
Here ``away from resonance'' refers to separation from the static spectrum in
the rescaled variable \(\tau\).  The factor \(\delta^{-1}\) is the baseline
threshold amplification of the rigid channel for the source normalization in
\eqref{eq:source-functional}; it is not a pole enhancement.  Relative to this
baseline, a simple bright center gains an additional factor
\(\delta^{-1/2}\), while a second-order-bright dark center gains an additional
factor \(\delta^{-3/2}\).
\end{remark}

\begin{theorem}
\label{thm:noncommuting-limits}
Let \(\lambda_j\) be a simple eigenvalue of \(K\), let
\(|a_j|=1\), and assume
\[
  \gamma_j:=\inner{\Gamma_0a_j}{a_j}>0.
\]
At the unperturbed real frequency
\(\omega_{j,0}=\delta^{1/2}\lambda_j^{1/2}\),
\begin{equation}
\label{eq:bright-real-blowup}
  \norm{\chi R_\delta^+(\omega_{j,0})\chi}
  \asymp\delta^{-3/2}.
\end{equation}
More precisely, with \(P_j=a_j\otimes a_j^*\),
\begin{equation}
\label{eq:effective-scaled-limit}
  \delta^{3/2}
  \bigl(\cM_\delta^+(\omega_{j,0})\bigr)^{-1}
  \longrightarrow
  \frac{\ii}{\lambda_j^{1/2}\gamma_j}P_j
\end{equation}
in matrix norm.  Consequently convergence in the absorption parameter
is not uniform in \(\delta\) along this resonant path.  More explicitly,
fix \(\vartheta>0\) and set
\[
  \zeta_\delta=\omega_{j,0}+\ii\vartheta\delta.
\]
Then
\begin{equation}
\label{eq:absorbed-effective-scaled-limit}
  \delta^{3/2}\cM_\delta(\zeta_\delta)^{-1}
  \longrightarrow
  \frac{\ii}{\lambda_j^{1/2}(\gamma_j+2\vartheta)}P_j,
\end{equation}
and
\begin{equation}
\label{eq:absorption-nonuniform}
  \norm{
  \chi R_\delta(\zeta_\delta^2)\chi
  -\chi R_\delta^+(\omega_{j,0})\chi}
  \asymp\delta^{-3/2}.
\end{equation}
Thus \(\operatorname{Im}\zeta_\delta=\vartheta\delta\downarrow0\), but the
absorbed resolvent does not approach its boundary value uniformly in
\(\delta\).  The pole-subtracted family remains uniformly bounded.  No claim
about unscaled iterated limits at a fixed positive frequency is intended.
\end{theorem}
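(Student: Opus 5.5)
The plan is to analyze the effective matrix at the two relevant points through a Schur complement with respect to the splitting \(\C^m=\Span\{a_j\}\oplus a_j^\perp\), and then transfer the matrix statements to the resolvent via Theorem~\ref{thm:sharp-resolvent-law}.

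\textbf{The real-axis limit.} At \(\omega=\omega_{j,0}\), Theorem~\ref{thm:effective-expansion} gives
\[
  \cM_\delta^+(\omega_{j,0})
  =\delta(K-\lambda_jI)-\ii\delta^{3/2}\lambda_j^{1/2}\Gamma_0+\mathcal O(\delta^2).
\]
Write \(\Pi=I-P_j\). Since \(\lambda_j\) is simple, \(\Pi(K-\lambda_j)\Pi\) is invertible on \(a_j^\perp\) with a uniformly bounded inverse. Hence the block \(\Pi\cM\Pi\) has inverse of size \(\mathcal O(\delta^{-1})\), and the off-diagonal blocks are \(\mathcal O(\delta^{3/2})\), because \(K-\lambda_j\) annihilates \(a_j\). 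The scalar Schur complement is then
\[
  s_\delta=\inner{\cM a_j}{a_j}-\mathcal O(\delta^{3/2}\cdot\delta^{-1}\cdot\delta^{3/2})
  =-\ii\delta^{3/2}\lambda_j^{1/2}\gamma_j+\mathcal O(\delta^2).
\]
By the block inverse formula, \(\cM^{-1}=s_\delta^{-1}P_j+\mathcal O(\delta^{-1})\), where the correction collects \(\Pi\cM\Pi^{-1}\) and cross terms of size \(\delta^{-1}\cdot\delta^{3/2}\cdot\delta^{-3/2}=\delta^{-1}\). Multiplying by \(\delta^{3/2}\) gives the limit \(\ii/(\lambda_j^{1/2}\gamma_j)P_j\), which is \eqref{eq:effective-scaled-limit}. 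Theorem~\ref{thm:sharp-resolvent-law} then yields \eqref{eq:bright-real-blowup}, since \(\delta^{-3/2}\) dominates the additive constants.

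\textbf{The absorbed point.} At \(\zeta_\delta=\omega_{j,0}+\ii\vartheta\delta\) we have
\[
  \zeta_\delta^2=\delta\lambda_j+2\ii\vartheta\delta^{3/2}\lambda_j^{1/2}+\mathcal O(\delta^2),
\]
and the holomorphic expansion \eqref{eq:effective-expansion}, with the bound on \(E_\delta\) valid in the complex disk, gives
\[
  \cM_\delta(\zeta_\delta)=\delta(K-\lambda_j)-\ii\delta^{3/2}\lambda_j^{1/2}(2\vartheta+\Gamma_0)+\mathcal O(\delta^2).
\]
The same Schur argument produces \eqref{eq:absorbed-effective-scaled-limit}. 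The mechanism is that \(\vartheta\) enters only through the scalar \(2\vartheta\) added to \(\gamma_j\).

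\textbf{The difference of resolvents, \eqref{eq:absorption-nonuniform}.} Use Proposition~\ref{prop:exact-resolvent-identity} at \(\zeta_\delta\) together with \eqref{eq:boundary-resolvent-decomposition} at \(\omega_{j,0}\). The difference of the regular parts is bounded by Proposition~\ref{prop:global-block-bounds}. For the finite-rank parts, write
\[
  \Phi(\zeta_\delta)\cM(\zeta_\delta)^{-1}\Psi(\zeta_\delta)-\Phi(\omega)\cM(\omega)^{-1}\Psi(\omega)
  =\Phi(\omega)\bigl[\cM(\zeta_\delta)^{-1}-\cM(\omega)^{-1}\bigr]\Psi(\omega)+\text{errors}.
\]
The errors involve \(\Phi(\zeta_\delta)-\Phi(\omega)\) and \(\Psi(\zeta_\delta)-\Psi(\omega)\), which are \(\mathcal O(\delta)\) by the derivative bound \eqref{eq:global-block-derivative-bound}, multiplied by \(\norm{\cM^{-1}}=\mathcal O(\delta^{-3/2})\). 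They are therefore \(\mathcal O(\delta^{-1/2})\). By the two limits above, the bracket satisfies
\[
  \delta^{3/2}\bigl[\cM(\zeta_\delta)^{-1}-\cM(\omega)^{-1}\bigr]\to
  \frac{\ii}{\lambda_j^{1/2}}\Bigl(\frac1{\gamma_j+2\vartheta}-\frac1{\gamma_j}\Bigr)P_j\neq0 .
\]
Lemmas~\ref{lem:resonant-map-nondegeneracy} and~\ref{lem:finite-rank-transfer} then give a lower bound \(c\delta^{-3/2}\), and the upper bound is immediate.

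\textbf{Expected obstacle.} The step requiring most care is the Schur bookkeeping: the remainder \(\mathcal O(\delta^2)\) must be negligible against the \(\delta^{3/2}\) radiative term in the \(a_j\) direction. It must also not spoil the \(\delta^{-1}\) control on \(a_j^\perp\), and the cross terms must stay \(\mathcal O(\delta^{3/2})\) rather than \(\mathcal O(\delta)\). This last point holds because \((K-\lambda_j)a_j=0\), and it should be checked explicitly. Everything else is a direct application of the earlier results.
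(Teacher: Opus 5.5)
Your proposal is correct and follows essentially the same route as the paper's proof. Both use a Schur complement against $a_j^\perp$ at $\omega_{j,0}$ and at $\zeta_\delta$, where $\vartheta$ enters only through the scalar shift $2\vartheta$, and both transfer the result to the cutoff resolvent via Theorem~\ref{thm:sharp-resolvent-law}. For the difference, both use the same three-term splitting, an $O(\delta)\cdot O(\delta^{-3/2})=O(\delta^{-1/2})$ error bound, and Lemma~\ref{lem:finite-rank-transfer} with Lemma~\ref{lem:resonant-map-nondegeneracy} for the lower bound.
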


\begin{proof}
Let \(P_j=a_j\otimes a_j^*\) and \(Q_j=I-P_j\).
On \(Q_j\C^m\),
\[
  Q_j(\delta K-\omega_{j,0}^2I)Q_j
  =
  \delta Q_j(K-\lambda_jI)Q_j
\]
has inverse of norm \(O(\delta^{-1})\).  The off-diagonal blocks of
\(\cM_\delta^+(\omega_{j,0})\) are
\(\mathcal O(\delta^{3/2})\), and the error
\eqref{eq:effective-error} is \(\mathcal O(\delta^2)\).
Taking the Schur complement with respect to \(Q_j\) gives the scalar
block
\[
  -\ii\delta^{3/2}\lambda_j^{1/2}\gamma_j
  +
  \mathcal O(\delta^2).
\]
Its inverse is
\[
  \frac{\ii\delta^{-3/2}}
  {\lambda_j^{1/2}\gamma_j}
  +\mathcal O(\delta^{-1}),
\]
while every other block of the inverse is \(O(\delta^{-1})\).
This proves \eqref{eq:effective-scaled-limit}.
The two-sided estimate \eqref{eq:sharp-two-sided} then gives
\eqref{eq:bright-real-blowup}.

At \(\zeta_\delta=\omega_{j,0}+\ii\vartheta\delta\), the same Schur
complement calculation gives
\[
  P_j\cM_\delta(\zeta_\delta)P_j
  =-\ii\delta^{3/2}\lambda_j^{1/2}
  (\gamma_j+2\vartheta)P_j+\mathcal O(\delta^2),
\]
while all complementary inverse blocks remain \(O(\delta^{-1})\).  This
proves \eqref{eq:absorbed-effective-scaled-limit}.  Hence
\begin{equation}
\label{eq:absorbed-boundary-matrix-difference}
\begin{split}
  &\delta^{3/2}
  \left[
    \cM_\delta(\zeta_\delta)^{-1}
    -\cM_\delta^+(\omega_{j,0})^{-1}
  \right]\\
  &\qquad\longrightarrow
  \frac{\ii}{\lambda_j^{1/2}}
  \left(\frac1{\gamma_j+2\vartheta}-\frac1{\gamma_j}\right)P_j\ne0.
\end{split}
\end{equation}
Put
\[
  X_\delta(\zeta)=\chi\Phi_\delta(\zeta),
  \qquad
  Y_\delta(\zeta)=\Psi_\delta(\zeta)\chi,
  \qquad
  D_\delta=\cM_\delta(\zeta_\delta)^{-1}
  -\cM_\delta^+(\omega_{j,0})^{-1}.
\]
The two exact resolvent decompositions give
\begin{equation}
\label{eq:absorbed-resolvent-difference-expansion}
\begin{split}
  &\chi R_\delta(\zeta_\delta^2)\chi
  -\chi R_\delta^+(\omega_{j,0})\chi\\
  &={}
  \cR_{\delta,\mathrm{reg}}(\zeta_\delta)
  -\cR_{\delta,\mathrm{reg}}^+(\omega_{j,0})
  +X_\delta(\omega_{j,0})D_\delta Y_\delta(\omega_{j,0})
  +\operatorname{Err}_\delta,
\end{split}
\end{equation}
where, writing \(M_\zeta=\cM_\delta(\zeta_\delta)^{-1}\),
\[
\begin{split}
  \operatorname{Err}_\delta
  ={}&
  \bigl[X_\delta(\zeta_\delta)-X_\delta(\omega_{j,0})\bigr]
  M_\zeta Y_\delta(\zeta_\delta)\\
  &+X_\delta(\omega_{j,0})M_\zeta
  \bigl[Y_\delta(\zeta_\delta)-Y_\delta(\omega_{j,0})\bigr].
\end{split}
\]
By \eqref{eq:global-block-derivative-bound}, both bracketed differences are
\(O(\delta)\), while \(\|M_\zeta\|=O(\delta^{-3/2})\).  Hence
\(\|\operatorname{Err}_\delta\|=O(\delta^{-1/2})\); the regular-block difference is
bounded as well.  On the other hand,
\eqref{eq:absorbed-boundary-matrix-difference} gives
\(\|D_\delta\|\ge c_\vartheta\delta^{-3/2}\).  Apply
Lemma~\ref{lem:finite-rank-transfer} to the middle term in
\eqref{eq:absorbed-resolvent-difference-expansion}, using the lower bound and
exact right inverse from Lemma~\ref{lem:resonant-map-nondegeneracy}.  The
reverse triangle inequality proves the lower bound in
\eqref{eq:absorption-nonuniform}; the upper bound follows from the uniform
block bounds.  Uniform boundedness of the pole-subtracted part is
Theorem~\ref{thm:resonant-lap}.
\end{proof}

\section{Elastic Radiation, Resonance Poles, and Line Shapes}
\label{sec:radiation}

\subsection{Pressure and shear far-field maps}

Let \(g\in H^{1/2}(\Sigma)^3\) and
\(U_g^+(\omega)=\cE^+(\omega)g\).  As \(r=|x|\to\infty\),
\begin{equation}
\label{eq:far-field-expansion}
  U_g^+(x,\omega)
  =
  \frac{e^{\ii k_pr}}{r}
  F_p(\omega)g(\widehat x)
  +
  \frac{e^{\ii k_sr}}{r}
  F_s(\omega)g(\widehat x)
  +
  \mathcal O(r^{-2}),
\end{equation}
where
\[
  k_p=\omega\sqrt{\frac{\rho_0}{\lambda_0+2\mu_0}},
  \qquad
  k_s=\omega\sqrt{\frac{\rho_0}{\mu_0}},
\]
and
\[
  F_p(\omega)g(\widehat x)\parallel\widehat x,
  \qquad
  F_s(\omega)g(\widehat x)\cdot\widehat x=0.
\]
The remainder in \eqref{eq:far-field-expansion}, together with one
radial derivative, is uniform for \(g\) in bounded subsets of
\(H^{1/2}(\Sigma)^3\); see also \cite{AlvesKress2002}.

Set
\[
  c_p=(\lambda_0+2\mu_0)\frac{k_p}{\omega}
  =\sqrt{\rho_0(\lambda_0+2\mu_0)},
  \qquad
  c_s=\mu_0\frac{k_s}{\omega}
  =\sqrt{\rho_0\mu_0},
\]
and define
\[
  \cZ
  =
  L^2(\mathbb S^2)^3\oplus L^2(\mathbb S^2)^3,
\]
\begin{equation}
\label{eq:weighted-far-field-map}
  \cF(\omega)g
  =
  \bigl(\sqrt{c_p}F_p(\omega)g,
  \sqrt{c_s}F_s(\omega)g\bigr)\in\cZ.
\end{equation}
The operator imaginary part, rather than
\(\operatorname{Im}\inner{Ag}{h}\) for unrelated \(g,h\), is the
correct polarized object.

\begin{theorem}
\label{thm:optical-identity}
For every real \(\omega>0\),
\begin{equation}
\label{eq:operator-optical-identity}
  \operatorname{Im}_{\mathrm{op}}\Lambda^+(\omega)
  =
  \omega\cF(\omega)^*\cF(\omega)
\end{equation}
as an operator from \(H^{1/2}(\Sigma)^3\) to
\(H^{-1/2}(\Sigma)^3\).  Equivalently, for all \(g,h\),
\begin{align}
  &\frac1{2\ii}
  \left(
  \inner{\Lambda^+(\omega)g}{h}_{\Sigma}
  -
  \overline{\inner{\Lambda^+(\omega)h}{g}_{\Sigma}}
  \right)
  \notag\\
  &\qquad
  =
  \omega c_p
  \inner{F_p(\omega)g}{F_p(\omega)h}_{L^2(\mathbb S^2)}
  +
  \omega c_s
  \inner{F_s(\omega)g}{F_s(\omega)h}_{L^2(\mathbb S^2)}.
  \label{eq:polarized-optical}
\end{align}
\end{theorem}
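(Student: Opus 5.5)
The plan is to prove the polarized identity \eqref{eq:polarized-optical} directly by Betti's formula on a truncated exterior domain. The operator form \eqref{eq:operator-optical-identity} then follows by reading it as a statement about sesquilinear forms. Fix real \(\omega>0\) and \(g,h\in H^{1/2}(\Sigma)^3\), and set \(U=U_g^+(\omega)\), \(V=U_h^+(\omega)\). Both fields solve \(-\nabla\cdot(C_0e(\cdot))-\rho_0\omega^2(\cdot)=0\) in \(\Omega\). Since \(\omega\) is real, \(\overline V\) solves the same equation. Apply the second Betti identity to \(U\) and \(\overline V\) on \(\Omega_R=B_R\setminus\overline D\). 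The volume terms cancel exactly, because the coefficients are real and symmetric. This leaves only boundary integrals on \(\Sigma\) and on \(\partial B_R\).

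On \(\Sigma\), the normal \(\bfnu\) points into \(\Omega\), which is opposite to the outward normal of \(\Omega_R\). With the paper's convention \(\Lambda g=T_0^\nu U|_\Sigma\), the \(\Sigma\) contribution is
\[
  -\inner{\Lambda^+(\omega)g}{h}_\Sigma+\overline{\inner{\Lambda^+(\omega)h}{g}_\Sigma}.
\]
Here I use the paper's anti-dual convention to write \(\int_\Sigma T_0^\nu\overline V\cdot U=\overline{\inner{\Lambda h}{g}}\). For Lipschitz-type traces this pairing is justified by the weak traction definition, using \(H^1_{\mathrm{loc}}\) regularity near \(\Sigma\) from Lemma~\ref{lem:cutoff-exterior-holomorphy}. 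The identity therefore gives
\[
  \inner{\Lambda g}{h}-\overline{\inner{\Lambda h}{g}}
  =\int_{\partial B_R}\bigl(T^{\hat x}U\cdot\overline V-U\cdot T^{\hat x}\overline V\bigr)\dd S.
\]

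The main step, and the one I expect to be the real obstacle, is evaluating the \(\partial B_R\) integral as \(R\to\infty\). For this I insert the far-field expansion \eqref{eq:far-field-expansion} together with its radial derivative, which is uniform with \(O(r^{-2})\) remainders. I split \(U=U_p+U_s\) with \(U_p\parallel\hat x\) and \(U_s\perp\hat x\) at leading order. The leading tractions are
\[
  T^{\hat x}U_p\sim \ii k_p(\lambda_0+2\mu_0)U_p,
  \qquad
  T^{\hat x}U_s\sim \ii k_s\mu_0U_s.
\]
Tangential derivatives only contribute \(O(r^{-2})\). For the same-mode terms the integrand is then \(2\ii\,\omega c_\alpha r^{-2}F_\alpha g\cdot\overline{F_\alpha h}\), and integrating against \(R^2\dd\hat x\) gives \(2\ii\omega c_\alpha\inner{F_\alpha g}{F_\alpha h}\). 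The cross terms \(p\)--\(s\) have two sources of vanishing. Their pointwise dot products vanish by orthogonality of the polarizations. Their traction cross terms are also orthogonal at leading order, which I will check: for \(T^{\hat x}U_p\) the term \(\lambda_0\operatorname{div}U\,\hat x\) is parallel to \(\hat x\), and \(\mu_0(\partial_rU+\dots)\) keeps polarization. In any case they carry the oscillating factor \(e^{\ii(k_p-k_s)R}\) times a vanishing inner product. Remainders are \(O(R^{-1})\). Dividing by \(2\ii\) then gives \eqref{eq:polarized-optical}.

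Finally, the left side of \eqref{eq:polarized-optical} equals \(\inner{\operatorname{Im}_{\mathrm{op}}\Lambda^+g}{h}_\Sigma\), since \(\overline{\inner{\Lambda h}{g}}=\inner{(\Lambda)^*g}{h}\). The right side equals \(\omega\inner{\cF g}{\cF h}_\cZ=\inner{\omega\cF^*\cF g}{h}\). Boundedness of \(\cF(\omega):H^{1/2}(\Sigma)^3\to\cZ\) follows from the uniform far-field bound, so \(\cF^*\cF\) maps into \(H^{-1/2}(\Sigma)^3\). Since the two bounded sesquilinear forms agree for all \(g,h\), the operators coincide, which is \eqref{eq:operator-optical-identity}.
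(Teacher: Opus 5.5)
Your proposal is correct and follows essentially the same route as the paper. Both apply Betti's identity on \(B_R\setminus\overline D\), with the \(\Sigma\) terms producing \(\inner{\Lambda^+(\omega)g}{h}_{\Sigma}-\overline{\inner{\Lambda^+(\omega)h}{g}_{\Sigma}}\). Both evaluate the large-sphere integral through the leading tractions \(\ii k_p(\lambda_0+2\mu_0)U_p\) and \(\ii k_s\mu_0U_s\), discard the mixed terms by polarization orthogonality, and divide by \(2\ii\). Your remarks on the weak traction pairing and on passing from the sesquilinear identity to the operator identity only fill in details the paper leaves implicit.
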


\begin{proof}
Let \(U=\cE^+(\omega)g\) and \(V=\cE^+(\omega)h\).
Apply Betti's formula in
\(\Omega_R=B_R\setminus\overline D\).  The outward normal to
\(\Omega_R\) is \(-\bfnu\) on \(\Sigma\) and \(\widehat x\) on
\(\partial B_R\).  Therefore
\begin{align}
  &\inner{\Lambda^+(\omega)g}{h}_{\Sigma}
  -
  \overline{\inner{\Lambda^+(\omega)h}{g}_{\Sigma}}
  \notag\\
  &\qquad
  =
  \lim_{R\to\infty}
  \int_{\partial B_R}
  \left(
  T_rU\cdot\overline V
  -
  U\cdot\overline{T_rV}
  \right)\,\dd S.
  \label{eq:betti-large-sphere}
\end{align}

Write \(U=U_p+U_s\) and \(V=V_p+V_s\).  Direct differentiation of
\eqref{eq:far-field-expansion}, or the Kupradze condition, gives
\[
  T_rU_p
  =
  \ii k_p(\lambda_0+2\mu_0)U_p+\mathcal O(r^{-2}),
  \qquad
  T_rU_s
  =
  \ii k_s\mu_0U_s+\mathcal O(r^{-2}).
\]
The leading pressure field is radial and the leading shear field is
tangential.  Hence the mixed terms in
\eqref{eq:betti-large-sphere} are \(o(1)\) after integration.
The pressure contribution converges to
\[
  2\ii k_p(\lambda_0+2\mu_0)
  \inner{F_p(\omega)g}{F_p(\omega)h}_{L^2(\mathbb S^2)}
  =
  2\ii\omega c_p
  \inner{F_p(\omega)g}{F_p(\omega)h},
\]
and the shear contribution converges to
\[
  2\ii\omega c_s
  \inner{F_s(\omega)g}{F_s(\omega)h}.
\]
Division by \(2\ii\) proves \eqref{eq:polarized-optical} and hence
\eqref{eq:operator-optical-identity}.
\end{proof}

\subsection{Low-frequency force map and explicit constants}

Let \(\mathsf S_0\) be the static elastic single-layer boundary
operator from the proof of Proposition~\ref{prop:dtn-analytic}.  Define
the total-force map
\[
  \mathfrak q:
  H^{1/2}(\Sigma)^3\longrightarrow\C^3,
  \qquad
  \mathfrak q g
  =
  \int_\Sigma\mathsf S_0^{-1}g\,\dd S.
\]
Here the integral of an \(H^{-1/2}(\Sigma)^3\) density is understood
componentwise by duality against the constant function on \(\Sigma\).
The terminology follows from the traction jump relation: for static
Dirichlet data, \(\mathsf S_0^{-1}g\) is the jump of traction and its
integral is the resultant force, up to the fixed normal convention.

\begin{proposition}
\label{prop:far-field-low-frequency}
The map
\[
  \cF(\omega):
  H^{1/2}(\Sigma)^3\to\cZ
\]
is analytic at zero and
\begin{equation}
\label{eq:F-Taylor}
  \cF(\omega)
  =
  \cF_0+\omega\cF_1+\mathcal O(\omega^2).
\end{equation}
The first two coefficients satisfy the operator orthogonality relation
\begin{equation}
\label{eq:F0F1-orthogonality}
  \cF_0^*\cF_1+\cF_1^*\cF_0=0.
\end{equation}
Its leading pressure and shear components are
\begin{align}
  F_{p,0}g(\widehat x)
  &=
  \frac1{4\pi(\lambda_0+2\mu_0)}
  \widehat x\widehat x^{\mathsf T}\mathfrak qg,
  \label{eq:Fp0}\\
  F_{s,0}g(\widehat x)
  &=
  \frac1{4\pi\mu_0}
  (I-\widehat x\widehat x^{\mathsf T})\mathfrak qg.
  \label{eq:Fs0}
\end{align}
Consequently,
\begin{equation}
\label{eq:F0-force-gram}
  \cF_0^*\cF_0
  =
  \gamma_{\mathrm{el}}\mathfrak q^*\mathfrak q,
  \qquad
  \gamma_{\mathrm{el}}
  =
  \frac{\sqrt{\rho_0}}{12\pi}
  \left(
  \frac1{(\lambda_0+2\mu_0)^{3/2}}
  +
  \frac2{\mu_0^{3/2}}
  \right).
\end{equation}
\end{proposition}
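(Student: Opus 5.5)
We represent the outgoing exterior field by layer potentials and read off the far field from the Kupradze tensor. By the proof of Proposition~\ref{prop:dtn-analytic}, \(\cE^+(\omega)g=\mathbf{SL}(\omega)\mathsf S(\omega)^{-1}g\) in \(\Omega\), with \(\mathsf S(\omega)^{-1}\) holomorphic near zero. Split the Kupradze tensor into its pressure and shear parts,
\[
  \mathbf G^\omega=\mathbf G_p^\omega+\mathbf G_s^\omega ,
\]
where \(\mathbf G_p^\omega\) contains \(\nabla\nabla^{\mathsf T}\Phi_{k_p}\) and \(\mathbf G_s^\omega\) contains \(\Phi_{k_s}I\) and \(\nabla\nabla^{\mathsf T}\Phi_{k_s}\). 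For \(|x|\to\infty\) and \(y\in\Sigma\) one has the standard asymptotics
\[
  \mathbf G_p^\omega(x-y)\sim\frac{e^{\ii k_pr}}{4\pi(\lambda_0+2\mu_0)r}\,\widehat x\widehat x^{\mathsf T}e^{-\ii k_p\widehat x\cdot y},
\]
\[
  \mathbf G_s^\omega(x-y)\sim\frac{e^{\ii k_sr}}{4\pi\mu_0 r}\,(I-\widehat x\widehat x^{\mathsf T})e^{-\ii k_s\widehat x\cdot y}.
\]
For \(\mathbf G_p\) this uses \(k_p^2/(\rho_0\omega^2)=1/(\lambda_0+2\mu_0)\). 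Hence, with \(\mu=\mathsf S(\omega)^{-1}g\),
\[
  F_p(\omega)g(\widehat x)=\frac{\widehat x\widehat x^{\mathsf T}}{4\pi(\lambda_0+2\mu_0)}\int_\Sigma e^{-\ii k_p\widehat x\cdot y}\mu(y)\dd S_y ,
\]
and the analogous formula holds for \(F_s\). The exponential is entire in \(\omega\), uniformly for \(y\in\Sigma\) and \(\widehat x\in\mathbb S^2\), and \(\mu\) is holomorphic in \(H^{-1/2}(\Sigma)^3\). Pairing against smooth functions of \(y\) therefore gives analyticity of \(\cF(\omega)\) in \(\mathcal B(H^{1/2},\cZ)\) together with the Taylor expansion \eqref{eq:F-Taylor}. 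Setting \(\omega=0\) gives \(\int_\Sigma\mathsf S_0^{-1}g=\mathfrak qg\), which proves \eqref{eq:Fp0}--\eqref{eq:Fs0}.

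The Gram identity \eqref{eq:F0-force-gram} is an angular integration. Using \(\int_{\mathbb S^2}\widehat x\widehat x^{\mathsf T}\dd S=\frac{4\pi}{3}I\) and \(\int_{\mathbb S^2}(I-\widehat x\widehat x^{\mathsf T})\dd S=\frac{8\pi}{3}I\), we obtain
\[
  \cF_0^*\cF_0
  =\Bigl(\frac{c_p}{16\pi^2(\lambda_0+2\mu_0)^2}\cdot\frac{4\pi}{3}
  +\frac{c_s}{16\pi^2\mu_0^2}\cdot\frac{8\pi}{3}\Bigr)\mathfrak q^*\mathfrak q .
\]
Inserting \(c_p=\sqrt{\rho_0(\lambda_0+2\mu_0)}\) and \(c_s=\sqrt{\rho_0\mu_0}\) gives exactly \(\gamma_{\mathrm{el}}\).

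For the orthogonality relation \eqref{eq:F0F1-orthogonality}, the plan is to use Theorem~\ref{thm:optical-identity} together with Proposition~\ref{prop:dtn-analytic}. On one side,
\[
  \operatorname{Im}_{\mathrm{op}}\Lambda^+(\omega)=\omega\Lambda_1+\operatorname{Im}_{\mathrm{op}}(\omega^2\Lambda_2(\omega)).
\]
On the other side,
\[
  \omega\cF^*\cF=\omega\cF_0^*\cF_0+\omega^2(\cF_0^*\cF_1+\cF_1^*\cF_0)+O(\omega^3).
\]
Matching the \(\omega^2\) coefficients reduces the claim to showing that \(\operatorname{Im}_{\mathrm{op}}\Lambda_2(0)=0\), i.e. \(\Lambda_2(0)\) is self-adjoint. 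This self-adjointness is the main obstacle. I would derive it from the symmetry \(\Lambda^-(\omega)=\Lambda^+(\omega)^*\) and \(\Lambda^+(-\omega)=\Lambda^-(\omega)\), which together give \(\Lambda^+(-\omega)=\Lambda^+(\omega)^*\) for real \(\omega\). Expanding both sides, the even Taylor coefficients of \(\Lambda^+\) are self-adjoint and the odd ones are \(\ii\) times self-adjoint operators. In particular the \(\omega^2\) coefficient \(\Lambda_2(0)\) is self-adjoint, so its operator imaginary part vanishes, and \eqref{eq:F0F1-orthogonality} follows.

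As a consistency check, the \(\omega^1\) coefficients give \(\Lambda_1=\cF_0^*\cF_0\), which agrees with \(\Lambda_1\ge0\). If the symmetry argument turned out to be delicate in the stated topology, an alternative route is direct computation: \(\cF_1\) equals \(-\ii\) times a real operator (first moments \(\widehat x\cdot y\) together with \(\partial_\omega\mathsf S^{-1}(0)\)), while \(\cF_0\) is real. Then \(\cF_0^*\cF_1\) is purely imaginary and antisymmetric in the combined sense, so \(\cF_0^*\cF_1+\cF_1^*\cF_0=0\).
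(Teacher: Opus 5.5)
Your proposal is correct and follows the paper's proof essentially step for step: the representation $\cE^+(\omega)g=\mathbf{SL}(\omega)\mathsf S(\omega)^{-1}g$, the Kupradze far-field asymptotics followed by Taylor expansion of $e^{-\ii k\widehat x\cdot y}$ and of the density, the two spherical moment identities giving $\gamma_{\mathrm{el}}$, and the symmetry $\Lambda^+(-\omega)=\Lambda^+(\omega)^*$, which makes the $\omega^2$ coefficient of $\Lambda^+$ self-adjoint so that matching $\omega^2$ terms in the optical identity gives the orthogonality relation. Only your fallback ``direct computation'' does not close as written. Reality of $\cF_0$ and $\ii\cF_1$ gives only $\cF_0^*\cF_1+\cF_1^*\cF_0=-\ii(X-X^{\mathsf T})$ with $X:=\ii\cF_0^*\cF_1$ real, so you would still need to show that $X$ is symmetric. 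This is true here: the first-moment term pairs to zero with $\cF_0$ by parity on $\mathbb S^2$, and the term from $\partial_\omega\mathsf S(\omega)^{-1}|_{\omega=0}$ contributes symmetrically. Your main route does not need this.
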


\begin{proof}
The exterior solution is
\[
  U_g^+(\omega)
  =
  \mathbf{SL}(\omega)\mu_g(\omega),
  \qquad
  \mu_g(\omega)
  =
  \mathsf S(\omega)^{-1}g.
\]
Both factors are analytic at zero by Proposition~\ref{prop:dtn-analytic}.
The large-\(r\) expansion of the Kupradze
tensor gives
\begin{align*}
  F_p(\omega)g(\widehat x)
  &=
  \frac{\widehat x\widehat x^{\mathsf T}}
  {4\pi(\lambda_0+2\mu_0)}
  \int_\Sigma
  e^{-\ii k_p\widehat x\cdot y}
  \mu_g(\omega,y)\,\dd S_y,\\
  F_s(\omega)g(\widehat x)
  &=
  \frac{I-\widehat x\widehat x^{\mathsf T}}
  {4\pi\mu_0}
  \int_\Sigma
  e^{-\ii k_s\widehat x\cdot y}
  \mu_g(\omega,y)\,\dd S_y.
\end{align*}
Taylor expansion of the exponential and of \(\mu_g(\omega)\) proves
\eqref{eq:F-Taylor}--\eqref{eq:Fs0}.

To prove \eqref{eq:F0F1-orthogonality}, use
\(\Lambda^+(-\omega)=\Lambda^+(\omega)^*\).  The coefficient of
\(\omega^2\) in the Taylor expansion of \(\Lambda^+\) is therefore
self-adjoint.  Expanding the exact identity
\[
  \operatorname{Im}_{\mathrm{op}}\Lambda^+(\omega)
  =\omega\cF(\omega)^*\cF(\omega)
\]
shows that its left side has no term of order \(\omega^2\).  Comparing
that coefficient on the right gives
\eqref{eq:F0F1-orthogonality}.

For \(q\in\C^3\),
\[
  \int_{\mathbb S^2}
  |\widehat x\widehat x^{\mathsf T}q|^2\,\dd\widehat x
  =
  \frac{4\pi}{3}|q|^2,
  \qquad
  \int_{\mathbb S^2}
  |(I-\widehat x\widehat x^{\mathsf T})q|^2\,\dd\widehat x
  =
  \frac{8\pi}{3}|q|^2.
\]
Multiplying these identities by \(c_p\) and \(c_s\), respectively,
and using \eqref{eq:weighted-far-field-map} gives
\eqref{eq:F0-force-gram}.
\end{proof}

\begin{corollary}
\label{cor:Lambda1-explicit}
The first DtN coefficient is
\[
  \Lambda_1
  =
  \cF_0^*\cF_0
  =
  \gamma_{\mathrm{el}}\mathfrak q^*\mathfrak q.
\]
In particular, \(\rank\Lambda_1\le3\).
\end{corollary}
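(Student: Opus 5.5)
The plan is to compare the optical identity of Theorem~\ref{thm:optical-identity} with the DtN expansion \eqref{eq:dtn-basic-expansion}, order by order in \(\omega\). We then use the explicit leading far-field coefficients from Proposition~\ref{prop:far-field-low-frequency}. Both sides are analytic operator families from \(H^{1/2}(\Sigma)^3\) to \(H^{-1/2}(\Sigma)^3\), so identifying Taylor coefficients is legitimate once each side is expanded in operator norm.

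First, for real \(\omega\in(0,r_0)\), compute \(\operatorname{Im}_{\mathrm{op}}\Lambda^+(\omega)\) from \eqref{eq:dtn-basic-expansion}. Since \(\Lambda_0=\Lambda(0)\) is the static DtN map, it is self-adjoint by Betti reciprocity, so its operator imaginary part vanishes. Since \(\Lambda_1=\Lambda_1^*\), the operator imaginary part of \(\ii\omega\Lambda_1\) is exactly \(\omega\Lambda_1\). The remainder satisfies \(\operatorname{Im}_{\mathrm{op}}(\omega^2\Lambda_2(\omega))=\mathcal O(\omega^2)\) in operator norm, by the uniform bound on \(\Lambda_2\). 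Hence
\[
  \operatorname{Im}_{\mathrm{op}}\Lambda^+(\omega)=\omega\Lambda_1+\mathcal O(\omega^2).
\]

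Second, the right side of \eqref{eq:operator-optical-identity} expands via \eqref{eq:F-Taylor} as
\[
  \omega\cF(\omega)^*\cF(\omega)=\omega\cF_0^*\cF_0+\mathcal O(\omega^2).
\]
This uses that \(\cF(\omega)\) is real-analytic in operator norm near \(0\), so the adjoint expansion holds with uniformly bounded remainder. Dividing both sides by \(\omega\) and letting \(\omega\downarrow0\) gives \(\Lambda_1=\cF_0^*\cF_0\). Formula \eqref{eq:F0-force-gram} then yields
\[
  \Lambda_1=\gamma_{\mathrm{el}}\mathfrak q^*\mathfrak q.
\]
Since \(\mathfrak q\) maps into \(\C^3\), \(\mathfrak q^*\mathfrak q\) factors through \(\C^3\), and so \(\rank\Lambda_1\le3\).

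The only delicate point is the self-adjointness of \(\Lambda_0\) as a map into the anti-dual. Under the paper's conventions this follows from \eqref{eq:static-dtn-sign} polarized, or from the static Green formula applied to two decaying fields \(\cE(0)g\) and \(\cE(0)h\): the pairing \(\inner{\Lambda_0g}{h}_\Sigma\) equals \(-\int_\Omega C_0e(\cE(0)g):\overline{e(\cE(0)h)}\,\dd x\), which is Hermitian. Everything else is bookkeeping of operator-norm remainders.
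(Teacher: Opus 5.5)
Your proposal is correct and is essentially the paper's own argument: divide the optical identity \eqref{eq:operator-optical-identity} by \(\omega\), let \(\omega\downarrow0\) using \eqref{eq:dtn-basic-expansion} and \eqref{eq:F-Taylor}, and then invoke \eqref{eq:F0-force-gram}. Your explicit checks that \(\operatorname{Im}_{\mathrm{op}}\Lambda_0=0\) and that \(\operatorname{Im}_{\mathrm{op}}(\ii\omega\Lambda_1)=\omega\Lambda_1\) (using \(\Lambda_1=\Lambda_1^*\)) spell out what the paper's one-line proof leaves implicit.
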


\begin{proof}
Divide \eqref{eq:operator-optical-identity} by \(\omega\) and let
\(\omega\downarrow0\).  The left side converges to \(\Lambda_1\) by
\eqref{eq:dtn-basic-expansion}, and the right side converges to
\(\cF_0^*\cF_0\).  Proposition~\ref{prop:far-field-low-frequency} gives the
last identity.
\end{proof}

\subsection{The rank-three theorem}

\begin{theorem}
\label{thm:rank-three}
The leading radiation matrix is
\begin{equation}
\label{eq:Gamma0-force}
  \Gamma_0
  =
  \gamma_{\mathrm{el}}
  B^*\mathfrak q^*\mathfrak qB.
\end{equation}
Moreover,
\begin{equation}
\label{eq:rank-three}
  \rank\Gamma_0=3,\qquad
  \Ker\Gamma_0=\Ker(\mathfrak qB),\qquad
  \dim\Ker\Gamma_0=6N-3.
\end{equation}
\end{theorem}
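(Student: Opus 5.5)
The formula \eqref{eq:Gamma0-force} follows by inserting Corollary~\ref{cor:Lambda1-explicit} into the definition \eqref{eq:Gamma0-definition}: \(\Gamma_0=B^*\Lambda_1B=\gamma_{\mathrm{el}}B^*\mathfrak q^*\mathfrak qB=\gamma_{\mathrm{el}}(\mathfrak qB)^*(\mathfrak qB)\). Since \(\gamma_{\mathrm{el}}>0\), for every \(a\in\C^m\) we have \(\inner{\Gamma_0a}{a}=\gamma_{\mathrm{el}}|\mathfrak qBa|^2\). Hence \(\Ker\Gamma_0=\Ker(\mathfrak qB)\) and \(\rank\Gamma_0=\rank(\mathfrak qB)\le3\), because \(\mathfrak qB\) maps into \(\C^3\). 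Rank--nullity then gives \(\dim\Ker\Gamma_0=6N-\rank(\mathfrak qB)\). The whole theorem therefore reduces to showing that \(\mathfrak qB:\C^m\to\C^3\) is surjective.

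To prove surjectivity, I will test on rigid translations of the whole cluster. Fix \(c\in\C^3\) and take \(a_c\in\C^m\) with \(Ja_c=c\) on every \(D_j\); this is possible because the constant field lies in \(\cN\). Then \(Ba_c=c\) on \(\Sigma\), and \(\mu_c:=\mathsf S_0^{-1}c\) is the density whose static single-layer potential \(\mathbf{SL}(0)\mu_c\) equals \(c\) in \(D\). By the jump relation, the potential's exterior part is the decaying static field \(U^0_{a_c}\). I then need to show that the linear map \(c\mapsto\mathfrak q(c)=\int_\Sigma\mu_c\,\dd S\) is injective on \(\C^3\), i.e. an elastic ``capacitance'' (polarization) matrix \(\mathcal P\) with \(\mathcal Pc=\int_\Sigma\mu_c\).

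The main obstacle is this injectivity, and my first approach is an energy identity. The fundamental solution \(\mathbf G^0\) is real symmetric and the density is the traction jump, so
\[
  \inner{\mathcal P c}{c}=\int_\Sigma\mu_c\cdot\overline c\,\dd S=\inner{\mu_c}{\mathsf S_0\mu_c}_\Sigma .
\]
Green's formula in \(D\) and in \(\Omega\), with the decay of the static potential, turns the right side into \(\int_{\R^3}C_0e(\mathbf{SL}(0)\mu_c):\overline{e(\mathbf{SL}(0)\mu_c)}\,\dd x\), up to the normal-convention sign. Inside \(D\) the potential is constant, so only the exterior energy \(\inner{Ka_c}{a_c}\) survives. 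Proposition~\ref{prop:K-positive} then gives \(\inner{\mathcal Pc}{c}=\pm\inner{Ka_c}{a_c}\neq0\) for \(c\ne0\), because \(a_c\ne0\).

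Thus \(\mathcal P\) is definite, hence invertible, and \(\mathfrak qB\) is onto \(\C^3\). This gives \(\rank\Gamma_0=3\) and \(\dim\Ker\Gamma_0=6N-3\). I expect the only delicate bookkeeping to be the sign of the normal convention \(\bfnu\) in the jump relation, and this does not affect definiteness.
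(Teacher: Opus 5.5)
Your proposal is correct and is essentially the paper's proof: you get the formula from Corollary~\ref{cor:Lambda1-explicit}, the kernel and rank bound from $\inner{\Gamma_0a}{a}=\gamma_{\mathrm{el}}|\mathfrak qBa|^2$, and surjectivity by testing $\mathfrak qB$ on global translations and showing $\inner{\mathfrak qBa(t)}{t}=\inner{Ka(t)}{a(t)}>0$. The paper obtains that identity from $\mu_t=-\Lambda_0Ba(t)$, since the constant interior field has zero traction, whereas you pass through $\inner{\mu_c}{\mathsf S_0\mu_c}_\Sigma$ and Green's formula on both sides. With the paper's normal convention your sign is $+$. Also, identifying the exterior part with $U^0_{a_c}$ uses continuity of the single-layer trace together with uniqueness of the decaying exterior problem, not the jump relation.
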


\begin{proof}
Equation \eqref{eq:Gamma0-force} follows from
\eqref{eq:Gamma0-definition} and Corollary~\ref{cor:Lambda1-explicit}.  It gives
\[
  \inner{\Gamma_0a}{a}
  =
  \gamma_{\mathrm{el}}|\mathfrak qBa|^2,
\]
and therefore
\(\Ker\Gamma_0=\Ker(\mathfrak qB)\) and
\(\rank\Gamma_0\le3\).

It remains to prove surjectivity of \(\mathfrak qB\).
Let \(\cN_{\mathrm{tr}}\subset\cN\) be the three-dimensional space of
global translations, so that, for \(t\in\C^3\), there is
\(a(t)\in\C^m\) with
\[
  Ja(t)=t\quad\text{on every }D_j.
\]
Let \(\mu_t=\mathsf S_0^{-1}Ba(t)\).  With the convention used in
Proposition~\ref{prop:dtn-analytic}, the traction jump is
\[
  T_0^\nu\mathbf{SL}(0)\mu\big|_+
  -T_0^\nu\mathbf{SL}(0)\mu\big|_-=-\mu,
\]
where \(+\) and \(-\) denote the exterior and interior traces, respectively.
The static single-layer potential generated by \(\mu_t\) equals the
constant field \(t\) in every \(D_j\), whose interior traction is zero.
Hence \(\mu_t=-\Lambda_0Ba(t)\), and Green's identity gives
\[
  -\inner{\Lambda_0Ba(t)}{Ba(t)}_{\Sigma}
  =
  \inner{\mathfrak qBa(t)}{t}_{\C^3}
  >0
  \qquad (t\ne0),
\]
with no undetermined sign.  Hence the linear map
\[
  t\longmapsto\mathfrak qBa(t)
\]
has trivial kernel.  It is an endomorphism of \(\C^3\), so it is
invertible.  Thus \(\rank(\mathfrak qB)=3\), which proves
\eqref{eq:rank-three}.
\end{proof}

\begin{corollary}
\label{cor:polarization-ratio}
If \(\mathfrak qg\ne0\), the leading radiated pressure and shear powers
satisfy
\[
  \frac{
  c_p\norm{F_{p,0}g}_{L^2(\mathbb S^2)}^2}
  {
  c_s\norm{F_{s,0}g}_{L^2(\mathbb S^2)}^2}
  =
  \frac12
  \left(\frac{\mu_0}{\lambda_0+2\mu_0}\right)^{3/2}.
\]
Thus a nonzero leading force channel emits both polarizations in a
geometry-independent ratio; pressure-only and shear-only behavior can
first occur at higher multipole order.
\end{corollary}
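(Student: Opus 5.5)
This ratio follows directly from the explicit leading far-field formulas in Proposition~\ref{prop:far-field-low-frequency}. Set \(q=\mathfrak qg\in\C^3\), which is nonzero by hypothesis. By \eqref{eq:Fp0} and \eqref{eq:Fs0}, the leading pressure and shear far fields are fixed multiples of the radial and tangential projections of the single vector \(q\):
\[
  F_{p,0}g(\widehat x)=\frac{\widehat x\widehat x^{\mathsf T}q}{4\pi(\lambda_0+2\mu_0)},
  \qquad
  F_{s,0}g(\widehat x)=\frac{(I-\widehat x\widehat x^{\mathsf T})q}{4\pi\mu_0}.
\]
This shows the geometry of \(\Sigma\) enters only through \(q\), and \(q\) will cancel in the ratio.

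Next I use the two spherical integrals already recorded in the proof of Proposition~\ref{prop:far-field-low-frequency}. The radial projection gives \(\frac{4\pi}{3}|q|^2\), and the tangential projection gives \(\frac{8\pi}{3}|q|^2\). These follow from \(\int_{\mathbb S^2}\widehat x\widehat x^{\mathsf T}\dd\widehat x=\frac{4\pi}{3}I\). Hence
\[
  c_p\norm{F_{p,0}g}^2=\frac{c_p}{16\pi^2(\lambda_0+2\mu_0)^2}\cdot\frac{4\pi}{3}|q|^2,
  \qquad
  c_s\norm{F_{s,0}g}^2=\frac{c_s}{16\pi^2\mu_0^2}\cdot\frac{8\pi}{3}|q|^2 .
\]
The denominator is strictly positive because \(q\ne0\), so the ratio is well defined.

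Taking the quotient, \(|q|^2\) and the factor \(16\pi^2\) cancel, leaving
\[
  \frac12\cdot\frac{c_p\,\mu_0^2}{c_s(\lambda_0+2\mu_0)^2}.
\]
Now insert \(c_p=\sqrt{\rho_0(\lambda_0+2\mu_0)}\) and \(c_s=\sqrt{\rho_0\mu_0}\). Then \(\rho_0\) cancels, and \(c_p/c_s=(\lambda_0+2\mu_0)^{1/2}\mu_0^{-1/2}\), which yields \(\frac12\mu_0^{3/2}(\lambda_0+2\mu_0)^{-3/2}\) as claimed. As a cross-check, the two powers sum to \(\gamma_{\mathrm{el}}|q|^2\) in agreement with \eqref{eq:F0-force-gram}.

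The only point requiring care is the bookkeeping of the impedance weights \(c_p,c_s\) against the \((\lambda_0+2\mu_0)^{-2}\) and \(\mu_0^{-2}\) prefactors; there is no analytic obstacle. The closing remark about higher multipoles is interpretive: when \(\mathfrak qg=0\), both leading terms vanish and the ratio is decided by \(\cF_1\), which this argument does not address.
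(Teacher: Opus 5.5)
Your proposal is correct and follows essentially the same route as the paper: substitute \eqref{eq:Fp0}--\eqref{eq:Fs0}, apply the two spherical integrals $\tfrac{4\pi}{3}|q|^2$ and $\tfrac{8\pi}{3}|q|^2$, and cancel $|q|^2$ after inserting $c_p,c_s$. Your bookkeeping of the impedances is right, and your consistency check that the two powers sum to $\gamma_{\mathrm{el}}|q|^2$, as in \eqref{eq:F0-force-gram}, is a welcome addition.
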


\begin{proof}
Insert \eqref{eq:Fp0}--\eqref{eq:Fs0} and the two spherical integral
identities from Proposition~\ref{prop:far-field-low-frequency}.
\end{proof}

\subsection{Exact effective damping and the second radiation form}

Define the corrected boundary trace
\[
  B_\delta(\omega)
  =
  \gamma S_\delta^r(\omega):
  \C^m\to H^{1/2}(\Sigma)^3.
\]
By \eqref{eq:block-uniform},
\begin{equation}
\label{eq:Bdelta-expansion}
  B_\delta(\omega)=B+\mathcal O(\delta)
\end{equation}
uniformly for \(\omega=\delta^{1/2}\tau\), \(\tau\in I\).

\begin{theorem}
\label{thm:effective-optical}
For every real \(\omega>0\) and \(a,b\in\C^m\),
\begin{align}
  &-\frac1{2\ii}
  \left(
  \inner{\cM_\delta^+(\omega)a}{b}
  -
  \overline{\inner{\cM_\delta^+(\omega)b}{a}}
  \right)
  \notag\\
  &\qquad
  =
  \delta\omega
  \inner{
  \cF(\omega)B_\delta(\omega)a
  }{
  \cF(\omega)B_\delta(\omega)b
  }_{\cZ}.
  \label{eq:effective-optical-polarized}
\end{align}
Equivalently,
\begin{equation}
\label{eq:effective-optical-operator}
  -\operatorname{Im}_{\mathrm{op}}\cM_\delta^+(\omega)
  =
  \delta\omega
  B_\delta(\omega)^*
  \cF(\omega)^*\cF(\omega)
  B_\delta(\omega)
  \ge0.
\end{equation}
\end{theorem}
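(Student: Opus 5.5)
The identity will follow from the Feshbach structure (\ref{eq:feshbach-matrix}), the real-axis Hermitian symmetry of the interior forms, and the operator optical identity of Theorem~\ref{thm:optical-identity}. The point is that the imaginary part of the Schur complement is the imaginary part of \(\cA_\delta^+(\omega)\) tested on the corrected states \(S_\delta^r(\omega)a\).

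\textbf{Step 1: a sesquilinear representation of \(\cM_\delta^+\).} Write \(\cA=\cA_\delta^+(\omega)\) and \(S=S_\delta^r(\omega)\). Since \(Q^*\cA S=0\) and \(PS=J\), the vector \(\cA Sa\in\cV^*\) annihilates \(\cV_\perp\), and
\[
  \inner{\cM_\delta^+(\omega)a}{b}=\inner{J^*\cA Sa}{b}=\inner{\cA Sa}{Jb}.
\]
Because \(Sb-Jb\in\cV_\perp\) and \(\cA Sa\) annihilates \(\cV_\perp\), this equals \(\inner{\cA Sa}{Sb}\). By the same argument,
\[
  \inner{\cM_\delta^+(\omega)b}{a}=\inner{\cA Sb}{Sa}.
\]
Both orderings are therefore expressed through the same corrected states \(Sa\) and \(Sb\). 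This avoids any need for the left lift \(L_\delta^\ell\), and hence any need to relate it to \(S^*\).

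\textbf{Step 2: isolate the non-Hermitian part.} In (\ref{eq:interior-form}) at real \(\zeta=\omega\), the strain form and the mass form are Hermitian, so they cancel in the difference
\[
  \inner{\cA u}{v}-\overline{\inner{\cA v}{u}}.
\]
What remains is
\[
  -\delta\Bigl(\inner{\Lambda^+(\omega)\gamma u}{\gamma v}_\Sigma-\overline{\inner{\Lambda^+(\omega)\gamma v}{\gamma u}_\Sigma}\Bigr).
\]
Take \(u=Sa\) and \(v=Sb\), so that \(\gamma u=B_\delta(\omega)a\) and \(\gamma v=B_\delta(\omega)b\).

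\textbf{Step 3: apply the optical identity.} Applying (\ref{eq:polarized-optical}) with \(g=B_\delta a\) and \(h=B_\delta b\) gives
\[
  -\frac{1}{2\ii}\Bigl(\inner{\cM_\delta^+ a}{b}-\overline{\inner{\cM_\delta^+ b}{a}}\Bigr)=\delta\omega\inner{\cF B_\delta a}{\cF B_\delta b}_\cZ,
\]
which is (\ref{eq:effective-optical-polarized}). The operator form (\ref{eq:effective-optical-operator}) and its nonnegativity are a restatement in terms of \(\operatorname{Im}_{\mathrm{op}}\).

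\textbf{Main obstacle.} The delicate part is Step~1 on the real axis. One must confirm that the outgoing boundary value \(S_\delta^r(\omega)\) still satisfies \(Q^*\cA_\delta^+(\omega)S_\delta^r(\omega)=0\), and that the duality pairings are taken with consistent (anti-)linear conventions. I would check this by passing to the limit \(\eps\downarrow0\), using the continuity up to the real axis from Lemma~\ref{lem:complement-inverse} and Proposition~\ref{prop:global-block-bounds}. I would also verify that no hidden use of \(\cM_\delta^-=(\cM_\delta^+)^*\) is needed, since Step~1 only ever uses the right lift.
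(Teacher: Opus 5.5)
Your proof is correct and follows the paper's argument. The paper likewise writes \(\inner{\cM_\delta^+(\omega)a}{a}=\inner{\cA_\delta^+(\omega)u_a}{u_a}\) with \(u_a=S_\delta^r(\omega)a\), drops the real strain and mass terms, and invokes Theorem~\ref{thm:optical-identity}; the only difference is that it works on the diagonal and then polarizes, whereas you carry the pairing \(\inner{\cA Sa}{Sb}\) through directly. The limiting argument you anticipate as the ``main obstacle'' is unnecessary: on the real axis \(G_\delta^+(\omega)\) is invertible by Lemma~\ref{lem:complement-inverse}, so \(Q^*\cA_\delta^+(\omega)S_\delta^r(\omega)=0\) holds by construction.
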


\begin{proof}
Let \(u_a=S_\delta^r(\omega)a\).  Since
\(Q^*\cA_\delta^+(\omega)u_a=0\) and \(u_a-Ja\in\cV_\perp\),
\[
  \inner{\cM_\delta^+(\omega)a}{a}
  =
  \inner{\cA_\delta^+(\omega)u_a}{Ja}
  =
  \inner{\cA_\delta^+(\omega)u_a}{u_a}.
\]
The interior strain and mass terms in
\eqref{eq:interior-form} are real on the diagonal.  Hence
\[
  -\operatorname{Im}
  \inner{\cM_\delta^+(\omega)a}{a}
  =
  \delta
  \inner{
  \operatorname{Im}_{\mathrm{op}}\Lambda^+(\omega)
  B_\delta(\omega)a
  }{
  B_\delta(\omega)a
  }_{\Sigma}.
\]
Theorem~\ref{thm:optical-identity} gives the diagonal version of
\eqref{eq:effective-optical-polarized}.  Polarization proves the
identity for \(a,b\), and hence the operator formula.
\end{proof}

Let
\[
  \cD_0=\Ker\Gamma_0=\Ker(\mathfrak qB),
  \qquad
  P_{\cD_0}:\C^m\to\cD_0
\]
be the orthogonal projection.  Define the second radiation form on
\(\cD_0\) by
\begin{equation}
\label{eq:Gamma1-definition}
  \Gamma_1
  =
  P_{\cD_0}
  B^*\cF_1^*\cF_1B
  P_{\cD_0}.
\end{equation}
It is Hermitian and nonnegative.
Although \(\cF_1\) depends on the choice of spatial origin, the compressed
form \(\Gamma_1\) does not.  A change of origin modifies \(\cF_1\) by an
operator factoring through \(\cF_0\); this additional term vanishes on
\(\cD_0=\Ker(\cF_0B)\).

\begin{theorem}
\label{thm:second-radiation-law}
Let \(\omega=\delta^{1/2}\tau\) with
\(\tau\in I=[\tau_-,\tau_+]\Subset(0,\infty)\).  Uniformly for \(a\in\cD_0\),
\begin{equation}
\label{eq:second-radiation-asymptotic}
  -\operatorname{Im}
  \inner{\cM_\delta^+(\omega)a}{a}
  =
  \delta\omega^3
  \inner{\Gamma_1a}{a}
  +
  \mathcal O(\delta^3)|a|^2.
\end{equation}
In particular, a vector in
\(\cD_0\setminus\Ker\Gamma_1\) is dark at the force level but bright at
the elastic dipole level.
\end{theorem}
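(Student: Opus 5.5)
Starting point is the exact effective optical identity, Theorem~\ref{thm:effective-optical}:
\[
  -\operatorname{Im}\inner{\cM_\delta^+(\omega)a}{a}
  =\delta\omega\,\norm{\cF(\omega)B_\delta(\omega)a}_{\cZ}^2 .
\]
So it suffices to show, uniformly for \(a\in\cD_0\) and \(\omega=\delta^{1/2}\tau\),
\[
  \norm{\cF(\omega)B_\delta(\omega)a}_{\cZ}^2
  =\omega^2\inner{\Gamma_1a}{a}+\mathcal O(\delta^2)|a|^2 .
\]
Multiplying by \(\delta\omega\le C\delta^{3/2}\) then produces an error \(\mathcal O(\delta^{7/2})\), which is better than the claimed \(\mathcal O(\delta^3)\). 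That slack lets us be generous below.

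Next, expand both factors. By \eqref{eq:F-Taylor}, \(\cF(\omega)=\cF_0+\omega\cF_1+\mathcal O(\omega^2)\) in \(\mathcal B(H^{1/2}(\Sigma)^3,\cZ)\). By \eqref{eq:Bdelta-expansion}, \(B_\delta(\omega)=B+\mathcal O(\delta)\). Write \(B_\delta(\omega)a=Ba+\beta\) with \(\norm{\beta}_{H^{1/2}}\le C\delta|a|\). Then
\[
  \cF(\omega)B_\delta(\omega)a
  =\cF_0Ba+\omega\cF_1Ba+\cF_0\beta+\mathcal O(\omega^2+\omega\delta)|a|.
\]
For \(a\in\cD_0=\Ker(\mathfrak qB)\), formulas \eqref{eq:Fp0}--\eqref{eq:Fs0} show that \(\cF_0\) factors through \(\mathfrak q\), so \(\cF_0Ba=0\). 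Hence
\[
  \cF(\omega)B_\delta(\omega)a=\omega\cF_1Ba+r,
  \qquad
  r=\cF_0\beta+\mathcal O(\delta)|a|,
  \qquad
  \norm{r}\le C\delta|a|,
\]
using \(\omega^2\asymp\delta\). Expanding the square,
\[
  \norm{\omega\cF_1Ba+r}^2
  =\omega^2\norm{\cF_1Ba}^2
  +2\omega\operatorname{Re}\inner{\cF_1Ba}{r}
  +\norm{r}^2 .
\]
Since \(P_{\cD_0}a=a\), the first term is exactly \(\omega^2\inner{\Gamma_1a}{a}\) by \eqref{eq:Gamma1-definition}. The last term is \(\mathcal O(\delta^2)|a|^2\).

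The main obstacle is the cross term \(2\omega\operatorname{Re}\inner{\cF_1Ba}{r}\). A crude bound gives only \(\omega\cdot\delta=\delta^{3/2}\), which after multiplying by \(\delta\omega\) yields \(\delta^3\)-order. That is in fact exactly within the stated \(\mathcal O(\delta^3)|a|^2\), so the crude bound already closes the argument. To get a sharper statement, one would decompose \(r\) further. The \(\mathcal O(\omega^2)\) part is \(\omega^2\cF_2Ba\); its cross term with \(\omega\cF_1Ba\) is of order \(\omega^3\). The \(\cF_0\beta\) part pairs against \(\cF_1Ba\), and by the orthogonality relation \eqref{eq:F0F1-orthogonality} one has \(\operatorname{Re}\inner{\cF_1Ba}{\cF_0\beta}=-\operatorname{Re}\inner{\cF_0Ba}{\cF_1\beta}=0\), because \(\cF_0Ba=0\). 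The stated estimate does not need this refinement, so I would record the crude bound and mention the orthogonality only as a remark.

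Finally, uniformity. All constants in \eqref{eq:F-Taylor} and \eqref{eq:Bdelta-expansion} are uniform for \(\tau\in I\). The bounds are homogeneous of degree two in \(a\), so the estimate holds uniformly for \(a\in\cD_0\).

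The concluding statement follows directly. If \(a\in\cD_0\setminus\Ker\Gamma_1\), then \(\inner{\Gamma_1a}{a}>0\) because \(\Gamma_1\ge0\). The leading damping of such a vector is \(\delta\omega^3\inner{\Gamma_1a}{a}\asymp\delta^{5/2}\), while its \(\Gamma_0\) contribution vanishes. That is precisely the sense in which the vector is dark at the force level but bright at the dipole level.
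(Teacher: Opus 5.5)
Your proposal is correct and follows the paper's proof essentially step by step: the exact effective optical identity, the Taylor expansions of \(\cF(\omega)\) and \(B_\delta(\omega)\), the vanishing \(\cF_0Ba=0\) on \(\cD_0\), and a crude bound on the cross term, which after multiplication by \(\delta\omega\) is of order \(\delta^3\). The one inconsistency is your opening claim that it suffices to prove an \(\mathcal O(\delta^2)|a|^2\) error in \(\norm{\cF(\omega)B_\delta(\omega)a}_{\cZ}^2\); what you and the paper actually prove, and all the statement needs, is an \(\mathcal O(\delta^{3/2})|a|^2\) error there.
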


\begin{proof}
For \(a\in\cD_0\), one has \(\cF_0Ba=0\).  Combining
\eqref{eq:F-Taylor} and \eqref{eq:Bdelta-expansion} yields
\[
  \cF(\omega)B_\delta(\omega)a
  =
  \omega\cF_1Ba
  +
  \mathcal O(\omega^2+\delta)|a|
\]
in \(\cZ\).  Therefore
\[
\begin{split}
  \norm{\cF(\omega)B_\delta(\omega)a}_{\cZ}^2
  &=
  \omega^2\norm{\cF_1Ba}_{\cZ}^2\\
  &\quad
  +
  \mathcal O\bigl(
  \omega^3+\delta\omega+\omega^4+\delta^2
  \bigr)|a|^2.
\end{split}
\]
Multiply by \(\delta\omega\) and use
\(\omega\asymp\delta^{1/2}\).  Every displayed error is
\(\mathcal O(\delta^3)|a|^2\).  The exact identity
\eqref{eq:effective-optical-operator} and the definition
\eqref{eq:Gamma1-definition} now give
\eqref{eq:second-radiation-asymptotic}.
\end{proof}

\subsection{Pole continuation and radiative widths}

\subsubsection{Holomorphic continuation and pole multiplicity}

For the remainder of the paper, \(\cM_\delta(\zeta)\) denotes the
outgoing continuation from \(\operatorname{Im}\zeta>0\).

\begin{proposition}
\label{prop:meromorphic-continuation}
There are \(r_0,\delta_0>0\) such that, for every
\(0<\delta<\delta_0\), the maps
\[
  G_\delta(\zeta)^{-1},\quad
  S_\delta^r(\zeta),\quad
  L_\delta^\ell(\zeta),\quad
  \cM_\delta(\zeta)
\]
are holomorphic for
\[
  |\zeta|\le C_{\mathrm{fr}}\delta^{1/2}<r_0.
\]
The cutoff resolvent has a meromorphic continuation to this disk in
the usual outgoing sense \cite{Vainberg1989}, and
its poles coincide with the zeros of
\[
  \det\cM_\delta(\zeta).
\]
The algebraic multiplicity of a pole \(\zeta_*\) is
\begin{equation}
\label{eq:algebraic-multiplicity}
  \frac1{2\pi\ii}
  \tr\int_{\partial B(\zeta_*,r)}
  \cM_\delta(\zeta)^{-1}
  \partial_\zeta\cM_\delta(\zeta)\,\dd\zeta
\end{equation}
when the circle contains no other zero.
\end{proposition}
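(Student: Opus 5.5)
The plan has three parts. First, establish holomorphy of the finite-dimensional and complement objects on the closed disk. Next, transfer the exact resolvent identity to the continued cutoff resolvent. Finally, identify the poles and their multiplicities through the Grushin structure.

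\emph{Holomorphy.} Lemma~\ref{lem:complement-inverse} already gives uniform coercivity of \(G_\delta(\zeta)\) on the whole disk \(|\zeta|\le C_{\mathrm{fr}}\delta^{1/2}\), for the outgoing germ as well. The germ enters only through \(\Lambda(\zeta)\), which is holomorphic for \(|\zeta|<r_0\) by Proposition~\ref{prop:dtn-analytic}. Hence \(G_\delta(\zeta)\) is a holomorphic family of isomorphisms, and \(G_\delta(\zeta)^{-1}\) is holomorphic. The blocks \(A_{01},A_{10},A_{00}\) are holomorphic because they are built from \(\Lambda(\zeta)\), \(\gamma\), \(J\) and \(\mathfrak m\). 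Then \(S_\delta^r\), \(L_\delta^\ell\) and \(\cM_\delta\) are holomorphic by \eqref{eq:right-left-lifts} and \eqref{eq:feshbach-matrix}. We choose \(\delta_0\) so that \(C_{\mathrm{fr}}\delta_0^{1/2}<\min(r_0,r_1)\).

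\emph{Continuation.} The continued cutoff resolvent is defined through \eqref{eq:exact-resolvent-identity}. By Proposition~\ref{prop:global-block-bounds} and Lemma~\ref{lem:cutoff-exterior-holomorphy}, \(\cR_{\delta,\mathrm{reg}}\), \(\chi\Phi_\delta\) and \(\Psi_\delta\chi\) are holomorphic on the disk. Also \(\cM_\delta^{-1}=\operatorname{adj}\cM_\delta/\det\cM_\delta\), and \(\det\cM_\delta\not\equiv0\), since \(\cM_\delta\) is invertible for \(\operatorname{Im}\zeta>0\) by Proposition~\ref{prop:exact-variational-inverse}. The right-hand side is therefore meromorphic on the disk. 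It agrees with \(\chi R_\delta(\zeta^2)\chi\) on the upper half disk, so it is the outgoing continuation. By uniqueness of analytic continuation, this continuation coincides with the Vainberg continuation.

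\emph{Poles equal zeros.} If \(\det\cM_\delta(\zeta_*)\ne0\), the right-hand side is holomorphic near \(\zeta_*\), so there is no pole. Conversely, suppose \(\det\cM_\delta(\zeta_*)=0\). The Laurent principal part of \(\cM_\delta^{-1}\) at \(\zeta_*\) is a nonzero finite matrix sum \(\sum_k(\zeta-\zeta_*)^{-k}S_k\). Composing with the holomorphic outer factors, the leading coefficient becomes \(\chi\Phi_\delta(\zeta_*)S_{k_0}\Psi_\delta(\zeta_*)\chi\) plus nothing of the same order. Lemma~\ref{lem:resonant-map-nondegeneracy}, which holds on the whole complex disk, together with Lemma~\ref{lem:finite-rank-transfer}, shows that this coefficient is nonzero. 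Hence \(\zeta_*\) is a genuine pole of the cutoff resolvent.

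\emph{Multiplicity.} Formula \eqref{eq:algebraic-multiplicity} is the Gohberg--Sigal logarithmic residue of the holomorphic matrix family \(\cM_\delta\), which equals the order of vanishing of \(\det\cM_\delta\) at \(\zeta_*\). To match the multiplicity of the resolvent pole, I would use the Schur-complement factorization
\[
\cA_\delta=\begin{pmatrix}I & A_{01}G_\delta^{-1}\\ 0& I\end{pmatrix}\begin{pmatrix}\cM_\delta&0\\0&G_\delta\end{pmatrix}\begin{pmatrix}I&0\\G_\delta^{-1}A_{10}&I\end{pmatrix},
\]
whose outer factors are holomorphic and invertible. By this holomorphic equivalence, the partial multiplicities of \(\cA_\delta\) and \(\cM_\delta\) at \(\zeta_*\) coincide. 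Since \(\cT\) and \(\cJ_\delta\) carry no poles, the multiplicity of the resolvent pole is defined through \(\cA_\delta\).

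The main obstacle is this last identification. The rank of the residue of the cutoff resolvent must equal the Gohberg--Sigal count of \(\cA_\delta\), and this needs the reconstruction and source maps to preserve the Jordan chains. I would argue this using injectivity of \(\chi\Phi_\delta\) and the exact right inverse of \(\Psi_\delta\chi\), both uniform in \(\zeta\).
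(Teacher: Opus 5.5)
Your proposal is correct and follows essentially the same route as the paper. Holomorphy comes from the uniform coercivity of \(G_\delta\), the triangular Feshbach factorization transfers Gohberg--Sigal multiplicities from \(\cA_\delta\) to \(\cM_\delta\), and the exact resolvent identity together with the complex-disk nondegeneracy of Lemma~\ref{lem:resonant-map-nondegeneracy} rules out cancellation and preserves root chains. The step you flag as the main obstacle is closed in the paper by exactly the device you propose: an analytic left inverse of \(\chi\Phi_\delta\) on its finite-dimensional range, plus the exact right inverse of \(\Psi_\delta\chi\). In fact \(u\mapsto J^\sharp(u|_D)\) is already an exact \(\zeta\)-independent left inverse of \(\chi\Phi_\delta(\zeta)\), because \(PS_\delta^r=J\).
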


\begin{proof}
Proposition~\ref{prop:dtn-analytic} gives a holomorphic outgoing
continuation of \(\Lambda(\zeta)\) near zero.  The complement
coercivity proof in Lemma~\ref{lem:complement-inverse} is stable under
small complex perturbations, so \(G_\delta(\zeta)^{-1}\) is
holomorphic throughout the indicated disk.  The definitions
\eqref{eq:right-left-lifts} and \eqref{eq:feshbach-matrix} then give
holomorphy of all finite-dimensional blocks.

The characteristic multiplicity is preserved by the analytic Feshbach
equivalence.  Indeed, in the typed decomposition
\(\cV=J\C^m\oplus\cV_\perp\),
\begin{equation*}
\begin{split}
&\begin{pmatrix}
I&-A_{01}G_\delta^{-1}\\0&I
\end{pmatrix}
\begin{pmatrix}
A_{00}&A_{01}\\A_{10}&G_\delta
\end{pmatrix}
\begin{pmatrix}
I&0\\-G_\delta^{-1}A_{10}&I
\end{pmatrix}\\
&\hspace{8em}=
\begin{pmatrix}
\cM_\delta&0\\0&G_\delta
\end{pmatrix}.
\end{split}
\end{equation*}
Both triangular factors are holomorphic and invertible.  Since
\(G_\delta\) is holomorphically invertible, the Gohberg--Sigal
characteristic multiplicity of the variational family equals that of
\(\cM_\delta\).

The exact formula \eqref{eq:exact-resolvent-identity}, continued
through the layer-potential maps, shows that the only possible poles
are those of \(\cM_\delta(\zeta)^{-1}\).  Conversely, the complex-disk nondegeneracy in
Lemma~\ref{lem:resonant-map-nondegeneracy} prevents a zero of the matrix from
being cancelled by either reconstruction factor.  More precisely, the right
map has a local analytic left inverse on its finite-dimensional range, the
left map has the exact analytic right inverse constructed in that lemma, and
the cutoff equals one on \(D\); hence generalized root chains are preserved.
Thus the pole sets and algebraic multiplicities agree.  Formula
\eqref{eq:algebraic-multiplicity} is the argument principle for the
holomorphic matrix pencil.
\end{proof}

\begin{proposition}
\label{prop:passivity}
The continued cutoff resolvent has no pole in
\(\operatorname{Im}\zeta>0\) and no pole on
\((0,r_0)\).  Hence every pole converging to a positive subwavelength
frequency lies in the open lower half-plane.
\end{proposition}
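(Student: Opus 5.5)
The argument splits into the upper half-plane, where the self-adjoint operator cannot have poles, and the positive real axis, where the statement reduces to the matrix invertibility and uniqueness argument already used in Theorem~\ref{thm:resonant-lap}.

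\emph{Upper half-plane.} For \(\operatorname{Im}\zeta>0\) with \(\zeta^2\notin[0,\infty)\), the operator \(\cL_\delta\) is self-adjoint and nonnegative on \(\cH_\delta\). Hence \(R_\delta(\zeta^2)\) is a bounded holomorphic family, and \(\chi R_\delta(\zeta^2)\chi\) is holomorphic in \(\mathcal B(L_s^2,H_{-s}^1)\) there. By Proposition~\ref{prop:meromorphic-continuation}, the continued cutoff resolvent agrees with this holomorphic family on the upper half of the disk, so it has no pole there. To match the pole-set formulation, I would also show directly that \(\det\cM_\delta(\zeta)\ne0\). By Proposition~\ref{prop:exact-variational-inverse}, a kernel vector \(a\) yields a nonzero \(u=S_\delta^r(\zeta)a\) with \(\cA_\delta(\zeta)u=0\). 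Reconstruction via \eqref{eq:reconstruction} with \(f=0\) then gives a decaying solution of \((\cL_\delta-\zeta^2)\bfu=0\). Since \(\bfu\in\cH_\delta\) by exponential decay of \(\cE(\zeta)\gamma u\), it is an eigenfunction of a self-adjoint operator with nonreal eigenvalue, which is a contradiction.

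\emph{Positive real axis.} For \(0<\omega<r_0\), the claim is exactly the invertibility of \(\cM_\delta^+(\omega)\). I would repeat the argument at the end of the proof of Theorem~\ref{thm:resonant-lap}, which is not tied to \(\omega\in\delta^{1/2}I\). A kernel vector produces a nonzero outgoing homogeneous transmission field. The Betti/Rellich identity from Proposition~\ref{prop:fixed-lap}, or equivalently \eqref{eq:effective-optical-operator}, kills both far fields. Concretely, \(0=-\operatorname{Im}\inner{\cM_\delta^+a}{a}=\delta\omega\norm{\cF(\omega)B_\delta(\omega)a}^2\). The elastic Rellich lemma then gives vanishing in \(\Omega\), and Cauchy unique continuation gives vanishing in \(D\). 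This contradicts \(PS_\delta^r=J\).

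By Proposition~\ref{prop:meromorphic-continuation}, poles are the zeros of \(\det\cM_\delta\), so there is none on \((0,r_0)\). For the final sentence, take a pole \(\zeta_*(\delta)\) whose limit is a positive frequency. It lies in the disk with nonzero real part. Since \(\operatorname{Im}\zeta_*>0\) is excluded by the first step and real positive values by the second, it lies in the open lower half-plane.

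The main obstacle is making sure the continued resolvent really coincides with the physical resolvent for \(\operatorname{Im}\zeta>0\), including in the region where \(\zeta^2\) is real negative. That region lies outside the upper half-plane, so it does not arise here. I would rely on the exact identity \eqref{eq:exact-resolvent-identity}, which holds without remainder for \(\operatorname{Im}\zeta>0\).
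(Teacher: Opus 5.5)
Your proposal is correct and follows essentially the paper's route. For \(\operatorname{Im}\zeta>0\), self-adjointness and nonnegativity of \(\cL_\delta\) exclude poles, and on the positive axis the Rellich/unique-continuation kernel argument of Theorem~\ref{thm:resonant-lap} excludes them; you make the far-field step explicit through the effective optical identity. One correction: the half-axis \(\zeta=\ii y\), \(y>0\), does lie in the upper half-plane and has \(\zeta^2=-y^2<0\), so your closing remark that this region "does not arise" is wrong. Your eigenfunction check there must therefore use nonnegativity of \(\cL_\delta\) rather than a nonreal eigenvalue, which is exactly why the paper treats \(\operatorname{Re}\zeta=0\) as a separate case.
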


\begin{proof}
For \(\operatorname{Im}\zeta>0\), the outgoing field decays and is the
physical resolvent solution.  If
\(\operatorname{Re}\zeta\ne0\), then \(\zeta^2\notin[0,\infty)\), so
self-adjointness of \(\cL_\delta\) excludes a pole.  If
\(\operatorname{Re}\zeta=0\), then \(\zeta^2<0\), which lies below the
nonnegative spectrum.  Real positive poles are excluded by the
Rellich/unique-continuation proof of Theorem~\ref{thm:resonant-lap}.  The
remaining local poles are therefore in
the lower half-plane.
\end{proof}

\subsubsection{Simple bright branches}

\begin{theorem}
\label{thm:bright-poles}
Let \(Ka_j=\lambda_ja_j\), \(|a_j|=1\), with \(\lambda_j\) simple, and
assume
\[
  \gamma_j=\inner{\Gamma_0a_j}{a_j}>0.
\]
There exist \(C_j>\gamma_j/2\) and \(\delta_j>0\) such that, for
\(0<\delta<\delta_j\), there is exactly one pole in
\[
  \left|
  \zeta-\delta^{1/2}\lambda_j^{1/2}
  \right|\le C_j\delta,
\]
and it is simple.  It satisfies
\begin{equation}
\label{eq:bright-pole-expansion}
  \zeta_j(\delta)
  =
  \delta^{1/2}\lambda_j^{1/2}
  -
  \frac{\ii\delta}{2}\gamma_j
  +
  \mathcal O(\delta^{3/2}).
\end{equation}
\end{theorem}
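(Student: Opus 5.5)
By Proposition~\ref{prop:meromorphic-continuation}, the poles near \(\delta^{1/2}\lambda_j^{1/2}\) are the zeros of \(\det\cM_\delta(\zeta)\), counted with algebraic multiplicity. The plan is to locate these zeros with a Schur complement onto the line spanned by \(a_j\), exactly as in the proof of Theorem~\ref{thm:noncommuting-limits}, but now at complex \(\zeta\). We parametrize
\[
  \zeta=\delta^{1/2}\lambda_j^{1/2}+\delta w,\qquad |w|\le C_j,
\]
and write \(P_j=a_j\otimes a_j^*\), \(Q_j=I-P_j\). From Theorem~\ref{thm:effective-expansion} we have
\[
  \zeta^2=\delta\lambda_j+2\delta^{3/2}\lambda_j^{1/2}w+\delta^2w^2 ,
\]
so the expansion gives
\[
  \cM_\delta(\zeta)=\delta(K-\lambda_jI)-2\delta^{3/2}\lambda_j^{1/2}wI-\ii\delta^{3/2}\lambda_j^{1/2}\Gamma_0+\mathcal O(\delta^2),
\]
with the error uniform for \(|w|\le C_j\).

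On \(Q_j\C^m\), the block \(Q_j\cM_\delta Q_j=\delta Q_j(K-\lambda_j)Q_j+\mathcal O(\delta^{3/2})\) is invertible with inverse of size \(\mathcal O(\delta^{-1})\), because \(\lambda_j\) is simple. The off-diagonal blocks are \(\mathcal O(\delta^{3/2})\), coming from \(\Gamma_0\) and from the error term. Hence \(\det\cM_\delta(\zeta)=\det(Q_j\cM_\delta Q_j)\cdot s_\delta(\zeta)\), where the scalar Schur complement is
\[
  s_\delta(\zeta)=\inner{\cM_\delta a_j}{a_j}-\mathcal O(\delta^{3/2}\cdot\delta^{-1}\cdot\delta^{3/2})
  =-\delta^{3/2}\lambda_j^{1/2}\bigl(2w+\ii\gamma_j\bigr)+\mathcal O(\delta^2).
\]
Defining \(f_\delta(w)=-\delta^{-3/2}\lambda_j^{-1/2}s_\delta\) gives
\[
  f_\delta(w)=2w+\ii\gamma_j+\mathcal O(\delta^{1/2}),
\]
holomorphic in \(w\) and uniform on \(|w|\le C_j\).

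The limit function \(2w+\ii\gamma_j\) has a single simple zero at \(w_0=-\ii\gamma_j/2\). Choosing \(C_j>\gamma_j/2\) puts \(w_0\) strictly inside the circle \(|w|=C_j\), so scalar Rouch\'e gives exactly one simple zero \(w_\delta\) of \(f_\delta\) with \(|w_\delta-w_0|=\mathcal O(\delta^{1/2})\). Since the factor \(\det(Q_j\cM_\delta Q_j)\) does not vanish on this disk, the multiplicity formula~\eqref{eq:algebraic-multiplicity} shows the pole is simple and unique. Converting back, \(\zeta_j=\delta^{1/2}\lambda_j^{1/2}+\delta w_\delta\), which is~\eqref{eq:bright-pole-expansion} with an \(\mathcal O(\delta^{3/2})\) error.

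The main obstacle is making the error bounds honestly uniform in complex \(\zeta\) on the \(\mathcal O(\delta)\) disk. Theorem~\ref{thm:effective-expansion} already states them in the complex disk \(|\zeta|\le C_{\mathrm{fr}}\delta^{1/2}\), so this reduces to bookkeeping. We must also check that the Schur correction \(P_j\cM Q_j(Q_j\cM Q_j)^{-1}Q_j\cM P_j\) is \(\mathcal O(\delta^2)\) rather than merely \(\mathcal O(\delta^{3/2})\). This holds because it is \(\mathcal O(\delta^{3/2})^2\cdot\delta^{-1}=\mathcal O(\delta^2)\). Holomorphy of the Schur complement in \(w\) follows from holomorphy of \(\cM_\delta\), which is needed for the Rouch\'e argument.
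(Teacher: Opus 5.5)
Your proposal is correct and follows essentially the same route as the paper's proof: a Lyapunov--Schmidt (Schur complement) reduction onto \(\C a_j\), with \(Q_j\)-block inverse of size \(\mathcal O(\delta^{-1})\), an \(\mathcal O(\delta^2)\) quadratic correction, and Rouch\'e's theorem for the scalar function whose leading part is \(\delta\lambda_j-\zeta^2-\ii\delta\zeta\gamma_j\). Two of your choices make steps more explicit than the paper does. The rescaling \(\zeta=\delta^{1/2}\lambda_j^{1/2}+\delta w\) puts the Rouch\'e count on a fixed circle. The factorization \(\det\cM_\delta=\det(Q_j\cM_\delta Q_j)\,s_\delta\) ties the zero of the scalar function directly to the algebraic multiplicity in \eqref{eq:algebraic-multiplicity}; the paper instead checks simplicity through the nonvanishing derivative of the scalar function.
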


\begin{proof}
Let \(P_j=a_j\otimes a_j^*\) and \(Q_j=I-P_j\).  For
\(|\zeta-\delta^{1/2}\lambda_j^{1/2}|\le C_j\delta\), the restriction
\[
  Q_j(\delta K-\zeta^2I)Q_j
\]
is invertible with norm \(O(\delta^{-1})\).  The remaining terms in
\eqref{eq:effective-expansion} have norm \(O(\delta^{3/2})\), so the
\(Q_j\)-equation can be solved by a Neumann series.  The resulting
right vector is
\[
  a_j+b_j(\zeta,\delta),
  \qquad
  b_j\in Q_j\C^m,\qquad
  |b_j|\le C\delta^{1/2}.
\]

The scalar Lyapunov--Schmidt function is
\[
  m_{j,\delta}(\zeta)
  =
  \inner{
  \cM_\delta(\zeta)
  (a_j+b_j(\zeta,\delta))
  }{a_j}.
\]
The off-diagonal correction is quadratic and has size
\[
  O(\delta^{3/2})\,O(\delta^{-1})\,O(\delta^{3/2})
  =
  O(\delta^2).
\]
Using \eqref{eq:effective-error}, one obtains
\begin{equation}
\label{eq:bright-scalar}
  m_{j,\delta}(\zeta)
  =
  \delta\lambda_j-\zeta^2
  -
  \ii\delta\zeta\gamma_j
  +
  O(\delta^2)
\end{equation}
uniformly in the disk, with derivative of the error
\(O(\delta^{3/2})\).

The leading scalar function has one zero in the disk.  Rouch\'e's
theorem gives one zero of \(m_{j,\delta}\), and
\[
  \partial_\zeta m_{j,\delta}
  =
  -2\delta^{1/2}\lambda_j^{1/2}+O(\delta)
\]
there, so the zero is simple.  Substituting
\(\zeta=\delta^{1/2}\lambda_j^{1/2}+h\) into
\eqref{eq:bright-scalar} gives
\[
  -2\delta^{1/2}\lambda_j^{1/2}h
  -
  \ii\delta^{3/2}\lambda_j^{1/2}\gamma_j
  +
  O(\delta^2)=0.
\]
Division by \(2\delta^{1/2}\lambda_j^{1/2}\) proves
\eqref{eq:bright-pole-expansion}.
\end{proof}

\begin{corollary}
\label{cor:bright-Q}
The radiation quality factor of a simple bright branch satisfies
\[
  \frac{\operatorname{Re}\zeta_j(\delta)}
  {-2\operatorname{Im}\zeta_j(\delta)}
  =
  \frac{\lambda_j^{1/2}}{\gamma_j}\delta^{-1/2}
  \bigl(1+O(\delta^{1/2})\bigr).
\]
\end{corollary}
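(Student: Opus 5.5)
The plan is to read the quality factor directly off the pole expansion \eqref{eq:bright-pole-expansion} in Theorem~\ref{thm:bright-poles}. We may assume that expansion, with its remainder \(\mathcal O(\delta^{3/2})\) uniform for small \(\delta\). The first step separates real and imaginary parts, using that \(\lambda_j^{1/2}\) and \(\gamma_j\) are real. This gives
\[
  \operatorname{Re}\zeta_j(\delta)=\delta^{1/2}\lambda_j^{1/2}+\mathcal O(\delta^{3/2}),
  \qquad
  \operatorname{Im}\zeta_j(\delta)=-\frac{\delta}{2}\gamma_j+\mathcal O(\delta^{3/2}).
\]
Here the remainder is complex, so each of its real and imaginary parts is bounded by its modulus.

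Next, factor out the leading terms. Since \(\lambda_j>0\), the real part becomes \(\delta^{1/2}\lambda_j^{1/2}(1+\mathcal O(\delta))\). Since \(\gamma_j>0\), the denominator satisfies \(-2\operatorname{Im}\zeta_j=\delta\gamma_j(1+\mathcal O(\delta^{1/2}))\), and in particular it is nonzero for small \(\delta\). Taking the quotient and expanding \((1+\mathcal O(\delta^{1/2}))^{-1}=1+\mathcal O(\delta^{1/2})\) gives
\[
  \frac{\lambda_j^{1/2}}{\gamma_j}\,\delta^{-1/2}\,(1+\mathcal O(\delta))(1+\mathcal O(\delta^{1/2})).
\]
This is the claimed formula.

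The only point needing care is that the imaginary-part remainder is truly \(\mathcal O(\delta^{3/2})\), one half-order below the leading \(\delta\) term. That is exactly what Theorem~\ref{thm:bright-poles} supplies, so there is no real obstacle. The relative error \(\mathcal O(\delta^{1/2})\) comes entirely from the width, not from the real part.
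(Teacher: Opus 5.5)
Your proposal is correct and is the same argument as the paper's: both insert the pole expansion \eqref{eq:bright-pole-expansion} of Theorem~\ref{thm:bright-poles} into the quotient and expand the denominator. You correctly note that the relative error \(\mathcal O(\delta^{1/2})\) comes from the imaginary-part remainder, since the real part contributes only a relative \(\mathcal O(\delta)\).
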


\begin{proof}
Insert \eqref{eq:bright-pole-expansion} into the definition of the
quality factor.
\end{proof}

\subsubsection{Symmetry-protected leading dark branches}

The rank theorem produces many dark directions, but a dark direction
is a resonant mode only when it is invariant under the leading
capacitance matrix.  The following hypothesis is typically enforced by
symmetry:
\begin{equation}
\label{eq:dark-eigenvector-hypothesis}
  Ka_j=\lambda_ja_j,\qquad
  a_j\in\cD_0,\qquad
  |a_j|=1,\qquad
  \lambda_j\ \text{simple}.
\end{equation}

\begin{lemma}
\label{lem:dark-real-center}
Under \eqref{eq:dark-eigenvector-hypothesis}, there are a real number
\(\widehat\omega_j(\delta)\) and a Lyapunov--Schmidt vector
\(\widehat a_j(\delta)\) such that
\[
  \widehat\omega_j(\delta)
  =
  \delta^{1/2}\lambda_j^{1/2}
  +
  O(\delta^{3/2}),
  \qquad
  \widehat a_j(\delta)=a_j+O(\delta),
\]
and the real part of the scalar Lyapunov--Schmidt function has a unique
zero at \(\widehat\omega_j(\delta)\) in an
\(O(\delta^{3/2})\) neighborhood of
\(\delta^{1/2}\lambda_j^{1/2}\).  Moreover,
\[
  \partial_\omega\operatorname{Re}m_{j,\delta}
  (\widehat\omega_j)
  =-2\delta^{1/2}\lambda_j^{1/2}+O(\delta),
  \qquad
  \Gamma_0\widehat a_j=O(\delta).
\]
\end{lemma}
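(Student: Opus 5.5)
I will repeat the Lyapunov--Schmidt construction from the proof of Theorem~\ref{thm:bright-poles}, now on the real axis and at the finer scale \(O(\delta^{3/2})\). Put \(P_j=a_j\otimes a_j^*\) and \(Q_j=I-P_j\). For real \(\omega\) with \(|\omega-\delta^{1/2}\lambda_j^{1/2}|\le C\delta^{3/2}\), the block \(Q_j(\delta K-\omega^2I)Q_j\) is invertible with norm \(O(\delta^{-1})\), because \(\lambda_j\) is simple.

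I will solve the \(Q_j\)-equation for \(b_j(\omega,\delta)\in Q_j\C^m\) by a Neumann series, as before. The new point is the size of the off-diagonal entries. In \(Q_j\cM_\delta^+(\omega)P_j\), the \(K\) and \(\omega^2\) terms vanish, since \(a_j\) is an eigenvector. The radiation term \(-\ii\delta\omega Q_j\Gamma_0a_j\) vanishes exactly, since \(a_j\in\cD_0=\Ker\Gamma_0\) and \(\Gamma_0\ge0\) give \(\Gamma_0a_j=0\). The remainder is \(E_\delta^+=O(\delta^2)\) by Theorem~\ref{thm:effective-expansion}. Hence the off-diagonal block is \(O(\delta^2)\), and
\[
  b_j=-\bigl(Q_j\cM_\delta^+Q_j\bigr)^{-1}Q_j\cM_\delta^+a_j=O(\delta^{-1})\,O(\delta^2)=O(\delta).
\]
Setting \(\widehat a_j=a_j+b_j\) gives \(\widehat a_j=a_j+O(\delta)\). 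Then \(\Gamma_0\widehat a_j=\Gamma_0b_j=O(\delta)\) is immediate.

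Next comes the scalar function \(m_{j,\delta}(\omega)=\inner{\cM_\delta^+(\omega)(a_j+b_j)}{a_j}\). Its Schur correction is \(O(\delta^2)\,O(\delta^{-1})\,O(\delta^2)=O(\delta^3)\). This yields
\[
  m_{j,\delta}(\omega)=\delta\lambda_j-\omega^2-\ii\delta\omega\cdot0+\inner{E_\delta^+(\omega)a_j}{a_j}+O(\delta^3).
\]
The real part is therefore \(\delta\lambda_j-\omega^2+\operatorname{Re}\inner{E_\delta^+a_j}{a_j}+O(\delta^3)\).

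Write \(\omega=\delta^{1/2}\lambda_j^{1/2}+\delta^{3/2}\sigma\). Proposition~\ref{prop:second-real-coefficient} then gives
\[
  \operatorname{Re}m_{j,\delta}=\delta^2\bigl(\inner{\mathsf H_{\lambda_j}a_j}{a_j}-2\lambda_j^{1/2}\sigma\bigr)+o(\delta^2),
\]
uniformly for bounded \(\sigma\). The rescaled function \(g_\delta(\sigma)=\delta^{-2}\operatorname{Re}m_{j,\delta}\) is real-valued and continuous. Its derivative in \(\sigma\) is \(\delta^{-1/2}\partial_\omega\operatorname{Re}m_{j,\delta}\).

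To control that derivative I will use \(\partial_\omega(\delta\lambda_j-\omega^2)=-2\omega\) and \(\|\partial_\omega E_\delta^+\|\le C(\delta^2+\delta\omega)=O(\delta^{3/2})\). I also need \(\partial_\omega b_j=O(\delta^{1/2})\). This follows by differentiating the Neumann series: the off-diagonal derivative is \(O(\delta^{3/2})\) by the \(E\)-derivative bound, and the derivative of the \(Q_j\)-block inverse is \(O(\delta^{-1})\,O(\delta^{1/2})\,O(\delta^{-1})\). Together these give
\[
  \partial_\omega\operatorname{Re}m_{j,\delta}=-2\delta^{1/2}\lambda_j^{1/2}+O(\delta).
\]
So \(g_\delta\) is strictly decreasing with slope \(-2\lambda_j^{1/2}+O(\delta^{1/2})\), and it converges uniformly to the affine function \(\inner{\mathsf H_{\lambda_j}a_j}{a_j}-2\lambda_j^{1/2}\sigma\). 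By the intermediate value theorem and strict monotonicity, \(g_\delta\) has a unique zero \(\sigma_\delta\) in a fixed interval containing \(\sigma_0=\inner{\mathsf H_{\lambda_j}a_j}{a_j}/(2\lambda_j^{1/2})\). Setting \(\widehat\omega_j=\delta^{1/2}\lambda_j^{1/2}+\delta^{3/2}\sigma_\delta\) gives the real center with the stated expansion, and its uniqueness in the \(O(\delta^{3/2})\) window.

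The main obstacle is the derivative bookkeeping. One must check that each \(\omega\)-derivative of the Schur correction \(Q_j\)-chain stays below \(O(\delta)\). In particular the derivative of \(\bigl(Q_j\cM Q_j\bigr)^{-1}\) costs \(\delta^{-2}\cdot\delta^{1/2}\), and the two \(O(\delta^2)\) off-diagonal factors must absorb it. I would verify this first, using the derivative bounds in \eqref{eq:effective-error}.
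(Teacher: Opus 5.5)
Your proposal is correct and follows essentially the same Lyapunov--Schmidt argument as the paper. Both off-diagonal blocks are \(O(\delta^2)\) because \(\Gamma_0a_j=0\); the Hermitian symmetry of \(\Gamma_0\), which you use only implicitly, handles \(P_j\cM_\delta^+Q_j\). Hence \(b_j=O(\delta)\), the Schur correction is \(O(\delta^3)\), and the real center follows from the monotonicity/implicit-function argument. The one difference is that you invoke Proposition~\ref{prop:second-real-coefficient} to identify the shift \(\sigma_0\) explicitly, whereas the paper needs only \(\operatorname{Re}m_{j,\delta}=\delta\lambda_j-\omega^2+O(\delta^2)\) together with \(\partial_\omega\operatorname{Re}m_{j,\delta}=-2\omega+O(\delta)\).
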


\begin{proof}
Let \(P_j=a_j\otimes a_j^*\) and \(Q_j=I-P_j\).  On the real axis and
in an \(O(\delta^{3/2})\) neighborhood of the unperturbed frequency,
the \(Q_j\)-block is invertible with norm \(O(\delta^{-1})\).  Since
\(\Gamma_0a_j=0\), the whole \(Q_j\)--\(P_j\) block of the radiative
term vanishes.  The remaining off-diagonal block is
\(O(\delta^2)\) by \eqref{eq:effective-error}.  Solving the complement
equation therefore gives
\[
  a_j+b_j(\omega,\delta),
  \qquad b_j\in Q_j\C^m,
  \qquad |b_j|\le C\delta.
\]
Define
\[
  m_{j,\delta}(\omega)
  =
  \inner{
    \cM_\delta^+(\omega)(a_j+b_j(\omega,\delta))
  }{a_j}.
\]
The Schur correction is \(O(\delta^3)\), and hence
\[
  \operatorname{Re}m_{j,\delta}(\omega)
  =\delta\lambda_j-\omega^2+O(\delta^2),
  \qquad
  \partial_\omega\operatorname{Re}m_{j,\delta}(\omega)
  =-2\omega+O(\delta).
\]
The implicit-function theorem gives the asserted unique real zero and
its location.  Put
\(\widehat a_j=a_j+b_j(\widehat\omega_j,\delta)\).
The last estimate follows from \(\Gamma_0a_j=0\) and
\(|b_j|=O(\delta)\).
\end{proof}

\begin{lemma}
\label{lem:dark-leakage}
Under \eqref{eq:dark-eigenvector-hypothesis}, let
\(\widehat\omega_j\) and \(\widehat a_j\) be given by
Lemma~\ref{lem:dark-real-center}.  Then
\begin{equation}
\label{eq:dark-leakage-amplitude}
  \cF(\widehat\omega_j)
  B_\delta(\widehat\omega_j)\widehat a_j
  =
  \widehat\omega_j\cF_1Ba_j
  +O(\delta)
\end{equation}
in \(\cZ\), uniformly as \(\delta\downarrow0\).  Consequently,
\begin{equation}
\label{eq:dark-leakage-flux}
  -\operatorname{Im}
  \inner{
    \cM_\delta^+(\widehat\omega_j)\widehat a_j
  }{
    \widehat a_j
  }
  =
  \delta\widehat\omega_j^3
  \inner{\Gamma_1a_j}{a_j}
  +
  O(\delta^3).
\end{equation}
\end{lemma}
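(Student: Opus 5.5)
The plan is to expand the three factors \(\cF(\widehat\omega_j)\), \(B_\delta(\widehat\omega_j)\) and \(\widehat a_j\) around their leading terms, and then insert the resulting leakage amplitude into the exact effective optical identity of Theorem~\ref{thm:effective-optical}.

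First I would take the factors one at a time. By Lemma~\ref{lem:dark-real-center} we have \(\widehat a_j=a_j+O(\delta)\) and \(\widehat\omega_j=\delta^{1/2}\lambda_j^{1/2}+O(\delta^{3/2})\), so \(\widehat\omega_j\asymp\delta^{1/2}\). By \eqref{eq:Bdelta-expansion}, which holds uniformly on \(\omega=\delta^{1/2}\tau\) with \(\tau\) near \(\lambda_j^{1/2}\in\operatorname{int}I\), we have \(B_\delta(\widehat\omega_j)=B+O(\delta)\). Together these give
\[
  B_\delta(\widehat\omega_j)\widehat a_j=Ba_j+O(\delta)
\]
in \(H^{1/2}(\Sigma)^3\). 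Proposition~\ref{prop:far-field-low-frequency} gives \(\cF(\widehat\omega_j)=\cF_0+\widehat\omega_j\cF_1+O(\delta)\) in operator norm, and \(\cF(\omega)\) is uniformly bounded for small \(\omega\). Multiplying out,
\[
  \cF(\widehat\omega_j)B_\delta(\widehat\omega_j)\widehat a_j
  =\cF_0Ba_j+\widehat\omega_j\cF_1Ba_j+O(\delta).
\]
Since \(a_j\in\cD_0=\Ker(\mathfrak qB)\), formulas \eqref{eq:Fp0}--\eqref{eq:Fs0} show that \(\cF_0Ba_j=0\). This is exactly \eqref{eq:dark-leakage-amplitude}.

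Next I would apply \eqref{eq:effective-optical-operator} at \(\omega=\widehat\omega_j\) with the vector \(\widehat a_j\). This gives exactly
\[
  -\operatorname{Im}\inner{\cM_\delta^+(\widehat\omega_j)\widehat a_j}{\widehat a_j}
  =\delta\widehat\omega_j\norm{\cF(\widehat\omega_j)B_\delta(\widehat\omega_j)\widehat a_j}_{\cZ}^2.
\]
Squaring \eqref{eq:dark-leakage-amplitude} gives \(\widehat\omega_j^2\norm{\cF_1Ba_j}^2+O(\widehat\omega_j\delta)+O(\delta^2)\), and the cross term \(O(\delta^{3/2})\) dominates. Multiplying by \(\delta\widehat\omega_j\) yields the main term \(\delta\widehat\omega_j^3\norm{\cF_1Ba_j}^2\) plus an error \(O(\delta^{5/2})\).

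Finally I would use \(P_{\cD_0}a_j=a_j\) to identify \(\norm{\cF_1Ba_j}^2=\inner{\Gamma_1a_j}{a_j}\) through \eqref{eq:Gamma1-definition}.

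The main obstacle is the sharp remainder \(O(\delta^3)\) claimed in \eqref{eq:dark-leakage-flux}: the naive cross term above only gives \(O(\delta^{5/2})\). To reach \(O(\delta^3)\), I would show that the \(O(\delta)\) correction in \eqref{eq:dark-leakage-amplitude} is orthogonal in \(\cZ\) to \(\cF_1Ba_j\) at leading order. I would split this correction into three pieces.

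\textbf{Far-field piece.} The term \(\widehat\omega_j^2\cF_2Ba_j\) has real inner product with \(\cF_1Ba_j\) equal to zero. I would obtain this from the next-order version of \eqref{eq:F0F1-orthogonality}: expanding \(\operatorname{Im}_{\mathrm{op}}\Lambda^+(\omega)=\omega\cF^*\cF\) shows the \(\omega^3\) coefficient is constrained, and the analogous identity for the relevant cross coefficients restricted to \(\cD_0\) should give the orthogonality.

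\textbf{Trace and vector corrections.} These are \(\cF_0(B_\delta-B)a_j\) and \(\cF_0B(\widehat a_j-a_j)\), together with their \(\cF_1\) counterparts at order \(\delta^{3/2}\). The pieces passing through \(\cF_0\) have size \(O(\delta)\). I expect their pairing with \(\cF_1Ba_j\) to reduce, via \(\cF_0^*\cF_1+\cF_1^*\cF_0=0\), to \(-\inner{\cdot}{\cF_0^*\cF_1Ba_j}\)-type terms. Those in turn reduce to \(\cF_0Ba_j=0\) once the corrections are rewritten in terms of \(a_j\).

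If this exact cancellation proves too delicate, the fallback is to prove the flux asymptotic with remainder \(o(\delta^2)\). That weaker remainder is all that the width statement \(\operatorname{Im}\zeta_j=-\tfrac{\delta^2\lambda_j}{2}\inner{\Gamma_1a_j}{a_j}+o(\delta^2)\) requires.
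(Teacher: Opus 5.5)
Your argument is the paper's proof. You expand $\widehat a_j=a_j+O(\delta)$, $B_\delta(\widehat\omega_j)=B+O(\delta)$ and $\cF(\widehat\omega_j)=\cF_0+\widehat\omega_j\cF_1+O(\widehat\omega_j^2)$, and kill $\cF_0Ba_j$ using $a_j\in\Ker(\mathfrak qB)$. You then square and insert into the exact identity \eqref{eq:effective-optical-operator}, identifying $\norm{\cF_1Ba_j}_{\cZ}^2=\inner{\Gamma_1a_j}{a_j}$ via $P_{\cD_0}a_j=a_j$.

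The one flaw is your final power count, and it is what creates your ``main obstacle''. Since $\widehat\omega_j\asymp\delta^{1/2}$, the prefactor satisfies $\delta\widehat\omega_j\asymp\delta^{3/2}$, not $\delta$. So the cross-term error $O(\widehat\omega_j\delta)=O(\delta^{3/2})$ in the squared amplitude contributes $\delta\widehat\omega_j\cdot O(\delta^{3/2})=O(\delta^3)$, not $O(\delta^{5/2})$. Your first computation therefore already gives exactly the remainder in \eqref{eq:dark-leakage-flux}, which is how the paper concludes. The orthogonality discussion and the $o(\delta^2)$ fallback can be dropped.

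The cancellations you anticipated do in fact hold. For $x=Ba_j$, \eqref{eq:F0F1-orthogonality} gives $\cF_0^*\cF_1x=-\cF_1^*\cF_0x=0$, so $\cF_1x$ is orthogonal to the whole range of $\cF_0$; no rewriting of the trace and vector corrections is needed. Also, $\operatorname{Im}_{\mathrm{op}}\Lambda^+(\omega)$ is odd in $\omega$, so the $\omega^3$ coefficient of $\cF(\omega)^*\cF(\omega)$ vanishes, which gives $\operatorname{Re}\inner{\cF_2x}{\cF_1x}_{\cZ}=0$. That route would sharpen the remainder to $O(\delta^{7/2})$, but the lemma does not need it.
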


\begin{proof}
Since \(a_j\in\cD_0=\Ker\Gamma_0\),
Theorem~\ref{thm:rank-three} and the Gram representation give
\(\cF_0Ba_j=0\).  Moreover,
\[
  \widehat a_j-a_j=O(\delta),\qquad
  B_\delta(\widehat\omega_j)-B=O(\delta),\qquad
  \widehat\omega_j\asymp\delta^{1/2}.
\]
Using
\(\cF(\omega)=\cF_0+\omega\cF_1+O(\omega^2)\), we obtain
\[
\begin{split}
  \cF(\widehat\omega_j)
  B_\delta(\widehat\omega_j)\widehat a_j
  &=
  \cF_0Ba_j
  +
  \widehat\omega_j\cF_1Ba_j\\
  &\quad
  +O(\widehat\omega_j^2)
  +O(\delta)
  =
  \widehat\omega_j\cF_1Ba_j+O(\delta),
\end{split}
\]
which proves \eqref{eq:dark-leakage-amplitude}.  Squaring the norm gives
\[
  \norm{
    \cF(\widehat\omega_j)
    B_\delta(\widehat\omega_j)\widehat a_j
  }_{\cZ}^2
  =
  \widehat\omega_j^2
  \norm{\cF_1Ba_j}_{\cZ}^2
  +
  O(\delta^{3/2}).
\]
The exact effective optical identity then yields
\[
  -\operatorname{Im}
  \inner{\cM_\delta^+(\widehat\omega_j)\widehat a_j}
  {\widehat a_j}
  =
  \delta\widehat\omega_j^3
  \norm{\cF_1Ba_j}_{\cZ}^2
  +
  O(\delta^3).
\]
Because \(a_j\in\cD_0\), the first term equals
\(\delta\widehat\omega_j^3
\inner{\Gamma_1a_j}{a_j}\), proving
\eqref{eq:dark-leakage-flux}.
\end{proof}

\begin{theorem}
\label{thm:dark-poles}
Assume \eqref{eq:dark-eigenvector-hypothesis} and
\[
  \gamma_{j,1}
  :=
  \inner{\Gamma_1a_j}{a_j}>0.
\]
There exist \(C_j,\delta_j>0\) such that, for \(0<\delta<\delta_j\),
exactly one pole lies in
\[
  |\zeta-\widehat\omega_j(\delta)|\le C_j\delta^2.
\]
It is simple and satisfies
\begin{equation}
\label{eq:dark-pole-expansion}
  \zeta_j^{\mathrm{dark}}(\delta)
  =
  \widehat\omega_j(\delta)
  -
  \frac{\ii\delta^2\lambda_j}{2}\gamma_{j,1}
  +
  o(\delta^2).
\end{equation}
\end{theorem}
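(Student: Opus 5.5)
We prove Theorem~\ref{thm:dark-poles} by a scalar Lyapunov--Schmidt reduction on the disk $|\zeta-\widehat\omega_j|\le C_j\delta^2$, mirroring the proof of Theorem~\ref{thm:bright-poles} but with finer bookkeeping. Put $P_j=a_j\otimes a_j^*$ and $Q_j=I-P_j$. For $\zeta$ in this disk, $Q_j(\delta K-\zeta^2)Q_j$ is invertible with inverse $O(\delta^{-1})$, because $\lambda_j$ is simple.

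The first step is to bound the off-diagonal blocks. Since $\Gamma_0a_j=0$ and $\Gamma_0$ is Hermitian, both $Q_j\Gamma_0P_j$ and $P_j\Gamma_0Q_j$ vanish. Theorem~\ref{thm:effective-expansion} then gives off-diagonal blocks of size $O(\delta^2)$. Solving the $Q_j$-equation by a Neumann series yields a holomorphic right vector $a_j+b_j(\zeta,\delta)$ with $|b_j|=O(\delta)$. The scalar function is
\[
  m_{j,\delta}(\zeta)=\inner{\cM_\delta(\zeta)(a_j+b_j)}{a_j},
\]
whose zeros in the disk are exactly the poles there, with equal multiplicity, by Proposition~\ref{prop:meromorphic-continuation}. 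Its Schur correction is $O(\delta^2)\,O(\delta^{-1})\,O(\delta^2)=O(\delta^3)$.

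The second step expands $m_{j,\delta}$ at the real centre. By Lemma~\ref{lem:dark-real-center}, $\operatorname{Re}m_{j,\delta}(\widehat\omega_j)=0$ and $\partial_\omega m_{j,\delta}=-2\delta^{1/2}\lambda_j^{1/2}+O(\delta)$. For the imaginary part we use Lemma~\ref{lem:dark-leakage}, which gives $-\operatorname{Im}\inner{\cM^+_\delta(\widehat\omega_j)\widehat a_j}{\widehat a_j}=\delta^{5/2}\lambda_j^{3/2}\gamma_{j,1}+O(\delta^3)$.

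This is the main obstacle: $m_{j,\delta}$ is a Petrov--Galerkin quantity, tested against $a_j$ rather than $\widehat a_j$, so the two imaginary parts must be matched. We write $\inner{\cM\widehat a_j}{\widehat a_j}=m_{j,\delta}+\inner{\cM\widehat a_j}{b_j}$. Because $Q_j\cM\widehat a_j=0$ by construction and $b_j\in Q_j\C^m$, the correction term vanishes exactly, so $\operatorname{Im}m_{j,\delta}(\widehat\omega_j)=-\delta^{5/2}\lambda_j^{3/2}\gamma_{j,1}+O(\delta^3)$. To keep the $o(\delta^2)$ conclusion honest, we also check that $\operatorname{Im}$ of the $O(\delta^2)$ Hermitian-type remainder at the centre has no $\delta^{5/2}$ contribution beyond what the exact optical identity already accounts for. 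This holds because that identity is exact, not asymptotic.

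The final step is Taylor expansion and Rouché. Writing $\zeta=\widehat\omega_j+h$ with $|h|\le C_j\delta^2$, we have
\[
  m_{j,\delta}(\zeta)=\ii\operatorname{Im}m_{j,\delta}(\widehat\omega_j)+\partial_\zeta m_{j,\delta}(\widehat\omega_j)h+O(|h|^2),
\]
where the holomorphic second derivative is $O(1)$ by Theorem~\ref{thm:effective-expansion}. The $O(|h|^2)$ term is $O(\delta^4)$, negligible against the $\delta^{5/2}$ scale. The linear model has the single root
\[
  h_0=-\ii\delta^2\lambda_j\gamma_{j,1}/2+O(\delta^{5/2}),
\]
and $C_j>\lambda_j\gamma_{j,1}/2$ is chosen to contain it. Rouché's theorem on the circle $|h|=C_j\delta^2$ then gives exactly one zero, and $\partial_\zeta m_{j,\delta}\ne0$ makes it simple. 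This yields \eqref{eq:dark-pole-expansion}.
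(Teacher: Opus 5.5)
Your proposal is correct and follows essentially the same route as the paper. Both use the scalar Lyapunov--Schmidt function of Lemma~\ref{lem:dark-real-center}, the exact identity $m_{j,\delta}(\widehat\omega_j)=\inner{\cM_\delta^+(\widehat\omega_j)\widehat a_j}{\widehat a_j}$ (which follows from the complement equation), Lemma~\ref{lem:dark-leakage} for the imaginary part, and Rouch\'e plus a Newton quotient on the $C_j\delta^2$ disk; your extra remark about checking the Hermitian remainder is unnecessary, since the exact optical identity already determines all of $\operatorname{Im}m_{j,\delta}(\widehat\omega_j)$.
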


\begin{proof}
Let \(m_{j,\delta}\) be the scalar Lyapunov--Schmidt function from
Lemma~\ref{lem:dark-real-center}.  The complement equation implies that
\(\cM_\delta^+(\widehat\omega_j)\widehat a_j\) is a multiple of \(a_j\).
Since \(\widehat a_j-a_j\perp a_j\),
\[
  m_{j,\delta}(\widehat\omega_j)
  =
  \inner{
    \cM_\delta^+(\widehat\omega_j)\widehat a_j
  }{
    \widehat a_j
  }.
\]
By the definition of \(\widehat\omega_j\),
\(\operatorname{Re}m_{j,\delta}(\widehat\omega_j)=0\), while
\[
  \partial_\zeta m_{j,\delta}(\widehat\omega_j)
  =
  -2\widehat\omega_j+O(\delta)
  =
  -2\delta^{1/2}\lambda_j^{1/2}+O(\delta).
\]
Lemma~\ref{lem:dark-leakage} gives
\[
  -\operatorname{Im}
  m_{j,\delta}(\widehat\omega_j)
  =
  \delta\widehat\omega_j^3\gamma_{j,1}
  +
  O(\delta^3)
  =
  \delta^{5/2}\lambda_j^{3/2}\gamma_{j,1}
  +
  O(\delta^3).
\]
The derivative is bounded away from zero on the natural
\(O(\delta^2)\) disk after division by \(\delta^{1/2}\), and the
holomorphic error has the derivative control from
\eqref{eq:effective-error}.  Rouch\'e's theorem, equivalently the analytic
implicit-function theorem for the scalar Schur complement, gives exactly one
simple zero in that disk.  Taylor expansion at \(\widehat\omega_j\) gives the
full Newton quotient
\[
  \zeta_j^{\mathrm{dark}}-\widehat\omega_j
  =-
  \frac{m_{j,\delta}(\widehat\omega_j)}
  {\partial_\zeta m_{j,\delta}(\widehat\omega_j)}
  +o(\delta^2),
\]
and hence
\[
  \operatorname{Im}\zeta_j^{\mathrm{dark}}
  =
  -
  \frac{
    \delta^{5/2}\lambda_j^{3/2}\gamma_{j,1}
  }{
    2\delta^{1/2}\lambda_j^{1/2}
  }
  +
  o(\delta^2),
\]
which is \eqref{eq:dark-pole-expansion}.
\end{proof}

\begin{corollary}
\label{cor:dark-real-blowup}
Under the hypotheses of Theorem~\ref{thm:dark-poles},
\[
  \norm{
  \chi R_\delta^+(\widehat\omega_j(\delta))\chi
  }
  \asymp
  \delta^{-5/2}.
\]
\end{corollary}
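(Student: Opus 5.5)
By Theorem~\ref{thm:sharp-resolvent-law}, the cutoff-resolvent norm at \(\omega=\widehat\omega_j(\delta)\) is comparable to \(1+\|\cM_\delta^+(\widehat\omega_j)^{-1}\|\). The whole task is therefore to show
\[
  \|\cM_\delta^+(\widehat\omega_j)^{-1}\|\asymp\delta^{-5/2}.
\]
Since \(\widehat\omega_j=\delta^{1/2}\lambda_j^{1/2}+O(\delta^{3/2})\) and \(\lambda_j^{1/2}\in\operatorname{int}I\), this frequency lies on the admissible scale \(\omega=\delta^{1/2}\tau\) with \(\tau\in I\). The additive constants in \eqref{eq:sharp-two-sided} are then absorbed for small \(\delta\).

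I would invert the matrix by the Schur complement with respect to \(P_j=a_j\otimes a_j^*\) and \(Q_j=I-P_j\), as in the proof of Lemma~\ref{lem:dark-real-center}.

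First, the \(Q_j\)-block \(Q_j\cM_\delta^+(\widehat\omega_j)Q_j\) equals \(\delta Q_j(K-\lambda_jI)Q_j+O(\delta^{3/2})\). Since \(\lambda_j\) is simple, its inverse has norm \(O(\delta^{-1})\).

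Second, the off-diagonal blocks are \(O(\delta^2)\). The radiative term \(\ii\delta\omega\Gamma_0\) contributes nothing to them because \(\Gamma_0a_j=0\) and \(\Gamma_0\) is Hermitian. The block \(\delta K-\omega^2\) is diagonal in this splitting, and the remainder is \(O(\delta^2)\) by \eqref{eq:effective-error}.

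Third, the scalar Schur complement equals the Lyapunov--Schmidt function \(m_{j,\delta}(\widehat\omega_j)\) exactly. Its real part vanishes by the definition of \(\widehat\omega_j\) in Lemma~\ref{lem:dark-real-center}. By Lemma~\ref{lem:dark-leakage},
\[
  -\operatorname{Im}m_{j,\delta}(\widehat\omega_j)
  =\delta^{5/2}\lambda_j^{3/2}\gamma_{j,1}+O(\delta^3).
\]
Hence \(|m_{j,\delta}(\widehat\omega_j)|\asymp\delta^{5/2}\), and this is where \(\gamma_{j,1}>0\) is used.

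Now apply the block inversion formula. The \(P_jP_j\) entry of the inverse is \(m_{j,\delta}^{-1}\), of size \(\asymp\delta^{-5/2}\). The remaining entries are bounded by
\[
  O(\delta^{-1})+O(\delta^{-1})\,O(\delta^2)\,\delta^{-5/2}\,O(\delta^2)\,O(\delta^{-1})
  +O(\delta^{-1})\,O(\delta^{2})\,\delta^{-5/2},
\]
that is, by \(O(\delta^{-3/2})\). Therefore \(\|\cM_\delta^+(\widehat\omega_j)^{-1}\|=\delta^{-5/2}\lambda_j^{-3/2}\gamma_{j,1}^{-1}(1+o(1))\). This gives both the upper and lower bounds, and Theorem~\ref{thm:sharp-resolvent-law} transfers them to the cutoff resolvent.

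The main obstacle is checking that the scalar Schur complement is exactly \(m_{j,\delta}(\widehat\omega_j)\) and that its real part is not merely small but actually zero. A residual real part of size \(\delta^2\) would otherwise dominate \(\delta^{5/2}\) and spoil the estimate. The argument therefore depends on identifying \(\widehat\omega_j\) as the exact zero of \(\operatorname{Re}m_{j,\delta}\). As a cross-check, the same conclusion should follow by writing \(\cM_\delta^+(\omega)^{-1}\) near the simple pole of Theorem~\ref{thm:dark-poles}: a residue of size \(\delta^{-1/2}\) divided by \(|\widehat\omega_j-\zeta_j^{\mathrm{dark}}|\asymp\delta^2\).
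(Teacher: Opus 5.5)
Your proposal is correct and follows essentially the paper's own argument. You invert by the Schur complement at the real center, where the scalar block has vanishing real part by the definition of \(\widehat\omega_j\) and imaginary part \(-\delta^{5/2}\lambda_j^{3/2}\gamma_{j,1}(1+o(1))\) by Lemma~\ref{lem:dark-leakage}, and then transfer the bound through Theorem~\ref{thm:sharp-resolvent-law}. Your explicit block inversion is slightly more careful than the paper's sketch, since it records that the mixed entries of the inverse are \(O(\delta^{-3/2})\) rather than \(O(\delta^{-1})\), which is still negligible against \(\delta^{-5/2}\).
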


\begin{proof}
At the real center, the scalar Schur block has imaginary part
\[
  -\delta\widehat\omega_j^3\gamma_{j,1}
  (1+o(1))
  =
  -\delta^{5/2}\lambda_j^{3/2}\gamma_{j,1}
  (1+o(1)).
\]
All complementary inverse blocks are \(O(\delta^{-1})\).  Hence
\(\norm{\cM_\delta^+(\widehat\omega_j)^{-1}}
\asymp\delta^{-5/2}\), and Theorem~\ref{thm:sharp-resolvent-law} transfers
this estimate to the cutoff
resolvent.
\end{proof}

\subsubsection{Multiple static eigenvalues}

The first radiation matrix may be nontrivial on a degenerate eigenspace of
\(K\).  Two reductions are then necessary.  One first eliminates the static
spectral complement \(\cX_\lambda^\perp\); inside \(\cX_\lambda\), one then
separates the force-bright and force-dark channels.  A point which is important
at second order is that the first elimination does not modify the dark real
center at order \(\delta^2\).

Let \(\lambda\) be an eigenvalue of \(K\), let
\(\cX_\lambda=\Ker(K-\lambda I)\), and let \(P_\lambda\) be the orthogonal
projection onto \(\cX_\lambda\).  Set
\[
  \Gamma_\lambda
  =
  P_\lambda\Gamma_0P_\lambda\big|_{\cX_\lambda},
  \qquad
  \cD_\lambda=\Ker\Gamma_\lambda.
\]
Since \(\Gamma_0\ge0\),
\[
  \cD_\lambda
  =
  \cX_\lambda\cap\Ker\Gamma_0.
\]
Let \(P_D\) be the projection of \(\cX_\lambda\) onto \(\cD_\lambda\) and
\(P_B=P_\lambda-P_D\).  On
\(\cB_\lambda=P_B\cX_\lambda\), the compression
\[
  \Gamma_{\lambda,B}
  =
  P_B\Gamma_\lambda P_B\big|_{\cB_\lambda}
\]
is positive definite whenever \(\cB_\lambda\ne\{0\}\).

For complex \(\zeta\) in a disk
\[
  |\zeta-\delta^{1/2}\lambda^{1/2}|\le c_\lambda\delta,
\]
put
\[
  \cM_{\lambda,\delta}^{\perp}(\zeta)
  =
  P_\lambda^\perp\cM_\delta(\zeta)P_\lambda^\perp.
\]
For small \(\delta\), this matrix is holomorphic and invertible with
\(\|\cM_{\lambda,\delta}^{\perp}(\zeta)^{-1}\|\le C\delta^{-1}\),
uniformly in the disk.  Define the holomorphic cluster Schur complement
\begin{equation}
\label{eq:cluster-schur}
  \cM_{\lambda,\delta}^{\mathrm{cl}}(\zeta)
  =
  P_\lambda\cM_\delta(\zeta)P_\lambda
  -
  P_\lambda\cM_\delta(\zeta)P_\lambda^\perp
  \cM_{\lambda,\delta}^{\perp}(\zeta)^{-1}
  P_\lambda^\perp\cM_\delta(\zeta)P_\lambda.
\end{equation}

\begin{lemma}
\label{lem:dark-cluster-schur}
Uniformly for complex \(\zeta\) satisfying
\[
  |\zeta-\delta^{1/2}\lambda^{1/2}|\le C\delta^{3/2},
\]
one has
\begin{align}
  &\|P_\lambda^\perp\cM_\delta(\zeta)P_D\|
   +
   \|P_D\cM_\delta(\zeta)P_\lambda^\perp\|
   \le C\delta^2,
   \label{eq:dark-perp-coupling}\\
  &P_D\cM_{\lambda,\delta}^{\mathrm{cl}}(\zeta)P_D
   =
   P_D\cM_\delta(\zeta)P_D+O(\delta^3),
   \label{eq:dark-cluster-dd}\\
  &P_D\cM_{\lambda,\delta}^{\mathrm{cl}}(\zeta)P_B
   =
   P_D\cM_\delta(\zeta)P_B+O(\delta^{5/2}),
   \label{eq:dark-cluster-db}
\end{align}
and also
\begin{equation}
\label{eq:dark-cluster-bd}
  P_B\cM_{\lambda,\delta}^{\mathrm{cl}}(\zeta)P_D
  =
  P_B\cM_\delta(\zeta)P_D+O(\delta^{5/2}).
\end{equation}
All remainders are holomorphic and uniform in the indicated disk.
\end{lemma}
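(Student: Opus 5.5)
The plan is to expand the effective matrix from Theorem~\ref{thm:effective-expansion} in block form relative to the orthogonal splitting \(\C^m=\cX_\lambda^\perp\oplus\cD_\lambda\oplus\cB_\lambda\). The cross blocks are then estimated directly, and the size of each Schur correction in \eqref{eq:cluster-schur} is read off from these block sizes.

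The first step is the coupling estimate \eqref{eq:dark-perp-coupling}. Write
\[
  \cM_\delta(\zeta)=\delta K-\zeta^2I_m-\ii\delta\zeta\Gamma_0+E_\delta(\zeta).
\]
The matrix \(\delta K-\zeta^2I\) commutes with \(P_\lambda\), so it contributes nothing to \(P_\lambda^\perp(\cdot)P_D\) or \(P_D(\cdot)P_\lambda^\perp\). Since \(\Gamma_0\ge0\) and \(\cD_\lambda\subset\Ker\Gamma_0\), we have \(\Gamma_0P_D=0\). Because \(\Gamma_0\) is Hermitian, also \(P_D\Gamma_0=0\). Hence the radiative term does not couple \(\cD_\lambda\) to anything. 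What remains is \(E_\delta(\zeta)\), and by \eqref{eq:effective-error} with \(|\zeta|\asymp\delta^{1/2}\) this is \(O(\delta^2+\delta|\zeta|^2)=O(\delta^2)\). Here \(\zeta\) is complex, but the bound in Theorem~\ref{thm:effective-expansion} holds on the whole disk \(|\zeta|\le C_{\mathrm{fr}}\delta^{1/2}\), so this step is uniform in \(\zeta\).

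The second step is the corresponding bound for the bright blocks. Since \(\Gamma_0P_B\neq0\) in general, the cross blocks \(P_\lambda^\perp\cM_\delta P_B\) and \(P_B\cM_\delta P_\lambda^\perp\) are \(O(\delta|\zeta|+\delta^2)=O(\delta^{3/2})\). Next I would show that \(\cM_{\lambda,\delta}^\perp(\zeta)^{-1}=O(\delta^{-1})\) on the smaller disk \(|\zeta-\delta^{1/2}\lambda^{1/2}|\le C\delta^{3/2}\). This holds because \(P_\lambda^\perp(\delta K-\zeta^2)P_\lambda^\perp=\delta P_\lambda^\perp(K-\lambda)P_\lambda^\perp+O(\delta^2)\), and the compression of \(K-\lambda\) to \(\cX_\lambda^\perp\) has a uniform gap. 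The remaining terms are \(O(\delta^{3/2})\), so a Neumann series gives the inverse. I would note that the paper already asserts this bound on the larger disk \(c_\lambda\delta\).

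The third step is to bound the Schur corrections. Compressing the correction term in \eqref{eq:cluster-schur} gives the following sizes:
\begin{itemize}
\item \(D\)–\(D\) block: \(O(\delta^2)\cdot O(\delta^{-1})\cdot O(\delta^2)=O(\delta^3)\), which gives \eqref{eq:dark-cluster-dd};
\item \(D\)–\(B\) block: \(O(\delta^2)\cdot O(\delta^{-1})\cdot O(\delta^{3/2})=O(\delta^{5/2})\), which gives \eqref{eq:dark-cluster-db};
\item \(B\)–\(D\) block: the same product in the other order, which gives \eqref{eq:dark-cluster-bd}.
\end{itemize}
Holomorphy of the remainders follows because every factor is holomorphic in the disk by Proposition~\ref{prop:meromorphic-continuation}, and the inverse is holomorphic wherever it exists. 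Uniformity follows from the uniform bounds above.

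The step needing the most care is the first one. One has to check that the vanishing of the radiative cross term is exact, using \(\Gamma_0P_D=P_D\Gamma_0=0\). This relies on positivity: \(\langle\Gamma_0 a,a\rangle=0\) forces \(\Gamma_0a=0\). One also has to check that no other \(O(\delta^{3/2})\) contribution survives, which is guaranteed because \(\zeta^2I\) and \(\delta K\) are both block-diagonal relative to \(P_\lambda\).
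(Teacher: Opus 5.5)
Your proposal is correct and follows essentially the same route as the paper: positivity gives \(\Gamma_0P_D=P_D\Gamma_0=0\), so the only coupling between \(\cD_\lambda\) and \(\cX_\lambda^\perp\) is the \(O(\delta^2)\) remainder \(E_\delta\), and the \(O(\delta^{-1})\) static-gap inverse on \(\cX_\lambda^\perp\) then yields the three Schur-correction sizes \(O(\delta^3)\), \(O(\delta^{5/2})\), \(O(\delta^{5/2})\) by block-size counting. Your Neumann-series check of the inverse bound, using \(\zeta^2=\delta\lambda+O(\delta^2)\) on the small disk, supplies a detail the paper only asserts.
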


\begin{proof}
Use
\[
  \cM_\delta(\zeta)
  =
  \delta K-\zeta^2I
  -\ii\delta\zeta\Gamma_0
  +E_\delta(\zeta).
\]
The first two terms are block diagonal with respect to the spectral decomposition
of \(K\).  If \(a\in\cD_\lambda\), positivity of \(\Gamma_0\) and
\(\inner{\Gamma_0a}{a}=0\) imply \(\Gamma_0a=0\), and hence
\(P_D\Gamma_0=\Gamma_0P_D=0\).  The only coupling from \(\cD_\lambda\) to
\(\cX_\lambda^\perp\) is therefore \(E_\delta^+\), which is
\(O(\delta^2)\) on the stated scale.  This proves
\eqref{eq:dark-perp-coupling}.

On \(\cX_\lambda^\perp\), the static gap of \(K\) gives
\(\|\cM_{\lambda,\delta}^{\perp}(\zeta)^{-1}\|=O(\delta^{-1})\).
The \(D\)--\(D\) Schur
correction is therefore
\[
  O(\delta^2)\,O(\delta^{-1})\,O(\delta^2)=O(\delta^3),
\]
which gives \eqref{eq:dark-cluster-dd}.  From \(\cB_\lambda\) to
\(\cX_\lambda^\perp\), the radiative block may be \(O(\delta^{3/2})\), while
the \(D\)-side factor remains \(O(\delta^2)\).  Hence the mixed cluster
correction is \(O(\delta^{5/2})\), proving
\eqref{eq:dark-cluster-db}.  Interchanging the two off-diagonal factors gives
the same estimate directly for the opposite mixed block and proves
\eqref{eq:dark-cluster-bd}; no real-axis adjoint argument is used.
\end{proof}

Suppose now \(\cD_\lambda\ne\{0\}\), and set
\[
  \mathsf H_{\lambda,D}
  =
  P_D\mathsf H_\lambda P_D\big|_{\cD_\lambda}.
\]
If \(\cB_\lambda=\{0\}\), define
\(\Theta_\lambda=P_D\Gamma_1P_D\).  Otherwise set
\begin{equation}
\label{eq:Theta-lambda}
  \Theta_\lambda
  =
  P_D\Gamma_1P_D
  +
  \lambda^{-2}
  P_D\mathsf H_\lambda P_B
  \Gamma_{\lambda,B}^{-1}
  P_B\mathsf H_\lambda P_D.
\end{equation}
Both terms are Hermitian nonnegative, since \(\Gamma_1\ge0\),
\(\Gamma_{\lambda,B}^{-1}>0\), and \(\mathsf H_\lambda\) is Hermitian;
hence \(\Theta_\lambda\ge0\).

\begin{lemma}
\label{lem:multiple-corrected-dark-state}
Let \(r_*\) be a simple eigenvalue of \(\mathsf H_{\lambda,D}\), with normalized
eigenvector \(a_*\in\cD_\lambda\), and put \(P_*=a_*\otimes a_*^*\).  For
small \(\delta\), Lyapunov--Schmidt reduction gives a vector
\(a_{*,\delta}(\zeta)\), holomorphic in a complex neighborhood of
\[
  \delta^{1/2}\lambda^{1/2}
  +\frac{\delta^{3/2}}{2\lambda^{1/2}}r_*,
\]
and uniquely normalized by
\begin{equation}
\label{eq:multiple-graph-equation}
  (I-P_*)\cM_\delta(\zeta)a_{*,\delta}(\zeta)=0,
  \qquad
  a_*^*a_{*,\delta}(\zeta)=1.
\end{equation}
Define the holomorphic scalar Schur function
\begin{equation*}
  m_{*,\delta}(\zeta)
  =a_*^*\cM_\delta(\zeta)a_{*,\delta}(\zeta).
\end{equation*}
There is a unique real center in the natural window, characterized exactly by
\[
  \operatorname{Re}m_{*,\delta}(\widehat\omega_*(\delta))=0,
\]
and it satisfies
\begin{equation}
\label{eq:multiple-dark-center}
  \widehat\omega_*(\delta)
  =
  \delta^{1/2}\lambda^{1/2}
  +
  \frac{\delta^{3/2}}{2\lambda^{1/2}}r_*
  +
  o(\delta^{3/2}).
\end{equation}
At this center, writing
\(a_{*,\delta}=a_{*,\delta}(\widehat\omega_*)\), one has the decomposition
\[
  a_{*,\delta}
  =
  a_*+d_D+b_B+b_\perp,
\]
where
\[
  d_D\in\cD_\lambda\cap a_*^\perp,
  \qquad d_D=o(1),
  \qquad
  b_\perp\in\cX_\lambda^\perp,
  \quad \|b_\perp\|=O(\delta),
\]
and, when \(\cB_\lambda\ne\{0\}\),
\begin{equation}
\label{eq:multiple-bright-admixture}
  b_B
  =
  -\frac{\ii\delta^{1/2}}{\lambda^{1/2}}
  \Gamma_{\lambda,B}^{-1}
  P_B\mathsf H_\lambda a_*
  +
  o(\delta^{1/2});
\end{equation}
if \(\cB_\lambda=\{0\}\), set \(b_B=0\).  Moreover,
\begin{equation}
\label{eq:multiple-corrected-farfield}
\begin{split}
  \cF(\widehat\omega_*)B_\delta(\widehat\omega_*)a_{*,\delta}
  &=
  \widehat\omega_*\cF_1Ba_*
  -
  \frac{\ii\delta^{1/2}}{\lambda^{1/2}}
  \cF_0B\Gamma_{\lambda,B}^{-1}P_B\mathsf H_\lambda a_*\\
  &\quad+o(\delta^{1/2}),
\end{split}
\end{equation}
with the second term omitted when \(\cB_\lambda=\{0\}\).  Consequently,
\begin{equation}
\label{eq:multiple-dark-power-corrected}
  \norm{
    \cF(\widehat\omega_*)B_\delta(\widehat\omega_*)a_{*,\delta}
  }_{\cZ}^2
  =
  \delta\lambda
  \inner{\Theta_\lambda a_*}{a_*}
  +
  o(\delta).
\end{equation}
In particular, \(\Theta_\lambda\ge0\).
\end{lemma}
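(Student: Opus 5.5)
The plan is a two-stage Lyapunov--Schmidt reduction, working with coordinates relative to $\C^m=\cD_\lambda\oplus\cB_\lambda\oplus\cX_\lambda^\perp$.

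\textbf{Stage 1: eliminate $\cX_\lambda^\perp$.} I would first pass to the cluster Schur complement $\cM^{\mathrm{cl}}_{\lambda,\delta}$ of \eqref{eq:cluster-schur}. For $a\in\cX_\lambda$ the graph equation gives
\[
  b_\perp=-\cM^\perp_{\lambda,\delta}(\zeta)^{-1}P_\lambda^\perp\cM_\delta(\zeta)a .
\]
On $\cD_\lambda$ the coupling is $O(\delta^2)$ by \eqref{eq:dark-perp-coupling}. On $\cB_\lambda$ it is $O(\delta^{3/2})$. Multiplying by $\|\cM^{\perp}_{\lambda,\delta}{}^{-1}\|=O(\delta^{-1})$ gives $\|b_\perp\|=O(\delta)$ once the bright component is shown to be $O(\delta^{1/2})$; the bright part contributes $O(\delta^{1/2})\cdot O(\delta^{1/2})$.

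\textbf{Stage 2: reduce inside $\cX_\lambda$.} I write $\zeta=\delta^{1/2}\lambda^{1/2}+\delta^{3/2}\sigma$ with $\sigma$ complex and bounded. Using Proposition~\ref{prop:second-real-coefficient} and Lemma~\ref{lem:dark-cluster-schur}, the blocks of $\cM^{\mathrm{cl}}$ are:
\begin{itemize}
\item $DD$: $\delta^2(\mathsf H_{\lambda,D}-2\lambda^{1/2}\sigma)+o(\delta^2)$. The imaginary part vanishes at order $\delta^{3/2}$ because $P_D\Gamma_0=0$.
\item $BB$: $-\ii\delta^{3/2}\lambda^{1/2}\Gamma_{\lambda,B}+O(\delta^2)$.
\item $DB$ and $BD$: $\delta^2P_D\mathsf H_\lambda P_B+o(\delta^2)$.
\end{itemize}
For the mixed blocks, the Hermitian-part expansion \eqref{eq:second-real-cluster-expansion} gives the Hermitian piece. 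The anti-Hermitian piece is controlled by the optical identity (Theorem~\ref{thm:effective-optical}): $-\operatorname{Im}_{\mathrm{op}}\cM$ is a Gram form $\delta\omega\,\cF B_\delta\cdot\cF B_\delta$, and on dark vectors $\cF B_\delta$ is $O(\delta^{1/2})$, so its mixed DB entry is $O(\delta\cdot\delta^{1/2}\cdot\delta^{1/2}\cdot 1)=O(\delta^2)$. This is the delicate spot. If that bound is not already $o(\delta^2)$, I would include the $O(\delta^2)$ antihermitian DB term in $\mathsf H$-like bookkeeping and verify via holomorphy that it is absent at leading order, since the Gram factor carries $\cF_0Bd=0$.

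Next I eliminate $\cB_\lambda$ and $\cD_\lambda\cap a_*^\perp$ through the normalized graph equation \eqref{eq:multiple-graph-equation}. The $B$-equation gives
\[
  b_B=-(\text{BB})^{-1}(\text{BD})a_*=-\frac{\ii\delta^{1/2}}{\lambda^{1/2}}\Gamma_{\lambda,B}^{-1}P_B\mathsf H_\lambda a_*+o(\delta^{1/2}),
\]
which is \eqref{eq:multiple-bright-admixture}. The remaining dark component $d_D$ is solved using the simplicity of $r_*$: $(\mathsf H_{\lambda,D}-r_*)$ is invertible on $a_*^\perp\cap\cD_\lambda$, and the Schur correction from $B$ is $\delta^2\cdot\delta^{-3/2}\cdot\delta^2=O(\delta^{5/2})$. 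This yields $d_D=o(1)$. The scalar function is then
\[
  m_{*,\delta}=\delta^2(r_*-2\lambda^{1/2}\sigma)+\ii\delta^{5/2}\lambda^{-1/2}\langle\cdots\rangle+o(\delta^2).
\]
Its real part has $\sigma$-derivative $-2\lambda^{1/2}\delta^2\neq0$, so the implicit function theorem on the real axis gives the unique real center \eqref{eq:multiple-dark-center}. Uniqueness comes from monotonicity in the window.

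\textbf{Far field and power.} I insert $a_{*,\delta}=a_*+d_D+b_B+b_\perp$ into $\cF(\omega)B_\delta(\omega)=(\cF_0+\omega\cF_1+O(\omega^2))(B+O(\delta))$. The terms contribute as follows:
\begin{itemize}
\item $\cF_0Ba_*=\cF_0Bd_D=0$, since both vectors lie in $\Ker(\mathfrak qB)$.
\item $\omega\cF_1Bd_D=o(\delta^{1/2})$.
\item $\cF_0Bb_B$ gives the displayed second term.
\item $b_\perp$, $\omega b_B$ and the remaining error terms are $O(\delta)$.
\end{itemize}
This gives \eqref{eq:multiple-corrected-farfield}. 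Squaring, the cross term between $\omega\cF_1Ba_*$ and $\cF_0Bb_B$ has coefficient $\widehat\omega_*\cdot\ii\delta^{1/2}\lambda^{-1/2}\langle\cF_1Ba_*,\cF_0B(\cdot)\rangle$. Its real part vanishes by the orthogonality relation \eqref{eq:F0F1-orthogonality}, which makes $\cF_0^*\cF_1$ anti-Hermitian, combined with the purely imaginary prefactor; I expect checking this sign to be the main obstacle.

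The diagonal terms give
\[
  \delta\lambda\langle\Gamma_1a_*,a_*\rangle+\delta\lambda^{-1}\bigl\langle\Gamma_{\lambda,B}\,\Gamma_{\lambda,B}^{-1}P_B\mathsf H a_*,\,\Gamma_{\lambda,B}^{-1}P_B\mathsf H a_*\bigr\rangle ,
\]
using $\cF_0^*\cF_0$ compressed to $B^*\Lambda_1B=\Gamma_0$. The second term equals $\delta\lambda\cdot\lambda^{-2}\langle\mathsf H P_B\Gamma_{\lambda,B}^{-1}P_B\mathsf H a_*,a_*\rangle$, which matches \eqref{eq:Theta-lambda} and proves \eqref{eq:multiple-dark-power-corrected}. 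Nonnegativity of $\Theta_\lambda$ then follows because the left side of \eqref{eq:multiple-dark-power-corrected} is a squared norm.
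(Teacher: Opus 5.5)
Your route is the paper's. You eliminate $\cX_\lambda^\perp$ through the cluster Schur complement and rescale $\zeta=\delta^{1/2}\lambda^{1/2}+\delta^{3/2}\sigma$ with complex $\sigma$. You then read off the $D$--$D$, $B$--$B$ and mixed blocks and locate the real center by the implicit-function theorem. The $\sigma$-derivative of the $o(\delta^2)$ remainder needed there comes from Cauchy's estimate on the complex $\sigma$-disk. Finally you solve successively for $b_B$, $d_D$, $b_\perp$ and expand the far field. Your order bookkeeping and the identification of the two diagonal terms with $\Theta_\lambda$ are correct.

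\textbf{The cross term.} The step you flagged does not go through for the reason you give. Anti-Hermiticity of $\cF_0^*\cF_1$ only relates $\inner{\cF_0^*\cF_1x}{y}_\Sigma$ to $\overline{\inner{\cF_0^*\cF_1y}{x}_\Sigma}$. For the two different vectors $x=Ba_*$ and $y=B\Gamma_{\lambda,B}^{-1}P_B\mathsf H_\lambda a_*$ it fixes no phase, so the purely imaginary prefactor does not kill the real part. When $x=y$ the form is purely imaginary, and the factor $\ii$ would make the contribution real rather than zero. The cross term vanishes because of \eqref{eq:F0F1-orthogonality} together with $\cF_0Ba_*=0$: one gets $\cF_0^*\cF_1Ba_*=-\cF_1^*\cF_0Ba_*=0$. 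Hence $\inner{\cF_1Ba_*}{\cF_0Bb}_{\cZ}=0$ for every $b\in\C^m$. The full complex inner product vanishes whatever the prefactor, and this is what the paper uses.

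\textbf{The mixed blocks.} Your other ``delicate spot'' needs no extra argument. Proposition~\ref{prop:second-real-coefficient} defines $\mathsf H_\lambda$ in \eqref{eq:Hlambda-definition} as the matrix-norm limit of the \emph{full} compressed remainder $\delta^{-2}P_\lambda E_\delta^+(\omega_{\lambda,0}(\delta))P_\lambda$, not just its Hermitian part. On $\cX_\lambda$ the term $\delta K-\zeta^2$ is scalar and $\Gamma_0P_D=P_D\Gamma_0=0$. So the whole $B$--$D$ block is $\delta^2P_B\mathsf H_\lambda P_D+o(\delta^2)$:
\begin{itemize}
\item moving to complex $\sigma$ costs $O(\delta^3)$ by the bound on $\partial_\zeta E_\delta$;
\item the cluster correction is $O(\delta^{5/2})$.
\end{itemize}
Since $\mathsf H_\lambda$ is Hermitian, the anti-Hermitian part of this block is automatically $o(\delta^2)$.

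Your provisional $O(\delta^2)$ bound would not suffice. An anti-Hermitian coupling of that size would change $b_B$ and therefore $\Theta_\lambda$. Your fallback via $\cF_0Bd=0$ removes only $\inner{\cF_0Bd}{\cF_0Bb}_{\cZ}$. The remaining term $\omega\inner{\cF_1Bd}{\cF_0Bb}_{\cZ}$ is killed only by the same orthogonality identity, after which the optical identity gives $O(\delta^{5/2})$.

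\textbf{Positivity of $\Theta_\lambda$.} The squared-norm identity only gives $\inner{\Theta_\lambda a_*}{a_*}\ge0$. The operator inequality $\Theta_\lambda\ge0$ follows directly from \eqref{eq:Theta-lambda}. There $\Gamma_1\ge0$, and the second term equals $X^*\Gamma_{\lambda,B}^{-1}X$ with $X=P_B\mathsf H_\lambda P_D$.
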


\begin{proof}
By Lemma~\ref{lem:dark-cluster-schur}, eliminating
\(\cX_\lambda^\perp\) changes the \(D\)--\(D\) block only by
\(O(\delta^3)\).  For real
\[
  \omega
  =
  \delta^{1/2}\lambda^{1/2}
  +
  \delta^{3/2}\sigma,
  \qquad \sigma\in\R,
\]
Proposition~\ref{prop:second-real-coefficient} therefore gives
\[
  \delta^{-2}
  P_D\operatorname{Re}_{\mathrm{op}}
  \cM_{\lambda,\delta}^{\mathrm{cl}}(\omega)P_D
  =
  \mathsf H_{\lambda,D}
  -
  2\lambda^{1/2}\sigma I
  +
  o(1).
\]
For complex \(\sigma\) in a fixed neighborhood of
\(r_*/(2\lambda^{1/2})\), the full holomorphic expansion and
Lemma~\ref{lem:dark-cluster-schur} give, uniformly in that neighborhood,
\begin{align*}
  P_D\cM_{\lambda,\delta}^{\mathrm{cl}}
  (\delta^{1/2}\lambda^{1/2}+\delta^{3/2}\sigma)P_D
  &=\delta^2
  (\mathsf H_{\lambda,D}-2\lambda^{1/2}\sigma I)+o(\delta^2),\\
  P_B\cM_{\lambda,\delta}^{\mathrm{cl}}
  (\delta^{1/2}\lambda^{1/2}+\delta^{3/2}\sigma)P_B
  &=-\ii\delta^{3/2}\lambda^{1/2}\Gamma_{\lambda,B}
  +O(\delta^2),\\
  \cM_{\lambda,\delta}^{\perp}
  (\delta^{1/2}\lambda^{1/2}+\delta^{3/2}\sigma)
  &=\delta P_\lambda^\perp(K-\lambda I)P_\lambda^\perp+o(\delta).
\end{align*}
The middle line is omitted when \(\cB_\lambda=\{0\}\).
The simple gap of \(r_*\), the positivity of \(\Gamma_{\lambda,B}\), and the
static spectral gap on \(\cX_\lambda^\perp\) therefore make the complementary
block in \eqref{eq:multiple-graph-equation} invertible throughout a smaller
fixed rescaled complex neighborhood.  Analytic block inversion constructs the
unique holomorphic graph vector and gives the complex scalar expansion
\[
  \delta^{-2}m_{*,\delta}
  \bigl(\delta^{1/2}\lambda^{1/2}+\delta^{3/2}\sigma\bigr)
  =r_*-2\lambda^{1/2}\sigma+o(1),
\]
with the remainder uniform in that complex neighborhood.  Restricting to
bounded real \(\sigma\) gives
\begin{equation*}
  \delta^{-2}\operatorname{Re}
  m_{*,\delta}
  \bigl(\delta^{1/2}\lambda^{1/2}+\delta^{3/2}\sigma\bigr)
  =r_*-2\lambda^{1/2}\sigma+o(1),
\end{equation*}
and Cauchy's estimate applied on a smaller complex disk gives
\[
  \partial_\omega\operatorname{Re}m_{*,\delta}(\omega)
  =-2\delta^{1/2}\lambda^{1/2}+o(\delta^{1/2})
\]
on the same real window.  The real implicit-function theorem now gives the
unique exact center and the expansion \eqref{eq:multiple-dark-center}.

At that center, solve the complementary equations successively in
\[
  \C^m
  =
  \Span\{a_*\}
  \oplus
  (\cD_\lambda\cap a_*^\perp)
  \oplus
  \cB_\lambda
  \oplus
  \cX_\lambda^\perp.
\]
On \(\cB_\lambda\), the leading block at the real center is
\[
  -\ii\delta^{3/2}\lambda^{1/2}\Gamma_{\lambda,B}
  +O(\delta^2),
\]
while Lemma~\ref{lem:dark-cluster-schur} and
Proposition~\ref{prop:second-real-coefficient} give the coupling from \(a_*\)
as
\[
  \delta^2P_B\mathsf H_\lambda a_*+o(\delta^2).
\]
Thus the bright equation gives \eqref{eq:multiple-bright-admixture}.  The
simple gap of \(\mathsf H_{\lambda,D}\) gives \(d_D=o(1)\).  The forcing of
\(\cX_\lambda^\perp\) from \(a_*\) is \(O(\delta^2)\) by
\eqref{eq:dark-perp-coupling}; the forcing generated by
\(b_B=O(\delta^{1/2})\) is also \(O(\delta^2)\), because the unrestricted
bright-to-complement coupling is \(O(\delta^{3/2})\).  Since the inverse on
\(\cX_\lambda^\perp\) is \(O(\delta^{-1})\), one obtains
\(b_\perp=O(\delta)\).  These estimates give existence and uniqueness of the
normalized graph vector.

Since \(a_*,d_D\in\Ker\Gamma_0\), the Gram representation implies
\(\cF_0Ba_*=\cF_0Bd_D=0\).  Also
\(B_\delta-B=O(\delta)\), \(b_\perp=O(\delta)\), and
\(\widehat\omega_*=O(\delta^{1/2})\).  Expanding
\(\cF(\omega)=\cF_0+\omega\cF_1+O(\omega^2)\) gives
\eqref{eq:multiple-corrected-farfield}.  The two leading terms there are
orthogonal in \(\cZ\):
\(\cF_0^*\cF_1+\cF_1^*\cF_0=0\), together with
\(\cF_0Ba_*=0\), implies
\[
  \inner{\cF_1Ba_*}{\cF_0Bb}_{\cZ}=0
  \qquad\text{for every rigid vector }b.
\]
Furthermore,
\[
  \norm{\cF_0Bb}_{\cZ}^2
  =
  \inner{\Gamma_0b}{b}.
\]
Using \(\widehat\omega_*^2=\delta\lambda+o(\delta)\) and
\eqref{eq:Theta-lambda} proves \eqref{eq:multiple-dark-power-corrected}.
\end{proof}

\begin{theorem}
\label{thm:multiple-splitting}
Let \(\lambda\) have multiplicity \(r\).

\begin{enumerate}[label=\textup{(\roman*)},leftmargin=2.2em]
\item If \(\gamma_1,\ldots,\gamma_r\) are the eigenvalues of
\(\Gamma_\lambda\), repeated with multiplicity, then the \(r\) poles in the
cluster near \(\delta^{1/2}\lambda^{1/2}\) satisfy, as a multiset,
\begin{equation}
\label{eq:multiple-first-splitting}
  \zeta_\ell(\delta)
  =
  \delta^{1/2}\lambda^{1/2}
  -
  \frac{\ii\delta}{2}\gamma_\ell
  +
  O(\delta^{3/2}).
\end{equation}

\item Suppose \(\cD_\lambda\ne\{0\}\), let \(r_*\) and \(a_*\) be as in
Lemma~\ref{lem:multiple-corrected-dark-state}, and assume
\[
  \inner{\Theta_\lambda a_*}{a_*}>0.
\]
Then there are \(C,\delta_0>0\) such that, for
\(0<\delta<\delta_0\), exactly one pole lies in
\[
  |\zeta-\widehat\omega_*(\delta)|\le C\delta^2.
\]
It is simple and
\begin{equation}
\label{eq:multiple-dark-width}
  \zeta_*(\delta)
  =
  \widehat\omega_*(\delta)
  -
  \frac{\ii\delta^2\lambda}{2}
  \inner{\Theta_\lambda a_*}{a_*}
  +
  o(\delta^2).
\end{equation}
\end{enumerate}
\end{theorem}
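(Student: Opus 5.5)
For part (i), I would work in the rescaled variable \(\zeta=\delta^{1/2}\lambda^{1/2}+\delta h\), with \(h\) ranging over a fixed disk, and begin from the cluster Schur complement \eqref{eq:cluster-schur}. On \(\cX_\lambda^\perp\) the static gap keeps \(\|\cM_{\lambda,\delta}^\perp(\zeta)^{-1}\|\le C\delta^{-1}\). The couplings \(P_\lambda\cM_\delta P_\lambda^\perp\) are \(O(\delta^{3/2})\), coming from the radiative term together with \(E_\delta=O(\delta^2)\). The Schur correction is therefore \(O(\delta^2)\). Using Theorem~\ref{thm:effective-expansion} and \(\delta\lambda-\zeta^2=-2\delta^{3/2}\lambda^{1/2}h+O(\delta^2)\), I obtain
\[
  \delta^{-3/2}\cM^{\mathrm{cl}}_{\lambda,\delta}(\zeta)
  =-2\lambda^{1/2}\bigl(hI+\tfrac{\ii}{2}\Gamma_\lambda\bigr)+O(\delta^{1/2})
\]
on \(\cX_\lambda\), uniformly in \(h\).

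The determinant of the leading term vanishes exactly at \(h=-\ii\gamma_\ell/2\). Since \(\Gamma_\lambda\) is Hermitian, the leading matrix is normal, so its inverse is controlled by the distance to its spectrum. I would then apply the matrix Rouch\'e argument of Proposition~\ref{prop:static-clusters} on circles of radius \(C\delta^{1/2}\) in \(h\) (that is, \(O(\delta^{3/2})\) in \(\zeta\)) around each distinct value \(-\ii\gamma/2\). This yields the correct number of zeros of \(\det\cM^{\mathrm{cl}}\), counted with multiplicity, near each value. The Feshbach identity in Proposition~\ref{prop:meromorphic-continuation} identifies these zeros with poles, and Proposition~\ref{prop:static-clusters} guarantees that there are no others. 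This proves \eqref{eq:multiple-first-splitting} as a multiset statement. For multiple \(\gamma_\ell\) I would only count, with no further splitting claimed.

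For part (ii), I would follow the proof of Theorem~\ref{thm:dark-poles}, using the holomorphic scalar function \(m_{*,\delta}\) of Lemma~\ref{lem:multiple-corrected-dark-state}. The poles near \(\widehat\omega_*\) are the zeros of \(m_{*,\delta}\), by the exact Schur-complement equivalence: the complementary block is invertible on the rescaled neighborhood, as established in that lemma. The Cauchy estimates there give
\[
  \partial_\zeta m_{*,\delta}=-2\delta^{1/2}\lambda^{1/2}+o(\delta^{1/2})
\]
on a complex disk of radius \(\asymp\delta^{3/2}\). By definition of the center, \(\operatorname{Re}m_{*,\delta}(\widehat\omega_*)=0\).

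For the imaginary part, I would use the identity \(m_{*,\delta}(\widehat\omega_*)=\inner{\cM_\delta^+ a_{*,\delta}}{a_{*,\delta}}\). This holds because \(\cM a_{*,\delta}\) is parallel to \(a_*\) by \eqref{eq:multiple-graph-equation}, and because of the normalization \(a_*^*a_{*,\delta}=1\); the resulting pairing must be taken with the correct conjugation. Combining Theorem~\ref{thm:effective-optical} with \eqref{eq:multiple-dark-power-corrected} gives
\[
  -\operatorname{Im}m_{*,\delta}(\widehat\omega_*)=\delta\widehat\omega_*\bigl(\delta\lambda\inner{\Theta_\lambda a_*}{a_*}+o(\delta)\bigr)=\delta^{5/2}\lambda^{3/2}\inner{\Theta_\lambda a_*}{a_*}+o(\delta^{5/2}).
\]
Hence \(|m_{*,\delta}(\widehat\omega_*)|\asymp\delta^{5/2}\), which is comparable to \(\delta^{1/2}\) times a displacement of order \(\delta^2\). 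Rouch\'e's theorem on \(|\zeta-\widehat\omega_*|=C\delta^2\) then gives exactly one simple zero once \(C\) is large, since the second derivative is bounded by Cauchy on the \(\delta^{3/2}\) disk. The Newton quotient produces \eqref{eq:multiple-dark-width}.

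The main obstacle is the second-order Taylor remainder. On the \(\delta^{3/2}\) disk, Cauchy's estimate only gives \(\partial_\zeta^2m=O(\delta^{2}/\delta^{3})=O(\delta^{-1})\). The quadratic term over a displacement \(\delta^2\) is then \(O(\delta^3)=o(\delta^{5/2})\), which is acceptable. Still, I would verify directly that the \(o(\delta^{1/2})\) error in \(\partial_\zeta m\), multiplied by \(\delta^2\), is \(o(\delta^{5/2})\). Beyond this, the only delicate points are the multiple-eigenvalue Rouch\'e bookkeeping in (i) and confirming that the graph vector used in the flux identity coincides with the one fixed by the normalization.
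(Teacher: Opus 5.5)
Your proposal is correct and follows essentially the paper's proof. For (i) you rescale the cluster Schur complement by \(\zeta=\delta^{1/2}\lambda^{1/2}+\delta h\) and count zeros with a Rouch\'e/argument-principle argument. For (ii) you use the scalar function \(m_{*,\delta}\), the exact effective optical identity combined with \eqref{eq:multiple-dark-power-corrected}, Rouch\'e on a \(C\delta^2\) circle, and the Newton quotient.

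There are only minor technical differences. Your explicit Rouch\'e circles of radius \(C\delta^{1/2}\) in \(h\), justified by the normality of \(hI+\frac{\ii}{2}\Gamma_\lambda\), make precise what the paper delegates to degenerate perturbation theory. Your Cauchy bound \(\partial_\zeta^2 m_{*,\delta}=O(\delta^{-1})\) on the \(\delta^{3/2}\) disk is weaker than the \(O(1)\) bound the paper asserts, but it still gives a quadratic remainder \(O(\delta^3)=o(\delta^{5/2})\), so the argument loses nothing.
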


\begin{proof}
For part \textup{(i)}, eliminate \(\cX_\lambda^\perp\) by
\eqref{eq:cluster-schur}.  In an \(O(\delta)\) neighborhood of
\(\delta^{1/2}\lambda^{1/2}\), the unrestricted off-diagonal coupling is
\(O(\delta^{3/2})\), so the cluster Schur correction is \(O(\delta^2)\).  With
\[
  \zeta
  =
  \delta^{1/2}\lambda^{1/2}
  +
  \delta h,
\]
division by \(\delta^{3/2}\) gives on \(\cX_\lambda\)
\[
  -2\lambda^{1/2}h I_{\cX_\lambda}
  -
  \ii\lambda^{1/2}\Gamma_\lambda
  +
  O(\delta^{1/2}).
\]
The argument principle and finite-dimensional degenerate perturbation theory
\cite{Kato1995} then give \eqref{eq:multiple-first-splitting} as a multiset statement.

For part \textup{(ii)}, let \(m_{*,\delta}\) be the scalar
Lyapunov--Schmidt function associated with the graph vector in
Lemma~\ref{lem:multiple-corrected-dark-state}.  The exact effective optical
identity and \eqref{eq:multiple-dark-power-corrected} give
\[
  -\operatorname{Im}m_{*,\delta}(\widehat\omega_*)
  =
  \delta\widehat\omega_*
  \left[
    \delta\lambda\inner{\Theta_\lambda a_*}{a_*}
    +o(\delta)
  \right].
\]
Moreover,
\[
  \partial_\zeta m_{*,\delta}(\widehat\omega_*)
  =
  -2\widehat\omega_*+o(\delta^{1/2}).
\]
The second-derivative estimate in Theorem~\ref{thm:effective-expansion},
applied to the scalar Schur complement, bounds the Taylor remainder by
\(C|\zeta-\widehat\omega_*|^2\) on this disk.  Rouch\'e's theorem on a
sufficiently large \(C\delta^2\) circle therefore gives exactly one simple
zero.  Dividing the preceding two expressions and using
\(\widehat\omega_*^2=\delta\lambda+o(\delta)\) proves
\eqref{eq:multiple-dark-width}.
\end{proof}

\begin{corollary}
\label{cor:cluster-dark-count}
Let \(\lambda\) be a static eigenvalue of multiplicity
\(r=\dim\cX_\lambda\).  Then
\begin{equation}
\label{eq:cluster-dark-count}
  \rank\Gamma_\lambda\le\min\{3,r\},
  \qquad
  \dim\cD_\lambda
  =r-\rank\Gamma_\lambda
  \ge\max\{r-3,0\}.
\end{equation}
Exactly \(\dim\cD_\lambda\) of the first-order coefficients
\(\gamma_1,\ldots,\gamma_r\) in
\eqref{eq:multiple-first-splitting} vanish.  Consequently that many poles in
the cluster satisfy
\[
  \zeta_\ell(\delta)
  =\delta^{1/2}\lambda^{1/2}+O(\delta^{3/2}),
  \qquad
  \operatorname{Im}\zeta_\ell(\delta)=o(\delta),
\]
as a multiset.  In particular, a static eigenspace of multiplicity greater
than three necessarily contains force-dark resonant directions.
\end{corollary}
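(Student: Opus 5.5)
The plan is to reduce everything to linear algebra for the compressed matrix \(\Gamma_\lambda\), and then invoke Theorem~\ref{thm:multiple-splitting}\textup{(i)}.

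\emph{Rank bound.} By Theorem~\ref{thm:rank-three}, \(\Gamma_0=\gamma_{\mathrm{el}}(\mathfrak qB)^*(\mathfrak qB)\), with \(\mathfrak qB:\C^m\to\C^3\). Hence
\[
  \Gamma_\lambda=\gamma_{\mathrm{el}}(\mathfrak qBP_\lambda)^*(\mathfrak qBP_\lambda)\big|_{\cX_\lambda}
\]
factors through \(\C^3\). This gives \(\rank\Gamma_\lambda\le3\). Since \(\Gamma_\lambda\) acts on the \(r\)-dimensional space \(\cX_\lambda\), also \(\rank\Gamma_\lambda\le r\).

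\emph{Kernel dimension.} \(\Gamma_\lambda\) is Hermitian on \(\cX_\lambda\), so rank--nullity gives \(\dim\cD_\lambda=r-\rank\Gamma_\lambda\). Combined with the rank bound, this is at least \(r-\min\{3,r\}=\max\{r-3,0\}\). Positivity of \(\Gamma_0\) gives \(\cD_\lambda=\cX_\lambda\cap\Ker\Gamma_0\), as recorded before Lemma~\ref{lem:dark-cluster-schur}. This identifies these directions as force-dark.

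\emph{Counting vanishing coefficients.} \(\Gamma_\lambda\ge0\) is Hermitian, so its eigenvalues \(\gamma_1,\dots,\gamma_r\), counted with multiplicity, consist of exactly \(\dim\Ker\Gamma_\lambda=\dim\cD_\lambda\) zeros; the remaining eigenvalues are strictly positive. Insert this into the multiset expansion \eqref{eq:multiple-first-splitting} of Theorem~\ref{thm:multiple-splitting}\textup{(i)}. For each \(\ell\) with \(\gamma_\ell=0\) it yields \(\zeta_\ell(\delta)=\delta^{1/2}\lambda^{1/2}+O(\delta^{3/2})\). Taking imaginary parts gives \(\operatorname{Im}\zeta_\ell=O(\delta^{3/2})=o(\delta)\).

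\emph{Main obstacle.} The one delicate point is the multiset matching in Theorem~\ref{thm:multiple-splitting}\textup{(i)}. When several \(\gamma_\ell\) coincide, the poles are assigned only as a multiset, via the argument principle. So "that many poles" must be read as an algebraic count: the number of zeros of \(\det\) of the rescaled cluster Schur complement inside small disks around \(-\tfrac{\ii}{2}\cdot0\) in the \(h\)-variable equals the algebraic multiplicity of the eigenvalue \(0\) of \(-2\lambda^{1/2}hI-\ii\lambda^{1/2}\Gamma_\lambda\). Because \(\Gamma_\lambda\) is diagonalizable, this equals \(\dim\cD_\lambda\). I would make this explicit by applying matrix Rouch\'e on a small circle in the \(h\)-plane that separates \(0\) from the positive \(\gamma_\ell\).

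\emph{Final claim.} If \(r>3\), then \(\dim\cD_\lambda\ge r-3>0\), so \(\cX_\lambda\) contains force-dark directions.
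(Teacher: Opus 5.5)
Your proposal is correct and follows the paper's proof essentially step for step. You bound the rank through the factorization of $\Gamma_0$ via the total-force map into $\C^3$ (the paper's ``compression cannot increase rank''), identify $\Ker\Gamma_\lambda=\cX_\lambda\cap\Ker\Gamma_0=\cD_\lambda$ by positivity, apply rank--nullity, and read off the vanishing coefficients and the $o(\delta)$ imaginary parts from the multiset expansion in Theorem~\ref{thm:multiple-splitting}\textup{(i)}; your extra remarks (Hermiticity of $\Gamma_\lambda$, so the zero $\gamma_\ell$ counted with multiplicity number exactly $\dim\cD_\lambda$, and the Rouch\'e reading of the multiset matching) only make explicit what the paper leaves implicit.
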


\begin{proof}
Compression cannot increase rank, so Theorem~\ref{thm:rank-three} gives
\(\rank\Gamma_\lambda\le3\).  Positivity of \(\Gamma_0\) yields
\(\Ker\Gamma_\lambda=\cX_\lambda\cap\Ker\Gamma_0=\cD_\lambda\), and
rank--nullity proves \eqref{eq:cluster-dark-count}.  The remaining assertions
follow from the multiset expansion in Theorem~\ref{thm:multiple-splitting}\textup{(i)}.
\end{proof}

\begin{remark}
Part~\textup{(ii)} of Theorem~\ref{thm:multiple-splitting} deliberately assumes
that the relevant eigenvalue of \(\mathsf H_{\lambda,D}\) is simple.  If it is
multiple, the available \(o(\delta^2)\) control of the Hermitian dark block may
produce real splittings larger than the \(\delta^{5/2}\) damping term in the
effective matrix.  A universal second-order width formula then requires either
one further real expansion or an exact symmetry that preserves the degenerate
subspace.  No scalar \(\delta^2\)-width formula is claimed in that case.
\end{remark}

\subsubsection{Real-frequency Lorentzian laws}

\begin{theorem}
\label{thm:rank-one-expansion}
Let \(\lambda_j\) be simple.  Assume either the bright hypotheses of
Theorem~\ref{thm:bright-poles}, or the dark hypotheses of
Theorem~\ref{thm:dark-poles}, including the stated positivity of the first
nonzero radiation coefficient.  In the corresponding complex neighborhood
there exist a holomorphic right vector \(r_{j,\delta}(\zeta)\), a holomorphic
left covector
\(\ell_{j,\delta}(\zeta)\in(\C^m)^*\), and a holomorphic scalar
\(m_{j,\delta}(\zeta)\), normalized so that
\[
  r_{j,\delta}(\zeta)=a_j+o(1),
  \qquad
  \ell_{j,\delta}(\zeta)(x)=\inner{x}{a_j}+o(1)\norm{x},
  \qquad
  \ell_{j,\delta}(\zeta)\bigl(r_{j,\delta}(\zeta)\bigr)=1.
\]
Writing \((r\otimes\ell)x=\ell(x)r\), one has
the meromorphic identity
\begin{equation}
\label{eq:matrix-rank-one}
  \cM_\delta(\zeta)^{-1}
  =
  \frac{r_{j,\delta}(\zeta)\otimes\ell_{j,\delta}(\zeta)}
       {m_{j,\delta}(\zeta)}
  +H_{j,\delta}(\zeta),
  \qquad
  \norm{H_{j,\delta}(\zeta)}\le C\delta^{-1}.
\end{equation}
Here \(H_{j,\delta}\) is holomorphic throughout the neighborhood.
On the outgoing real boundary, write
\(m_{j,\delta}^+(\omega)=m_{j,\delta}(\omega)\).  For a bright branch and
\(|\omega-\omega_{j,0}|\le c\delta\),
\begin{equation}
\label{eq:bright-Lorentzian-denominator}
  m_{j,\delta}^+(\omega)
  =
  -2\omega_{j,0}(\omega-\omega_{j,0})
  -
  \ii\delta\omega_{j,0}\gamma_j
  +
  O(\delta^2).
\end{equation}
For a dark branch and
\(|\omega-\widehat\omega_j|\le c\delta^2\),
\begin{equation}
\label{eq:dark-Lorentzian-denominator}
  m_{j,\delta}^+(\omega)
  =
  -2\widehat\omega_j(\omega-\widehat\omega_j)
  -
  \ii\delta\widehat\omega_j^3\gamma_{j,1}
  +
  o(\delta^{5/2}).
\end{equation}
The errors in both formulas, as well as the bound in
\eqref{eq:matrix-rank-one}, are uniform on the displayed shrinking windows.
\end{theorem}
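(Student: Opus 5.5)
Both the right vector and the left covector come from a Lyapunov--Schmidt reduction of \(\cM_\delta(\zeta)\) relative to \(\C^m=\Span\{a_j\}\oplus Q_j\C^m\), with \(P_j=a_j\otimes a_j^*\) and \(Q_j=I-P_j\). We will carry this out in the same disks used in Theorems~\ref{thm:bright-poles} and~\ref{thm:dark-poles}: radius \(C_j\delta\) about \(\delta^{1/2}\lambda_j^{1/2}\) in the bright case and radius \(C_j\delta^2\) about \(\widehat\omega_j\) in the dark case.

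First, by the simplicity of \(\lambda_j\), the block \(Q_j\cM_\delta(\zeta)Q_j=\delta Q_j(K-\lambda_j)Q_j+O(\delta^{3/2})\) is holomorphically invertible with inverse \(O(\delta^{-1})\). We then set
\[
  r_{j,\delta}=a_j-(Q_j\cM_\delta Q_j)^{-1}Q_j\cM_\delta a_j,
  \qquad
  \ell_{j,\delta}(x)=\inner{x-Q_j\cM_\delta(Q_j\cM_\delta Q_j)^{-1}Q_jx}{a_j},
\]
and take \(m_{j,\delta}=\inner{\cM_\delta r_{j,\delta}}{a_j}\), which is the scalar Schur complement. The off-diagonal blocks are \(O(\delta^{3/2})\) in the bright case and \(O(\delta^2)\) in the dark case, the latter because \(\Gamma_0a_j=0\). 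Hence \(r_{j,\delta}-a_j\) and the corresponding covector correction are \(O(\delta^{1/2})\), respectively \(O(\delta)\). The normalization \(\ell_{j,\delta}(r_{j,\delta})=1\) is immediate, since \(Q_jr_{j,\delta}\) is annihilated by the correction.

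Second, the standard block inversion formula (Schur--Frobenius) gives exactly
\[
  \cM_\delta^{-1}=m_{j,\delta}^{-1}\,r_{j,\delta}\otimes\ell_{j,\delta}+Q_j(Q_j\cM_\delta Q_j)^{-1}Q_j .
\]
Thus \(H_{j,\delta}=Q_j(Q_j\cM_\delta Q_j)^{-1}Q_j\) is holomorphic with norm at most \(C\delta^{-1}\), which is \eqref{eq:matrix-rank-one}. This is pure algebra once the complementary inverse is controlled uniformly.

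Third, the denominators.

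In the bright case, \eqref{eq:bright-scalar} already gives \(m_{j,\delta}(\zeta)=\delta\lambda_j-\zeta^2-\ii\delta\zeta\gamma_j+O(\delta^2)\) uniformly. Write \(\omega=\omega_{j,0}+h\) with \(|h|\le c\delta\). Then \(\delta\lambda_j-\omega^2=-2\omega_{j,0}h-h^2\), the term \(h^2=O(\delta^2)\) is absorbed, and \(\ii\delta\omega\gamma_j=\ii\delta\omega_{j,0}\gamma_j+O(\delta^2)\). This yields \eqref{eq:bright-Lorentzian-denominator}.

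In the dark case we Taylor expand about \(\widehat\omega_j\). Lemma~\ref{lem:dark-real-center} gives \(\operatorname{Re}m(\widehat\omega_j)=0\), and Lemma~\ref{lem:dark-leakage}, after identifying \(m(\widehat\omega_j)\) with \(\inner{\cM_\delta\widehat a_j}{\widehat a_j}\) as in the proof of Theorem~\ref{thm:dark-poles}, gives \(\operatorname{Im}m(\widehat\omega_j)=-\delta\widehat\omega_j^3\gamma_{j,1}+O(\delta^3)\). For the first-order term, \(\partial_\omega m(\widehat\omega_j)=-2\widehat\omega_j+O(\delta)\), and over a window of size \(\delta^2\) this contributes \(O(\delta^3)\) error. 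For the quadratic remainder we bound \(\partial_\zeta^2m\) by \(O(1)\), using \(\partial_\zeta^2\cM_\delta=-2I+O(\delta)\) from \eqref{eq:effective-error} together with derivative bounds for the Lyapunov--Schmidt correction. Over the window this gives \(O(\delta^4)\). Since \(O(\delta^3)=o(\delta^{5/2})\), we obtain \eqref{eq:dark-Lorentzian-denominator}.

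The main obstacle is controlling the \(\zeta\)-derivatives of the reduced quantities uniformly on the shrinking windows. In the dark case we need \(\partial_\zeta m\) accurate to \(O(\delta)\) and \(\partial_\zeta^2 m=O(1)\). Differentiating \((Q_j\cM_\delta Q_j)^{-1}\) produces factors \(\delta^{-1}\cdot\partial_\zeta\cM_\delta\cdot\delta^{-1}\), where \(\partial_\zeta\cM_\delta=O(\delta^{1/2})\). These are offset by the \(O(\delta^2)\) off-diagonal blocks, whose derivatives are \(O(\delta^{3/2})\) by \eqref{eq:effective-error}, noting that \(\partial_\zeta(\delta\zeta\Gamma_0)a_j=0\). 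I would check this bookkeeping explicitly. Failing that, I would fall back on Cauchy estimates on a slightly larger disk of radius \(\asymp\delta^{3/2}\), where \(m\) is holomorphic with size \(O(\delta^2)\), so that \(\partial_\zeta^2 m=O(\delta^2/\delta^3)=O(\delta^{-1})\). Over a \(\delta^2\) window this still gives a remainder \(O(\delta^3)\), which is sufficient.
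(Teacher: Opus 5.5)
Your construction follows the paper's proof almost step for step. You use the same splitting \(\Span\{a_j\}\oplus Q_j\C^m\) and the same right vector \(r_0=a_j-\mathsf D^{-1}\mathsf c\,a_j\), where \(\mathsf b=P_j\cM_\delta Q_j\), \(\mathsf c=Q_j\cM_\delta P_j\), \(\mathsf D=Q_j\cM_\delta Q_j\). You also use the same holomorphic remainder \(H_{j,\delta}=Q_j\mathsf D^{-1}Q_j\) and the same scalar inputs for the two denominators. Your derivative bookkeeping in the dark window is more explicit than the paper's.

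The step that fails is the normalization. As written, your covector is simply \(x\mapsto\inner{x}{a_j}\), because \(\inner{Q_jy}{a_j}=0\) for every \(y\). With that covector the block-inversion identity is false. The \((P_j,Q_j)\) block of \(\cM_\delta^{-1}\) is \(-\mathsf s^{-1}\mathsf b\mathsf D^{-1}\). The discrepancy \(-\mathsf s^{-1}\,r_0\otimes(\mathsf b\mathsf D^{-1}Q_j)\) carries the resonance pole unless \(\mathsf b\) vanishes there. It therefore cannot be absorbed into a holomorphic \(H_{j,\delta}\) with \(\norm{H_{j,\delta}}\le C\delta^{-1}\). The covector that does make the Schur--Frobenius formula exact is
\[
  \widetilde\ell_0(x)=a_j^*x-a_j^*\cM_\delta Q_j(Q_j\cM_\delta Q_j)^{-1}Q_jx .
\]
Its correction does not annihilate \(Q_jr_0=-\mathsf D^{-1}\mathsf c\,a_j\). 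Instead
\[
  d:=\widetilde\ell_0(r_0)=1+\mathsf b\mathsf D^{-2}\mathsf c\neq1
\]
in general. So if \(m_{j,\delta}\) is the Schur complement \(\mathsf s=\inner{\cM_\delta r_0}{a_j}\), the rank-one identity cannot be exact while also \(\ell_{j,\delta}(r_{j,\delta})=1\).

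The repair, which is exactly what the paper does, is to set \(\ell_{j,\delta}=d^{-1}\widetilde\ell_0\) and \(m_{j,\delta}=d^{-1}\mathsf s\). Here \(\norm{\mathsf b}+\norm{\mathsf c}\) is \(O(\delta^{3/2})\) on the bright window and \(O(\delta^2)\) on the dark one, and \(\norm{\mathsf D^{-1}}\le C\delta^{-1}\). Hence \(d=1+O(\delta)\), respectively \(d=1+O(\delta^2)\). So \(d\) is zero-free after shrinking, and the covector asymptotics survive.

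You then need one extra check that you omitted: the rescaling must not spoil the Lorentzian denominators.
\begin{itemize}
\item On the bright window \(\mathsf s=O(\delta^{3/2})\), so \((d^{-1}-1)\mathsf s=O(\delta^{5/2})\), which lies inside the \(O(\delta^2)\) error.
\item On the dark window \(\mathsf s=O(\delta^{5/2})\), so the change is \(O(\delta^{9/2})=o(\delta^{5/2})\).
\end{itemize}
Your Taylor argument should therefore be run for \(\mathsf s\) itself and only afterwards transferred to \(m_{j,\delta}\). For \(\mathsf s\), one has \(\operatorname{Re}\mathsf s(\widehat\omega_j)=0\) exactly and \(\mathsf s(\widehat\omega_j)=\inner{\cM_\delta^+(\widehat\omega_j)\widehat a_j}{\widehat a_j}\), as you use. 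With this correction the rest of your argument goes through.
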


\begin{proof}
Put \(P=P_j\), \(Q=Q_j\), and identify \(P\C^m\) with \(\C a_j\).  In the
decomposition \(P\C^m\oplus Q\C^m\), write
\[
  \cM_\delta(\zeta)
  =
  \begin{pmatrix}
    \mathsf a(\zeta)&\mathsf b(\zeta)\\
    \mathsf c(\zeta)&\mathsf D(\zeta)
  \end{pmatrix}.
\]
On both shrinking neighborhoods in the theorem, the static spectral gap gives
\[
  \mathsf D(\zeta)^{-1}
  \in\mathcal B(Q\C^m),
  \qquad
  \norm{\mathsf D(\zeta)^{-1}}\le C\delta^{-1}.
\]
All four blocks and \(\mathsf D^{-1}\) are holomorphic.  Define the scalar
Schur complement
\[
  \mathsf s(\zeta)
  =\mathsf a(\zeta)
  -\mathsf b(\zeta)\mathsf D(\zeta)^{-1}\mathsf c(\zeta),
\]
where the right side is identified with its coefficient on \(a_j\).  Set
\begin{align*}
  r_0(\zeta)
  &=a_j-\mathsf D(\zeta)^{-1}\mathsf c(\zeta)a_j,\\
  \widetilde\ell_0(\zeta)x
  &=a_j^*Px
    -a_j^*\mathsf b(\zeta)\mathsf D(\zeta)^{-1}Qx.
\end{align*}
These are respectively a holomorphic vector and a holomorphic covector.  In
particular, \(\widetilde\ell_0\) is not obtained by taking the Hermitian
adjoint of \(r_0\), which would be anti-holomorphic.  Direct block inversion
gives the exact identity
\begin{equation}
\label{eq:explicit-rank-one-block-inverse}
  \cM_\delta(\zeta)^{-1}
  =
  \frac{r_0(\zeta)\otimes\widetilde\ell_0(\zeta)}
       {\mathsf s(\zeta)}
  +Q\mathsf D(\zeta)^{-1}Q.
\end{equation}

Let \(d(\zeta)=\widetilde\ell_0(\zeta)r_0(\zeta)\).  On a bright window,
\(\mathsf b,\mathsf c=O(\delta^{3/2})\), and hence
\[
  r_0=a_j+O(\delta^{1/2}),\qquad
  \widetilde\ell_0=a_j^*+O(\delta^{1/2}),\qquad
  d=1+O(\delta).
\]
On a dark window, \(\Gamma_0a_j=0\) annihilates both leading radiative mixed
blocks, so \(\mathsf b,\mathsf c=O(\delta^2)\); consequently
\[
  r_0=a_j+O(\delta),\qquad
  \widetilde\ell_0=a_j^*+O(\delta),\qquad
  d=1+O(\delta^2).
\]
After decreasing the neighborhoods, \(d\) has no zero.  Define
\[
  r_{j,\delta}=r_0,
  \qquad
  \ell_{j,\delta}=d^{-1}\widetilde\ell_0,
  \qquad
  m_{j,\delta}=d^{-1}\mathsf s,
  \qquad
  H_{j,\delta}=Q\mathsf D^{-1}Q.
\]
Then \(\ell_{j,\delta}(r_{j,\delta})=1\), and
\eqref{eq:explicit-rank-one-block-inverse} becomes
\eqref{eq:matrix-rank-one}.  It also gives directly
\(\norm{H_{j,\delta}}\le C\delta^{-1}\).

It remains to record the scalar expansions.  In the bright case,
\eqref{eq:bright-scalar} and
\[
  \delta\lambda_j-\omega^2
  =-2\omega_{j,0}(\omega-\omega_{j,0})
   -(\omega-\omega_{j,0})^2
\]
give \eqref{eq:bright-Lorentzian-denominator}; replacing \(\mathsf s\) by
\(m_{j,\delta}=d^{-1}\mathsf s\) changes only the stated
\(O(\delta^2)\) remainder.  In the dark case, the exact definition of
\(\widehat\omega_j\), Lemma~\ref{lem:dark-leakage}, and
\(\partial_\omega\mathsf s(\widehat\omega_j)
=-2\widehat\omega_j+O(\delta)\) yield
\eqref{eq:dark-Lorentzian-denominator}.  Here the factor
\(d^{-1}=1+O(\delta^2)\) changes the denominator by
\(o(\delta^{5/2})\).  All estimates are uniform on the displayed windows.
\end{proof}

\begin{corollary}
\label{cor:field-enhancement}
Let \(f\in L_s^2(\R^3)^3\) be supported where \(\chi=1\), and define
\[
  \mathfrak c_{j,\delta}(f,\omega)
  =
  \ell_{j,\delta}(\omega)
  \bigl(\Psi_\delta^+(\omega)f\bigr).
\]
Then
\begin{align}
  \chi R_\delta^+(\omega)f
  &=
  \frac{\mathfrak c_{j,\delta}(f,\omega)}
  {m_{j,\delta}^+(\omega)}
  \chi\Phi_\delta^+(\omega)
  r_{j,\delta}(\omega)
  +
  \mathcal H_{j,\delta}^{\mathrm{fld}}(\omega)f,
  \label{eq:field-rank-one}\\
  \norm{\mathcal H_{j,\delta}^{\mathrm{fld}}(\omega)f}_{H_{-s}^1}
  &\le C\delta^{-1}\norm{f}_{L_s^2}.
  \label{eq:field-remainder}
\end{align}
If the coupling coefficient stays bounded away from zero, the peak
sizes are \(\delta^{-3/2}\) for a bright branch and
\(\delta^{-5/2}\) for a second-order-bright dark branch.
\end{corollary}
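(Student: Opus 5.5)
The plan is to insert the rank-one matrix decomposition \eqref{eq:matrix-rank-one} of Theorem~\ref{thm:rank-one-expansion} into the exact boundary decomposition \eqref{eq:boundary-resolvent-decomposition} of Theorem~\ref{thm:resonant-lap}. Since \(f\) is supported where \(\chi=1\), we have \(\chi f=f\), and therefore
\[
  \chi R_\delta^+(\omega)f
  =
  \cR_{\delta,\mathrm{reg}}^+(\omega)f
  +
  \chi\Phi_\delta^+(\omega)\bigl(\cM_\delta^+(\omega)\bigr)^{-1}\Psi_\delta^+(\omega)f.
\]
On the real window, \eqref{eq:matrix-rank-one} gives
\(\bigl(\cM_\delta^+\bigr)^{-1}=m_{j,\delta}^{-1}\,r_{j,\delta}\otimes\ell_{j,\delta}+H_{j,\delta}\).
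By the convention \((r\otimes\ell)x=\ell(x)r\), the rank-one part applied to \(\Psi_\delta^+f\) is exactly
\(\mathfrak c_{j,\delta}(f,\omega)\,m_{j,\delta}^{-1}\,\chi\Phi_\delta^+ r_{j,\delta}\).
This identifies \eqref{eq:field-rank-one}, with
\[
  \mathcal H^{\mathrm{fld}}_{j,\delta}
  :=\cR_{\delta,\mathrm{reg}}^+
  +\chi\Phi_\delta^+H_{j,\delta}\Psi_\delta^+\chi .
\]

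For \eqref{eq:field-remainder}, combine three bounds. The regular block is uniformly bounded by Theorem~\ref{thm:resonant-lap}, and is in particular \(O(1)\le C\delta^{-1}\). The two maps \(\chi\Phi_\delta^+\) and \(\Psi_\delta^+\chi\) are uniformly bounded by Proposition~\ref{prop:global-block-bounds}. Finally, \(\|H_{j,\delta}\|\le C\delta^{-1}\) uniformly on the window. The one point needing care is that the real windows in Theorem~\ref{thm:rank-one-expansion} (\(|\omega-\omega_{j,0}|\le c\delta\), respectively \(|\omega-\widehat\omega_j|\le c\delta^2\)) lie inside the holomorphic neighbourhoods where \eqref{eq:matrix-rank-one} holds. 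Boundary values from \(\operatorname{Im}\zeta\downarrow0\) then agree with the outgoing germ by the continuity statement in Proposition~\ref{prop:global-block-bounds}.

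For the peak sizes, evaluate at the real center: \(\omega=\omega_{j,0}\) in the bright case and \(\omega=\widehat\omega_j\) in the dark case. By \eqref{eq:bright-Lorentzian-denominator}, \(|m_{j,\delta}^+(\omega_{j,0})|=\delta^{3/2}\lambda_j^{1/2}\gamma_j+O(\delta^2)\asymp\delta^{3/2}\). By \eqref{eq:dark-Lorentzian-denominator}, \(|m_{j,\delta}^+(\widehat\omega_j)|=\delta\widehat\omega_j^3\gamma_{j,1}(1+o(1))\asymp\delta^{5/2}\).

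The leading term has norm \(|\mathfrak c_{j,\delta}|\,|m_{j,\delta}^+|^{-1}\,\|\chi\Phi_\delta^+ r_{j,\delta}\|\). To bound it from below, use Lemma~\ref{lem:resonant-map-nondegeneracy}: since \(r_{j,\delta}=a_j+o(1)\) is a unit-size vector, \(\|\chi\Phi_\delta^+ r_{j,\delta}\|\ge c\). Together with the hypothesis \(|\mathfrak c_{j,\delta}|\ge c_0\), the leading term is of order \(\delta^{-3/2}\), respectively \(\delta^{-5/2}\). The remainder is \(O(\delta^{-1})\), so it is of lower order and the reverse triangle inequality gives the stated peak sizes. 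The matching upper bounds follow from boundedness of \(\ell_{j,\delta}\) and \(\Psi_\delta^+\chi\).

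The main point to verify is the interpretation of the hypothesis ``coupling coefficient bounded away from zero'' uniformly in \(\delta\), together with the uniformity of the \(o(1)\) normalization of \(r_{j,\delta}\) on the windows. Both follow directly from Theorem~\ref{thm:rank-one-expansion}, so I expect no genuine obstacle beyond the bookkeeping of boundary values.
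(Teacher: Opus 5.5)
Your proposal is correct and follows essentially the same route as the paper. You insert the rank-one splitting \eqref{eq:matrix-rank-one} into the boundary decomposition \eqref{eq:boundary-resolvent-decomposition}. You then bound the remainder using the \(O(1)\) regular block, the uniformly bounded resonant maps, and \(\|H_{j,\delta}\|\le C\delta^{-1}\). Finally, you read off the peak orders from the denominator sizes \(\delta^{3/2}\) and \(\delta^{5/2}\) at the real centers together with the lower bound in Lemma~\ref{lem:resonant-map-nondegeneracy}, which is exactly the paper's argument.
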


\begin{proof}
Insert \eqref{eq:matrix-rank-one} into the exact boundary-value
decomposition \eqref{eq:boundary-resolvent-decomposition}.  The regular
block is \(O(1)\), the left and right maps are \(O(1)\), and the
complementary matrix inverse is \(O(\delta^{-1})\).  This proves
\eqref{eq:field-rank-one}--\eqref{eq:field-remainder}.  At the real
center, the two denominators have sizes
\(\delta^{3/2}\) and \(\delta^{5/2}\), respectively; the right
resonant map is bounded below by Lemma~\ref{lem:resonant-map-nondegeneracy}.
\end{proof}

\section{Symmetric Models and a Conditional Shape Crossover}
\label{sec:examples}

\subsection{A spherical inclusion}

Let \(D=B_a(0)\), assume that \(\rho_D\) is the constant
\(\rho_D>0\), and use the natural rigid coordinates
\[
  u(x)=t+b\times x,\qquad t,b\in\R^3.
\]
The formulas are first computed for real \(t,b\) and then extended
sesquilinearly to \(t,b\in\C^3\).
The corresponding mass matrix is
\begin{equation}
\label{eq:sphere-mass}
  M_{\mathrm{nat}}
  =
  \diag(m_T I_3,m_R I_3),
  \qquad
  m_T=\frac{4\pi}{3}\rho_Da^3,
  \qquad
  m_R=\frac{8\pi}{15}\rho_Da^5.
\end{equation}

\begin{proposition}
\label{prop:sphere}
For the sphere,
\begin{equation}
\label{eq:sphere-K}
  K_{\mathrm{nat}}
  =
  \diag(\kappa_T I_3,\kappa_R I_3),
\end{equation}
where
\begin{equation}
\label{eq:sphere-kappas}
  \kappa_T
  =
  \frac{12\pi a\mu_0(\lambda_0+2\mu_0)}
  {2\lambda_0+5\mu_0},
  \qquad
  \kappa_R=8\pi\mu_0a^3.
\end{equation}
The leading radiation matrix in the same coordinates is
\begin{equation}
\label{eq:sphere-Gamma0}
  \Gamma_{0,\mathrm{nat}}
  =
  \diag(
  \gamma_{\mathrm{el}}\kappa_T^2I_3,0_{3\times3}).
\end{equation}
Thus the translational branches are bright, while all three rotational
branches are force-dark.  In natural rotational coordinates,
\[
  \Gamma_{1,\mathrm{nat}}
  =
  \gamma_R^{(1)}I_3,
  \qquad
  \gamma_R^{(1)}
  =
  \frac{8\pi}{3}a^6
  \frac{\rho_0^{3/2}}{\mu_0^{1/2}}>0.
\]
After mass normalization, the corresponding blocks are
\begin{equation}
\label{eq:sphere-normalized-blocks}
  K
  =
  \diag\left(
  \frac{\kappa_T}{m_T}I_3,
  \frac{\kappa_R}{m_R}I_3
  \right),
  \quad
  \Gamma_0
  =
  \diag\left(
  \frac{\gamma_{\mathrm{el}}\kappa_T^2}{m_T}I_3,
  0
  \right),
  \quad
  \Gamma_1\big|_{\mathrm{rot}}
  =\frac{\gamma_R^{(1)}}{m_R}I_3.
\end{equation}
Hence the rotational poles have width of order \(\delta^2\).
\end{proposition}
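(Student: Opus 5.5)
Throughout, I work in the natural rigid coordinates \(u=t+b\times x\) and exploit the rotation covariance of the exterior static problem. Every matrix in the statement commutes with the action of \(SO(3)\) on \((t,b)\). Under this action \(t\) and \(b\) transform as vectors (and pseudo-vectors; reflection symmetry of the sphere separates them). Hence each matrix is block diagonal \(\diag(\alpha I_3,\beta I_3)\), and the only remaining task is to compute scalars.

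\emph{Step 1 (capacitance).} For \(K_{\mathrm{nat}}\) I will write down the explicit decaying static solutions. For a rigid translation \(t\) on \(|x|=a\), the exterior field is a combination of the Kelvin solution and its second-derivative (dipole) correction:
\[
U=\alpha\,\mathbf G^0(x)t+\beta\,\nabla\nabla^{\mathsf T}(|x|^{-1})t ,
\]
with \(\alpha,\beta\) fixed by the requirement \(U=t\) on \(|x|=a\). For a rotation \(b\) the field is \(U=a^3 b\times x/|x|^3\), which is divergence free.

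I will then compute \(K\) through \(\inner{Ka}{a}=-\inner{\Lambda_0Ba}{Ba}_\Sigma\), which requires the exterior traction on the sphere. It is cleaner to use the force and torque identities. The integrated traction of the translation field is \(-\kappa_T t\). By the proof of Theorem~\ref{thm:rank-three}, this equals \(-\mathfrak qBa(t)\), so that \(\mathfrak qBa(t)=\kappa_T t\). In the same way, the torque of the rotation field gives \(\kappa_R b\).

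The radial and tangential traction computation for \(b\times x/r^3\) gives a traction \(-3\mu_0 a^{-1}\,b\times\hat x\) on the sphere. Integrating \(x\times\) this traction yields \(8\pi\mu_0a^3\). The translation constant \(\kappa_T=12\pi a\mu_0(\lambda_0+2\mu_0)/(2\lambda_0+5\mu_0)\) is the classical Stokes-type drag for elasticity; I will verify it by matching coefficients in the Kelvin ansatz. I expect this to be the main computational obstacle. Only the coefficient \(\alpha\) of the Kelvin part matters, because the force is \(\alpha t\) by the Kelvin normalization.

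\emph{Step 2 (\(\Gamma_0\)).} Theorem~\ref{thm:rank-three} gives \(\Gamma_0=\gamma_{\mathrm{el}}B^*\mathfrak q^*\mathfrak qB\). Step~1 gives \(\mathfrak qB(t,b)=\kappa_T t\). The torque-free rotation field has zero resultant force: the density is tangential and odd, so \(\int\mu\,dS=0\). This yields \(\Gamma_{0,\mathrm{nat}}=\diag(\gamma_{\mathrm{el}}\kappa_T^2I_3,0)\). The rotational block is therefore \(\Ker\Gamma_0\), so those directions are force dark.

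\emph{Step 3 (\(\Gamma_1\)).} For \(a\in\cD_0\) (rotations), I compute \(\cF_1Ba\) from the far-field formulas in the proof of Proposition~\ref{prop:far-field-low-frequency}. Since \(\int\mu\,dS=0\), the order-\(\omega\) term is \(-\ii k\,\hat x\cdot\int y\,\mu(y)\,dS_y\), and the \(\omega\)-derivative of \(\mu_g(\omega)\) contributes nothing at leading order once the zero-force moment is taken. The first moment \(\int y\otimes\mu\) for \(\mu=\mathsf S_0^{-1}B(b\times x)\) is antisymmetric and equals \(\tfrac12\) the torque \(8\pi\mu_0a^3 b\) in matrix form. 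Consequently only the shear part survives: the antisymmetric moment is divergence free, so its pressure projection vanishes.

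The shear far field is \(\frac{k_s}{4\pi\mu_0}\cdot4\pi\mu_0a^3\,\hat x\times b\), up to a phase and a factor of \(\omega\). Multiplying by \(c_s\) and integrating \(|\hat x\times b|^2\) over \(\mathbb S^2\), which gives \(\frac{8\pi}{3}|b|^2\), produces \(\gamma_R^{(1)}=\frac{8\pi}{3}a^6\rho_0^{3/2}\mu_0^{-1/2}\). In this step I need to check the constants carefully.

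\emph{Step 4 (normalization).} Remark~\ref{rem:mass-matrix} gives the congruence by \(M^{-1/2}\), which produces the displayed normalized blocks. The width statement then follows from Theorem~\ref{thm:dark-poles} and Theorem~\ref{thm:multiple-splitting}(ii), since each rotational block is scalar and \(\Gamma_1\) on it is positive.
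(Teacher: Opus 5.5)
Your proposal is correct. Its skeleton matches the paper's proof: the $O(3)$/parity Schur argument for the block form, the static fields $a^3|x|^{-3}\,b\times x$ and Kelvin-plus-dipole (your ansatz is exactly the paper's $f(r)t+h(r)(t\cdot x)x$), the force factorization for $\Gamma_0$, and congruence by $M_{\mathrm{nat}}^{-1/2}$. Two computations take a different route.

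\emph{The constant $\kappa_T$.} You read the resultant off the Kelvin coefficient instead of integrating the explicit traction. This works because the harmonic dipole $\nabla\nabla^{\mathsf T}(|x|^{-1})t$ carries no net force: its stress is $O(r^{-4})$ and the net traction does not depend on the radius. Matching $U=t$ at $r=a$ indeed gives $\alpha=\kappa_T$. This is a minor variation.

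\emph{The form $\Gamma_1$.} This is the genuinely different step. The paper uses the exact outgoing toroidal field $a\,h_1^{(1)}(k_sr)/h_1^{(1)}(k_sa)\,b\times\widehat x$ and Hankel asymptotics, which gives $F_p(\omega)g_b\equiv0$ for every $\omega$ at once. You instead expand the single-layer far-field formula and use the first moment $\int_\Sigma\mu\otimes y\,\dd S$ of the static density $\mu_b=3\mu_0\,b\times\widehat x$. That moment is antisymmetric and equals half the torque, so it radiates only shear. The pressure part vanishes because $\widehat x^{\mathsf T}A\widehat x=0$ for antisymmetric $A$; "divergence free" is not the reason.

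Your route reuses the static data from Steps~1--2 and needs no special functions. It also shows the general mechanism: for any force-free datum $g$, $\cF_1g$ is determined by the first moment of $\mathsf S_0^{-1}g$.

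It does need one line you only asserted. By Lemma~\ref{lem:kernel-expansion}, the kernel of $\mathsf S_1$ is the constant matrix $\mathbf G_1$. Hence $\partial_\omega\mu_g(0)=-\ii\,\mathsf S_0^{-1}\bigl(\mathbf G_1\,\mathfrak q g\bigr)$, which vanishes identically when $\mathfrak q g=0$.

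\emph{The width claim.} Neither theorem you cite applies as stated.
\begin{itemize}
\item The rotational eigenvalue $\kappa_R/m_R$ of $K$ has multiplicity three, so Theorem~\ref{thm:dark-poles}, which needs a simple $\lambda_j$, does not apply.
\item By symmetry, $\mathsf H_{\lambda,D}$ is a multiple of the identity on the rotational triplet, so the simplicity hypothesis of Theorem~\ref{thm:multiple-splitting}(ii) also fails.
\end{itemize}
Block-scalarity of $K$, $\Gamma_0$ and $\Gamma_1$ alone does not bring you under either result. What you need, and what the paper uses in Corollary~\ref{cor:sphere-frequencies}, is that the full effective matrix $\cM_\delta(\zeta)$, remainder included, commutes with the $O(3)$ action. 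Then $\cM_\delta(\zeta)=\diag(m_T(\zeta)I_3,m_R(\zeta)I_3)$ exactly, and the scalar argument of Theorem~\ref{thm:dark-poles} runs on $m_R$.

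\emph{Slips to fix.}
\begin{itemize}
\item The rotational traction is $-3\mu_0\,b\times\widehat x$, equivalently $-3\mu_0a^{-1}\,b\times x$. With your formula the torque would come out as $8\pi\mu_0a^2$.
\item The rotation field is force-free, not torque-free.
\end{itemize}
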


\begin{proof}
The sphere and the isotropic exterior equations are invariant under the
full orthogonal group \(O(3)\).  Translations are polar vectors whereas
rotations are axial vectors, so they have opposite parity under
reflections.  They are therefore inequivalent as \(O(3)\)
representations, although they would be equivalent if one used only
\(SO(3)\).  Schur's lemma and parity give the block form
\eqref{eq:sphere-K} and exclude translation--rotation cross blocks.

For a rotational datum \(b\times x\), the static exterior solution is
\[
  U_b^0(x)=\frac{a^3}{|x|^3}\,b\times x.
\]
It is divergence free, harmonic componentwise, equals \(b\times x\)
at \(|x|=a\), and decays at infinity.  Its traction at the sphere is
\[
  T_0^\nu U_b^0=-3\mu_0\,b\times\widehat x.
\]
Consequently,
\[
  -\int_{\partial B_a}
  T_0^\nu U_b^0\cdot\overline{b\times x}\,\dd S
  =
  8\pi\mu_0a^3|b|^2,
\]
which proves the formula for \(\kappa_R\).

For a unit translation \(t\), seek a static solution in the form
\[
  U_t^0(x)=f(r)t+h(r)(t\cdot x)x.
\]
The decaying Navier solutions satisfying \(U_t^0=t\) at \(r=a\) are
\begin{align*}
  f(r)
  &=
  \frac{3a(\lambda_0+3\mu_0)}
  {2(2\lambda_0+5\mu_0)}\,r^{-1}
  +
  \frac{a^3(\lambda_0+\mu_0)}
  {2(2\lambda_0+5\mu_0)}\,r^{-3},\\
  h(r)
  &=
  \frac{\lambda_0+\mu_0}{\lambda_0+3\mu_0}
  \frac{3a(\lambda_0+3\mu_0)}
  {2(2\lambda_0+5\mu_0)}\,r^{-3}
  -
  \frac{3a^3(\lambda_0+\mu_0)}
  {2(2\lambda_0+5\mu_0)}\,r^{-5}.
\end{align*}
Substitution verifies the Navier equation and the boundary condition.
Integrating its traction over \(\partial B_a\) gives
\[
  \int_{\partial B_a}T_0^\nu U_t^0\,\dd S
  =
  -\kappa_Tt,
\]
with \(\kappa_T\) as in \eqref{eq:sphere-kappas}.  This proves the
translation block.  Formula \eqref{eq:sphere-mass} follows by direct
integration.

The total-force map is \(\kappa_T I_3\) on translations and zero on
rotations.  Equation \eqref{eq:Gamma0-force} gives
\eqref{eq:sphere-Gamma0}.

For the dynamic rotational field, put
\[
  k_s=\omega\sqrt{\rho_0/\mu_0}.
\]
Let \(g_b(x)=b\times x\) on \(\partial B_a\).  The exact outgoing toroidal
solution with boundary value \(g_b\) is
\[
  U_b^+(r,\omega)
  =
  \frac{a\,h_1^{(1)}(k_sr)}
  {h_1^{(1)}(k_sa)}
  \,b\times\widehat x,
\]
where \(h_1^{(1)}\) is the spherical Hankel function.  Since
\[
  h_1^{(1)}(w)=-\frac{\ii}{w^2}+O(1)
  \quad(w\to0),
  \qquad
  h_1^{(1)}(w)=-\frac{e^{\ii w}}{w}
  +O(w^{-2})
  \quad(|w|\to\infty),
\]
its shear far-field amplitude is
\[
  F_s(\omega)g_b
  =
  -\ii\omega a^3
  \sqrt{\frac{\rho_0}{\mu_0}}\,
  b\times\widehat x
  +
  O(\omega^2),
  \qquad
  F_p(\omega)g_b=0.
\]
Using
\[
  \int_{\mathbb S^2}|b\times\widehat x|^2\,\dd\widehat x
  =
  \frac{8\pi}{3}|b|^2
\]
and \(c_s=\sqrt{\rho_0\mu_0}\) gives the displayed value of
\(\gamma_R^{(1)}\).  Finally, conjugating the three natural-coordinate
matrices by \(M_{\mathrm{nat}}^{-1/2}\) gives
\eqref{eq:sphere-normalized-blocks}.
\end{proof}

\begin{corollary}
\label{cor:sphere-frequencies}
In natural coordinates,
\[
  \zeta_T(\delta)
  =
  \delta^{1/2}
  \left(
  \frac{
  9\mu_0(\lambda_0+2\mu_0)
  }{
  \rho_Da^2(2\lambda_0+5\mu_0)
  }
  \right)^{1/2}
  -\frac{\ii\delta}{2}
  \frac{\gamma_{\mathrm{el}}\kappa_T^2}{m_T}
  +O(\delta^{3/2}),
\]
and
\[
  \zeta_R(\delta)
  =
  \widehat\omega_R(\delta)
  -\frac{\ii\delta^2}{2}
  \frac{\kappa_R}{m_R}
  \frac{\gamma_R^{(1)}}{m_R}
  +o(\delta^2),
  \qquad
  \widehat\omega_R(\delta)
  =\delta^{1/2}
  \left(\frac{15\mu_0}{\rho_Da^2}\right)^{1/2}
  +O(\delta^{3/2}).
\]
Each displayed pole has algebraic multiplicity three.  In particular,
the first imaginary correction to the translational triplet is of order
\(\delta\), whereas that of the rotational triplet is of order
\(\delta^2\).
\end{corollary}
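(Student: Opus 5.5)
The plan is to read off both expansions from the general pole theorems, using the explicit sphere data in Proposition~\ref{prop:sphere} after mass normalization. The mass-normalized matrices \eqref{eq:sphere-normalized-blocks} are block diagonal. The translational block of \(K\) is \(\lambda_T I_3\) with \(\lambda_T=\kappa_T/m_T\), and the rotational block is \(\lambda_R I_3\) with \(\lambda_R=\kappa_R/m_R\).

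First I will compute the real parts. Inserting \eqref{eq:sphere-kappas} and \eqref{eq:sphere-mass} gives
\[
\lambda_T=\frac{12\pi a\mu_0(\lambda_0+2\mu_0)}{2\lambda_0+5\mu_0}\cdot\frac{3}{4\pi\rho_Da^3}
=\frac{9\mu_0(\lambda_0+2\mu_0)}{\rho_Da^2(2\lambda_0+5\mu_0)},
\qquad
\lambda_R=\frac{8\pi\mu_0a^3\cdot15}{8\pi\rho_Da^5}=\frac{15\mu_0}{\rho_Da^2}.
\]
These coincide with the displayed square roots. Generically \(\lambda_T\ne\lambda_R\); I will assume this, since coincidence is a single algebraic relation between the Lamé constants. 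Under that assumption, each eigenvalue has multiplicity exactly three.

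Next, the translational triplet. Theorem~\ref{thm:multiple-splitting}(i) applies with \(r=3\) and \(\Gamma_\lambda=(\gamma_{\mathrm{el}}\kappa_T^2/m_T)I_3\). It gives three poles, all with the same leading expansion \(\delta^{1/2}\lambda_T^{1/2}-\tfrac{\ii\delta}{2}\gamma_{\mathrm{el}}\kappa_T^2/m_T+O(\delta^{3/2})\). The count of three, with algebraic multiplicity, comes from Proposition~\ref{prop:static-clusters}.

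To identify these three zeros as a single pole of multiplicity three, I would use \(O(3)\) symmetry. The full effective matrix \(\cM_\delta(\zeta)\) commutes with the rotation action, because the DtN map, the mass form, and the Korn complement are all rotation-equivariant. Schur's lemma, together with the inequivalence of the polar and axial representations, then shows that \(\cM_\delta(\zeta)\) is block scalar, \(\mathrm{diag}(m_T(\zeta)I_3,m_R(\zeta)I_3)\), with holomorphic scalars \(m_T,m_R\). Consequently \(\det\cM_\delta=m_T^3m_R^3\). Each simple zero of \(m_T\) (respectively \(m_R\)) is therefore a pole of algebraic multiplicity three. The scalar analyses of Theorems~\ref{thm:bright-poles} and \ref{thm:dark-poles} can then be run on \(m_T\) and \(m_R\) directly, with the complement consisting of the other block. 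In those theorems the simplicity of \(\lambda\) was used only to isolate a one-dimensional block. Here the block is scalar times identity, so the same Rouché and Newton argument applies verbatim.

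Finally, the rotational triplet. Every rotational direction lies in \(\cD_0\), because \(\Gamma_0\) vanishes on the rotational block. With the scalar reduction above, \(\mathsf H_\lambda\) restricted to the rotational block is scalar and \(\cB_\lambda=\{0\}\). So \(\Theta_\lambda=\Gamma_1|_{\mathrm{rot}}=(\gamma_R^{(1)}/m_R)I_3\), which is positive. Theorem~\ref{thm:dark-poles}, applied to \(m_R\), then gives
\[
\operatorname{Im}\zeta_R=-\frac{\delta^2\lambda_R}{2}\,\frac{\gamma_R^{(1)}}{m_R}+o(\delta^2)
=-\frac{\delta^2}{2}\,\frac{\kappa_R}{m_R}\,\frac{\gamma_R^{(1)}}{m_R}+o(\delta^2),
\]
with real center \(\widehat\omega_R=\delta^{1/2}\lambda_R^{1/2}+O(\delta^{3/2})\) from Lemma~\ref{lem:dark-real-center}.

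The main obstacle is justifying exact equivariance of the outgoing effective matrix, including the Schur correction and the mass-orthonormal basis. This matters because Theorem~\ref{thm:multiple-splitting}(ii) requires \(r_*\) to be a simple eigenvalue of \(\mathsf H_{\lambda,D}\), and that hypothesis fails for the triply degenerate rotational block. Equivariance follows because every operator entering \(\cM_\delta\) is built from rotation-invariant forms. The basis choice \(t+b\times x\) transforms as the vector and pseudovector representations under \(O(3)\), so the block-scalar form survives the congruence by \(M_{\mathrm{nat}}^{-1/2}\).
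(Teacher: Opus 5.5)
Your approach is the paper's: $O(3)$-equivariance, the parity distinction between polar translations and axial rotations, and Schur's lemma make $\cM_\delta(\zeta)=\diag(m_T I_3,m_R I_3)$. The scalar arguments of Theorems~\ref{thm:bright-poles} and~\ref{thm:dark-poles} then run on $m_T$ and $m_R$, and $\det\cM_\delta=m_T^3m_R^3$ gives multiplicity three.

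The one real gap is your ``generic'' hypothesis $\lambda_T\ne\lambda_R$. The corollary is unconditional, so this is not a genericity question, and you do not need to assume it. It follows from your own formulas:
\[
  \lambda_R-\lambda_T
  =\frac{\mu_0}{\rho_Da^2}\Bigl(15-\frac{9(\lambda_0+2\mu_0)}{2\lambda_0+5\mu_0}\Bigr)
  =\frac{3\mu_0(7\lambda_0+19\mu_0)}{\rho_Da^2(2\lambda_0+5\mu_0)}.
\]
Under the standing assumptions $\mu_0>0$ and $3\lambda_0+2\mu_0>0$ one has $7\lambda_0+19\mu_0>\tfrac{43}{3}\mu_0>0$ and $2\lambda_0+5\mu_0>\tfrac{11}{3}\mu_0>0$. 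Coincidence would require $\lambda_0=-19\mu_0/7$, which violates even $\lambda_0+2\mu_0>0$. This is exactly the check the paper makes.

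With it, your uses of $r=3$ in Proposition~\ref{prop:static-clusters} and Theorem~\ref{thm:multiple-splitting}(i), and of $\cB_\lambda=\{0\}$ on the rotational cluster, hold without any extra hypothesis. Without it, your write-up as given proves the corollary only on a restricted parameter set.
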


\begin{proof}
The generalized eigenvalues of
\((K_{\mathrm{nat}},M_{\mathrm{nat}})\) give the two leading real
frequencies.  They are distinct because
\[
  \frac{\kappa_R}{m_R}-\frac{\kappa_T}{m_T}
  =
  \frac{3\mu_0(7\lambda_0+19\mu_0)}
  {\rho_Da^2(2\lambda_0+5\mu_0)}>0.
\]
The full effective pencil commutes with \(O(3)\); parity
separates its translational and rotational sectors, and Schur's lemma
makes each sector a scalar pencil tensored with \(I_3\).  The scalar
arguments in Theorems~\ref{thm:bright-poles} and
\ref{thm:dark-poles}, together with
\eqref{eq:sphere-normalized-blocks}, therefore apply in each sector and
give a zero of multiplicity three.  This symmetry reduction is needed
because the corresponding eigenvalues of \(K\) are not simple on the
full six-dimensional rigid space.
\end{proof}

\subsection{A symmetric dimer and a conditional shape crossover}

Let \(D_1\) and \(D_2\) be congruent and exchanged by an isometry
\(\mathsf E\) of the reference configuration.  Assume that the two components
carry identical reference Lam\'e parameters and density, so both the exterior
medium and the interior mass form are invariant under \(\mathsf E\).  The
induced unitary action on \(\cN\) then commutes with \(K\).

\begin{proposition}
\label{prop:dimer}
Suppose an eigenspace of \(K\) contains an exchange-odd vector
\(a_-\), normalized by \(|a_-|=1\), whose static single-layer densities on the two components are mapped
to one another with opposite resultant forces.  Then
\[
  \mathfrak qBa_-=0,
  \qquad
  \Gamma_0a_-=0.
\]
If the corresponding eigenvalue is simple in its symmetry class and
\(\inner{\Gamma_1a_-}{a_-}>0\), then its odd-sector pole satisfies
\[
  \zeta_-(\delta)
  =
  \widehat\omega_-(\delta)
  -
  \frac{\ii\delta^2\lambda_-}{2}
  \inner{\Gamma_1a_-}{a_-}
  +
  o(\delta^2).
\]
\end{proposition}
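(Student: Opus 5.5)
Writing \(\mu_-=\mathsf S_0^{-1}Ba_-\), the plan is to compute \(\mathfrak qBa_-\) directly from the total-force map and then pass to the effective matrix through the optical identity and the rank-three theorem.

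\emph{Force cancellation.} By definition \(\mathfrak qBa_-=\int_\Sigma\mu_-\,\dd S\), and since \(\Sigma=\partial D_1\cup\partial D_2\) this integral splits into \(\int_{\partial D_1}\mu_-\,\dd S+\int_{\partial D_2}\mu_-\,\dd S\). These are the two resultant forces. By hypothesis they are opposite, so their sum vanishes and \(\mathfrak qBa_-=0\).

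If one prefers to derive this from the symmetry rather than assume it, the argument is as follows. The isometry \(\mathsf E\) commutes with the isotropic Kupradze kernel, and hence with \(\mathsf S_0\), under the pullback action twisted by the linear part of \(\mathsf E\). Exchange-oddness then says that the density on \(\partial D_2\) is minus the \(\mathsf E\)-transported density on \(\partial D_1\). When the linear part acts trivially on the resultant (for example, a translation exchanging congruent copies), the resultants cancel. In general one only gets the cancellation when it is combined with the stated hypothesis, which is exactly why the hypothesis is included.

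\emph{Kernel of \(\Gamma_0\).} Theorem~\ref{thm:rank-three} gives \(\Ker\Gamma_0=\Ker(\mathfrak qB)\). Since \(\mathfrak qBa_-=0\), it follows that \(\Gamma_0a_-=0\), so \(a_-\in\cD_0\).

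\emph{Pole expansion.} I will restrict to the exchange-odd sector and reuse the simple dark-pole theorem there. The unitary exchange action \(U\) commutes with \(K\), with the mass form, and with the exterior DtN map \(\Lambda(\zeta)\), because the exterior medium is \(\mathsf E\)-invariant. The projection \(Q\) onto \(\cV_\perp\) is \(\mathsf E\)-equivariant, since \(\cN\) is invariant. Consequently every Grushin block, \(G_\delta^{-1}\), and \(\cM_\delta(\zeta)\) commute with \(U\). The same holds for \(\Gamma_0\) and \(\Gamma_1\); for \(\Gamma_1\) I use its origin-independence on \(\cD_0\) and take the origin fixed by \(\mathsf E\). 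Therefore \(\cM_\delta\) is block diagonal with respect to the \(\pm1\) eigenspaces of \(U\), and \(\det\cM_\delta\) factors accordingly.

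On the odd block, \(\lambda_-\) is a simple eigenvalue of \(K\) with eigenvector \(a_-\in\cD_0\). This is exactly hypothesis \eqref{eq:dark-eigenvector-hypothesis}, read within the odd sector, and \(\inner{\Gamma_1a_-}{a_-}>0\). I then repeat Lemma~\ref{lem:dark-real-center}, Lemma~\ref{lem:dark-leakage}, and Theorem~\ref{thm:dark-poles} verbatim with \(\C^m\) replaced by the odd subspace, as was done for the sphere in Corollary~\ref{cor:sphere-frequencies}. This yields the real center \(\widehat\omega_-\), a unique simple zero of the odd determinant in the \(C\delta^2\) disk, and the stated imaginary part.

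\emph{Main obstacle.} The delicate point is justifying that the Lyapunov--Schmidt reduction may be performed inside the odd sector. The complement \(Q_j\)-block must remain invertible with norm \(O(\delta^{-1})\) there. This only needs the static gap of \(K\) restricted to the odd sector, which is the "simple in its symmetry class" assumption. The even sector may contain the same eigenvalue \(\lambda_-\), but it decouples exactly. As a result, the pole is simple as a zero of the odd factor, though not necessarily of the full determinant.
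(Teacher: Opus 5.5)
Your proposal is correct and follows essentially the paper's own argument: the resultant cancellation gives $\mathfrak qBa_-=0$, Theorem~\ref{thm:rank-three} gives $\Gamma_0a_-=0$, and the pole formula is Theorem~\ref{thm:dark-poles} run inside the exchange-odd sector, where your check that $U$ commutes with every Grushin block supplies the detail the paper leaves implicit. One small correction to your optional aside: no nonzero translation can exchange two bounded components, since that would force $D_1+2c=D_1$, so that example is vacuous; your main argument relies only on the stated resultant hypothesis and does not depend on it.
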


\begin{proof}
The static single-layer operator commutes with the exchange isometry.  For an
exchange-odd datum, the transformed densities have opposite resultants, so
their integrals cancel.  Hence \(\mathfrak qBa_-=0\), and
Theorem~\ref{thm:rank-three} gives \(\Gamma_0a_-=0\).  The pole formula is
Theorem~\ref{thm:dark-poles} applied in the odd symmetry sector.
\end{proof}

We next break the exchange symmetry.  The full-space simplicity assumption in
the next proposition is intentional: once the symmetry is broken, the odd and
even subspaces are no longer invariant, so simplicity only inside the odd
sector would not by itself produce a differentiable eigenvector branch.

Fix
\[
  \Xi_\eta(x)=x+\eta V(x),
  \qquad
  V\in C_c^{2,\alpha}(\R^3;\R^3),
  \qquad
  D_j(\eta)=\Xi_\eta(D_j).
\]
On each moved component we keep the same homogeneous isotropic Lam\'e
parameters and density as on its reference component; thus the material is
not transformed into an anisotropic push-forward medium.  We identify the
moving rigid spaces with a fixed \(\C^m\) by using translations and rotations
about the moving component centers, forming their \(\rho_D\)-mass Gram matrix,
and applying its positive inverse square root.  Boundary operators are pulled
back by \(\Xi_\eta\).  More precisely, if
\(\Sigma_\eta=\partial D(\eta)\), let
\[
  \mathcal P_\eta:H^{1/2}(\Sigma_\eta)^3\to H^{1/2}(\Sigma)^3,
  \qquad
  \mathcal P_\eta g=g\circ\Xi_\eta,
\]
and let \(\mathcal P_{\eta,*}:H^{-1/2}(\Sigma_\eta)^3
\to H^{-1/2}(\Sigma)^3\) be its dual transport, including the surface
Jacobian, characterized by
\[
  \inner{\mathcal P_{\eta,*}\mu}{\varphi}_{\Sigma}
  =\inner{\mu}{\mathcal P_\eta^{-1}\varphi}_{\Sigma_\eta}.
\]
With this convention, define the fixed-space boundary operators by
\[
  \widetilde{\mathsf S}_{\eta}(\zeta)
  =\mathcal P_\eta\mathsf S_{\eta}(\zeta)\mathcal P_{\eta,*}^{-1},
  \qquad
  \widetilde{\mathsf K}_{\eta}^*(\zeta)
  =\mathcal P_{\eta,*}\mathsf K_{\eta}^*(\zeta)
   \mathcal P_{\eta,*}^{-1},
\]
and
\[
  \widetilde\Lambda_\eta(\zeta)
  =\mathcal P_{\eta,*}\Lambda_\eta(\zeta)\mathcal P_\eta^{-1}
  =\left(-\frac12I+\widetilde{\mathsf K}_{\eta}^*(\zeta)\right)
   \widetilde{\mathsf S}_{\eta}(\zeta)^{-1}.
\]
We write \(\widetilde{\mathsf S}_{0,\eta}\) for
\(\widetilde{\mathsf S}_{\eta}(0)\).  The two transports play different
roles: \(\mathcal P_\eta\) acts on Dirichlet traces, whereas
\(\mathcal P_{\eta,*}\) acts on densities and tractions.

We first record the operator regularity required for this moving-geometry
argument.
\begin{lemma}
\label{lem:dimer-shape-regularity}
For \(|\eta|\) small enough that the components retain a uniform positive
separation, the pulled-back static single-layer operator and its inverse, the
static DtN map, \(\mathfrak q_\eta\), \(B_\eta\), \(K_\eta\),
\(\Gamma_{0,\eta}\), and \(\Gamma_{1,\eta}\) are \(C^1\) in \(\eta\) in
the fixed operator spaces
\[
  \mathcal B(H^{-1/2}(\Sigma)^3,H^{1/2}(\Sigma)^3),\quad
  \mathcal B(H^{1/2}(\Sigma)^3,H^{-1/2}(\Sigma)^3),
\]
\[
  \mathcal B(H^{1/2}(\Sigma)^3,\C^3),\quad
  \mathcal B(\C^m,H^{1/2}(\Sigma)^3),
\]
and the corresponding finite-dimensional matrix spaces.  The low-frequency
expansions and their first two \(\zeta\)-derivatives are \(C^1\) in \(\eta\), uniformly for
\(\eta\) in a sufficiently small compact interval.  In particular,
\(\cM_{\delta,\eta}(\zeta)\) is
holomorphic in \(\zeta\) and \(C^1\) in \(\eta\) on the joint neighborhood
used below.
\end{lemma}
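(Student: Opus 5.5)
The plan is to establish shape regularity by transporting everything to the fixed reference boundary \(\Sigma\) and using the \(C^1\) dependence of pulled-back boundary integral kernels on \(\eta\). Other quantities will then follow by composition and inversion in fixed Banach algebras. First I would write the pulled-back single-layer operator explicitly:
\[
  \widetilde{\mathsf S}_\eta(\zeta)\mu(x)
  =\int_\Sigma \mathbf G^\zeta\bigl(\Xi_\eta(x)-\Xi_\eta(y)\bigr)\mu(y)\,\dd S_y,
\]
where the surface Jacobian is absorbed into \(\mathcal P_{\eta,*}\). Since \(\Xi_\eta(x)-\Xi_\eta(y)=(x-y)+\eta(V(x)-V(y))\) with \(V\in C^{2,\alpha}\), the kernel is a pseudohomogeneous kernel of class \(-1\) whose \(\eta\)-derivative
\[
  \nabla\mathbf G^\zeta\bigl(\Xi_\eta(x)-\Xi_\eta(y)\bigr)\cdot(V(x)-V(y))
\]
is again of the same class, because \(|V(x)-V(y)|\le C|x-y|\). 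The kernel expansion of Lemma~\ref{lem:kernel-expansion} (Appendix~\ref{app:dtn}) holds uniformly in \(\eta\), including two \(\zeta\)-derivatives. The standard mapping theorem for such kernels on the fixed \(C^{2,\alpha}\) surface then gives \(C^1\) dependence in \(\mathcal B(H^{-1/2},H^{1/2})\), jointly with holomorphy in \(\zeta\). The same argument applies to \(\widetilde{\mathsf K}^*_\eta(\zeta)\), whose kernel involves the transported normal \(\bfnu_\eta\circ\Xi_\eta\), which is \(C^{1,\alpha}\) in \(x\) and \(C^1\) in \(\eta\).

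Second, invertibility of \(\mathsf S_{0,\eta}\) for each small \(\eta\) follows from Proposition~\ref{prop:dtn-analytic} applied to the moved cluster, because separation and connectedness of the exterior persist. Inversion is smooth on the open set of invertible operators, so \(\widetilde{\mathsf S}_{0,\eta}^{-1}\) is \(C^1\). A Neumann series around \(\zeta=0\) then gives a radius \(r_0\) uniform in \(\eta\). It follows that
\[
  \widetilde\Lambda_\eta(\zeta)=\bigl(-\tfrac12 I+\widetilde{\mathsf K}^*_\eta(\zeta)\bigr)\widetilde{\mathsf S}_\eta(\zeta)^{-1}
\]
and its Taylor coefficients \(\Lambda_{0,\eta},\Lambda_{1,\eta},\Lambda_{2,\eta}(\zeta)\) are \(C^1\) in \(\eta\). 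The coefficients are obtained by Cauchy integrals in \(\zeta\), which commute with \(\eta\)-differentiation; this also yields the uniformity of the first two \(\zeta\)-derivatives.

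Third, I would treat the finite-dimensional data. The force map is \(\mathfrak q_\eta g=\inner{\widetilde{\mathsf S}_{0,\eta}^{-1}g}{\mathcal P_\eta\mathbf 1}\), and the constant function is unchanged by \(\mathcal P_\eta\), so \(\mathfrak q_\eta\) is \(C^1\). The rigid basis consists of translations and rotations about the moving centers, composed with \(\Xi_\eta\) on \(\Sigma\). The components are polynomial in \(\eta\), and so is the mass Gram matrix, which is computed by pulling back the volume integrals with the Jacobian of \(\Xi_\eta\). The inverse square root is smooth on positive matrices, so \(B_\eta\) is \(C^1\). With \(B_\eta\) in hand:
\begin{itemize}
  \item \(K_\eta=-B_\eta^*\Lambda_{0,\eta}B_\eta\) is \(C^1\);
  \item \(\Gamma_{0,\eta}=B_\eta^*\Lambda_{1,\eta}B_\eta\) is \(C^1\);
  \item \(\Gamma_{1,\eta}\) is \(C^1\) once \(\cF_{1,\eta}\) is. This holds because \(\cF_1\) comes from the explicit far-field formula with the pulled-back density \(\widetilde{\mathsf S}_\eta(\omega)^{-1}\) and the phase \(e^{-\ii k\widehat x\cdot\Xi_\eta(y)}\).
\end{itemize}

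A caveat concerns \(\Gamma_{1,\eta}\). The compression onto \(\Ker\Gamma_{0,\eta}\) is \(C^1\) only because the rank stays equal to three (Theorem~\ref{thm:rank-three}), so the kernel projection varies smoothly. I would state this explicitly.

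Finally, for \(\cM_{\delta,\eta}\), the interior form must also be transported. I would pull back the interior Navier form by \(\Xi_\eta\) to \(H^1(D)^3\). This gives coefficients \(C^1\) in \(\eta\) with uniform Korn constant, so Lemma~\ref{lem:complement-inverse} holds uniformly and \(G_{\delta,\eta}^{-1}\) is \(C^1\). The Feshbach formula \eqref{eq:feshbach-matrix} then transfers the regularity.

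The main obstacle I expect is the first step: verifying that \(\eta\)-differentiation of the singular pulled-back kernels, together with two \(\zeta\)-derivatives and the removable \(\zeta^{-2}\) structure of the Kupradze tensor, stays in the bounded pseudodifferential class uniformly. I would handle this by differentiating the explicit series of Lemma~\ref{lem:kernel-expansion} term by term, with the \(C^{2,\alpha}\) regularity of \(V\) ensuring the Hölder control needed for the hypersingular traction part.
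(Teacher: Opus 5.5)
Your proposal is correct and follows essentially the paper's route. The paper likewise uses pulled-back kernels $\mathbf G^\zeta(\Xi_\eta(x)-\Xi_\eta(y))$ with the kernel mapping theorem, openness of the invertible group for $\widetilde{\mathsf S}_{0,\eta}^{-1}$, composition for $\widetilde\Lambda_\eta$, $\mathfrak q_\eta$, $B_\eta=\mathsf R_\eta H_\eta^{-1/2}$, $K_\eta$ and $\Gamma_{0,\eta}$, persistence of rank three for the kernel projection in $\Gamma_{1,\eta}$, and a volume pullback with a uniform Korn constant for the Schur complement.

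The one step the paper makes explicit that you pass over concerns the complement $\cV_{\perp,\eta}^\#$, the $\mathfrak m_\eta^\#$-orthogonal complement of the pulled-back rigid space, which moves with $\eta$. So the claim that $G_{\delta,\eta}^{-1}$ is $C^1$ only makes sense after an identification such as $W_\eta=Q_\eta^\#|_{\cV_{\perp,0}^\#}$, or after working with $Q_\eta^\#G_{\delta,\eta}^{-1}Q_\eta^{\#*}\in\mathcal B(\cV^*,\cV)$. The uniform Korn bound on these moving complements also needs a short compactness or perturbation argument.
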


\begin{proof}
In this proof, a dot denotes \(\partial_\eta|_{\eta=0}\).
For small \(|\eta|\), \(\Xi_\eta\) is a uniformly \(C^{2,\alpha}\)
diffeomorphic family and
\[
  c|x-y|\le
  |\Xi_\eta(x)-\Xi_\eta(y)|
  \le C|x-y|,
  \qquad x,y\in\Sigma.
\]
The composition maps \(\mathcal P_\eta\), their inverses, and their dual
transports are uniformly bounded isomorphisms on the indicated Sobolev
spaces.  We do not differentiate these composition operators on arbitrary
Sobolev data; the required operator-norm differentiability is proved directly
for the pulled-back kernels.  For a smooth fixed density, the kernel of
\(\widetilde{\mathsf S}_{\eta}(\zeta)\) is exactly
\[
  \mathbf G^\zeta(\Xi_\eta(x)-\Xi_\eta(y)),
\]
because the input surface Jacobian is already built into
\(\mathcal P_{\eta,*}\).  Its derivative contains
\[
  \nabla\mathbf G^\zeta(\Xi_\eta(x)-\Xi_\eta(y))
  \bigl(V(x)-V(y)\bigr).
\]
At \(\eta=0\), this gives
\begin{equation}
\label{eq:shape-derivative-single-layer}
  (\dot{\widetilde{\mathsf S}}_{0,0}\mu)(x)
  =\int_\Sigma
  \nabla\mathbf G^0(x-y)\bigl(V(x)-V(y)\bigr)
  \mu(y)\,\dd S_y.
\end{equation}
In the pulled-back traction kernel, the output dual transport contributes
\(J_{\Sigma,\eta}(x)\), together with the transformed normal and first
derivatives of \(\Xi_\eta\).  More precisely,
\[
  \dot J_{\Sigma,0}=\operatorname{div}_\Sigma V,
  \qquad
  \dot\bfnu_0
  =-(I-\bfnu\otimes\bfnu)(DV)^{\mathsf T}\bfnu,
\]
and, initially for smooth \(\mu\),
\begin{equation}
\label{eq:shape-derivative-traction-kernel}
  (\dot{\widetilde{\mathsf K}}_{0,0}^*\mu)(x)
  =\int_\Sigma
  \left.\partial_\eta\right|_{\eta=0}
  \bigl[
    J_{\Sigma,\eta}(x)
    T_{\bfnu_\eta}^{x_\eta}
    \mathbf G^0(\Xi_\eta(x)-\Xi_\eta(y))
  \bigr]\mu(y)\,\dd S_y,
\end{equation}
where \(x_\eta=\Xi_\eta(x)\), and the traction differentiates the first
kernel variable.  Since
\(|V(x)-V(y)|\le C|x-y|\), differentiation in \(\eta\) does not worsen the
pseudohomogeneous order of either diagonal kernel.  Kernels coupling different
components are smooth with all these derivatives because the separation is
uniformly positive.

Apply the boundary-kernel mapping theorem used in
Appendix~\ref{app:dtn} to the kernel and its \(\eta\)-derivative.  It follows
that \(\widetilde{\mathsf S}_{0,\eta}\) and the pulled-back
\(\widetilde{\mathsf K}_{0,\eta}^*\) are \(C^1\) in the asserted operator
norms.  Lemma~\ref{lem:kernel-expansion} applies uniformly to the compact
family of pulled-back kernels.  Applying the same argument to each Taylor
coefficient and to the integral remainder gives the joint \(C^1\)-in-\(\eta\),
holomorphic-in-\(\zeta\) low-frequency expansion, including two
\(\zeta\)-derivatives of the remainder.

The static operator \(\widetilde{\mathsf S}_{0,0}\) is invertible.  Norm
continuity and openness of the invertible group give a uniform inverse for
small \(\eta\), and differentiation of the identity
\(\widetilde{\mathsf S}_{0,\eta}^{-1}
\widetilde{\mathsf S}_{0,\eta}=I\) yields
\[
  \partial_\eta\widetilde{\mathsf S}_{0,\eta}^{-1}
  =-\widetilde{\mathsf S}_{0,\eta}^{-1}
   (\partial_\eta\widetilde{\mathsf S}_{0,\eta})
   \widetilde{\mathsf S}_{0,\eta}^{-1}.
\]
The exact identity for \(\widetilde\Lambda_\eta(\zeta)\) displayed above then
gives the same \(C^1\) dependence and uniform low-frequency expansion for the
DtN map.

For a physical density \(\mu_\eta\) on \(\Sigma_\eta\), set
\(\widetilde\mu_\eta=\mathcal P_{\eta,*}\mu_\eta\).  Dual transport gives
\[
  \int_{\Sigma_\eta}\mu_\eta\,\dd S
  =\inner{\widetilde\mu_\eta}{\mathbf 1}_{\Sigma},
\]
componentwise.  Denote this fixed density-resultant functional by
\(\mathfrak r:H^{-1/2}(\Sigma)^3\to\C^3\).  For pulled-back Dirichlet data,
the total-force map used in Section~\ref{sec:radiation} is
\[
  \mathfrak q_\eta
  =\mathfrak r\widetilde{\mathsf S}_{0,\eta}^{-1}
  \in\mathcal B(H^{1/2}(\Sigma)^3,\C^3).
\]
It is therefore \(C^1\), with
\[
  \dot{\mathfrak q}_0
  =-\mathfrak r\widetilde{\mathsf S}_{0,0}^{-1}
   \dot{\widetilde{\mathsf S}}_{0,0}
   \widetilde{\mathsf S}_{0,0}^{-1}.
\]
At zero frequency, put
\(\dot{\widetilde\Lambda}_0
:=\partial_\eta\widetilde\Lambda_\eta(0)|_{\eta=0}\).  The other first
variations are
\begin{align}
  \dot{\widetilde\Lambda}_0
  & =
  \dot{\widetilde{\mathsf K}}_{0,0}^*
  \widetilde{\mathsf S}_{0,0}^{-1}
  -\left(-\frac12I+\widetilde{\mathsf K}_{0,0}^*\right)
  \widetilde{\mathsf S}_{0,0}^{-1}
  \dot{\widetilde{\mathsf S}}_{0,0}
  \widetilde{\mathsf S}_{0,0}^{-1},
  \label{eq:shape-derivative-dtn}\\
  \dot K_0
  &=-\dot B_0^*\widetilde\Lambda_0B_0
    -B_0^*\dot{\widetilde\Lambda}_0B_0
    -B_0^*\widetilde\Lambda_0\dot B_0.
  \label{eq:shape-derivative-capacitance}
\end{align}
The translated and rotated rigid fields about the moving centers are \(C^1\)
after pullback.  Their mass Gram matrix is a positive \(C^1\) matrix.
Let \(\mathsf R_\eta\) be the raw rigid trace matrix, let \(H_\eta\) be its
mass Gram matrix, and put \(Z_\eta=H_\eta^{-1/2}\).  Then
\[
  B_\eta=\mathsf R_\eta Z_\eta,
  \qquad
  \dot B_0=\dot{\mathsf R}_0Z_0+\mathsf R_0\dot Z_0,
\]
where \(\dot Z_0\) is determined by differentiating
\(Z_\eta H_\eta Z_\eta=I\):
\begin{equation}
\label{eq:inverse-square-root-derivative}
  \dot Z_0H_0Z_0+Z_0\dot H_0Z_0+Z_0H_0\dot Z_0=0.
\end{equation}
Thus the mass-normalized trace map \(B_\eta\), and consequently
\(K_\eta\) and \(\Gamma_{0,\eta}\), are \(C^1\).

The rank-three theorem applies to every member of this shape family.  Its
three positive singular values stay bounded away from zero near \(\eta=0\),
so the spectral projection onto \(\Ker\Gamma_{0,\eta}\) is \(C^1\).  The
pulled-back far-field kernels are smooth in the observation direction, and
their first two low-frequency coefficients are \(C^1\) in \(\eta\).  The
definition \eqref{eq:Gamma1-definition}, with the preceding kernel projection,
therefore gives \(C^1\) dependence of \(\Gamma_{1,\eta}\).

It remains to justify the volume Schur complement, whose domain otherwise
moves with \(\eta\).  Introduce the volume pullback
\[
  \mathsf T_\eta:H^1(D(\eta))^3\longrightarrow\cV,
  \qquad \mathsf T_\eta u=u\circ\Xi_\eta,
\]
and put \(\mathsf A_\eta=D\Xi_\eta\),
\(j_\eta=\det\mathsf A_\eta\), and
\[
  e_\eta^\#(u)=\operatorname{sym}(\nabla u\,\mathsf A_\eta^{-1}).
\]
The pulled-back elastic and mass forms are
\begin{align*}
  \mathfrak a_\eta^\#(u,v)
  &=\int_D C_De_\eta^\#(u):\overline{e_\eta^\#(v)}\,j_\eta\,\dd x,\\
  \mathfrak m_\eta^\#(u,v)
  &=\int_D\rho_Du\cdot\overline v\,j_\eta\,\dd x.
\end{align*}
Their coefficients depend \(C^1\) on \(\eta\), and the transformed elasticity
tensors remain uniformly strongly elliptic.  The apparent anisotropy of
\(\mathfrak a_\eta^\#\) is only a coordinate representation of the homogeneous
isotropic material on \(D(\eta)\).

Let \(\mathcal N(D(\eta))\) denote the physical rigid-motion space on the
moved components and set
\[
  \cN_\eta^\#=\mathsf T_\eta\mathcal N(D(\eta)),
\]
and denote by \(\Pi_\eta^\#\) the
\(\mathfrak m_\eta^\#\)-orthogonal projection onto \(\cN_\eta^\#\).  The
pulled-back translations and rotations, their mass Gram matrix, and its
inverse are \(C^1\).  Consequently
\[
  \Pi_\eta^\#,\quad Q_\eta^\#=I-\Pi_\eta^\#
  \in\mathcal B(\cV)
\]
depend \(C^1\) on \(\eta\).  Set
\(\cV_{\perp,\eta}^\#=Q_\eta^\#\cV\).  Since
\(\|Q_\eta^\#-Q_0^\#\|\to0\), the restriction
\[
  W_\eta:=Q_\eta^\#|_{\cV_{\perp,0}^\#}:
  \cV_{\perp,0}^\#\longrightarrow\cV_{\perp,\eta}^\#
\]
is a \(C^1\) family of isomorphisms for small \(|\eta|\).

Define the reduced operator on \(\cV\) by
\[
  \inner{\cA_{\delta,\eta}^\#(\zeta)u}{v}
  =\mathfrak a_\eta^\#(u,v)
   -\zeta^2\mathfrak m_\eta^\#(u,v)
   -\delta\inner{\widetilde\Lambda_\eta(\zeta)\gamma u}
                    {\gamma v}_{\Sigma},
\]
and let
\[
  G_{\delta,\eta}^\#(\zeta):
  \cV_{\perp,\eta}^\#\longrightarrow
  (\cV_{\perp,\eta}^\#)^*
\]
be its complement block.  On the fixed complement define
\[
  \widehat G_{\delta,\eta}(\zeta)
  =W_\eta^*G_{\delta,\eta}^\#(\zeta)W_\eta,
\]
where \(W_\eta^*\) denotes dual pullback.  The uniform Korn estimate follows
by a fixed-domain compactness argument.  Indeed, otherwise there would be
\(\eta_n\to0\) and \(w_n\in\cV_{\perp,\eta_n}^\#\) such that
\[
  \norm{w_n}_{H^1(D)}=1,
  \qquad
  \mathfrak a_{\eta_n}^\#(w_n,w_n)\longrightarrow0.
\]
Since \(\mathsf A_{\eta_n}\to I\) in \(C^1\), one has
\(e(w_n)\to0\) in \(L^2(D)\).  Korn's inequality and compactness give,
after passage to a subsequence, \(w_n\to r\) in \(H^1(D)^3\) for some
\(r\in\cN_0^\#\).  The moving rigid bases and mass forms converge to their
values at zero.  Passing to the limit in the defining orthogonality of
\(\cV_{\perp,\eta_n}^\#\) gives
\(r\perp_{\mathfrak m_0^\#}\cN_0^\#\), hence \(r=0\), contradicting
\(\norm{w_n}_{H^1}=1\).  Thus the Korn constant is uniform.  The lower-order
mass and DtN terms are uniformly small on the stated frequency scale.  Hence
\(\widehat G_{\delta,\eta}\) is uniformly invertible, jointly holomorphic in
\(\zeta\) and \(C^1\) in \(\eta\), and
\[
  \partial_\eta\widehat G_{\delta,\eta}^{-1}
  =-\widehat G_{\delta,\eta}^{-1}
    (\partial_\eta\widehat G_{\delta,\eta})
    \widehat G_{\delta,\eta}^{-1}.
\]
Applying the fixed-space block formulas of Section~\ref{sec:reduction} now proves the
asserted regularity and joint uniformity of
\(\cM_{\delta,\eta}(\zeta)\).
\end{proof}

\begin{proposition}
\label{prop:dimer-geometric-perturbation}
Let \(a_-\) be the exchange-odd unit eigenvector in
Proposition~\ref{prop:dimer}, with eigenvalue \(\lambda_-\).  Assume that
\(\lambda_-\) is a simple isolated eigenvalue of the full matrix \(K_0\) and
that
\[
  \inner{\Gamma_{1,0}a_-}{a_-}>0.
\]
Suppose the deformation breaks the exchange symmetry while preserving a
positive separation.  Then, for small
\(|\eta|\), the pulled-back static operators and the corresponding normalized
eigenpair \((\lambda_-(\eta),a_-(\eta))\), with phase fixed by
\(\inner{a_-(\eta)}{a_-(0)}>0\), are \(C^1\) in \(\eta\).  Moreover,
\begin{equation}
\label{eq:q1-shape-derivative}
  \mathfrak q_\eta B_\eta a_-(\eta)
  =
  \eta q_1+o(\eta),
  \qquad
  q_1
  =
  \left.
  \frac{\dd}{\dd\eta}
  \bigl(\mathfrak q_\eta B_\eta a_-(\eta)\bigr)
  \right|_{\eta=0}.
\end{equation}
Writing a dot for \(\partial_\eta|_{\eta=0}\), one has the explicit
first-variation formula
\begin{equation}
\label{eq:q1-first-variation}
  q_1
  =\dot{\mathfrak q}_0B_0a_-
  +\mathfrak q_0\dot B_0a_-
  -\mathfrak q_0B_0
  \mathcal R_-
  (I-P_-)\dot K_0a_-,
\end{equation}
where \(P_-=a_-\otimes a_-^*\) and
\[
  \mathcal R_-
  =\left((K_0-\lambda_-I)|_{a_-^\perp}\right)^{-1}:
  a_-^\perp\to a_-^\perp.
\]
Thus the transversality question is reduced to the first shape derivatives in
\eqref{eq:q1-first-variation}.  Equations
\eqref{eq:shape-derivative-single-layer},
\eqref{eq:shape-derivative-traction-kernel},
\eqref{eq:shape-derivative-dtn},
\eqref{eq:shape-derivative-capacitance}, and
\eqref{eq:inverse-square-root-derivative} express these derivatives through
boundary kernels and finite matrices.  No generic nonvanishing is asserted.
If \(q_1\ne0\), let \(P_-(\eta)\) be the orthogonal projection onto
\(a_-(\eta)\).  On
\[
  |\zeta-\delta^{1/2}\lambda_-(\eta)^{1/2}|\le c\delta
\]
there is a unique holomorphic Lyapunov--Schmidt graph vector, normalized by
\begin{equation*}
\begin{split}
  &(I-P_-(\eta))\cM_{\delta,\eta}(\zeta)
  a_{-,\delta}(\eta,\zeta)=0,
  \qquad
  \inner{a_{-,\delta}(\eta,\zeta)}{a_-(\eta)}=1,\\
  &m_{-,\delta,\eta}(\zeta)
  =
  \inner{\cM_{\delta,\eta}(\zeta)
  a_{-,\delta}(\eta,\zeta)}{a_-(\eta)}.
\end{split}
\end{equation*}
There is a unique real center in
\(|\omega-\delta^{1/2}\lambda_-(\eta)^{1/2}|\le C\delta^{3/2}\), defined
exactly by
\[
  \operatorname{Re}m_{-,\delta,\eta}
  (\widehat\omega_-(\delta,\eta))=0,
  \qquad
  \widehat\omega_-(\delta,\eta)^2
  =\delta\lambda_-(\eta)+O(\delta^2),
\]
uniformly for small \(\eta\).  At that center the graph vector satisfies
\begin{equation}
\label{eq:dimer-graph-correction}
  a_{-,\delta}(\eta,\widehat\omega_-)
  =a_-(\eta)+r_{\delta,\eta}(\widehat\omega_-),
  \qquad
  \norm{r_{\delta,\eta}(\widehat\omega_-)}
  \le C\bigl(\delta^{1/2}|\eta|+\delta\bigr).
\end{equation}
The radiated power of this corrected branch has the joint expansion
\begin{equation}
\label{eq:dimer-power-joint}
\begin{split}
  \norm{
    \cF_\eta(\widehat\omega_-)B_{\delta,\eta}(\widehat\omega_-)
    a_{-,\delta}(\eta,\widehat\omega_-)
  }_{\cZ}^2
  & =
  \gamma_{\mathrm{el}}\eta^2|q_1|^2
  +
  \widehat\omega_-^2\inner{\Gamma_{1,0}a_-}{a_-}\\
  &\quad+
  o(\eta^2+\delta),
\end{split}
\end{equation}
jointly as \((\eta,\delta)\to(0,0)\).  In particular, for some \(c>0\),
\begin{equation}
\label{eq:dimer-power-lower-bound}
  \norm{
    \cF_\eta(\widehat\omega_-)B_{\delta,\eta}(\widehat\omega_-)
    a_{-,\delta}(\eta,\widehat\omega_-)
  }_{\cZ}^2
  \ge c(\eta^2+\delta).
\end{equation}
The scalar
\(m_{-,\delta,\eta}\) has exactly one simple zero
\(\zeta_-(\delta,\eta)\) in a disk
\[
  |\zeta-\widehat\omega_-(\delta,\eta)|
  \le C\delta(\eta^2+\delta),
\]
and
\begin{equation}
\label{eq:dimer-crossover}
  \zeta_-(\delta,\eta)
  =
  \widehat\omega_-(\delta,\eta)
  -\frac{\ii\delta\eta^2}{2}
  \gamma_{\mathrm{el}}|q_1|^2
  -\frac{\ii\delta^2\lambda_-}{2}
  \inner{\Gamma_{1,0}a_-}{a_-}
  +o(\delta\eta^2+\delta^2).
\end{equation}
Here \(\lambda_-=\lambda_-(0)\), \(a_-=a_-(0)\), and
\(\Gamma_{1,0}\) is the unperturbed first higher-order radiation matrix.
Thus the symmetry-breaking and intrinsic dark widths balance when
\(|\eta|\asymp\delta^{1/2}\).
\end{proposition}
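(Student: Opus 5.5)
The plan is to combine three inputs: the $C^1$ shape regularity of Lemma~\ref{lem:dimer-shape-regularity}, analytic perturbation theory for the simple eigenvalue $\lambda_-$, and the scalar Lyapunov--Schmidt machinery of Theorem~\ref{thm:dark-poles}. All estimates will be made uniform in the joint parameter $(\eta,\delta)$.

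\textbf{Step 1: eigenpair and $q_1$.} Since $K_\eta$ is $C^1$ and $\lambda_-$ is simple and isolated, the Riesz projection $P_-(\eta)$ is $C^1$. With the phase convention, so is $a_-(\eta)=P_-(\eta)a_-/|P_-(\eta)a_-|$. Standard first-order perturbation theory gives
\[
  \dot a_-=-\mathcal R_-(I-P_-)\dot K_0a_-+\ii\theta a_-
\]
for some real $\theta$. The normalization $\inner{a_-(\eta)}{a_-}>0$ forces $\operatorname{Re}\inner{\dot a_-}{a_-}=0$, and reality of the phase choice gives $\theta=0$. Proposition~\ref{prop:dimer} gives $\mathfrak q_0B_0a_-=0$. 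Differentiating the product $\mathfrak q_\eta B_\eta a_-(\eta)$ then yields \eqref{eq:q1-shape-derivative} and \eqref{eq:q1-first-variation}.

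\textbf{Step 2: graph vector and real center.} Decompose $\C^m$ as $a_-(\eta)\oplus a_-(\eta)^\perp$ and insert the expansion of Theorem~\ref{thm:effective-expansion}, whose constants are uniform in $\eta$ by Lemma~\ref{lem:dimer-shape-regularity}. On the complement, $\delta(K_\eta-\lambda_-(\eta))$ is invertible with inverse $O(\delta^{-1})$, by the uniform spectral gap. The off-diagonal coupling is
\[
  -\ii\delta\zeta(I-P_-)\Gamma_{0,\eta}a_-(\eta)+O(\delta^2).
\]
Since $\Gamma_{0,\eta}a_-(\eta)=\gamma_{\mathrm{el}}B_\eta^*\mathfrak q_\eta^*(\eta q_1+o(\eta))=O(\eta)$, this coupling is $O(\delta^{3/2}|\eta|+\delta^2)$. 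A Neumann series then gives the holomorphic graph vector with the bound \eqref{eq:dimer-graph-correction}. As in Lemma~\ref{lem:dark-real-center}, the real part of $m_{-,\delta,\eta}$ equals $\delta\lambda_-(\eta)-\omega^2+O(\delta^2)$, with $\omega$-derivative $-2\omega+O(\delta)$. The real implicit function theorem produces the unique real center $\widehat\omega_-(\delta,\eta)$.

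\textbf{Step 3: radiated power.} Expand
\[
  \cF_\eta(\omega)B_{\delta,\eta}(\omega)a_{-,\delta}
  =\cF_{0,\eta}B_\eta a_-(\eta)+\omega\cF_{1,\eta}B_\eta a_-(\eta)+O(\omega^2+\delta+\delta^{1/2}|\eta|),
\]
using $B_{\delta,\eta}-B_\eta=O(\delta)$ and \eqref{eq:dimer-graph-correction}.

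The first term equals $\cF_{0,\eta}$ applied to a vector whose force is $\eta q_1+o(\eta)$. By Proposition~\ref{prop:far-field-low-frequency} its squared norm is $\gamma_{\mathrm{el}}\eta^2|q_1|^2+o(\eta^2)$.

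The second term has squared norm $\omega^2\inner{\Gamma_{1,0}a_-}{a_-}+o(\delta)$ by continuity in $\eta$. Here $\cF_{1,\eta}B_\eta a_-(\eta)$ differs from its compressed version only through the force component, which is $O(\eta)$.

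The cross term needs the orthogonality relation \eqref{eq:F0F1-orthogonality} applied at parameter $\eta$. It gives $\operatorname{Re}\inner{\cF_{0,\eta}x}{\cF_{1,\eta}x}=0$ for every $x$, so the real cross term vanishes exactly. The remaining errors are $O(\delta^{1/2}|\eta|\cdot(|\eta|+\delta^{1/2}))=o(\eta^2+\delta)$, which proves \eqref{eq:dimer-power-joint}. Since $q_1\ne0$ and $\Gamma_{1,0}$ is positive at $a_-$, the lower bound \eqref{eq:dimer-power-lower-bound} follows.

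\textbf{Step 4: the pole.} As in the proof of Theorem~\ref{thm:dark-poles}, the exact effective optical identity (Theorem~\ref{thm:effective-optical}) at the center gives
\[
  -\operatorname{Im}m_{-,\delta,\eta}(\widehat\omega_-)=\delta\widehat\omega_-\cdot(\text{power})\asymp\delta^{3/2}(\eta^2+\delta).
\]
Here one must check that $m_{-,\delta,\eta}(\widehat\omega_-)$ equals the quadratic form on the graph vector. This holds because $\cM a_{-,\delta}$ is parallel to $a_-(\eta)$ and the correction is orthogonal to it. The derivative is $-2\widehat\omega_-+O(\delta)$, and the second derivative is controlled by \eqref{eq:effective-error}. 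Rouch\'e's theorem on the disk of radius $C\delta(\eta^2+\delta)$ then gives a unique simple zero, and the Newton quotient yields \eqref{eq:dimer-crossover} after using $\widehat\omega_-^2=\delta\lambda_-+o(\delta)$.

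The main obstacle is the uniformity in the joint limit $(\eta,\delta)\to0$. The spectral gap, the Schur bounds and the error $E_{\delta,\eta}$ must all be controlled uniformly in $\eta$, which rests on Lemma~\ref{lem:dimer-shape-regularity}. The cross term in Step~3 also has to be kept $o(\eta^2+\delta)$ without any assumption on the relative size of $\eta$ and $\delta^{1/2}$; the exact orthogonality \eqref{eq:F0F1-orthogonality} is what I would rely on there.
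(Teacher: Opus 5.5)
Your proposal is correct and follows the paper's proof essentially step for step. The shared steps are: the $C^1$ eigenpair with the product rule for $q_1$; the graph vector obtained from the $O(\delta^{-1})$ complement inverse together with $\Gamma_{0,\eta}a_-(\eta)=O(|\eta|)$; exact cancellation of the real cross term from the $\eta$-wise relation $\cF_{0,\eta}^*\cF_{1,\eta}+\cF_{1,\eta}^*\cF_{0,\eta}=0$; and Rouch\'e plus the Newton quotient driven by the effective optical identity. The one place the paper is more explicit is the uniform second-derivative bound for $m_{-,\delta,\eta}$, where besides $\partial_\zeta^2E_\delta=O(\delta)$ the Schur correction must be differentiated twice, using routine derivative bounds on the complement inverse and on the $O(\delta^{3/2}|\eta|+\delta^2)$ couplings.
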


\begin{proof}
Lemma~\ref{lem:dimer-shape-regularity} places all moving operators on fixed
spaces and gives their \(C^1\) dependence.  Since \(\lambda_-\) is simple and isolated in the
full rigid space, standard finite-dimensional spectral perturbation theory
\cite{Kato1995} gives a \(C^1\) normalized eigenpair.  Exchange oddness at \(\eta=0\) gives
\(\mathfrak q_0B_0a_-(0)=0\), and Taylor expansion proves
\eqref{eq:q1-shape-derivative}.  The chosen phase gives
\(a_-^*\dot a_-=0\).  Differentiating
\(K_\eta a_-(\eta)=\lambda_-(\eta)a_-(\eta)\) and projecting onto
\(a_-^\perp\) yields
\[
  \dot a_-
  =-\mathcal R_-(I-P_-)\dot K_0a_-.
\]
Differentiation of
\(\mathfrak q_\eta B_\eta a_-(\eta)\), using the formula for
\(\dot{\mathfrak q}_0\) in Lemma~\ref{lem:dimer-shape-regularity}, gives
\eqref{eq:q1-first-variation}.

It remains to justify both the corrected branch vector and the two-parameter
remainder.  Decompose the rigid space as
\(\Span\{a_-(\eta)\}\oplus a_-(\eta)^\perp\).  On the second factor the
static part of the effective matrix is
\(\delta(K_\eta-\lambda_-(\eta)I)\), whose inverse has norm
\(O(\delta^{-1})\) uniformly for small \(\eta\), by the full-space spectral
gap.  The force factorization and \eqref{eq:q1-shape-derivative} give
\[
  \Gamma_{0,\eta}a_-(\eta)=O(|\eta|),
\]
so the off-diagonal radiative forcing of this vector is
\(O(\delta\omega|\eta|)\); the effective-matrix remainder is
\(O(\delta^2)\) on the resonant scale.  Solving the complementary equation
therefore gives
\[
  \norm{r_{\delta,\eta}(\omega)}
  \le
  C\delta^{-1}
  \bigl(\delta\omega|\eta|+\delta^2\bigr)
  \le
  C\bigl(\delta^{1/2}|\eta|+\delta\bigr),
\]
which proves \eqref{eq:dimer-graph-correction}, uniformly on the indicated
branch disk.  The same complementary inversion constructs the graph vector
holomorphically in \(\zeta\).

We record the derivative estimate needed for the joint zero count.  Relative
to \(\Span\{a_-(\eta)\}\oplus a_-(\eta)^\perp\), write
\[
  \cM_{\delta,\eta}(\zeta)
  =\begin{pmatrix}\alpha&\beta\\ \gamma&\mathsf D\end{pmatrix}.
\]
On the branch disk, Theorem~\ref{thm:effective-expansion} and
Lemma~\ref{lem:dimer-shape-regularity}, including their uniform
second-derivative estimates, give
\begin{align*}
  &\norm{\mathsf D^{-1}}=O(\delta^{-1}),\qquad
  \norm{\mathsf D'}=O(\delta^{1/2}),\qquad
  \norm{\mathsf D''}=O(1),\\
  &\norm{(\mathsf D^{-1})'}=O(\delta^{-3/2}),\qquad
  \norm{(\mathsf D^{-1})''}=O(\delta^{-2}),\\
  &\norm{\beta}+\norm{\gamma}
    =O(\delta^{3/2}|\eta|+\delta^2),\\
  &\norm{\beta'}+\norm{\gamma'}
    =O(\delta|\eta|+\delta^{3/2}),\qquad
  \norm{\beta''}+\norm{\gamma''}=O(\delta).
\end{align*}
Here the bounds are uniform in \(\eta\); they use
\(\Gamma_{0,\eta}a_-(\eta)=O(|\eta|)\).  Since
\[
  m_{-,\delta,\eta}=\alpha-\beta\mathsf D^{-1}\gamma,
\]
differentiating this Schur complement twice yields
\begin{equation}
\label{eq:dimer-scalar-derivative-bounds}
  \partial_\zeta m_{-,\delta,\eta}(\zeta)
  =-2\zeta+O(\delta),
  \qquad
  \partial_\zeta^2m_{-,\delta,\eta}(\zeta)
  =-2+O(\delta)
\end{equation}
uniformly on the branch disk.  In particular, its quadratic Taylor remainder
has a parameter-independent \(O(|\zeta-\widehat\omega_-|^2)\) bound.

If
\(\omega_{-,0}(\delta,\eta)=\delta^{1/2}\lambda_-(\eta)^{1/2}\), the associated
scalar satisfies on the real axis
\[
  \partial_\omega\operatorname{Re}m_{-,\delta,\eta}(\omega)
  =-2\omega+O(\delta)
\]
uniformly for \(|\omega-\omega_{-,0}|\le C\delta^{3/2}\), while
\(\operatorname{Re}m_{-,\delta,\eta}(\omega_{-,0})=O(\delta^2)\).
The real implicit-function theorem gives the unique center in the statement
and \(\widehat\omega_-^2=\delta\lambda_-(\eta)+O(\delta^2)\).

Lemma~\ref{lem:dimer-shape-regularity} also gives, uniformly for small \(\eta\),
\[
  \cF_\eta(\omega)
  =
  \cF_{0,\eta}
  +
  \omega\cF_{1,\eta}
  +
  O(\omega^2),
  \qquad
  B_{\delta,\eta}(\omega)=B_\eta+O(\delta).
\]
For every fixed small \(\eta\), the optical identity gives the coefficient
orthogonality
\[
  \cF_{0,\eta}^*\cF_{1,\eta}
  +
  \cF_{1,\eta}^*\cF_{0,\eta}=0.
\]
With \(x_\eta=B_\eta a_-(\eta)\), it follows that the real cross term between
\(\cF_{0,\eta}x_\eta\) and
\(\omega\cF_{1,\eta}x_\eta\) vanishes.  Moreover,
\[
  \norm{\cF_{0,\eta}x_\eta}_{\cZ}^2
  =
  \gamma_{\mathrm{el}}
  |\mathfrak q_\eta B_\eta a_-(\eta)|^2
  =
  \gamma_{\mathrm{el}}\eta^2|q_1|^2+o(\eta^2),
\]
while continuity in \(\eta\) gives
\[
  \norm{\cF_{1,\eta}x_\eta}_{\cZ}^2
  =
  \inner{\Gamma_{1,0}a_-}{a_-}+o(1).
\]
At \(\omega=\widehat\omega_-\), the correction \(r_{\delta,\eta}\)
contributes to the far-field amplitude at
most
\[
  O(\delta^{1/2}|\eta|+\delta)
  =O(\widehat\omega_-|\eta|+\widehat\omega_-^2),
\]
because \(c\delta\le\widehat\omega_-^2\le C\delta\).  Its quadratic
contribution and its cross term with the leading amplitude
\(O(|\eta|+|\widehat\omega_-|)\) are therefore
\(o(\eta^2+\delta)\).  The same is true for the terms produced by the
\(O(\omega^2)+O(\delta)\) kernel and boundary-trace remainders; for example,
\(|\eta|\omega^2\), \(\omega^3\), and
\(\delta(|\eta|+|\omega|)\) are all little-o of
\(\eta^2+\delta\) at \(\omega=\widehat\omega_-\).  For instance,
\[
  \frac{\delta|\eta|}{\eta^2+\delta}
  \le \frac12\delta^{1/2},
  \qquad
  \frac{\delta^{3/2}}{\eta^2+\delta}
  \le\delta^{1/2}.
\]
Replacing the static vector by the corrected graph vector therefore proves
\eqref{eq:dimer-power-joint}.
Since \(q_1\ne0\),
\(\inner{\Gamma_{1,0}a_-}{a_-}>0\), and
\(\widehat\omega_-^2\ge c\delta\), the same expansion gives
\eqref{eq:dimer-power-lower-bound} after decreasing the joint neighborhood.

Finally, the complementary equation and the normalization give
\[
  \cM_{\delta,\eta}(\widehat\omega_-)
  a_{-,\delta}(\eta,\widehat\omega_-)
  =m_{-,\delta,\eta}(\widehat\omega_-)a_-(\eta),
\]
and hence, with the inner-product convention of this paper,
\[
  \inner{
    \cM_{\delta,\eta}(\widehat\omega_-)
    a_{-,\delta}(\eta,\widehat\omega_-)
  }{a_{-,\delta}(\eta,\widehat\omega_-)}
  =m_{-,\delta,\eta}(\widehat\omega_-).
\]
The exact effective optical identity therefore gives its imaginary part as
\(-\delta\widehat\omega_-\) times the power in
\eqref{eq:dimer-power-joint}, whereas
\[
  \partial_\zeta m_{-,\delta,\eta}(\widehat\omega_-)
  =-2\widehat\omega_-+o(\delta^{1/2})
\]
uniformly in the same joint regime.  Put
\(R_{\delta,\eta}=C_0\delta(\eta^2+\delta)\).  The optical identity,
\eqref{eq:dimer-power-joint}, and \eqref{eq:dimer-power-lower-bound} imply
\[
  c\delta^{3/2}(\eta^2+\delta)
  \le
  |m_{-,\delta,\eta}(\widehat\omega_-)|
  \le
  C\delta^{3/2}(\eta^2+\delta).
\]
On \(|\zeta-\widehat\omega_-|=R_{\delta,\eta}\), the linear term has
size comparable to
\(C_0\delta^{3/2}(\eta^2+\delta)\), whereas the quadratic Taylor remainder
is uniformly \(O(R_{\delta,\eta}^2)\) by
\eqref{eq:dimer-scalar-derivative-bounds}.  Choose \(C_0\) larger than the ratio in the
preceding upper bound and then let \((\delta,\eta)\to(0,0)\).  Rouch\'e's
theorem, applied against the linear term, gives exactly one simple zero in the
stated disk.  The Newton quotient then yields
\[
  \zeta_--\widehat\omega_-
  =-
  \frac{m_{-,\delta,\eta}(\widehat\omega_-)}
       {\partial_\zeta m_{-,\delta,\eta}(\widehat\omega_-)}
  +o(\delta\eta^2+\delta^2).
\]
Using \(\widehat\omega_-^2=\delta\lambda_-(\eta)+O(\delta^2)\), and observing
that the \(C^1\) variation of \(\lambda_-(\eta)\), \(\Gamma_{1,\eta}\), and
the dipole coefficient contributes only to the displayed little-o remainder,
proves \eqref{eq:dimer-crossover}.  Uniformity in the last assertion follows,
for example, from
\[
  \frac{\delta^2|\eta|}{\delta\eta^2+\delta^2}
  =\frac{\delta|\eta|}{\eta^2+\delta}
  \le\frac12\delta^{1/2}.
\]
\end{proof}

\begin{remark}
A vector field \(V\) supported near one component, including a local
translation, supplies an admissible symmetry-breaking family.  Establishing
\(q_1\ne0\) for any particular family still requires evaluating
\eqref{eq:q1-first-variation}; symmetry breaking alone does not imply
transversality.  If the unperturbed eigenvalue is degenerate in the full rigid
space, one must first perturb the associated spectral projection and
rediagonalize inside that finite-dimensional cluster.
\end{remark}

\section{Concluding Remarks and Scope}\label{sec:conclusion}

The exact finite-rank reduction identifies every loss in the uniform LAP with
the effective matrix.  Its optical identity explains why a cluster has only
three leading force channels, even though its rigid space has dimension
\(6N\).  The sphere realizes both radiative scales explicitly.  For the
dimer, the first-variation criterion separates the intrinsic dark width from
the width created by symmetry breaking.

The result has two stated limits.  Modes in
\(\Ker\Gamma_0\cap\Ker\Gamma_1\) require a higher multipole expansion.  The
geometric constants also require a fixed positive component separation.  A
simultaneous closing-gap limit must track the capacitance and boundary-operator
bounds in both parameters and may exhibit stress concentration
\cite{LiXu2026,LiXuYang2026}; it is not covered here.

\appendix

\section{Low-Frequency DtN Expansion in Operator Norm}
\label{app:dtn}

\begin{lemma}
\label{lem:kernel-expansion}
For \(x\ne0\), the Kupradze tensor admits
\[
  \mathbf G^\zeta(x)
  =
  \mathbf G^0(x)
  +
  \ii\zeta\,\mathbf G_1
  +
  \zeta^2\mathbf G_2^\zeta(x),
\]
where \(\mathbf G_1\) is a constant matrix and
\(\mathbf G_2^\zeta(x)\) is holomorphic in \(\zeta\).  For
every fixed \(R>1\), every \(|\zeta|\le r_0\), and each multiindex
\(\beta\) required by the single-layer and traction traces,
\[
  |\partial_\zeta^k\partial_x^\beta\mathbf G_2^\zeta(x)|
  \le
  C_{\beta,R}
  \begin{cases}
    |x|^{-1-|\beta|},&0<|x|\le1,\\
    1,&1\le|x|\le R
  \end{cases}
\]
for \(k=0,1,2\), with a constant independent of \(\zeta\).
\end{lemma}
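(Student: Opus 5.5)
The plan is to start from the explicit representation of the Kupradze tensor used in the proof of Proposition~\ref{prop:dtn-analytic},
\[
  \mathbf G^\zeta(x)
  =\frac1{\mu_0}\Phi_{k_s}(x)I
  +\frac1{\rho_0\zeta^2}\nabla\nabla^{\mathsf T}\bigl(\Phi_{k_s}-\Phi_{k_p}\bigr)(x),
  \qquad k_p=\alpha_p\zeta,\quad k_s=\alpha_s\zeta .
\]
Both scalar kernels are expanded in power series in \(\zeta\). Writing
\[
  \Phi_k(x)=\frac{1}{4\pi}\sum_{n\ge0}\frac{(\ii k)^n|x|^{n-1}}{n!},
\]
the difference becomes
\[
  \Phi_{k_s}-\Phi_{k_p}=\frac1{4\pi}\sum_{n\ge1}\frac{\ii^n(\alpha_s^n-\alpha_p^n)\zeta^n}{n!}|x|^{n-1},
\]
so there is no \(n=0\) term. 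The \(n=1\) term is a constant, which is killed by \(\nabla\nabla^{\mathsf T}\). The \(n=2\) term is proportional to \(\zeta^2|x|\), so after dividing by \(\zeta^2\) it contributes the \(\zeta\)-independent piece \(\nabla\nabla^{\mathsf T}|x|\), homogeneous of degree \(-1\), which combines with \(\mu_0^{-1}\Phi_0 I\) to give \(\mathbf G^0\). The \(n=3\) term gives \(\zeta\,\nabla\nabla^{\mathsf T}|x|^2=2\zeta I\). Together with the order-\(\zeta\) term \(\ii\alpha_s\zeta/(4\pi\mu_0)\,I\) from \(\Phi_{k_s}I\), this produces \(\ii\zeta\mathbf G_1\) with \(\mathbf G_1\) a constant matrix. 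I would verify the algebraic identity for \(\mathbf G^0\) against the known static Kelvin tensor as a consistency check, but no other fact is needed.

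The remainder \(\mathbf G_2^\zeta\) is defined as \(\zeta^{-2}\) times the tail (\(n\ge2\) in \(\Phi_{k_s}I/\mu_0\), and \(n\ge4\) in the difference term). Each summand is a monomial \(c_n\zeta^{n-2}|x|^{n-1}\) for the first part and \(c_n\zeta^{n-2}\,\nabla\nabla^{\mathsf T}|x|^{n-1}\) for the second. The latter is homogeneous of degree \(n-3\ge1\) and smooth away from \(0\). Hence \(\mathbf G_2^\zeta\) is an entire function of \(\zeta\) for each \(x\ne0\).

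For the bounds, I differentiate termwise. First, \(|\partial_x^\beta|x|^{n-1}|\le C_\beta\, n^{|\beta|}|x|^{n-1-|\beta|}\) (with \(|x|^{n-1-|\beta|}\) interpreted through the homogeneity, and vanishing when the monomial is a polynomial of lower degree). Second, \(|\partial_\zeta^k\zeta^{n-2}|\le n^k r_0^{n-2-k}\). Summing against \(\alpha^n/n!\) gives a convergent series dominated by
\[
  C\sum_n \frac{n^{k+|\beta|+2}(\alpha r_0)^{n}|x|^{n-1-|\beta|}}{n!},
\]
uniformly on \(|x|\le R\). On \(0<|x|\le1\), the worst term is the smallest admissible \(n\). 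For the \(\Phi_{k_s}I\) part this is \(n=2\), giving \(|x|^{1-|\beta|}\le|x|^{-1-|\beta|}\). For the difference part it is \(n=4\), giving \(|x|^{1-|\beta|}\). Both are dominated by the claimed \(|x|^{-1-|\beta|}\). On \(1\le|x|\le R\), all terms are bounded by \(C_R\). Note that the claimed bound is weaker than what the series gives, since \(\mathbf G_2\) is actually more regular at the origin.

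The only delicate point, and the main obstacle, is bookkeeping the cancellation of the \(\zeta^{-2}\) and \(\zeta^{-1}\) singular terms. In particular one must confirm that the \(n=1\) and \(n=2\) contributions of the difference are respectively annihilated by the derivatives and absorbed into \(\mathbf G^0\). After that, uniform convergence of the differentiated series on the compact set \(\{|\zeta|\le r_0\}\times\{\epsilon\le|x|\le R\}\) follows from the Weierstrass M-test. This justifies termwise differentiation in both \(\zeta\) (up to order 2) and \(x\) (up to the finitely many \(\beta\) needed for traces), with constants independent of \(\zeta\).
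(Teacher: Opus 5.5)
Your proposal is correct and follows the paper's route. You expand \(\Phi_k\) in powers of \(k\), note that the constant term dies under \(\nabla\nabla^{\mathsf T}\), that the \(\zeta^2|x|\) term combines with \(\Phi_0/\mu_0\) into \(\mathbf G^0\), and that the \(\zeta^3|x|^2\) term gives a constant, and then bound the tail; your convergent power series with the Weierstrass M-test simply makes explicit the paper's ``Taylor's formula with integral remainder.'' One harmless slip: after the extra division by \(\zeta^2\), the difference-part summands of \(\mathbf G_2^\zeta\) carry \(\zeta^{n-4}\) rather than \(\zeta^{n-2}\), and this does not affect the bounds.
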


\begin{proof}
Use
\[
  \Phi_{k}(x)
  =
  \frac1{4\pi|x|}
  +
  \frac{\ii k}{4\pi}
  -
  \frac{k^2|x|}{8\pi}
  -
  \frac{\ii k^3|x|^2}{24\pi}
  +
  O(k^4|x|^3).
\]
In
\[
  \frac1{\rho_0\zeta^2}
  \nabla\nabla^{\mathsf T}
  (\Phi_{k_s(\zeta)}-\Phi_{k_p(\zeta)}),
\]
the constant terms disappear under two derivatives, while the
\(\zeta^2|x|\) term produces the static Kelvin tensor.  The remaining
linear term in \(\zeta\) is independent of \(x\); all higher terms have
the displayed weak singularity after differentiation.  Taylor's formula
with integral remainder gives holomorphy and the same estimates after up
to two \(\zeta\)-derivatives.
\end{proof}

\begin{lemma}
\label{lem:boundary-operator-expansion}
On a \(C^{2,\alpha}\) boundary,
\begin{align*}
  \mathsf S(\zeta)
  &=
  \mathsf S_0+\ii\zeta\mathsf S_1+
  \zeta^2\mathsf S_2(\zeta)
  &&\text{in }
  \mathcal B(H^{-1/2},H^{1/2}),\\
  \mathsf K^*(\zeta)
  &=
  \mathsf K_0^*+\zeta^2\mathsf K_2^*(\zeta)
  &&\text{in }
  \mathcal B(H^{-1/2},H^{-1/2}),
\end{align*}
where the remainders are holomorphic and uniformly bounded.
After decreasing \(r_0\), if necessary, their first two \(\zeta\)-derivatives are
uniformly bounded in the same operator norms.
\end{lemma}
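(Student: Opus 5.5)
The plan is to lift the kernel expansion of Lemma~\ref{lem:kernel-expansion} to the two boundary operators through a standard mapping theorem for weakly singular and pseudohomogeneous kernels on a \(C^{2,\alpha}\) surface. For the single layer, write
\[
  \mathsf S(\zeta)\mu(x)=\int_\Sigma\mathbf G^\zeta(x-y)\mu(y)\,\dd S_y ,
\]
and insert \(\mathbf G^\zeta=\mathbf G^0+\ii\zeta\mathbf G_1+\zeta^2\mathbf G_2^\zeta\). The static term is the classical \(\mathsf S_0\in\mathcal B(H^{-1/2},H^{1/2})\). The linear term has constant kernel \(\mathbf G_1\), so \(\mathsf S_1\mu=\mathbf G_1\int_\Sigma\mu\,\dd S\) is a finite-rank operator with smooth range; it is bounded from \(H^{-1/2}\) into every \(H^t\), since pairing \(\mu\) with constants is continuous. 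For the remainder, \(\mathbf G_2^\zeta\) has the \(|x|^{-1}\)-type singularity with the derivative bounds of Lemma~\ref{lem:kernel-expansion}; it is therefore a kernel of the same (order \(-1\)) class as the Kelvin tensor, with seminorms uniform in \(|\zeta|\le r_0\). The mapping theorem then gives \(\mathsf S_2(\zeta)\in\mathcal B(H^{-1/2},H^{1/2})\) uniformly. Holomorphy follows from holomorphy of the kernel in these seminorms, for instance via Morera's theorem or Cauchy integrals in operator norm.

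For \(\mathsf K^*(\zeta)\), the kernel is \(T^x_{\bfnu}\mathbf G^\zeta(x-y)\), which involves one \(x\)-derivative of the kernel. The constant \(\mathbf G_1\) is annihilated by this derivative, so no linear term appears; this is exactly why the stated expansion of \(\mathsf K^*\) has no \(\zeta^1\) coefficient. The static part \(\mathsf K_0^*\) is the classical elastic adjoint double layer, bounded on \(H^{-1/2}\) for \(C^{2,\alpha}\) boundaries. The remainder kernel \(T^x_{\bfnu}\mathbf G_2^\zeta\) is \(O(|x-y|^{-2})\) with one more derivative controlled, by the \(|\beta|=1,2\) bounds of Lemma~\ref{lem:kernel-expansion}. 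An \(O(|x-y|^{-2})\) kernel on a two-dimensional surface is only borderline weakly singular, so I will need more than size: I expect \(\mathbf G_2^\zeta\) to contain a smooth function times \(|x|\) plus smoother terms. Then \(\nabla\mathbf G_2^\zeta\) is homogeneous of degree \(0\) near the diagonal, hence bounded, which gives an operator of order \(\le -1\) and thus \(\mathcal B(H^{-1/2},H^{-1/2})\), even compactly.

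The main obstacle is this refined structure of the remainder kernel. The crude bound \(C|x|^{-1-|\beta|}\) in Lemma~\ref{lem:kernel-expansion} is stated generously, and for the traction operator I must check that the Taylor series of \(\Phi_{k_s}-\Phi_{k_p}\) gives, after \(\zeta^{-2}\nabla\nabla^{\mathsf T}\), terms of the form \(\zeta^{n}\,\nabla\nabla^{\mathsf T}|x|^{n+1}\) for \(n\ge1\). Those are smooth for odd \(n+1\), and homogeneous of nonnegative degree with the Calder\'on--Zygmund-type cancellation for even \(n+1\). I will verify this by expanding \(\Phi_k\) as the entire series
\[
  \frac{1}{4\pi}\sum_n \frac{(\ii k)^n |x|^{n-1}}{n!},
\]
collecting the \(\zeta^2\) remainder explicitly as a convergent series in \(\zeta\), and estimating term by term. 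This also gives uniform holomorphy and, by Cauchy estimates on a slightly larger disk after shrinking \(r_0\), the bounds on the first two \(\zeta\)-derivatives in the same operator norms.
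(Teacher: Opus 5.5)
Your proposal follows the paper's proof: insert the expansion of Lemma~\ref{lem:kernel-expansion}, apply the mapping theorem for pseudohomogeneous boundary kernels, and use the fact that the constant $\zeta$-linear kernel gives a finite-rank $\mathsf S_1$ and is killed by the traction, so $\mathsf K^*(\zeta)$ has no linear term. Your extra check that every term of $\mathbf G_2^\zeta$ is homogeneous of degree at least one (so $T^x_{\bfnu}\mathbf G_2^\zeta$ is bounded near the diagonal) usefully sharpens the paper's ``same or better singularity'' claim, because the crude size bound $|x|^{-2}$ alone would not suffice for $\mathsf K_2^*$; one small slip is that the parity in your last paragraph is reversed, since $|x|^{n+1}$ is a polynomial when $n+1$ is even and only homogeneous when $n+1$ is odd, but this does not affect the conclusion.
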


\begin{proof}
The static kernels are the standard pseudohomogeneous kernels of order
\(-1\) and \(0\), respectively.  Lemma~\ref{lem:kernel-expansion} shows that
the remainder kernels have the
same or better singularity, uniformly together with their first two
\(\zeta\)-derivatives.  The mapping theorem for
pseudohomogeneous boundary kernels \cite{McLean2000} gives the asserted
Sobolev bounds.
The term linear in \(\zeta\) in the fundamental tensor is constant in
space; its traction vanishes, which explains the absence of a linear
term in \(\mathsf K^*(\zeta)\).
\end{proof}

\begin{proposition}
\label{prop:dtn-detailed}
The expansion \eqref{eq:dtn-basic-expansion} holds in
\(\mathcal B(H^{1/2}(\Sigma)^3,H^{-1/2}(\Sigma)^3)\), with a remainder
that can be differentiated twice in \(\zeta\) without loss of
uniformity.
\end{proposition}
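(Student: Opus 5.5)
The plan is to start from the layer-potential representation already used in the proof of Proposition~\ref{prop:dtn-analytic}:
\[
  \Lambda(\zeta)=\Bigl(-\tfrac12I+\mathsf K^*(\zeta)\Bigr)\mathsf S(\zeta)^{-1},
\]
and to insert the operator-norm expansions of Lemma~\ref{lem:boundary-operator-expansion}. First we invert \(\mathsf S(\zeta)\) near zero. Since \(\mathsf S_0\) is invertible \(H^{-1/2}\to H^{1/2}\), we write
\[
  \mathsf S(\zeta)=\mathsf S_0\bigl(I+\ii\zeta\mathsf S_0^{-1}\mathsf S_1+\zeta^2\mathsf S_0^{-1}\mathsf S_2(\zeta)\bigr).
\]
For \(|\zeta|<r_0\) small, the bracket is invertible on \(H^{-1/2}\) by a Neumann series, and holomorphy follows. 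Expanding the series gives
\[
  \mathsf S(\zeta)^{-1}=\mathsf S_0^{-1}-\ii\zeta\mathsf S_0^{-1}\mathsf S_1\mathsf S_0^{-1}+\zeta^2 T(\zeta),
\]
where \(T\) is holomorphic and uniformly bounded in \(\mathcal B(H^{1/2},H^{-1/2})\). We would obtain \(T\) explicitly as a convergent sum, or more simply by noting that \(T(\zeta)=\zeta^{-2}\bigl(\mathsf S(\zeta)^{-1}-\text{first two Taylor terms}\bigr)\) is holomorphic with a removable singularity at \(0\).

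Multiplying out, and using that \(\mathsf K^*\) has no linear term, we get
\[
  \Lambda_0=(-\tfrac12+\mathsf K_0^*)\mathsf S_0^{-1},\qquad
  \ii\zeta\Lambda_1=-\ii\zeta(-\tfrac12+\mathsf K_0^*)\mathsf S_0^{-1}\mathsf S_1\mathsf S_0^{-1}=-\ii\zeta\,\Lambda_0\mathsf S_1\mathsf S_0^{-1}.
\]
The remainder \(\Lambda_2(\zeta)\) collects \(\mathsf K_2^*(\zeta)\mathsf S(\zeta)^{-1}\) together with \((-\frac12+\mathsf K_0^*)T(\zeta)\). It is a sum of products of holomorphic, uniformly bounded families, so it is holomorphic as well.

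The derivative bounds follow from Cauchy's estimates. All these families are holomorphic on a disk of radius \(r_0'>r_0\), which we arrange by shrinking \(r_0\) after fixing the disk of Lemma~\ref{lem:boundary-operator-expansion}. Their sup norms are bounded there, so on the smaller disk \(|\zeta|<r_0\) the first and second \(\zeta\)-derivatives of \(\Lambda_2\) are bounded by \(C\sup_{|\zeta|<r_0'}\|\Lambda_2\|\). This avoids differentiating the Neumann series term by term. Since \(\Lambda_2(\zeta)=\zeta^{-2}(\Lambda(\zeta)-\Lambda_0-\ii\zeta\Lambda_1)\), the maximum principle also bounds \(\Lambda_2\) on the whole disk by its values on the boundary circle, where the factor \(\zeta^{-2}\) is harmless.

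The self-adjointness and nonnegativity of \(\Lambda_1\) have already been deduced in Proposition~\ref{prop:dtn-analytic} from the expansion together with the symmetries and the Betti inequality, so nothing further is needed there.

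The only genuinely delicate step is the input from Lemma~\ref{lem:boundary-operator-expansion}. In the kernel expansion of Lemma~\ref{lem:kernel-expansion}, the term \(\ii\zeta\mathbf G_1\) is constant. It therefore contributes the finite-rank operator \(\mathsf S_1\mu=\mathbf G_1\int_\Sigma\mu\) and gives no linear term in the traction operator. The remainder kernels must also map \(H^{-1/2}\to H^{1/2}\) uniformly with two \(\zeta\)-derivatives. We take that lemma as given. Consequently \(\Lambda_1\) is a finite-rank operator built from the force functional \(\int_\Sigma\mathsf S_0^{-1}(\cdot)\), which is consistent with Corollary~\ref{cor:Lambda1-explicit}.
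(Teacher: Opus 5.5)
Your proposal is correct and follows essentially the same route as the paper. Both arguments start from the layer representation $\Lambda(\zeta)=(-\frac12 I+\mathsf K^*(\zeta))\mathsf S(\zeta)^{-1}$, expand $\mathsf S(\zeta)^{-1}$ about the invertible $\mathsf S_0$ by a Neumann/Taylor argument, multiply by $-\frac12 I+\mathsf K^*(\zeta)$, and use holomorphy of all factors to control the twice-differentiated remainder (you make this explicit via Cauchy estimates). The only differences are cosmetic: the paper re-proves invertibility of $\mathsf S_0$ and cites the optical identity for the sign of $\Lambda_1$, whereas you import both from Proposition~\ref{prop:dtn-analytic} without circularity.
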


\begin{proof}
The static single-layer operator is Fredholm of index zero.  If
\(\mathsf S_0\mu=0\), the associated single-layer potential has zero
trace.  Its interior and decaying exterior energies vanish by Green's
formula; the traction jump then gives \(\mu=0\).  Hence
\(\mathsf S_0\) is invertible.

Lemma~\ref{lem:boundary-operator-expansion} and the identity
\[
  \mathsf S(\zeta)^{-1}
  =
  \mathsf S_0^{-1}
  -
  \mathsf S_0^{-1}
  \bigl(\mathsf S(\zeta)-\mathsf S_0\bigr)
  \mathsf S_0^{-1}
  +
  O(\zeta^2)
\]
give an operator-norm Taylor expansion of the inverse.  Multiplication by
\(-\frac12I+\mathsf K^*(\zeta)\) yields the DtN expansion.  All factors
are holomorphic, so the differentiated remainder obeys the same
uniform bound.  The optical identity identifies the Hermitian
coefficient \(\Lambda_1\) and proves its positivity.
\end{proof}

\section*{Declarations}

\paragraph{Funding.}
This work was partially supported by the National Natural Science Foundation of China
(Grant No.~12371187).

\paragraph{Competing interests.}
The author has no relevant financial or non-financial interests to disclose.

\paragraph{Author contributions.}
The author conceived the study, developed the analysis, and wrote and revised
the manuscript.

\paragraph{Data availability.}
No datasets were generated or analyzed for this study.

\paragraph{AI-assisted editing.}
AI-assisted tools were used for language editing and consistency checks.  The
author verified the mathematical content and references and assumes full
responsibility for the manuscript.

\end{document}